\documentclass[10pt]{amsart}

\usepackage[hmargin={2.1cm,2.1cm},vmargin={2.5cm,2.5cm}]{geometry}

\usepackage{amsmath, amsthm, amssymb}
\usepackage{mathtools}
\usepackage{graphics}
\usepackage{float}
\usepackage{epsfig}
\usepackage{enumerate}
\usepackage{nicefrac}
\usepackage{cancel}

\DeclareMathAlphabet{\mathpzc}{OT1}{pzc}{m}{it}
\usepackage{mathrsfs}

\usepackage{stmaryrd}


\usepackage{macros}
\usepackage[usenames,dvipsnames]{xcolor}

\begin{document}

\title[Two-phase ferrofluids]{\LARGE{A diffuse interface model for two-phase ferrofluid flows}}

\author[R.H.~Nochetto]{Ricardo H.~Nochetto}
\address[R.H.~Nochetto]{Department of Mathematics and Institute for Physical Science and Technology, University of Maryland, College Park, MD 20742, USA.}
\email[R.H.~Nochetto]{rhn@math.umd.edu}

\author[A.J.~Salgado]{Abner J.~Salgado}
\address[A.J.~Salgado]{Department of Mathematics, University of Tennessee, Knoxville, TN 37996, USA.}
\email[A.J.~Salgado]{asalgad1@utk.edu}

\author[I.~Tomas]{Ignacio Tomas}
\address[I.~Tomas]{Department of Mathematics, University of Maryland, College Park, MD 20742, USA.}
\curraddr{Department of Mathematics, Texas A\&M University, College Station, TX 77843, USA.}
\email{itomas@tamu.edu}

\thanks{The work of RHN and IT is supported by NSF grants DMS-1109325
  and DMS-1411808. AJS is partially supported by NSF grants
  DMS-1109325 and DMS-1418784.}

\keywords{Ferrofluids, incompressible flows, microstructure, magnetization, two-phase flow.}

\subjclass[2000]{
76Txx;    
35Q35;    
35Q61;    
76N10;    
65N12;    
65M60.    
}

\date{\textcolor{magenta}{\today.}}

\begin{abstract}
We develop a model describing the behavior of two-phase ferrofluid flows using phase field-techniques and present an energy-stable numerical scheme for
it. For a simplified, yet physically realistic, version of this model and the corresponding numerical scheme we prove, in addition to stability, convergence and as by-product existence of solutions. With a series of numerical experiments we illustrate the potential of these simple models and their ability to capture basic phenomenological features of ferrofluids such as the Rosensweig instability.
\end{abstract}

\maketitle
\section{Introduction}
A ferrofluid is a liquid which becomes strongly magnetized in the presence of applied magnetic fields. It is a colloid made of nanoscale monodomain ferromagnetic particles suspended in a carrier fluid (water, oil, or other organic solvent). These particles are suspended by Brownian motion and will not precipitate nor clump under normal conditions. Ferrofluids are dielectric (non conducting) and paramagnetic (they are attracted by magnetic fields, and do not retain magnetization in the absence of an applied field); see \cite{Behrens}.

Ferrofluids can be controlled by means of external magnetic fields, which gives rise to a wealth of control-based applications. They were developed in the 1960's to pump fuel in spacecrafts without mechanical action \cite{Stephen1995}. Recent interest in ferrofluids is related to technical applications such as instrumentation, vacuum technology, lubrication, vibration damping, radar absorbing materials, and acoustics \cite{Miwa2003,Raj1995,Vinoy2011}; they are used, for instance, as liquid seals for the drive shafts of hard disks, for vibration control and damping in vehicles and enhanced heat transfer of electronics. Other potential applications are in micro/nanoelectromechanical systems: magnetic manipulation of microchannel flows, particle separation, nanomotors, micro electrical generators, and nanopumps \cite{Shib2011,Hart2004,Yama2005,Zahn01,Zeng2013}. One of the most promising applications is in the field of medicine, where targeted (magnetically guided) chemotherapy and radiotherapy, hyperthermia 
treatments, 
and magnetic resonance imaging contrast enhancement are very active areas or research \cite{Rin2009,Pank03,Shap2013}. Yet another potential applications of ferrofluids under current research is the construction of adaptive deformable mirrors \cite{Laird04,Laird06,Brou07}.

At the time of this writing there are two well established PDE models as a mathematical description for the behavior of ferrofluids which we will call by the name of their developers: the Rosensweig model and the Shliomis model (cf.~\cite{Ros97,Shli2002}). Rigorous mathematical work on the mathematical analysis (existence of global weak solutions and local existence of strong solutions) for the Rosensweig and the Shliomis models is very recent (cf. \cite{AmiShliomis2008,Ami2009,Ami2010,Ami2008}).

The applications mentioned above justify the development of tools for the simulation of ferrofluids, but they are not the only reasons. Mathematical models for ferrofluids and their scope of validity have been areas of active research (cf. \cite{Oden2002,Oden2009}). Most ferrofluid flows have so far been studied using exact and approximate analytical solutions of the Rosensweig model (see for instance \cite{Rinal02}) contrasted with experimental data. However, these flows are analytically tractable in a very limited number of cases \cite{Rinal02,Zahn95}, and as shown for instance in \cite{Chav2008}, satisfactory model calibration/validation is beyond the current capabilities of analytic (asymptotic and perturbation) methods. Clearly, there is significant room for interdisciplinary work at the interface between model development, numerical analysis, simulation and experimentation.

Both the Rosensweig and Shliomis models deal with one-phase flows, which is the case of many technological applications. However, some applications arise naturally in the form of a two-phase flow: one of the phases has magnetic properties and the other one does not (e.g. magnetic manipulation of microchannel flows, microvalves, magnetically guided transport, etc.). There has been a major effort in order to develop appropriate interfacial conditions of two-phase flows in the sharp interface regime within the micropolar theory (see \cite{ros2007,Chav2014}), yet we are far from having at our disposal a mathematically and physically sound PDE model for two-phase ferrofluid flows. There are not well established PDE models describing the behavior of two-phase ferrofluid flows. On the other hand, systematic derivation of a two-phase model from first principles, using energy-variational techniques in the spirit of Onsager's principle as in \cite{Liu2009,Huan2009,Liu2010,Garcke2012,ElectroAbner}, would be 
highly desirable, but most probably too premature, given the current state of the art.

In this context, numerical analysis and scientific computation have a lot to offer, since carefully crafted computational experiments can help understand much better the limits of the current models and assist the development of new ones. Ad-hoc development (trial and error) of new models and numerical evaluation does not replace a proper mathematical derivation, but it can clearly help to find a reasonable starting point. In this spirit, the main goal of this work is to present a simple two-phase PDE model for ferrofluids. The model is not derived, but rather assembled using components of already existing models and high-level (as opposite to deep) understanding of the physics of ferrofluids. The model attempts to retain only the essential features and mathematical difficulties that might appear in much more sophisticated models. To the best of our knowledge this contribution is the first modeling/numerical work in the direction of time-dependent behavior of two-phase ferrofluid flows together with energy-stable and/or convergent schemes.

Regarding pre-existing work, closely related to two-phase flows, it is worth mentioning the interdisciplinary (including physical experiments) work of Tobiska and collaborators \cite{Tob2006,Tob2007,Tob2008} in the context of stationary configurations of free surfaces of ferrofluids using a sharp interface approach. Other models for two-phase ferrofluid flows, this time for non-stationary phenomena, are presented in \cite{Trung2011,AFK2008,AFK2010}, using either Level-Set or Volume of Fluid methods, but very little details are given about their actual numerical implementation, stability or convergence properties.

Our presentation is organized as follows: in Section~\ref{Sderivation}
we select the components of our two-phase model and assemble it. In
Section~\ref{Senergyest} we derive formal energy estimates which will
serve as basis for the development of an energy-stable scheme in
Section~\ref{numerSchSection}; in \S\ref{Sexistence} we prove
that the scheme always has a solution. In
Section~\ref{Ssimplification}, we propose a simplification of the
model with a more restrictive scope of physical validity. We present
the corresponding numerical scheme, and prove its stability and
convergence in \S\ref{Sconvscheme} and \S\ref{convschemesec}
respectively. Finally, we show the potential of the model in
Section~\ref{Sexperiment} with a series of insightful numerical
experiments. They include the Rosensweig instability with uniform and
non-uniform applied magnetic fields, the latter leading to an open
pattern of spikes.

\section{Heuristic derivation of a two-phase model}
\label{Sderivation}
We want to develop a simplified model which captures the essence of immiscible, matching density (or almost matching density), two-phase flows, one of them a ferrofluid and the other one a non-magnetic fluid. We aim at a simple mixture like water and an oil based ferrofluid (with, for instance, densities $1000 \ kg/m^3$ and $1050 \  kg/m^3$, respectively), where the dominant body force is the Kelvin force, and the gravitational body force only plays a secondary role, so that we could use the Boussinesq approximation in order to capture gravitational effects.

We will not present a systematic derivation of the model, but rather review existing models and standard assumptions, discard all the non-essential components, and select the right ingredients which could capture the basic phenomenological features of ferrofluids. Our main guidelines are minimalism and symmetry. We want the simplest model, with the smallest number of constitutive parameters and coupled PDEs, that still retains the essential features, and has sufficient symmetries (i.e. cancellations) in order to make possible the development of an energy law.

We consider a two-fluid system (a ferrofluid and a non-ferromagnetic
one), confined in a bounded convex polygon/polyhedron $\Omega \subset
\mathbb{R}^d$ ($d=2$ or $3$), and we denote by $\Gamma$ the boundary of $\Omega$.
Since we implicitly track the position of each fluid with a phase
field variable $\phvar$, our model is of the so-called diffuse-interface type. The evolution of the system is described by its velocity $\bv{u}$ and pressure $p$. As one of the fluid phases is susceptible to magnetic actuation, we need to keep track of the magnetization $\bv{m}$, which is induced by a magnetic field $\bv{h}$. To describe the evolution of these quantities we will consider:

\begin{enumerate}[\itemizebullet]
\item \textbf{Evolution of the phase-field variable $\phvar$:} there are two well-known PDE models for this purpose, the Allen-Cahn and the Cahn-Hilliard models. In particular, we will consider the Cahn-Hilliard equation:
\begin{align}
\left\{
\begin{aligned}
\phvar_t &= - \mobility \Delta \chpot  &&\text{in} \ \ \Omega\\
 \chpot &= \layerthick\Delta\phvar - \tfrac{1}{\layerthick}f(\phvar) &&\text{in} \ \ \Omega \\
\partial_n \phvar &= \partial_n\chpot = 0 
&&\text{on} \ \ \Gamma ,
\end{aligned}
\right. \, 
\end{align}
where $0 < \layerthick \ll 1$ is related to the interface thickness,
$\mobility>0$ is the (constant) mobility, $\psi$ is the chemical
potential, $f(\phvar) = F'(\phvar)$ and $F(\phvar)$ is the truncated double well potential
\begin{align}\label{truncdef}
F(\phvar) = 
\begin{cases}
(\phvar + 1)^2 & \text{if } \phvar \in  (-\infty, -1] \\
\tfrac{1}{4} (\phvar^2 - 1)^2 & \text{if } \phvar \in [-1, 1] \\
(\phvar - 1)^2 & \text{if } \phvar \in  [1,+\infty) \, .
\end{cases}
\end{align}
It is straightforward to check that 
\begin{align}\label{doublewellbound}
|f(\phvar)| = |F '(\phvar)| \leq 2 |\phvar| + 1 \ \ \text{and} \ \
|f'(\phvar)| = |F''(\phvar)|\leq 2 \ \ \forall \phvar \in \mathbb{R}.
\end{align}
The reason to choose the Cahn-Hilliard equation is that it is mass conservative, an easy consequence of the divergence theorem:
\begin{align}
\tfrac{d}{dt} \bulkint{\Omega}{ \phvar } = \bulkint{\Omega}{ \phvar_t } = 
- \mobility \bulkint{\Omega}{  \Delta \chpot } =
- \mobility \bdryint{\bdry}{  \partial_n \chpot } = 0 \, .
\end{align}
\item[\itemizebullet] \textbf{Simplified ferrohydrodynamics:} the
  Shliomis model (see for instance \cite{AmiShliomis2008,Shli2002}) is
  perhaps the simplest well-known PDE model describing the behavior of
  ferrofluids. It couples the Navier-Stokes equations of
  incompressible fluids for the velocity-pressure pair $(\bv{u},p)$
  with an advection-reaction equation for the magnetization $\bv{m}$:
\begin{subequations}\label{Shleq}
\begin{align}
\label{shliNS}
\bv{u}_t + (\bv{u}\cdot\nabla)\bv{u}- \nu \Delta\bv{u} + \nabla p &=  \mu_0 (\bv{m}\cdot\nabla)\heff + \tfrac{\mu_0}{2} \curl{}(\bv{m \times \heff}) \, ,\\
\label{shliMag}
\bv{m}_t + (\bv{u}\cdot\nabla)\bv{m} - \tfrac{1}{2} \curl{u} \times \bv{m} &= - \tfrac{1}{\chartime} (\bv{m} - \permit \heff) - \beta \, \bv{m} \times (\bv{m} \times \heff) \, , 
\end{align}
\end{subequations}
where $\nu$, $\mu_0$, $\chartime$, $\beta$, and $\permit$ are positive constitutive constants. Expression \eqref{shliMag} can be partially understood as the $\ltwod$ gradient flow of the functional
\begin{align}\label{functional}
\mathcal{J}(\bv{m}) = \tfrac{1}{2\chartime} \|\bv{m} - \permit \heff\|_{\ltwods}^2 \, , 
\end{align}
augmented with the corresponding kinematics. In other words
\begin{align}\label{magderiv}
\langle \bv{m}_t , \bv{z} \rangle =  - \langle \tfrac{\delta\mathcal{J}}{\delta\bv{m}},\bv{z}\rangle \ \ \ \Longrightarrow  \ \ \  
\bv{m}_t + \tfrac{1}{\chartime} \bv{m} = \tfrac{\permit}{\chartime} \heff \, , 
\end{align}
where the symbol $\delta$ denotes variational derivative in this context. We observe that if $\chartime <<1$ then the dynamics is fast towards equilibrium $\bv{m} \approx \permit \heff$. After that, we can replace the partial derivative $\bv{m}_t$ in \eqref{magderiv} with the co-rotational derivative $\bv{m}_t + (\bv{u}\cdot\nabla)\bv{m} - \tfrac{1}{2} \curl{u} \times \bv{m}$ (see for instance \cite{Virga2012}) accounting for the appropriate kinematics. On the other hand, the term $\beta \, \bv{m} \times (\bv{m} \times \heff)$ has phenomenological origins which in principle cannot be easily related to kinematic or energy principles (see \cite{Ros2002,Shli2002}).

The Shliomis model can be considered to be a limiting case of the more sophisticated Rosensweig model (see for instance \cite{Ami2008,Wang2010}) which takes into account the angular velocity. The core dynamics of the magnetization equation \eqref{shliMag} is dominated by the reaction terms for most flows of interest (see for instance \cite{rinaldi2002,Rinal02} for the dimensional analysis of the Rosensweig model). Essentially, this is the case because the relaxation time $\chartime$ of commercial grade ferrofluids is in the range of $10^{-5}$ to $10^{-9}$ seconds (see for instance \cite{rinaldi2002,Shli2002}), which makes $\tfrac{1}{\chartime}$ a very large constant. This somehow justifies the straightforward simplification of \eqref{Shleq}:
\begin{subequations}\label{Shleqref}
\begin{align}
\bv{u}_t + (\bv{u}\cdot\nabla)\bv{u} - \nu \Delta\bv{u} + \nabla p &=  \mu_0 (\bv{m}\cdot\nabla)\heff \, , \\
\bv{m}_t + (\bv{u}\cdot\nabla)\bv{m} + \tfrac{1}{\chartime} \bv{m} &= \tfrac{\permit}{\chartime} \heff \, .
\end{align}
\end{subequations}
In fact, in \eqref{Shleqref} we have dropped the terms $\tfrac{\mu_0}{2} \curl{}(\bv{m \times \heff})$ and $\beta \, \bv{m} \times (\bv{m} \times \heff)$ under the assumption that at every moment the behavior of $\bv{m}$ is very close to equilibrium, meaning that $\bv{m} \approx \permit \heff$ and $\bv{m} \times\heff \approx 0$, so that these terms are negligible. However, the convective term $(\bv{u}\cdot\nabla)\bv{m}$ is kept because of symmetry considerations and to be able to develop an energy stable model. On the other hand, the term $- \tfrac{1}{2} \curl{u} \times \bv{m}$ was dropped under the assumption that convection and reaction are the dominant terms. In a somewhat different context, similar ideas where used in order to simplify liquid-crystal models (see for instance \cite{LL1995}).

\smallskip
\item[\itemizebullet] \textbf{Simplified capillary force:} the
  capillary force is given by $\bv{f}_{\text{c}} = - \diver{} \stresstensor_{\text{c}}$, where $\stresstensor_{\text{c}} = \capcoeff \nabla\phvar \otimes \nabla\phvar$ is the so-called capillary stress tensor and $\capcoeff$ is the capillary coefficient (see for instance \cite{Ander1998,Lowen1998}). Manipulating $\bv{f}_{\text{c}}$ we get:
\begin{equation}
\label{capfderiv}
  \begin{aligned}
    \bv{f}_{\text{c}} &= - \diver{} \stresstensor_{\text{c}} =  - \tfrac{\capcoeff}{2} \nabla |\nabla\phvar|^2  - \capcoeff \Delta\phvar \nabla\phvar
    =  - \tfrac{\capcoeff}{2} \nabla |\nabla\phvar|^2 - \tfrac{\capcoeff}{\layerthick^2} f(\phvar) \nabla\phvar - \tfrac{\capcoeff}{\layerthick} \chpot \nabla\phvar \\
    &= - \capcoeff \nabla \alp \tfrac{1}{2}|\nabla\phvar|^2 + \tfrac{1}{\layerthick^2} F(\phvar) \arp - \tfrac{\capcoeff}{\layerthick} \chpot \nabla\phvar 
    = - \capcoeff \nabla \alp \tfrac{1}{2}|\nabla\phvar|^2 + \tfrac{1}{\layerthick^2} F(\phvar) + \tfrac{1}{\layerthick} \chpot \phvar \arp
    + \tfrac{\capcoeff}{\layerthick} \phvar \nabla \chpot \, .
  \end{aligned}
\end{equation}
Therefore the term $- \capcoeff \nabla \alp \tfrac{1}{2}|\nabla\phvar|^2 
+ \tfrac{1}{\layerthick^2} F(\phvar) + \tfrac{1}{\layerthick} \chpot \phvar \arp$ only modifies the pressure in the Navier-Stokes system (see Remark \ref{remredefpress}), so that it can be eliminated at the expense of redefining the pressure. Our capillary force will finally be:
\begin{align}\label{capforcedef}
\bv{f}_{\text{c}} := \tfrac{\capcoeff}{\layerthick} \phvar \nabla \chpot .
\end{align}
This definition of the capillary force traces back to \cite{LL1995,LiuShen2003} and is not a cosmetic manipulation but rather an essential ingredient in order to have sufficient cancellations allowing the development of an energy law.

\smallskip
\item[\itemizebullet] \textbf{Simplified electromagnetism:} the natural choice in this context are the magnetostatics equations:
\begin{align*}
\curl{}\heff = 0 \ , \ \  \diver{b} = 0 \, ,
\end{align*}
where
\begin{align}\label{heffdefS4}
\bv{b} := \mu_0 (\heff + \bv{m}) \ , \ \   \heff := \ha + \hd \, , 
\end{align}
$\ha$ is the (given) smooth harmonic (curl-free and div-free) applied
magnetizing field, and $\hd$ is the so-called demagnetizing field.
We use that $\bv{m}=\bv 0$ in $\mathbb{R}^d \backslash\Omega$ and
  assume that $\hd\approx\bv 0$ in $\mathbb{R}^d \backslash\Omega$ to derive
  the boundary condition $\heff\cdot\bv{n} = (\ha - \bv{m})\cdot\bv{n}$.
A simplified approach involves the scalar potential
$\hdpot$  \cite{Tom14a}, which satisfies
\begin{align}
\label{totalhTwPh}
\heff = \nabla\hdpot \, ,
\end{align}
along with
\begin{align}
\label{phiNeuIIItwo}
-\Delta \hdpot = \diver{}(\bv{m}-\ha) \ \text{in } \Omega,
\ \
\partial_n \hdpot = (\ha - \bv{m})\cdot \normal \ \text{on } \bdry
\, .
\end{align}

\end{enumerate}

Collecting all these simplifications we end up with the
following set of equations in strong form in $\Omega$
\begin{subequations}
\label{Themodel}
\begin{align}
\label{systemCH1} 
\phvar_t + \diver{}(\bv{u} \phvar)+ \mobility \Delta \chpot &= 0 \, ,\\
\label{systemCH2} %
\chpot - \layerthick \Delta\phvar + \tfrac{1}{\layerthick} f(\phvar)   &= 0 \, , \\
\label{systemM} %
\bv{m}_t + (\bv{u}\cdot\nabla)\bv{m} &= 
- \tfrac{1}{\chartime} (\bv{m} - \suscepc \heff) \, ,  \\
\label{systemPhi} %
-\Delta\hdpot &= \diver{}(\bv{m} - \ha) \, , \\
\label{systemNS1} %
\bv{u}_t + (\bv{u}\cdot\nabla)\bv{u}- \diver{}( \nu_{\phvar} \, \bv{T}(\bv{u})) + \nabla p &=  \mu_0 (\bv{m}\cdot\nabla)\heff + \tfrac{\capcoeff}{\layerthick} \phvar\nabla\chpot \, , \\
\label{systemNS2} %
\diver{u} &= 0 \, ,
\end{align}
\end{subequations}
for every $t \in [0,\tf]$, where $\bv{T}(\bv{u}) = \tfrac12 ( \nabla \bv{u} + \nabla \bv{u}^\intercal)$ denotes the symmetric gradient, and $\heff$ is defined in \eqref{totalhTwPh}-\eqref{phiNeuIIItwo}.

We supplement this system with the following boundary conditions
\begin{align}
\partial_n \phvar = \partial_n\chpot = 0 \, , \ \  \bv{u} = 0 \, , \ \
\text{and} \  \ \partial_n \hdpot = (\ha - \bv{m})\cdot \normal \ \
  \text{on} \ \ \Gamma.
\end{align}

Here $\nu_{\phvar}$ and $\suscepc$ are viscosities and susceptibilities depending on the phase-field variable $\phvar$. They are Lipschitz-continuous functions of $\phvar$ satisfying
\begin{align}\label{coeffbounds}
0< \min\{ \nu_{w}, \nu_f \}  \leq \nu_{\phvar} \leq \max\{ \nu_{w}, \nu_f \} \ \ \ \text{and} \ \ \ 
0 \leq \suscepc \leq \suscep \, , 
\end{align}
where $\nu_{w}$ is the viscosity of the non-magnetic phase (e.g. water) and $\nu_f$ is the viscosity of the ferrofluid (e.g. mineral oil). Here $\suscep > 0$ is the magnetic susceptibility of the ferrofluid phase, and we set the non-magnetic phase to have zero magnetic susceptibility. For commercial grade ferrofluids we have that $\suscep$ ranges from $0.5$ to $4.3$ (see for instance \cite{Rinal02}). The choice of functions $\nu_{\phvar}$ and $\suscepc$ is arbitrary, but essentially they involve a regularized approximation of the Heaviside step function, for instance
\begin{align}\label{viscsuscep}
\nu_{\phvar} = \nu_{w} + (\nu_f - \nu_w) \mathcal{H}(\phvar/\layerthick) \ \  \text{and}
\ \ \suscepc = \suscep \mathcal{H}(\phvar/\layerthick) \, ,
\end{align}
where $\mathcal{H}(x)$ could be for instance the sigmoid function
\begin{align}\label{approxstep}
\mathcal{H}(x) = \frac{1}{1+e^{-x}} \, .
\end{align}
Both in theory and practice, the choice of $\mathcal{H}(x)$ and the internal structure of $\nu_{\phvar}$ and $\suscepc$ are of very little importance, provided they are Lipschitz-continuous and satisfy inequalities \eqref{coeffbounds}. Here \eqref{viscsuscep} and \eqref{approxstep} are just provided as a couple of simple choices, but they are not the only ones; other choices are used for instance in \cite{ElectroAbner,LiuShen2003}.

Since this model is not a genuinely variable-density two-phase model,
gravitational effects can only be included approximately. We
will consider supplementing the right hand side of the conservation of
linear momentum \eqref{systemNS1} with a Boussinesq-like approximation
$\bv{f}_g$ in order to include such effects
\begin{align}\label{Bouapprox}
\bv{f}_g = (1 + r \,  \mathcal{H}(\phvar/\layerthick)) \bv{g} \ , \ \ r = \tfrac{|\rho_f - \rho_w|}{\textsl{min}(\rho_f,\rho_w)} \ , 
\end{align}
where $\rho_f$ and $\rho_w$ are the densities of the
ferromagnetic phase and non-ferromagnetic phase respectively, and
$\bv{g}$ is the acceleration of gravity.
Approximation \eqref{Bouapprox} is reasonable provided $r < < 1$.

The development of a complete existence theory for system \eqref{Themodel} seems unlikely, as it has been the history of most systems of PDEs without sufficient regularization mechanisms (e.g. compressible Euler equations of gas dynamics). This is primarily because of the sub-system \eqref{systemM}--\eqref{systemPhi} and the term $\mu_0 (\bv{m}\cdot\nabla)\heff$ on the right hand side of \eqref{systemNS1}. A first approach to solve this problem would be adding a regularization of the form $-\sigma\Delta\bv{m} =  \curl{}(\sigma\curl{}\bv{m}) -\nabla (\sigma \diver{m})$ in the equation \eqref{systemM} (\cf \cite{Tom14a,Ami2008}), or any other second order operator in space. However, most forms of regularization that we could add to this system will introduce new problems, primarily (but not only) related to boundary conditions, and the overall system might not even be formally energy-stable (see for instance \cite{Tom14a}).

These mathematical obstacles will not interfere with our exploration of the model \eqref{Themodel}, which is still a reasonable starting point to understand and develop PDE models for two-phase ferrofluid flows. It is actually possible to develop energy stable numerical methods for system \eqref{Themodel} and prove local-solvability of the scheme for every time step. With the aid of our numerical scheme, we will explore the behavior of this coupled system. Finally, under special circumstances, and with some simplifications, we will obtain a system for which it is possible to prove convergence when the discretization parameters $h$ and $\dt$ go to zero and, as a by product, global existence of weak solutions. 
\begin{remark}[redefinition of the pressure]\label{remredefpress} Given $\varpi \in \{0,1\}$, let $(\bv{u},p) \in \hzerod \times \lzerotwo$ solve
\begin{alignat*}{2}
(\nabla\bv{u},\nabla\bv{v}) - (p,\diver{v}) &= (\bv{f},\bv{v}) + \varpi (g,\diver{v})   && \ \ \forall \bv{v} \in \hzerod \, , \\
(q,\diver{u}) &= 0  &&\ \ \forall q \in \lzerotwo \, , 
\end{alignat*}
Since the pressure can be redefined as $\widehat{p} = p + \varpi g$ for all $\varpi \in \{0,1\}$, we can conclude that the velocity is independent of $g$. This has been used to devise energy stable schemes for phase field models \cite{LiuShen2003} and liquid crystals \cite{LinLiu2007}. In this work, we have already used this property for the derivation of the capillary force when we eliminated the term $- \capcoeff \nabla \alp \tfrac{1}{2}|\nabla\phvar|^2 + \tfrac{1}{\layerthick^2} F(\phvar) + \tfrac{1}{\layerthick} \chpot \phvar \arp$ from \eqref{capfderiv} and defined the capillary force in \eqref{capforcedef}. We will use this idea again in Section \S\ref{addnot} for the discretization of the Kelvin force $\mu_0 (\bv{m}\cdot\nabla)\heff$ to eliminate some terms which have no effect on the velocity field.
\end{remark}
%
%
\section{Formal energy estimates}
\label{Senergyest}
We begin by defining the trilinear form $\tril(\cdot,\cdot,\cdot)$
\begin{align}
\label{eq:defoftrilb}
  \tril(\bv{m},\heff,\bv{u}) = \sum_{i,j = 1}^d \int_{\Omega} \bv{m}^i \heff_{x_i}^j  \bv{u}^j \, dx \, ,
\end{align}
and showing a crucial identity that makes the energy estimate possible.

\begin{lemma}[identities for the magnetization and magnetic field]
\label{lem:ids}
Let $\bv{m}$ and $\bv{h}$ denote the magnetization and effective magnetizing field, respectively, and let $\bv{u}$ be a solenoidal field such that $\bv{u}\cdot n = 0$ on $\bdry$. Then the following identity holds true
\begin{align}
\label{eq:centralident}
\tril(\bv{u},\bv{m},\heff) = - \tril(\bv{m},\heff,\bv{u}) \, .
\end{align}
\end{lemma}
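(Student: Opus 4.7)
The claim essentially says that a ``convective-type'' trilinear form becomes antisymmetric under exchange of two of its arguments, provided $\heff$ is curl-free and $\bv{u}$ is solenoidal with vanishing normal trace. So I would exploit the irrotationality of $\heff$ coming from $\heff=\nabla\hdpot$ (see \eqref{totalhTwPh}), which yields the symmetry
\[
  \heff^j_{x_i} \;=\; \partial^2_{x_ix_j}\hdpot \;=\; \heff^i_{x_j}.
\]
This is the only real structural input; the rest is bookkeeping plus integration by parts.

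\textbf{Key steps, in order.} First I would write out both sides componentwise using the definition \eqref{eq:defoftrilb}:
\[
  \tril(\bv{u},\bv{m},\heff)=\sum_{i,j}\int_\Omega \bv{u}^i\,\bv{m}^j_{x_i}\,\heff^j\,dx,
  \qquad
  \tril(\bv{m},\heff,\bv{u})=\sum_{i,j}\int_\Omega \bv{m}^i\,\heff^j_{x_i}\,\bv{u}^j\,dx.
\]
In the second expression I would apply the curl-free identity $\heff^j_{x_i}=\heff^i_{x_j}$ and then relabel $i\leftrightarrow j$ in the first expression, so that after adding the two their sum collapses to
\[
  \tril(\bv{u},\bv{m},\heff)+\tril(\bv{m},\heff,\bv{u})
  \;=\;\sum_{i,j}\int_\Omega \bv{u}^j\,\partial_{x_j}\!\bigl(\bv{m}^i\,\heff^i\bigr)\,dx
  \;=\;\int_\Omega \bv{u}\cdot\nabla(\bv{m}\cdot\heff)\,dx.
\]
Finally, I would integrate by parts:
\[
  \int_\Omega \bv{u}\cdot\nabla(\bv{m}\cdot\heff)\,dx
  \;=\;-\int_\Omega (\diver\bv{u})\,(\bv{m}\cdot\heff)\,dx
  \;+\;\int_\Gamma (\bv{u}\cdot\bv{n})\,(\bv{m}\cdot\heff)\,d\sigma,
\]
and both terms vanish by the hypotheses $\diver\bv{u}=0$ in $\Omega$ and $\bv{u}\cdot\bv{n}=0$ on $\Gamma$. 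This gives \eqref{eq:centralident}.

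\textbf{Expected obstacle.} There is no deep difficulty here; the only subtle point is recognizing that the curl-free structure of $\heff$ is what triggers the cancellation (without it, the two trilinear terms would not collapse into a gradient of $\bv{m}\cdot\heff$). Everything else is the standard solenoidal/no-penetration integration by parts that is also used to show antisymmetry of the Navier--Stokes convective trilinear form.
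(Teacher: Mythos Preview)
Your proof is correct and follows essentially the same approach as the paper: both use the curl-free symmetry $\heff^j_{x_i}=\heff^i_{x_j}$ to recognize the product rule for $\partial_{x_j}(\bv{m}^i\heff^i)$, then integrate by parts and invoke $\diver\bv{u}=0$ and $\bv{u}\cdot\bv{n}=0$ to kill the remaining terms. The only cosmetic difference is that the paper manipulates $\tril(\bv{m},\heff,\bv{u})$ directly into $-\tril(\bv{u},\bv{m},\heff)$ plus vanishing terms, whereas you compute the sum $\tril(\bv{u},\bv{m},\heff)+\tril(\bv{m},\heff,\bv{u})=\int_\Omega \bv{u}\cdot\nabla(\bv{m}\cdot\heff)$ and show it vanishes; the underlying computation is identical.
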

\begin{proof}
Since $\heff$ is curl-free we have that $\heff_{x_i}^j = \heff_{x_j}^i$. Integration by parts then yields
\begin{align}
\label{skewproof}
\begin{aligned}
\tril(\bv{m},\heff,\bv{u}) &= \sum_{i,j = 1}^d \int_{\Omega} \bv{m}^i \heff_{x_i}^j  \bv{u}^j \, dx =
\sum_{i,j = 1}^d \int_{\Omega} \bv{m}^i \heff_{x_j}^i  \bv{u}^j \, dx
= \sum_{i,j = 1}^d \int_{\Omega} \lp ( \bv{m}^i \heff^i  )_{x_j} - \bv{m}_{x_j}^i \heff^i \rp \bv{u}^j \, dx  \\
&= \sum_{i,j = 1}^d \int_{\Omega}  - ( \bv{m}^i \heff^i  ) \bv{u}_{x_j}^j
- \bv{m}_{x_j}^i \heff^i \bv{u}^j \, dx
+ \int_{\bdry} ( \bv{m}^i \heff^i  ) \bv{u}^j \normal^j \, ds \\
&=  - \tril(\bv{u},\bv{m},\heff)
- \bulkint{\Omega}{(\bv{m}\cdot\heff) \, \diver{u}}
+ \int_{\bdry} ( \bv{m} \cdot \heff ) \bv{u}\cdot \normal \, ds.
\end{aligned}
\end{align}
The fact that $\bv{u}$ is solenoidal and has vanishing normal trace on the boundary yields \eqref{eq:centralident}.
\end{proof}

With this identity at hand we readily obtain a formal energy estimate. From now on we use the compact notation $\ltwos = \ltwo$ for scalar-valued functions and $\ltwods = \ltwod$ for vector-valued functions.

\begin{proposition}[energy estimate] \label{prop:formalenergy}
If $\suscep \leq 4$, then the following estimate holds for solutions of the system \eqref{Themodel}
\begin{align}
\label{enerestTwPh}
\begin{split}
\mathscr{E}(\bv{u},\bv{m},\heff, \phvar;\tf) + \int_{0}^{\tf} \mathscr{D}(\bv{u},\bv{m},\heff, \phvar;s) \, ds 
\leq \mathscr{E}(\bv{u},\bv{m},\heff, \phvar;0) \,
+ \int_{0}^{\tf} \mathscr{F}(\ha;s)  \, ds \, ,
\end{split}
\end{align}
where
{\allowdisplaybreaks\begin{subequations}
\begin{align*}
\mathscr{E}(\bv{u},\bv{m},\heff, \phvar;s) &= \tfrac{1}{2} \|\bv{u}(s)\|_{\ltwods}^2 
+ \tfrac{\mu_0}{2} \|\bv{m}(s)\|_{\ltwods}^2
+ \tfrac{\mu_0}{2} \|\heff(s) \|_{\ltwods}^2 
+ \tfrac{\capcoeff}{2} \|\nabla\phvar(s)\|_{\ltwods}^2 
+ \tfrac{\capcoeff}{\layerthick^2}(F(\phvar(s)),1) \, , \\
\mathscr{D}(\bv{u},\bv{m},\heff, \phvar;s) &=
\tfrac{\mu_0}{\chartime} \big( 1 - \tfrac{\suscep }{4} \big) \|\bv{m}(s)\|_{\ltwods}^2 
+ \tfrac{\mu_0}{2 \chartime} \|\heff (s)\|_{\ltwods}^2 
+ \|\sqrt{\nu_{\phvar}} \, \bv{T}(\bv{u})(s) \|_{\ltwods}^2 
+ \tfrac{\capcoeff \mobility}{\layerthick} \|\nabla \chpot(s)\|_{\ltwods}^2 \, , \\
\mathscr{F}(\ha;s) &= \mu_0 \chartime  \|\partial_t \ha (s)\|_{\ltwods}^2
+ \tfrac{\mu_0}{\chartime}\|\ha(s)\|_{\ltwods}^2  \, , 
\end{align*}
\end{subequations}}
\end{proposition}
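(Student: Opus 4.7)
The plan is to test each evolution equation of \eqref{Themodel} with a carefully chosen multiplier, sum the resulting identities, and exploit Lemma~\ref{lem:ids} together with the weak form of the magnetostatic problem \eqref{systemPhi} to obtain the critical cancellations. Concretely, I would test \eqref{systemNS1} with $\bv{u}$ (incompressibility and the no-slip condition kill the convective and pressure terms and leave the kinetic dissipation $\|\sqrt{\nu_{\phvar}}\bv{T}(\bv{u})\|_{\ltwods}^2$), test \eqref{systemCH1} with $\tfrac{\capcoeff}{\layerthick}\chpot$ and \eqref{systemCH2} with $\tfrac{\capcoeff}{\layerthick}\phvar_t$ (these two combine into the time derivative of the capillary energy, the dissipation $\tfrac{\capcoeff\mobility}{\layerthick}\|\nabla\chpot\|_{\ltwos}^2$, and a remainder $-\tfrac{\capcoeff}{\layerthick}(\phvar\nabla\chpot,\bv{u})$ that is exactly the negative of the capillary forcing in the momentum balance), and test \eqref{systemM} with $\mu_0\bv{m}$ and also with $-\mu_0\heff$. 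The convective terms $(\bv{u}\cdot\nabla)\bv{u}$ and $(\bv{u}\cdot\nabla)\bv{m}\cdot\bv{m}$ vanish directly from $\diver\bv{u}=0$ and $\bv{u}=\bv{0}$ on $\bdry$.

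Two structural identities then do the heavy lifting. First, Lemma~\ref{lem:ids} yields $\mu_0\tril(\bv{m},\heff,\bv{u})+\mu_0\tril(\bv{u},\bv{m},\heff)=0$, so the Kelvin force $\mu_0(\bv{m}\cdot\nabla)\heff$ arising from the momentum equation is annihilated by the convective term $\mu_0(\bv{u}\cdot\nabla)\bv{m}$ that pops out when testing \eqref{systemM} with $-\mu_0\heff$. Second, to convert the leftover $-\mu_0(\bv{m}_t,\heff)$ into the time derivative $\tfrac{\mu_0}{2}\tfrac{d}{dt}\|\heff\|_{\ltwods}^2$, I differentiate the weak form of \eqref{systemPhi} in time and test with $\hdpot$, obtaining $\tfrac{1}{2}\tfrac{d}{dt}\|\heff\|_{\ltwods}^2=(\partial_t\ha,\heff)-(\bv{m}_t,\heff)$; this simultaneously produces the forcing $\mu_0(\partial_t\ha,\heff)$ on the right-hand side.

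After summing the five identities and using these cancellations, the right-hand side reduces to $\mu_0(\partial_t\ha,\heff)+\tfrac{\mu_0}{\chartime}(\bv{m},\heff)+\tfrac{\mu_0}{\chartime}(\suscepc\heff,\bv{m})-\tfrac{\mu_0}{\chartime}\|\sqrt{\suscepc}\heff\|_{\ltwods}^2$. To control $(\bv{m},\heff)$, I test the weak form of \eqref{systemPhi} with $\hdpot$ itself to get the crucial identity $(\bv{m},\heff)=(\ha,\heff)-\|\heff\|_{\ltwods}^2$, which transfers an extra $\tfrac{\mu_0}{\chartime}\|\heff\|_{\ltwods}^2$ to the left-hand side. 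I then apply Young's inequality in the sharp forms $(\suscepc\heff,\bv{m})\le \|\sqrt{\suscepc}\heff\|_{\ltwods}^2+\tfrac{\suscep}{4}\|\bv{m}\|_{\ltwods}^2$, $\mu_0(\partial_t\ha,\heff)\le \mu_0\chartime\|\partial_t\ha\|_{\ltwods}^2+\tfrac{\mu_0}{4\chartime}\|\heff\|_{\ltwods}^2$, and $\tfrac{\mu_0}{\chartime}(\ha,\heff)\le \tfrac{\mu_0}{\chartime}\|\ha\|_{\ltwods}^2+\tfrac{\mu_0}{4\chartime}\|\heff\|_{\ltwods}^2$. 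The $\|\sqrt{\suscepc}\heff\|_{\ltwods}^2$ contributions cancel exactly, the dissipation on $\bv{m}$ collapses to $\tfrac{\mu_0}{\chartime}(1-\tfrac{\suscep}{4})\|\bv{m}\|_{\ltwods}^2$ (which is precisely where the hypothesis $\suscep\le 4$ is needed for coercivity), and the residual $\heff$ dissipation becomes $\tfrac{\mu_0}{2\chartime}\|\heff\|_{\ltwods}^2$; integrating on $[0,\tf]$ then yields \eqref{enerestTwPh}.

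The main delicate point is that, since $\suscepc$ may vanish in the non-magnetic phase, the natural dissipation $\|\sqrt{\suscepc}\heff\|_{\ltwods}^2$ coming from the Shliomis relaxation is by itself not coercive in $\|\heff\|_{\ltwods}^2$; the $\heff$ part of $\mathscr{D}$ must be extracted entirely from the identity $(\bv{m},\heff)=(\ha,\heff)-\|\heff\|_{\ltwods}^2$, i.e., from the full strength of the elliptic problem \eqref{systemPhi}, and the Young constants must be chosen with exactly matching weights so that a positive $\tfrac{\mu_0}{2\chartime}\|\heff\|_{\ltwods}^2$ survives on the left after all the $(\partial_t\ha,\heff)$ and $(\ha,\heff)$ terms have been absorbed.
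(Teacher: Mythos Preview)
Your proposal is correct and follows essentially the same route as the paper: the same test functions (up to the harmless choice of testing \eqref{systemM} with $-\mu_0\heff$ rather than $+\mu_0\heff$), the same cancellation of the Kelvin and convective terms via Lemma~\ref{lem:ids}, the same two magnetostatic identities $\|\heff\|_{\ltwods}^2=(\ha-\bv{m},\heff)$ and $\tfrac{1}{2}\tfrac{d}{dt}\|\heff\|_{\ltwods}^2=(\partial_t\ha-\bv{m}_t,\heff)$, and the same Young splittings that produce the $(1-\tfrac{\suscep}{4})$ coefficient and the residual $\tfrac{\mu_0}{2\chartime}\|\heff\|_{\ltwods}^2$. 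Your closing remark about why the $\heff$-dissipation must be harvested from \eqref{phiID00} rather than from $\|\sqrt{\suscepc}\heff\|_{\ltwods}^2$ is exactly the point the paper exploits.
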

\begin{proof}
Multiply \eqref{systemCH1} by $\chpot$ and  \eqref{systemCH2} by
$\phvar_t$ respectively and integrate by parts. This yields
\begin{align*}
- (\phvar_t,\chpot) + \mobility \|\nabla \chpot\|_{\ltwods}^2 &= - ( \bv{u} \phvar, \nabla \chpot) \, , \\
\tfrac{d}{dt} \alp \tfrac{\layerthick}{2} \|\nabla\phvar\|_{\ltwods}^2
+ \tfrac{1}{\layerthick} (F(\phvar),1) \arp + (\chpot,\phvar_t)  &= 0 \, ,
\end{align*}
whence, adding both lines and multiplying by $\tfrac{\capcoeff}{\layerthick}$, we get
\begin{align}
\label{withWC}
\tfrac{d}{dt} \alp \tfrac{\capcoeff}{2} \|\nabla\phvar\|_{\ltwods}^2 
+  \tfrac{\capcoeff}{\layerthick^2} (F(\phvar),1) \arp 
+ \tfrac{\capcoeff \mobility}{\layerthick}  \|\nabla \chpot\|_{\ltwods}^2 = 
- \tfrac{\capcoeff}{\layerthick} (\phvar \nabla\chpot,\bv{u}) \, .
\end{align}
Now we multiply \eqref{systemNS1} by $\bv{u}$, integrate by parts and use \eqref{eq:centralident} for the Kelvin force
\begin{align}
\label{withU}
\tfrac{1}{2}\tfrac{d}{dt} \|\bv{u}\|_{\ltwods}^2 + \|\sqrt{\nu_{\phvar}} \, \bv{T}(\bv{u}) \|_{\ltwods}^2 =  - \mu_0 \tril(\bv{u},\bv{m},\heff) 
+ \tfrac{\capcoeff}{\layerthick} (\phvar \nabla \chpot, \bv{u}) \, .
\end{align}
Adding \eqref{withWC} and \eqref{withU} we get
\begin{equation}
\label{NSandCHenergy}
    \tfrac{d}{dt} \big( \tfrac{1}{2} \|\bv{u}\|_{\ltwods}^2 
    + \tfrac{\capcoeff}{2} \|\nabla\phvar\|_{\ltwods}^2 
    + \tfrac{\capcoeff}{\layerthick^2}(F(\phvar),1) \big)
    + \|\sqrt{\nu_{\phvar}} \, \bv{T}(\bv{u}) \|_{\ltwods}^2 
    + \tfrac{\capcoeff \mobility}{\layerthick} \|\nabla \chpot\|_{\ltwods}^2 
    =  - \mu_0 \tril(\bv{u},\bv{m},\heff)
\end{equation}
Finally multiply \eqref{systemM} by $\mu_0 \bv{m}$ and $\mu_0 \heff$ and integrate to obtain
\begin{align}
\label{testM1}
\tfrac{\mu_0}{2} \tfrac{d}{dt} \|\bv{m}\|_{\ltwods}^2 
+ \tfrac{\mu_0}{\chartime} \|\bv{m}\|_{\ltwods}^2 
&= \tfrac{\mu_0}{\chartime} (\suscepc \heff, \bv{m}) \, , \\
\label{testM2}
\tfrac{\mu_0}{\chartime} \|\sqrt{\suscepc} \, \heff \|_{\ltwods}^2 &= \mu_0 (\bv{m}_t,\heff) 
+ \mu_0 \tril(\bv{u},\bv{m},\bv{h}) 
+ \tfrac{\mu_0}{\chartime} (\bv{m},\heff) \, . 
\end{align}
Expression \eqref{testM2} requires further manipulation. For this purpose we will use the following two identities
\begin{align}
\label{phiID00}
  \|\nabla\hdpot\|_{\ltwods}^2 = (\ha - \bv{m}, \nabla\hdpot) \, ,  \\
\label{phiID01}
\tfrac{1}{2}\tfrac{d}{dt} \|\nabla\hdpot\|_{\ltwods}^2 = (\partial_t\ha - \partial_t\bv{m}, \nabla\hdpot).
\end{align}
Identity \eqref{phiID00} is obtained by multiplying \eqref{systemPhi} by $\hdpot$ and integrating by parts. In order to derive \eqref{phiID01} we first differentiate \eqref{systemPhi} with respect to time, then multiply by $\hdpot$ and integrate in space. Using \eqref{phiID00} and \eqref{phiID01} (along with $\nabla\hdpot = \heff$) we can rewrite \eqref{testM2} as follows
\begin{align}
\label{testM2proc}
\begin{split}
\tfrac{\mu_0}{2} \tfrac{d}{dt} \|\heff \|_{\ltwods}^2 
&+ \tfrac{\mu_0}{\chartime} \|\heff \|_{\ltwods}^2
+ \tfrac{\mu_0}{\chartime} \|\sqrt{\suscepc} \, \heff \|_{\ltwods}^2 \\
&= \mu_0 (\partial_t \ha,\heff)
+ \mu_0 \tril(\bv{u},\bv{m},\bv{h})
+ \tfrac{\mu_0}{\chartime} (\ha,\heff) .
\end{split}
\end{align}
Adding \eqref{NSandCHenergy}, \eqref{testM1} and \eqref{testM2proc}, we get
\begin{align}\label{onebutlast}
\begin{split}
\tfrac{d}{dt} \mathscr{E}(\bv{u},\bv{m},\heff, \phvar;t) 
&+ \mathscr{D}(\bv{u},\bv{m},\heff, \phvar;s)
+ \tfrac{\mu_0\suscep}{4\chartime} \|\bv{m}\|_{\ltwods}^2 
+ \tfrac{\mu_0}{2\chartime} \|\heff \|_{\ltwods}^2 \\
&+ \tfrac{\mu_0}{\chartime} \|\sqrt{\suscepc} \, \heff \|_{\ltwods}^2 
= \tfrac{\mu_0}{\chartime} (\suscepc \heff, \bv{m})
+ \mu_0 (\partial_t \ha,\heff)
+ \tfrac{\mu_0}{\chartime} (\ha,\heff) \, ,
\end{split}
\end{align}
where the last two terms on the left hand side are artificially created to control the right hand side. In fact, using the bound \eqref{coeffbounds}, the term $\tfrac{\mu_0}{\chartime} (\suscepc \heff, \bv{m})$ can be estimated as follows:
\begin{align*}
\tfrac{\mu_0}{\chartime} (\suscepc \heff, \bv{m}) \leq 
\tfrac{\mu_0}{\chartime} \|\sqrt{\suscepc} \, \heff\|_{\ltwods} \|\sqrt{\suscepc} \, \bv{m} \|_{\ltwods}
\leq \tfrac{\mu_0}{\chartime} \|\sqrt{\suscepc} \, \heff\|_{\ltwods}^2
+ \tfrac{\mu_0 \suscep }{4 \chartime} \|\bv{m} \|_{\ltwods}^2 \, , 
\end{align*}
so that using this estimate in \eqref{onebutlast} we finally get
\begin{align*}
\tfrac{d}{dt} \mathscr{E}(\bv{u},\bv{m},\heff, \phvar;t) 
&+ \mathscr{D}(\bv{u},\bv{m},\heff, \phvar;s)
+ \tfrac{\mu_0}{2\chartime} \|\heff \|_{\ltwods}^2 \leq \mu_0 (\partial_t \ha,\heff)
+ \tfrac{\mu_0}{\chartime} (\ha,\heff) \, .
\end{align*}
The rest is just a matter of applying Cauchy-Schwarz and Young's inequalities to get the energy estimate \eqref{enerestTwPh}. From the expression for $\mathscr{D}(\bv{u},\bv{m},\heff, \phvar;s)$ we also get the restriction that we can only consider $\suscep \leq 4$. \end{proof}


\begin{remark}[range of susceptibility]
The restriction $\suscep \leq 4$, necessary for Proposition~\ref{prop:formalenergy} to hold, covers almost the complete range of commercial grade ferrofluids; see \cite{Rinal02,Oden2002}.
\end{remark}

\section{An energy stable scheme}
\label{numerSchSection}

In this section we present and analyze a discretization of system \eqref{Themodel}, its stability, and the existence of solutions. This scheme is our workhorse: the numerical simulations of Section~\ref{Sexperiment} use this method, and the existence of solutions for our simplified model are based on a scheme very similar to this one.

\subsection{Notation and assumptions}
We introduce $K > 0$ to denote the number of time steps, define the uniform time step as $\dt := T/K >0$ and set $t^k := k\dt$ for $0\leq k \leq K$. For $\varrho : [0,T] \rightarrow E$, with $E$ a normed space, we set $\varrho^k := \varrho(t^k)$, $\varrho^\dt = \lb \varrho^k\rb_{k=0}^{K}$. We introduce the following discrete norms:
\begin{align*}
\|\varrho^\dt \|_{\ell^\infty(E)} := \max_{0 \leq k \leq K} \|\varrho^k\|_E \, ,
\ \ \ 
\|\varrho^\dt \|_{\ell^2(E)} := \alp \sum_{k=0}^K \dt \|\varrho^k\|_E^2 \arp^{\!\!\nicefrac{1}{2}} \, .
\end{align*}
We also define the backward difference operator $\inc$:
\begin{align}\label{backwarddifferenceop}
  \delta\varrho^k = \varrho^k - \varrho^{k-1} .
\end{align}
The following identity will be used repeatedly
\begin{align}\label{sumid}
2 (a,a-b) = |a|^2 - |b|^2 + |a-b|^2 \, .
\end{align}
Space discretization will be based on Galerkin techniques. To this
effect, we introduce families of finite dimensional subspaces
$\FEspacePhase \subset \hone$, $\FEspaceChemP \subset \hone$,
$\FEspaceM \subset \ltwod$, $\FEspacePhi \subset \hone$, $\FEspaceU
\subset \hzerod$ and $\FEspaceP \subset \ltwo$, where we will
approximate the phase field, chemical potential, magnetization,
magnetic potential, velocity and pressure respectively. These
spaces are parametrized by the discretization parameter $h>0$, which
in the context of finite element methods will later be the meshsize.
Regarding the pair of spaces $(\FEspaceU,\FEspaceP)$, we assume that they satisfy a uniform inf-sup compatibility condition (also called LBB compatibility condition), i.e.
\begin{equation}
\label{discreteinfsup}
 \inf_{0\neq\Ptest \in \FEspaceP} \sup_{0\neq\Utest \in \FEspaceU} 
  \frac{( \diver{}\Utest, \Ptest )}{\|\Ptest\|_{\ltwods} \|\nabla\Utest\|_{\ltwods} } \geq \beta^*,
\end{equation}
with $\beta^*$ independent of the discretization parameter $h$. Specific constructions and examples of finite element spaces satisfying this condition can be found in \cite{Girault,ErnGuermond}. To be able to focus on the fundamental difficulties in the design of an energy stable scheme we will first describe the scheme without being specific about the particular structure of these discrete spaces. As we will see, the choice of discrete spaces shall come naturally from this analysis.

We introduce a discretization of the trilinear form \eqref{eq:defoftrilb} for the convective term in the Navier-Stokes equation:
\[
  \tril_h(\cdot,\cdot, \cdot) : \FEspaceU \times \FEspaceU \times \FEspaceU \to \mathbb{R}.
\]
We assume that $\tril_h$ is skew symmetric with respect to its last two arguments (\cf\cite{Temam}), i.e.
\begin{align}
\label{eq:bhmskewns}
  \tril_h(\bv{U},\bv{V},\bv{W}) = - \tril_h(\bv{U},\bv{W},\bv{V}), \quad
  \forall \, \bv{U},\bv{V},\bv{W} \in \FEspaceU.
\end{align}
Similarly, we also consider a discretization for the trilinear form associated to the convective term of \eqref{systemM} and the Kelvin force in \eqref{systemNS1}:
\begin{align*}
 \trilm(\cdot, \cdot,\cdot) : \FEspaceU \times \FEspaceM \times \FEspaceM \to \mathbb{R} \, , 
\end{align*}
which we also assume to be skew symmetric with respect to its last two arguments
\begin{align}\label{bmskew}
\trilm(\bv{U},\bv{V},\bv{W}) = - \trilm(\bv{U},\bv{W},\bv{V}), \quad
\forall \, \bv{U} \in \FEspaceU, \ \bv{V},\bv{W} \in \FEspaceM \, .
\end{align}
Let $\text{I}_{\FEspacePhase}$, $\text{I}_{\FEspaceChemP}$, $\text{I}_{\FEspaceM}$ and  $\text{I}_{\FEspaceU}$ denote mappings
\begin{align}\label{interpolants} 
\begin{split}
\text{I}_{\FEspacePhase} &:\mathcal{C}^0(\overline{\Omega}) \longrightarrow \FEspacePhase \, ,  \\
\text{I}_{\FEspaceChemP} &:\mathcal{C}^0(\overline{\Omega}) \longrightarrow \FEspaceChemP \, , \\
\text{I}_{\FEspaceM} &:\boldsymbol{\mathcal{C}}^0(\overline{\Omega}) \longrightarrow \FEspaceM \cap \boldsymbol{\mathcal{C}}^0(\overline{\Omega}) \, , \\
\text{I}_{\FEspaceU} &:\boldsymbol{\mathcal{C}}^0(\overline{\Omega}) \longrightarrow \FEspaceU \, ,
\end{split}
\end{align}
with optimal approximation properties
\begin{align}
\label{optestim}
\begin{split}
\|\text{I}_{\mathbb{A}_h}\phasetest - \phasetest\|_{\ltwos}
+ h \|\nabla(\text{I}_{\mathbb{A}_h}\phasetest - \phasetest)\|_{\ltwos} &\leq
c \, h^{\polydegree +1}  |\phasetest|_{H^{\polydegree+1}(\Omega)} \, , \\
\|\text{I}_{\mathbb{A}_h}\phasetest - \phasetest\|_{\linfs}
+ h \|\nabla(\text{I}_{\mathbb{A}_h}\phasetest - \phasetest)\|_{\linfs}
 &\leq 
c \, h^{\polydegree +1} |\phasetest|_{W_{\infty}^{\polydegree+1}(\Omega)} \, ,
\end{split}
\end{align}
where $\mathbb{A}_h$ is either $\FEspacePhase$, $\FEspaceChemP$, $\FEspaceM$ or $\FEspaceU$, and $\ell$ a positive integer. More notation and details about the space discretization will be provided in \S\ref{addnot}. Here we confine ourselves to mention that they can be easily constructed using finite elements (see for instance \cite{Ciar78,ErnGuermond}).

\subsection{Description of the scheme and stability}
\label{sub:descscheme}

For the Cahn-Hilliard equation we shall use the stabilization methodology proposed by Shen and Yang \cite{shenyang2010} in order to eliminate the constraint $\dt \lesssim \layerthick^4$ from the time step. The price paid in this stabilization is the introduction of an error of order $\mathcal{O}(\dt)$ which is consistent with the truncation order of the scheme.

To avoid unnecessary technicalities, we assume that the initial data is smooth and initialize the scheme as follows
\begin{align}\label{initialization}
\phvarh^{0} = \text{I}_{\FEspacePhase}[\phvar(0)]  \, , \
\bv{M}^{0} = \text{I}_{\FEspaceM}[\bv{m}(0)] \, , \
\bv{U}^{0} = \text{I}_{\FEspaceU}[\bv{u}(0)] \, . \
\end{align} 
Then, for every $k\in \{1,\ldots,K\}$ we compute $\{\phvarh^{k},\chpoth^k, \bv{M}^k,\Phi^k, \bv{U}^k,P^k\} \in \FEspacePhase \times \FEspaceChemP \times \FEspaceM \times \FEspacePhi \times 
\FEspaceU \times \FEspaceP$ that solves 
\begin{subequations}
\label{firstscheme}
\begin{align}
\label{Cahn1}
\alp \tfrac{\inc\phvarh^{k}}{\dt},\Phasetest \arp 
- (\bv{U}^{k} \phvarh^{k-1},\nabla\Phasetest)
- \mobility (\nabla\chpoth^k,\nabla\Phasetest) &= 0 \, , \\
\label{Cahn2}
(\chpoth^k, \Chpotest ) 
+ \layerthick (\nabla\phvarh^k,\nabla\Chpotest) 
+ \tfrac{1}{\layerthick} (f(\phvarh^{k-1}),\Chpotest) 
+ \tfrac{1}{\eta} (\inc\phvarh^k,\Chpotest) 
&= 0 \, , \\
\label{Mag1}
\alp \tfrac{\inc\bv{M}^{k}}\dt,\Mtest \arp
- \trilm\lp\bv{U}^{k},\Mtest,\bv{M}^{k}\rp
+ \tfrac{1}{\chartime} \lp\bv{M}^k,\Mtest\rp &=
\tfrac{1}{\chartime} \lp \suscepch \bv{H}^{k}, \Mtest \rp \, , \\
\label{discmagpot}
(\nabla\hdpoth^k,\nabla\Phitest) &= (\ha^k -\bv{M}^k,\nabla\Phitest) \, ,\\
\label{NS1}
\begin{split}
  \alp \tfrac{ \inc\bv{U}^{k}}\dt, \Utest \arp
+ \tril_h\lp\bv{U}^{k-1},\bv{U}^{k},\Utest\rp 
+ \lp \nu_{\phvarh} \bv{T}(\bv{U}^{k}),\bv{T}(\Utest) \rp
- \lp P^{k}, \diver{}\Utest \rp &=
\mu_0 \trilm\lp \Utest, \bv{H}^{k},\bv{M}^{k}\rp
\\
& + \! \tfrac{\capcoeff}{\layerthick} (\phvarh^{k-1} \nabla\chpoth^k,
\Utest)  ,
\end{split} \\
\label{NS2}
\lp \Ptest, \diver{U}^k\rp &= 0 \, ,
\end{align}
\end{subequations}
for all $\lb \Phasetest,\Chpotest,\Mtest,\Phitest,\Utest,\Ptest\rb \in 
\FEspacePhase \times \FEspaceChemP \times \FEspaceM \times \FEspacePhi \times 
\FEspaceU \times \FEspaceP$.
Here the term $\tfrac{1}{\eta} (\inc\phvarh^k,\Chpotest)$ in
\eqref{Cahn2} is a stabilization term with $\eta$ sufficiently small,
we have defined $\bv{H}^\dt = \nabla \hdpoth^\dt$ in analogy to
\eqref{totalhTwPh} and, finally, we have set $\nu_{\phvarh} := \nu( \phvarh^{k-1})$ and 
$\suscepch := \varkappa( \phvarh^{k-1})$.

\begin{remark}[initialization] The initialization proposed in \eqref{initialization} is the simplest choice. From the point of view of convergence to strong solutions (a priori error estimates) it is suboptimal (cf. \cite{Tom13,MR2249024,ErnGuermond,DiPi10}). However, this choice has no effect on the stability of the scheme, it only affects the regularity assumed on the initial data.
\end{remark}

\begin{proposition}[discrete energy stability]\label{discenerglemm00} Let $\lb\phvarh^{\dt}, \chpoth^{\dt}, \bv{M}^{\dt},\hdpoth^{\dt}, \bv{U}^{\dt},P^{\dt} \rb \in \FEspacePhase \times \FEspaceChemP \times \FEspaceM \times \FEspacePhi \times \FEspaceU \times \FEspaceP$ solve \eqref{firstscheme}. If $\nabla \FEspacePhi \subset \FEspaceM$, $\eta \leq \layerthick$ and $\suscep \leq 4$, then we have the following stability estimate
\begin{align}
\label{finaldiscenergy}
\begin{split}
\mathscr{E}(\bv{U}^{\dt} ,\bv{M}^{\dt}, \hdpoth^{\dt}, \phvarh^{\dt};K)
&+ \sum_{k=1}^K \Big( \mathscr{D}_n(\inc\bv{U}^{\dt} ,\inc\bv{M}^{\dt}, \inc\hdpoth^{\dt}, \inc\phvarh^{\dt};k) 
+ \dt \mathscr{D}_p(\bv{U}^{\dt} ,\bv{M}^{\dt}, \hdpoth^{\dt}, \phvarh^{\dt}, \chpoth^{\dt};k) \Big) \\
&\leq  \mathscr{E}(\bv{U}^{\dt} ,\bv{M}^{\dt}, \hdpoth^{\dt},\phvarh^{\dt};0) +
\sum_{k=1}^K \dt \mathscr{F}(\ha;k)  \, ,
\end{split}
\end{align}
where
\begin{align*}
\mathscr{E}(\bv{U}^{\dt} ,\bv{M}^{\dt}, \hdpoth^{\dt}, \phvarh^{\dt};k) &= 
\tfrac{1}{2} \|\bv{U}^{k}\|_{\ltwods}^2
+ \tfrac{\mu_0}{2} \|\bv{M}^{k}\|_{\ltwods}^2
+ \tfrac{\mu_0}{2} \|\nabla\hdpoth^k \|_{\ltwods}^2
+ \tfrac{\capcoeff}{2} \|\nabla\phvarh^k\|_{\ltwods}^2
+ \tfrac{\capcoeff}{\layerthick^2}(F(\phvarh^k),1) \, , \\
\mathscr{D}_n(\inc\bv{U}^{\dt} ,\inc\bv{M}^{\dt}, \inc\hdpoth^{\dt}, \inc\phvarh^{\dt};k) &=
\tfrac{1}{2} \|\inc\bv{U}^{k}\|_{\ltwods}^2
+ \tfrac{\mu_0}{2} \|\inc\bv{M}^{k}\|_{\ltwods}^2
+ \tfrac{\mu_0}{2} \|\inc\nabla\hdpoth^k \|_{\ltwods}^2
+ \tfrac{\capcoeff}{2} \|\inc\nabla\phvarh^k\|_{\ltwods}^2 \, ,  \\
\mathscr{D}_p(\bv{U}^{\dt} ,\bv{M}^{\dt}, \hdpoth^{\dt}, \phvarh^{\dt}, \chpoth^{\dt};k) &=
\tfrac{\mu_0}{\chartime} \big( 1 - \tfrac{\suscep }{4} \big) \|\bv{M}^{k}\|_{\ltwods}^2
+ \tfrac{\mu_0}{2 \chartime} \|\nabla\hdpoth^{k}\|_{\ltwods}^2
+ \|\sqrt{\nu_{\phvar}} \, \bv{T}(\bv{U}^k) \|_{\ltwods}^2
+ \tfrac{\capcoeff \mobility}{\layerthick} \|\nabla \chpoth^k\|_{\ltwods}^2 \, , \\
\mathscr{F}(\ha;k) &= \tfrac{\mu_0}{\chartime}\|\ha^k\|_{\ltwods}^2
+ \tfrac{\mu_0 \chartime}{\dt} \int_{t^{k-1}}^{t^k} \|\partial_t \ha (s)\|_{\ltwods}^2 \, ds \, .
\end{align*}
\end{proposition}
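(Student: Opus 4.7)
The plan is to parallel the proof of Proposition~\ref{prop:formalenergy}, with the backward-difference identity \eqref{sumid} replacing the chain rule; its quadratic remainder $\|\inc X\|^2$ becomes the numerical dissipation $\mathscr{D}_n$. The four structural ingredients that carry the argument are the skew-symmetries \eqref{eq:bhmskewns}-\eqref{bmskew} (which annihilate self-convection), the hypothesis $\nabla\FEspacePhi\subset\FEspaceM$ (which makes $\bv{H}^k=\nabla\hdpoth^k$ admissible as a test function in \eqref{Mag1}), the stabilization bound $\eta\leq\layerthick$ (which controls the nonlinear potential), and the susceptibility bound $\suscep\leq 4$.

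For the Cahn-Hilliard block I will test \eqref{Cahn1} with $\Phasetest=\chpoth^k$ and \eqref{Cahn2} with $\Chpotest=\inc\phvarh^k$, equate the two expressions for $(\chpoth^k,\inc\phvarh^k)$, scale by $\capcoeff/\layerthick$, and apply \eqref{sumid} to the $(\nabla\phvarh^k,\nabla\inc\phvarh^k)$ term. The nonlinear potential is handled via the Taylor expansion $F(\phvarh^k)-F(\phvarh^{k-1}) = f(\phvarh^{k-1})\inc\phvarh^k + \tfrac12 F''(\xi^k)(\inc\phvarh^k)^2$ together with $|F''|\leq 2$ from \eqref{doublewellbound}; the resulting quadratic error $\tfrac{\capcoeff}{\layerthick^2}\|\inc\phvarh^k\|^2$ is absorbed by the stabilization $\tfrac{\capcoeff}{\layerthick\eta}\|\inc\phvarh^k\|^2$ whenever $\eta\leq\layerthick$. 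Testing \eqref{NS1} with $\Utest=\bv{U}^k$ and scaling by $\dt$ will kill convection by \eqref{eq:bhmskewns} and pressure by \eqref{NS2}; the capillary source then cancels the coupling from Cahn-Hilliard, and the Kelvin source $\dt\mu_0\trilm(\bv{U}^k,\bv{H}^k,\bv{M}^k)=-\dt\mu_0\trilm(\bv{U}^k,\bv{M}^k,\bv{H}^k)$ (via \eqref{bmskew}) remains.

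For the magnetization I will test \eqref{Mag1} with $\Mtest=\mu_0\bv{M}^k$ (whose convective contribution vanishes by \eqref{bmskew}) and with $\Mtest=\mu_0\bv{H}^k$, the latter being admissible thanks to $\nabla\FEspacePhi\subset\FEspaceM$. To promote the $(\inc\bv{M}^k,\bv{H}^k)$ and $(\bv{M}^k,\bv{H}^k)$ terms into the $\bv{H}^k$-energy, I will derive the discrete analogs of \eqref{phiID00}-\eqref{phiID01}: testing \eqref{discmagpot} with $\Phitest=\hdpoth^k$ gives $\|\bv{H}^k\|^2=(\ha^k-\bv{M}^k,\bv{H}^k)$, and applying $\inc$ to \eqref{discmagpot} and testing again with $\hdpoth^k$ together with \eqref{sumid} gives $\tfrac12\inc\|\bv{H}^k\|^2+\tfrac12\|\inc\bv{H}^k\|^2=(\inc\ha^k-\inc\bv{M}^k,\bv{H}^k)$. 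Substituting and using \eqref{bmskew} yields a discrete analog of \eqref{testM2proc}, with source term $+\dt\mu_0\trilm(\bv{U}^k,\bv{M}^k,\bv{H}^k)$ which exactly cancels the Kelvin source carried by NS.

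Adding the five tested equations annihilates all couplings and leaves the same three residual sources $\tfrac{\dt\mu_0}{\chartime}(\suscepch\bv{H}^k,\bv{M}^k)$, $\mu_0(\inc\ha^k,\bv{H}^k)$ and $\tfrac{\dt\mu_0}{\chartime}(\ha^k,\bv{H}^k)$ as in the continuous proof. These are bounded by Cauchy-Schwarz and Young's inequality against an artificial $\tfrac{\dt\mu_0}{\chartime}\|\bv{H}^k\|^2$ dissipation; the condition $\suscep\leq 4$ keeps the resulting $\|\bv{M}^k\|^2$-coefficient in $\mathscr{D}_p$ nonnegative, while Jensen's inequality $\|\inc\ha^k\|^2\leq\dt\int_{t^{k-1}}^{t^k}\|\partial_t\ha(s)\|^2\,ds$ converts the discrete time-derivative of $\ha$ into the form of $\mathscr{F}(\ha;k)$. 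Telescoping the $\inc$-terms over $k=1,\ldots,K$ then gives \eqref{finaldiscenergy}. The main obstacle is not any single computation but the careful bookkeeping of signs in $\trilm$ via \eqref{bmskew}, which ensures that the Kelvin/convection cancellation operates at the discrete level exactly as it does in the continuous case.
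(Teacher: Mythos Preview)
Your proposal is correct and follows essentially the same approach as the paper's proof: the same test functions, the same use of \eqref{sumid} and the skew-symmetries \eqref{eq:bhmskewns}--\eqref{bmskew}, the same discrete analogs of \eqref{phiID00}--\eqref{phiID01} obtained by applying $\inc$ to \eqref{discmagpot}, the same Taylor argument for the potential absorbed by the stabilization when $\eta\leq\layerthick$, and the same final Cauchy--Schwarz/Young bounds yielding the restriction $\suscep\leq4$. One cosmetic remark: the cancellation between the Kelvin term from \eqref{NS1} and the convective term from \eqref{Mag1} tested with $\bv{H}^k$ is in fact direct (both produce $\pm\trilm(\bv{U}^k,\bv{H}^k,\bv{M}^k)$ with opposite signs), so your appeal to \eqref{bmskew} there is harmless but unnecessary.
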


\begin{proof}
We set $\Phasetest = \tfrac{2 \capcoeff \dt}{\layerthick} \chpoth^k $, $\Chpotest = \tfrac{2 \capcoeff}{\layerthick} \inc\phvarh^k$, $\Utest = 2\dt \bv{U}^k$, $\Mtest = 2\dt\mu_0 \bv{M}^k$, $\Mtest = 2\dt\mu_0 \bv{H}^k$ and $\Phitest = \tfrac{2 \mu_0 \dt}{\chartime} \hdpoth^k$ in \eqref{firstscheme}
\begin{subequations}
\label{partial00}
\begin{align}
\label{Cahn101}
\tfrac{2 \capcoeff \mobility \dt}{\layerthick}  \|\nabla\chpoth^k\|_{\ltwods}^2 
- \tfrac{2 \capcoeff}{\layerthick} \alp \inc\phvarh^{k},\chpoth^k \arp &= 
- \tfrac{2 \capcoeff \dt}{\layerthick} (\bv{U}^{k} \phvarh^{k-1},\nabla\chpoth^k) \, , \\
\label{Cahn201}
\tfrac{2 \capcoeff}{\layerthick}  (\chpoth^k, \inc\phvarh^k ) 
+ \tfrac{2 \capcoeff}{\eta\layerthick} \|\inc\phvarh^k\|_{\ltwods}^2 
+ 2 \capcoeff (\inc\nabla\phvarh^k, \nabla\phvarh^k) 
+ \tfrac{2 \capcoeff}{\layerthick^2} (f(\phvarh^{k-1}), \inc\phvarh^k)&= 0 \, , \\
\label{NS101}
\begin{split}
2 \alp \inc\bv{U}^{k}, \bv{U}^{k} \arp
+ 2 \dt \|\sqrt{\nu_{\phvarh}} \, \bv{T}(\bv{U}^{k})\|_{\ltwods}^2
= 2 \mu_0 \dt \trilm\lp \bv{U}^{k}, \bv{H}^{k}, \bv{M}^{k}\rp
&+ \tfrac{2 \capcoeff\dt}{\layerthick} (\phvarh^{k-1}\nabla\chpoth^k, \bv{U}^{k}) 
\end{split} \, , \\
\label{Mag101}
\begin{split}
2 \mu_0 \alp \inc\bv{M}^{k},\bv{M}^{k} \arp
+ \tfrac{2 \mu_0 \dt }{\chartime} \|\bv{M}^k \|_{\ltwods}^2 &= 
\tfrac{2 \mu_0 \dt}{\chartime} \lp \suscepch \bv{H}^{k}, \bv{M}^k \rp  
\end{split} \, , \\
\label{Mag201}
\tfrac{2 \mu_0 \dt}{\chartime} \| \sqrt{\suscepch} \, \bv{H}^{k}\|_{\ltwods}^{2} =
2 \mu_0 \alp \inc\bv{M}^{k},\bv{H}^k \arp 
- 2 \mu_0 \dt \trilm\lp\bv{U}^{k},\bv{H}^k,\bv{M}^{k} \rp
&+ \tfrac{2 \mu_0 \dt }{\chartime} \lp\bv{M}^k,\bv{H}^k\rp \, , \\ 
\tfrac{2 \mu_0 \dt}{\chartime} \|\nabla\hdpoth^k\|_{\ltwods}^2 &= \tfrac{2 \mu_0 \dt}{\chartime} (\ha^k -\bv{M}^k,\nabla\hdpoth^k) \, ;
\end{align}
\end{subequations}
note that $\bv{H}^h = \nabla\Phi^k\in\FEspaceM$ is an admissible
test function. Furthermore, observe
that the convective terms $\tril_h(\bv{U}^{k-1},\bv{U}^k,\bv{U}^k)$
and $\trilm(\bv{U}^{k-1},\bv{M}^k,\bv{M}^k)$ vanish in view of
\eqref{eq:bhmskewns} and \eqref{bmskew} respectively. Adding all the
lines of \eqref{partial00}, and using \eqref{sumid}
for the backward difference operator $\inc$ of
  \eqref{backwarddifferenceop}, we get
\begin{align}\label{discenergy01}
\begin{split}
\delta\|\bv{U}^{k} \|_{\ltwods}^2 
& + \mu_0 \delta \|\bv{M}^{k} \|_{\ltwods}^2 
+ \capcoeff \delta \|\nabla\phvarh^k\|_{\ltwods}^2
+ \|\inc\bv{U}^{k} \|_{\ltwods}^2
+ \mu_0 \|\inc\bv{M}^{k} \|_{\ltwods}^2
+ \capcoeff \|\inc\nabla\phvarh^k\|_{\ltwods}^2
\\
&
+ \tfrac{2 \capcoeff}{\layerthick^2} (f(\phvarh^{k-1}), \inc\phvarh^k)
+ \tfrac{2 \capcoeff}{\eta\layerthick} \|\inc\phvarh^k\|_{\ltwods}^2
+ \tfrac{2 \capcoeff \mobility \dt}{\layerthick} \|\nabla\chpoth^k\|_{\ltwods}^2 \\
& + 2 \dt   \|\sqrt{\nu_{\phvarh}} \, \bv{T}(\bv{U}^{k})\|_{\ltwods}^2 
+ \tfrac{2 \mu_0 \dt }{\chartime} \|\bv{M}^k\|_{\ltwods}^2
+ \tfrac{2 \mu_0 \dt}{\chartime} \| \sqrt{\suscepch} \, \bv{H}^{k}\|_{\ltwods}^{2}
+ \tfrac{2 \mu_0 \dt}{\chartime} \|\nabla\hdpoth^k\|_{\ltwods}^2 \\
& = \tfrac{2 \mu_0 \dt}{\chartime} \lp \suscepch \bv{H}^{k}, \bv{M}^k \rp  
+2 \mu_0 \alp \inc\bv{M}^{k},\nabla\hdpoth^k \arp 
+ \tfrac{2 \mu_0 \dt }{\chartime} (\ha^k,\nabla\hdpoth^k) \, .
\end{split}
\end{align}
Applying the operator $\inc$ to \eqref{discmagpot} and setting $\Phitest = \Phi^k$ yields
\begin{align}
\label{phiID01disc}
  (\inc\nabla\hdpoth^k, \nabla\hdpoth^k) = (\inc\ha^k - \inc\bv{M}^k,
\nabla\hdpoth^k)\, ,
\end{align}
whence \eqref{discenergy01} can be rewritten as
\begin{align}\label{discenergy02}
\begin{split}
\delta\|\bv{U}^{k} \|_{\ltwods}^2  
& + \mu_0 \delta \|\bv{M}^{k} \|_{\ltwods}^2 
+ \mu_0 \delta \|\nabla\hdpoth^k \|_{\ltwods}^2 
+ \capcoeff \delta \|\nabla\phvarh^k\|_{\ltwods}^2 
+ \|\inc\bv{U}^{k} \|_{\ltwods}^2
+ \mu_0 \|\inc\bv{M}^{k} \|_{\ltwods}^2 
\\
&+ \mu_0 \|\inc\nabla\hdpoth^k \|_{\ltwods}^2 
+ \capcoeff \|\inc\nabla\phvarh^k\|_{\ltwods}^2 
+ \tfrac{2 \capcoeff}{\layerthick^2} (f(\phvarh^{k-1}), \inc\phvarh^k)
+ \tfrac{2 \capcoeff}{\eta\layerthick} \|\inc\phvarh^k\|_{\ltwods}^2
+ \tfrac{2 \capcoeff \mobility \dt}{\layerthick} \|\nabla\chpoth^k\|_{\ltwods}^2
\\
&+ 2 \dt   \|\sqrt{\nu_{\phvarh}} \, \bv{T}(\bv{U}^{k})\|_{\ltwods}^2 
+ \tfrac{2 \mu_0 \dt }{\chartime} \|\bv{M}^k\|_{\ltwods}^2
+ \tfrac{2 \mu_0 \dt}{\chartime} \| \sqrt{\suscepch} \, \bv{H}^{k}\|_{\ltwods}^{2}
+ \tfrac{2 \mu_0 \dt}{\chartime} \|\nabla\hdpoth^k\|_{\ltwods}^2 \\
& = \tfrac{2 \mu_0 \dt}{\chartime} \lp \suscepch \bv{H}^{k}, \bv{M}^k \rp
+2 \mu_0 \alp \inc\ha^k,\nabla\hdpoth^k \arp 
+ \tfrac{2 \mu_0 \dt }{\chartime} (\ha^k,\nabla\hdpoth^k) .
\end{split}
\end{align}
In only remains to control the terms $\tfrac{2
  \capcoeff}{\layerthick^2} (f(\phvarh^{k-1}), \inc\phvarh^k)+
\tfrac{2 \capcoeff}{\eta\layerthick}
\|\inc\phvarh^k\|_{\ltwods}^2$. This is a standard argument
\cite{shenyang2010}, which for the sake of completeness we
repeat. In view of Taylor's formula for $F(\phvarh^{k})$ around $F(\phvarh^{k-1})$
\begin{align}\label{stabCH1}
F(\phvarh^{k}) = F(\phvarh^{k-1}) + f(\phvarh^{k-1}) \inc\phvarh^{k} + \tfrac{f'(\xi)}{2} (\inc\phvarh^{k})^2
\end{align}
for some $\xi$, we invoke the bound \eqref{doublewellbound} to get
\begin{align}\label{stabCH2}
(\inc F(\phvarh^{k}),1) \leq (f(\phvarh^{k-1}), \inc\phvarh^{k}) 
+ \|\inc\phvarh^{k} \|_{\ltwods}^2 \, .
\end{align}
Therefore, if we choose $\eta \leq \layerthick$, we can finally estimate \eqref{discenergy02} as follows:
\begin{align}\label{discenergy03}
\begin{split}
\,\inc\mathscr{E}(\bv{U}^{\dt} ,\bv{M}^{\dt}, \hdpoth^{\dt}, \phvarh^{\dt};k)
&+  \mathscr{D}_n(\inc\bv{U}^{\dt} ,\inc\bv{M}^{\dt}, \inc\hdpoth^{\dt}, \inc\phvarh^{\dt};k) 
+  \dt \mathscr{D}_p(\bv{U}^{\dt} ,\bv{M}^{\dt}, \hdpoth^{\dt}, \phvarh^{\dt}, \chpoth^{\dt};k)  \\
&+ \tfrac{\mu_0 \permit \dt}{4\chartime} \|\bv{M}^k\|_{\ltwods}^2
+ \tfrac{\mu_0 \dt}{\chartime} \| \sqrt{\suscepch} \, \bv{H}^{k}\|_{\ltwods}^{2} 
+ \tfrac{\mu_0 \dt}{2\chartime} \|\nabla\hdpoth^k\|_{\ltwods}^2 \\
&\leq \tfrac{ \mu_0 \dt}{\chartime} \lp \suscepch \bv{H}^{k}, \bv{M}^k \rp  
+ \mu_0 \alp \inc\ha^k,\nabla\hdpoth^k \arp 
+ \tfrac{ \mu_0 \dt }{\chartime} (\ha^k,\nabla\hdpoth^k) .
\end{split}
\end{align}
The rest is a matter of bounding the right hand side using
Cauchy-Schwarz and Young's inequalities with appropriate constants. As
in the continuous case, the term $\tfrac{\mu_0 \dt}{\chartime} \lp
\suscepch \bv{H}^{k}, \bv{M}^k \rp$ gives rise to the
  restriction $\suscep \leq 4$. Finally, for the term $\mu_0 \alp
\inc\ha^k,\nabla\hdpoth^k \arp$, we resort to the estimate 
\begin{align*}
\LN\tfrac{\inc\ha^k}{\dt}\RN_{\ltwods}^2 \leq \tfrac{1}{\dt} \int_{t^{k-1}}^{t^k} \|\partial_t \ha(s)\|_{\ltwods}^2 ds .
\end{align*}
This concludes the proof. \end{proof}
Note that \eqref{finaldiscenergy} is consistent with \eqref{enerestTwPh}, except for the term $\mathscr{D}_n$ of time increments (jumps). The latter is a dissipative term characteristic of the implicit Euler scheme.
\subsection{Existence of fixed points: local solvability}
\label{Sexistence}
Let us establish the local solvability of the numerical scheme \eqref{firstscheme}. To do so we will make use of the well-known Leray-Schauder fixed point theorem \cite[p. 280]{GT2001}. As it is usual when invoking this result, the core of the proof is a local (in time) a priori estimate, which very much resembles the arguments of \S\ref{sub:descscheme}. Therefore, a few intermediate steps have been eliminated leaving the details to the reader.

\begin{theorem}[existence]
\label{thm:existence}
Let $h,\dt > 0$, and let the parameters $\suscep$ and $\eta$ satisfy $\suscep \leq 4$ and $\eta \leq \layerthick$. If $\nabla\FEspacePhi \subset \FEspaceM$, then for all $k=1,\ldots,K$ there exists $\lb\phvarh^k,\chpoth^k,\bv{M}^k,\hdpoth^k,\bv{U}^k,P^k \rb$ $\in$ $\FEspacePhase \times \FEspaceChemP \times \FEspaceM \times  \FEspacePhi \times \FEspaceU \times \FEspaceP$
that solves \eqref{firstscheme}. Moreover, any such sequence of solutions satisfies estimate \eqref{finaldiscenergy}.
\end{theorem}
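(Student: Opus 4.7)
The plan is to apply the Leray--Schauder fixed point theorem \cite[p.~280]{GT2001} at each time step $k \in \{1,\ldots,K\}$, treating the previous-step quantities $(\phvarh^{k-1},\bv{M}^{k-1},\bv{U}^{k-1})$ as given data. Since we work on the finite-dimensional product space $X_h := \FEspacePhase \times \FEspaceChemP \times \FEspaceM \times \FEspacePhi \times \FEspaceU \times \FEspaceP$, the compactness hypothesis is automatic, and only a uniform a priori bound along a suitable homotopy needs to be produced. An induction on $k$ then yields the theorem.

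\textbf{Setting up the map.} I would first rewrite \eqref{firstscheme} as a fixed-point equation $w = \mathcal{T}(w)$ on $X_h$ by freezing one factor in each genuinely nonlinear product: in the trilinear terms $\trilm(\bv{U}^k,\Mtest,\bv{M}^k)$ of \eqref{Mag1} and $\mu_0\,\trilm(\Utest,\bv{H}^k,\bv{M}^k)$ of \eqref{NS1}, the advecting argument $\bv{U}^k$ and the vector $\bv{M}^k$ are replaced by components of the input $\tilde w$. The resulting linear block decouples into a Cahn--Hilliard problem, a magnetization/magnetic-potential problem, and a Stokes problem coupled only through already-frozen lower-order terms; its unique solvability follows from coercivity of the principal diagonals (using $\nu_{\phvar}\geq\nu_w>0$ from \eqref{coeffbounds} and the implicit positive contribution $\tfrac{1}{\chartime}\bv{M}^k$ in \eqref{Mag1}) together with the discrete inf-sup condition \eqref{discreteinfsup}. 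Continuity of $\mathcal{T}$ on $X_h$ is immediate in finite dimensions.

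\textbf{A priori bound via homotopy.} Leray--Schauder calls for estimating every solution of $w = \lambda\,\mathcal{T}(w)$, $\lambda\in[0,1]$, uniformly in $\lambda$. Such a $w$ solves the scaled scheme in which the right-hand side data $(w^{k-1},\ha^k)$ and the frozen factors carry an overall factor $\lambda$. The decisive observation is that at a fixed point the frozen arguments coincide with the current unknowns, so the trilinear forms regain their full skew-symmetric structure. I would therefore transcribe the testing procedure of Proposition~\ref{discenerglemm00} almost verbatim: take $\Phasetest = \tfrac{2\capcoeff\dt}{\layerthick}\chpoth^k$, $\Chpotest = \tfrac{2\capcoeff}{\layerthick}\inc\phvarh^k$, $\Utest = 2\dt\bv{U}^k$, test \eqref{Mag1} once with $\Mtest = 2\dt\mu_0\bv{M}^k$ and once with $\Mtest = 2\dt\mu_0\bv{H}^k$ (admissible because $\nabla\FEspacePhi\subset\FEspaceM$), and take $\Phitest = \tfrac{2\mu_0\dt}{\chartime}\hdpoth^k$. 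The cancellations \eqref{eq:bhmskewns}--\eqref{bmskew} dispatch the convective contributions, the Taylor expansion \eqref{stabCH1}--\eqref{stabCH2} combined with $\eta\leq\layerthick$ controls the chemical-potential nonlinearity, and the cross-term $\tfrac{\mu_0\dt}{\chartime}(\suscepch\bv{H}^k,\bv{M}^k)$ is absorbed using $\suscep\leq 4$, producing a bound
\begin{align*}
\|w\|_{X_h}^2 \;\leq\; C\bigl(w^{k-1},\ha^k,\dt,\mu_0,\chartime,\capcoeff,\layerthick\bigr)
\end{align*}
independent of $\lambda$, which is the uniform ball required by the fixed point theorem.

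\textbf{Closing and the estimate.} Leray--Schauder then produces a fixed point of $\mathcal{T}=\mathcal{T}_1$, i.e.\ a solution of \eqref{firstscheme} at step $k$; iterating on $k$ yields the whole trajectory. The claim that any such solution satisfies \eqref{finaldiscenergy} is not a separate argument: it is exactly the content of Proposition~\ref{discenerglemm00}, invoked \emph{a posteriori} on the solution just produced. The main obstacle, in my view, is selecting the freezing so that the symmetric cancellation between the Kelvin term $\mu_0\,\trilm(\bv{U}^k,\bv{H}^k,\bv{M}^k)$ in \eqref{NS1} and the magnetization convection $-\trilm(\bv{U}^k,\cdot,\bv{M}^k)$ in \eqref{Mag1} is recovered at fixed points of the homotopy; a naive symmetric freezing of both arguments would destroy this structure and leave uncontrollable cross-terms, so the a priori bound would fail to close. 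Once the linearization is set up correctly, the remaining work is a routine transcription of Proposition~\ref{discenerglemm00}.
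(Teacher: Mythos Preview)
Your proposal is correct and follows the same Leray--Schauder strategy as the paper: define a linearized map by freezing one factor in each genuinely nonlinear product, verify its well-posedness via the block (Cahn--Hilliard $\to$ magnetization/potential $\to$ Stokes) structure, obtain the uniform a priori bound along the homotopy by replaying the energy argument of Proposition~\ref{discenerglemm00}, and invoke finite-dimensional compactness. Your identification of the critical design constraint---that the freezing must be chosen so that along the homotopy $w=\lambda\mathcal{T}(w)$ the Kelvin force in \eqref{NS1} and the convective term in \eqref{Mag1} recombine into a skew-symmetric pair---is exactly the point the paper exploits.

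One technical detail differs from a \emph{verbatim} transcription of Proposition~\ref{discenerglemm00} and is worth noting. Along the homotopy the previous-step data carry a factor $\alpha$, so the natural difference operator is $\delta_\alpha\widehat{\rho}^k:=\widehat{\rho}^k-\alpha\rho^{k-1}$. The paper observes that one cannot simply apply $\delta_\alpha$ to the discrete magnetic potential equation (the analogue of \eqref{phiID01disc}) to convert $2\mu_0(\delta_\alpha\widehat{\bv{M}}^k,\nabla\widehat{\hdpoth}^k)$ into a telescoping term; instead it tests the potential equation directly with $\widehat{\hdpoth}^k$ to obtain $\|\nabla\widehat{\hdpoth}^k\|_{\ltwods}^2=(\alpha\ha^k-\widehat{\bv{M}}^k,\nabla\widehat{\hdpoth}^k)$ and closes the estimate from there. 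This is a small deviation from the stability proof, not a new idea, and your phrase ``almost verbatim'' already allows for it---but you should be aware that this is where the transcription is not literal.
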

\begin{proof} We define the affine map $\widehat{x} = \mathcal{L} x$,
\begin{align*}
\lb\phvarh^k,\chpoth^k,\bv{M}^k,\hdpoth^k,\bv{U}^k,P^k \rb \xmapsto{\mathcal{L}} 
\lb \widehat{\phvarh}^k,\widehat{\chpoth}^k,\widehat{\bv{M}}^k,\widehat{\hdpoth}^k, \widehat{\bv{U}}^k,\widehat{P}^k \rb \, , 
\end{align*}
where the quantities with hats solve of the following variational problem:
\begin{subequations}
\label{}
\begin{align}
\label{Cahn1schau}
\alp \tfrac{\widehat{\phvarh}^{k} - \phvarh^{k-1}}{\dt},\Phasetest \arp 
- (\bv{U}^{k} \phvarh^{k-1},\nabla\Phasetest)
- \mobility (\nabla\widehat{\chpoth}^k,\nabla\Phasetest) &= 0 \, , \\
\label{Cahn2schau}
(\widehat{\chpoth}^k, \Chpotest ) 
+ \tfrac{1}{\eta} (\widehat{\phvarh}^k - \phvarh^{k-1},\Chpotest) + \layerthick (\nabla\widehat{\phvarh}^k,\nabla\Chpotest) 
+ \tfrac{1}{\layerthick} (f(\phvarh^{k-1}),\Chpotest)&= 0 \, , \\
\label{MagnSchau1}
\begin{split}
\alp \tfrac{\widehat{\bv{M}}^{k} - \bv{M}^{k-1}}\dt,\Mtest \arp 
- \trilm\lp\bv{U}^{k},\Mtest,\widehat{\bv{M}}^{k}\rp
+ \tfrac{1}{\chartime} \lp\widehat{\bv{M}}^k,\Mtest\rp &= 
\tfrac{1}{\chartime} \lp \suscepch \widehat{\bv{H}}^{k}, \Mtest \rp \, ,
\end{split} \\
\label{MagnSchau2}
(\nabla\widehat{\hdpoth}^k,\nabla\Phitest) &= (\ha^k -\widehat{\bv{M}}^k,\nabla\Phitest) \, , \\
\label{NS1schau}
\begin{split}
\alp \tfrac{\widehat{\bv{U}}^{k} - \bv{U}^{k-1} }\dt, \Utest \arp
+ \lp \nu_{\phvarh} \bv{T}(\widehat{\bv{U}}^{k}),\bv{T}(\Utest) \rp + 
\tril_h\lp\bv{U}^{k-1},\widehat{\bv{U}}^{k},\Utest\rp - \lp \widehat{P}^{k}, \diver{}\Utest \rp 
&= \trilm\lp \Utest, \bv{H}^{k},\widehat{\bv{M}}^{k}\rp \\
& + \tfrac{\capcoeff}{\layerthick} (\phvarh^{k-1} \nabla\widehat{\chpoth}^k  , \Utest) \, ,
\end{split}
\\
\label{NS2schau}
  \lp \diver{} \widehat{\bv{U}}^k, \Ptest \rp & = 0 \, ,
\end{align}
\end{subequations}
for all $\lb \Phasetest,\Chpotest,\Mtest,\Phitest,\Utest,\Ptest\rb \in 
\FEspacePhase \times \FEspaceChemP \times \FEspaceM \times \FEspacePhi \times 
\FEspaceU \times \FEspaceP$. 
As always, we set $\widehat{\bv{H}}^\dt = \nabla
  \widehat{\Phi}^\dt$. Notice that a fixed point of the map
$\mathcal L$ is a solution to \eqref{firstscheme}. Let us then show
that $ \mathcal{L} $ verifies the assumptions of the Leray-Schauder
theorem, namely well-posedness, boundedness, and compactness:
\begin{enumerate}[\itemizebullet]
\setlength{\itemsep}{2pt} 
\setlength{\parskip}{0cm}
\item
\textbf{Well-posedness.} The operator $\mathcal{L}$ is clearly well 
defined. The information follows a top-down path. For a given velocity $\bv{U}^k$ the mixed Cahn-Hilliard system \eqref{Cahn1schau}--\eqref{Cahn2schau} is well-defined and widely-studied system which can be reduced to a single positive definite system in terms of the phase (see for instance \cite{ShenReview} and references therein). System \eqref{MagnSchau1}--\eqref{MagnSchau2} can be rewritten as follows:
\begin{align}\label{magproblem}
\begin{split}
\alp \widehat{\bv{M}}^{k},\Mtest \arp 
- \dt \trilm\lp\bv{U}^{k},\Mtest,\widehat{\bv{M}}^{k}\rp
+ \tfrac{\dt}{\chartime} \lp\widehat{\bv{M}}^k,\Mtest\rp
- \tfrac{\dt}{\chartime} \lp \suscepch \widehat{\bv{H}}^{k}, \Mtest \rp
&= \alp \bv{M}^{k-1},\Mtest \arp \\
(\widehat{\bv{M}}^k,\nabla\Phitest) + ( \nabla \widehat{\hdpoth}^k,\nabla\Phitest)
&= (\ha^k,\nabla\Phitest) \, .
\end{split}
\end{align}
Multiply the second line by $\tfrac{\dt \permit}{\chartime}$ and add both lines.
Taking $\Mtest = \widehat{\bv{M}}^{k}$ and $\Phitest = \widehat{\hdpoth}^k$,
and using that $\widehat{\bv{H}}^k=\nabla\widehat{\Phi}^k$, one verifies that the
associated bilinear form is coercive, provided $\permit \leq 4$.
Once the magnetization problem is solved, we will have the functions $\widehat{\bv{M}}^k$ and $\widehat{\hdpoth}^k$ which can be used as data for the Stokes problem \eqref{NS1schau}--\eqref{NS2schau}, which is also well posed.
\item
\textbf{Boundedness}. Given $\alpha \in [0,1]$, we must verify that all $\widehat{x} = \lb\widehat{\phvarh}^k,\widehat{\chpoth}^k,\widehat{\bv{M}}^k,\widehat{\hdpoth}^k,\widehat{\bv{U}}^k,\widehat{P}^k \rb$ satisfying $\tfrac{1}{\alpha} \widehat{x} = \mathcal{L}\widehat{x}$ can be bounded in terms of the local data 
$\lb \phvarh^{k-1},\bv{M}^{k-1},\hdpoth^{k-1},\bv{U}^{k-1},\ha^{k}\rb$ uniformly with respect to $\alpha$. In other words, we want to analyze the local boundedness of solutions to the system
\begin{align}
\label{IterationScheme}
\begin{aligned}
\alp  \tfrac{\widehat{\phvarh}^{k} - \alpha \phvarh^{k-1}}{\dt}, \Phasetest \arp 
- (\widehat{\bv{U}}^{k} \alpha \, \phvarh^{k-1},\nabla\Phasetest)
- \mobility ( \nabla\widehat{\chpoth}^k,\nabla\Phasetest) &= 0 \, , \\
(\widehat{\chpoth}^k, \Chpotest ) 
+ \tfrac{1}{\eta} ( \widehat{\phvarh}^k,\Chpotest) + \layerthick ( \nabla\widehat{\phvarh}^k,\nabla\Chpotest) 
+ \tfrac{\alpha}{\layerthick} (f(\phvarh^{k-1}),\Chpotest)&= 
 \tfrac{\alpha}{\eta} (\phvarh^{k-1},\Chpotest) \, , \\
\alp \tfrac{\widehat{\bv{M}}^{k} - \alpha\bv{M}^{k-1}}\dt,\Mtest \arp
- \trilm\lp\widehat{\bv{U}}^{k},\Mtest, \widehat{\bv{M}}^{k}\rp
+ \tfrac{1}{\chartime} \lp \widehat{\bv{M}}^k,\Mtest\rp &= 
\tfrac{1}{\chartime} \lp \suscepch \, \widehat{\bv{H}}^{k}, \Mtest \rp  \, , \\
(\nabla\widehat{\hdpoth}^k,\nabla\Phitest) &= (\alpha \, \ha^k - \widehat{\bv{M}}^k,\nabla\Phitest) \, , \\
\alp \tfrac{\widehat{\bv{U}}^{k} - \alpha \,\bv{U}^{k-1} }\dt, \Utest \arp
+ \lp \nu_{\phvarh} \, \bv{T}(\widehat{\bv{U}}^{k}),\bv{T}(\Utest)\rp
+  \tril_h\lp\bv{U}^{k-1}, \widehat{\bv{U}}^{k},\Utest\rp 
- \lp \widehat{P}^{k}, \diver{}\Utest \rp & = \mu_0 \trilm\lp \Utest,
\widehat{\bv{H}}^{k},\widehat{\bv{M}}^{k}\rp \\
&+ \tfrac{\capcoeff}{\layerthick} (\phvarh^{k-1} \nabla\widehat{\chpoth}^k , \Utest) \, , \\
\lp \diver{} \widehat{\bv{U}}^k, \Ptest \rp & = 0 \, . 
\end{aligned}
\end{align}
We now proceed exactly as in Proposition \ref{discenerglemm00}
(discrete energy stability). We
set $\Phasetest = \tfrac{2 \capcoeff \dt}{\layerthick} \widehat\chpoth^k $,
$\Chpotest = \tfrac{2 \capcoeff}{\layerthick} \delta_\alpha\widehat{\phvarh}^k$,
$\Utest = 2 \dt \widehat{\bv{U}}^k$,
$\Mtest = 2 \dt\mu_0 \widehat{\bv{M}}^k$,
$\Mtest = 2 \dt\mu_0 \widehat{\bv{H}}^k$,
and  $\Phitest = \tfrac{2 \mu_0 \dt}{\chartime} \widehat{\hdpoth}^k$
in \eqref{IterationScheme}, where the operator $\delta_\alpha$ is
defined to be
\[
\delta_\alpha \widehat{\rho}^k = \widehat{\rho}^k - \alpha \rho^{k-1}.
\]
Applying the identity \eqref{sumid}, we obtain
the following variant of \eqref{discenergy01}
\begin{align}\label{iterativeenergy2}
\begin{split}
\delta_\alpha \|\widehat{\bv{U}}^{k} \|_{\ltwods}^2
& + \mu_0 \delta_\alpha \|\widehat{\bv{M}}^{k} \|_{\ltwods}^2 
+ \capcoeff \delta_\alpha \|\nabla\widehat{\phvarh}^k\|_{\ltwods}^2
+ \|\inc_\alpha \widehat{\bv{U}}^{k} \|_{\ltwods}^2
+ \mu_0 \|\inc_\alpha \widehat{\bv{M}}^{k} \|_{\ltwods}^2
+ \capcoeff \|\inc_\alpha \nabla\widehat{\phvarh}^k\|_{\ltwods}^2
\\
&
+ \tfrac{2 \alpha \capcoeff}{\layerthick^2}
(f(\phvarh^{k-1}), \inc_\alpha \widehat{\phvarh}^k)
+ \tfrac{2 \capcoeff}{\eta\layerthick} \|\inc_\alpha \widehat{\phvarh}^k\|_{\ltwods}^2
+ \tfrac{2 \capcoeff \mobility \dt}{\layerthick}
\|\nabla\widehat{\chpoth}^k\|_{\ltwods}^2 \\
& + 2 \dt   \|\sqrt{\nu_{\phvarh}} \, \bv{T}(\widehat{\bv{U}}^{k})\|_{\ltwods}^2 
+ \tfrac{2 \mu_0 \dt }{\chartime} \|\widehat{\bv{M}}^k\|_{\ltwods}^2
+ \tfrac{2 \mu_0 \dt}{\chartime} \| \sqrt{\suscepch} \,
\widehat{\bv{H}}^{k}\|_{\ltwods}^{2}
+ \tfrac{2 \mu_0 \dt}{\chartime} \|\nabla\widehat{\hdpoth}^k\|_{\ltwods}^2 \\
& = \tfrac{2 \mu_0 \dt}{\chartime} \lp \suscepch
\widehat{\bv{H}}^{k}, \widehat{\bv{M}}^k \rp  
+2 \mu_0 \alp \inc_\alpha \widehat{\bv{M}}^{k},\nabla\widehat{\hdpoth}^k \arp 
+ \tfrac{2 \mu_0 \alpha \dt }{\chartime} (\ha^k,\nabla\widehat{\hdpoth}^k) \, .
\end{split}
\end{align}
The argument now differs from that in Proposition
\ref{discenerglemm00} because we cannot apply $\delta_\alpha$ to the
fourth equation in \eqref{IterationScheme}. Instead, setting
$\Phitest = \alpha \widehat{\hdpoth}^k$ yields
\begin{align*}
\|\nabla\widehat{\hdpoth}^k \|_{\ltwods}^2 &=  (\alpha \ha^k - \widehat{\bv{M}}^k, \nabla\widehat{\hdpoth}^k) \, , 
\end{align*}
whence we can rewrite \eqref{iterativeenergy2} as follows:
\begin{align*}
\begin{split}
\delta_\alpha \|\widehat{\bv{U}}^{k} \|_{\ltwods}^2
& + \mu_0 \delta_\alpha \|\widehat{\bv{M}}^{k} \|_{\ltwods}^2 
+ \capcoeff \delta_\alpha \|\nabla\widehat{\phvarh}^k\|_{\ltwods}^2
+ \|\inc_\alpha \widehat{\bv{U}}^{k} \|_{\ltwods}^2
+ \mu_0 \|\inc_\alpha \widehat{\bv{M}}^{k} \|_{\ltwods}^2
+ \capcoeff \|\inc_\alpha \nabla\widehat{\phvarh}^k\|_{\ltwods}^2
\\
&
+ \tfrac{2 \alpha \capcoeff}{\layerthick^2}
(f(\phvarh^{k-1}), \inc_\alpha \widehat{\phvarh}^k)
+ \tfrac{2 \capcoeff}{\eta\layerthick} \|\inc_\alpha \widehat{\phvarh}^k\|_{\ltwods}^2
+ \tfrac{2 \capcoeff \mobility \dt}{\layerthick}
\|\nabla\widehat{\chpoth}^k\|_{\ltwods}^2 \\
& + 2 \dt   \|\sqrt{\nu_{\phvarh}} \, \bv{T}(\widehat{\bv{U}}^{k})\|_{\ltwods}^2 
+ \tfrac{2 \mu_0 \dt }{\chartime} \|\widehat{\bv{M}}^k\|_{\ltwods}^2
+ \tfrac{2 \mu_0 \dt}{\chartime} \| \sqrt{\suscepch} \,
\widehat{\bv{H}}^{k}\|_{\ltwods}^{2}
+ 2 \mu_0\big(1+\tfrac{ \dt}{\chartime}\big) \|\nabla\widehat{\hdpoth}^k\|_{\ltwods}^2 \\
& = \tfrac{2 \mu_0 \dt}{\chartime} \lp \suscepch
\widehat{\bv{H}}^{k}, \widehat{\bv{M}}^k \rp  
- 2 \mu_0 \alpha \alp \bv{M}^{k-1},\nabla\widehat{\hdpoth}^k \arp 
+ 2 \mu_0\alpha\big(1+\tfrac{ \dt }{\chartime}\big) (\ha^k,\nabla\widehat{\hdpoth}^k) \, .
\end{split}
\end{align*}
To conclude it remains to bound the right hand side using Cauchy-Schwarz and Young's inequalities with appropriate constants, use that $\alpha \leq 1$, and the bound $\nu_{\phvarh} \geq \nu_w$ of \eqref{coeffbounds}. 

\smallskip
\item[\itemizebullet] \textbf{Compactness: }this is automatically satisfied since we are working with finite dimensional spaces.
\end{enumerate}
Finally, we apply the Leray-Schauder's theorem to prove the assertion. \end{proof}
\subsection{Practical space discretization}
\label{addnot}
Having understood what is required from a Galerkin technique to
achieve stability of the scheme \eqref{firstscheme} we now specify our
choices of discrete spaces using finite elements. We assume that
$\Omega$ is convex and $\Gamma$ is polyhedral, and that we have at
hand a quasi-uniform triangulation $\triangulation = \{T\}$ of
$\Omega$ with meshsize $h$. As Proposition~\ref{discenerglemm00}
shows, to gain stability it is instrumental to have $\nabla
\FEspacePhi \subset \FEspaceM$. Since the space $\FEspacePhi$ is used
to approximate the solution of an elliptic problem with Neumann
boundary conditions, the simplest choice for $\FEspacePhi$ is a space
of continuous piecewise polynomials of degree $\polydegree\ge1$
\begin{align}
\label{choice1}
\FEspacePhi := \lb \Phitest \in \mathcal{C}^0\bigl(\overline\Omega\bigr) \ | \ \Phitest|_{T} \in \simplex_{\polydegree}(\element) \, , \forall \element \in \triangulation \rb  \subset \hone .
\end{align}
Therefore, to achieve $\nabla\FEspacePhi \subset \FEspaceM$ we set
\begin{align}
\label{choice2}
\FEspaceM := \lb \bv{M} \in \ltwods(\Omega) \ | \ \bv{M}|_{T} \in [\simplex_{\polydegree-1}(\element)]^d \, , \forall \, \element \in \triangulation \rb \, ,
\end{align}
that is $\FEspaceM$ must be a space of discontinuous piecewise
polynomials of degree $\le\polydegree-1$. Consequently, the trilinear form $\trilm(\cdot, \cdot, \cdot)$ must be defined accordingly, namely
\begin{align}
\label{trilineardef}
\trilm(\bv{U},\bv{V},\bv{W})
= \sum_{\element \in \triangulation} \bulkint{\element}{ (\bv{U} \cdot \nabla )\bv{V} \cdot \bv{W}
    + \tfrac{1}{2} \diver{U} \, \bv{V}\cdot\bv{W} } 
-  \sum_{F \in \mathcal{F}^i}
\bdryint{F}{ ( \lj\bv{V}\rj \cdot \lbb\bv{W}\rbb)(\bv{U} \cdot \normal_F) } \, , 
\end{align}
where $F$ denotes an element face, $\mathcal{F}^i$ is the set of all internal faces (faces which are not part of the boundary $\bdry$), and $\normal_F$ is the normal of the face $\mathcal{F}$ which can be chosen arbitrarily. 
The bulk integrals in \eqref{trilineardef} are the classical Temam
\cite{Temam} modification of the convective term
\eqref{eq:defoftrilb}, whereas the face integrals are
consistency terms. This discretization of convection for discontinuous
finite elements dates back to \cite{DiErn2012,DiPi10,Gir2005,Les1974}. From these references it is also known that $\trilm(\cdot,\cdot, \cdot)$ is skew symmetric, that is \eqref{bmskew} holds, provided the first argument $\bv{U} \in \hdiv$ and $\bv{U}\cdot \normal = 0$ on $\bdry$.

The choice of the remaining spaces is now straightforward: for $\polydegree \geq 2$ we set
\begin{align}
\label{FEspaces}
\begin{aligned}
\FEspacePhase & := \lb \Phasetest \in \mathcal{C}^0\bigl(\overline\Omega\bigr) \ | \ \Phasetest|_{T} \in \simplex_{\polydegree}(\element) \quad \forall \, \element \in \triangulation \rb \, , \\
\FEspaceChemP & := \lb \Chpotest \in \mathcal{C}^0\bigl(\overline\Omega\bigr) \ | \ \Chpotest|_{T} \in \simplex_{\polydegree}(\element) \quad \forall \, \element \in \triangulation \rb \, , \\
\FEspaceU & := \lb \Utest \in \bm{\mathcal{C}}^0\bigl(\overline\Omega\bigr) \ |
\ \Utest|_{T} \in [\simplex_{\polydegree}(\element)]^d \quad \forall \, \element \in \triangulation,  \rb \cap \hzerod \, , \\
\FEspaceP & := \lb \Ptest \in \lzerotwo \ |
\ \Ptest|_{T} \in \simplex_{{\polydegree-1}(\element)} \quad \forall \, \element \in \triangulation \rb \, .\\
\end{aligned}
\end{align}
The finite element spaces $\FEspacePhase$, $\FEspaceChemP$, $\FEspaceM$, $\FEspacePhi$, $\FEspaceU$, and $\FEspaceP$ are defined using polynomial spaces $\simplex_{\polydegree}$, of total degree at most $\polydegree$, usually associated to simplicial elements. However, the fact that the scheme \eqref{firstscheme} is energy stable is independent of whether we choose simplices or quadrilaterals/hexahedrons. If we replace $\simplex_{\polydegree}$ by $\quadrilateral_{\polydegree}$ (polynomials of degree at most $\polydegree$ in each variable) in \eqref{choice1}, \eqref{choice2} and \eqref{FEspaces}, we would only need to do minor changes in the choice of polynomial degrees in order to guarantee that the inclusion $\nabla \FEspacePhi \subset \FEspaceM$ holds true. To simplify our exposition we will always assume that our elements are simplicial and develop our theory under this assumption. We will provide remarks describing the required modifications if quadrilaterals are to be used.

We further assume that the pair $( \FEspacePhase,\FEspaceChemP )$ is inf-sup stable for a mixed discretization of the bilaplacian.
It is well known that using equal-order polynomial spaces for the pair $( \FEspacePhase,\FEspaceChemP )$ (as in \eqref{FEspaces}) is enough to satisfy this condition (\cf \cite{Boff2013}).

Some additional definitions follow:
\begin{enumerate}[\itemizebullet]
\item
Let $\Vspace$ and $\Hspace$ denote the classical function spaces of
divergence-free functions
\begin{equation}\label{spaces:VH}
\begin{aligned}
\Vspace &= \lb \utest \in \hzerod \, | \, \diver{}\utest = 0 \, \text{in } \Omega \rb \\
\Hspace &= \lb \utest \in \ltwod \, | \, \diver{}\utest = 0  \text{ in } \Omega \text{ and } \utest \cdot \normal = 0 \text{ on } \bdry\rb
\end{aligned} 
\end{equation}

\item
We denote by $\FEspaceUdivfree$ the space of discretely divergence-free functions:
\begin{align*}
\FEspaceUdivfree = \lb \Utest \in \FEspaceU \ |
\ (\Ptest,\diver{}\Utest) = 0 \, \, \forall \, \Ptest \in \FEspaceP \rb 
\end{align*} 

\item
Let $\Pi_{\FEspaceUdivfree}:\ltwod \longrightarrow \FEspaceUdivfree$ denote the $\ltwo$ projection operator satisfying:
\begin{align}\label{ltwodef}
(\Pi_{\FEspaceUdivfree}\bv{v},\Utest) = (\bv{v},\Utest) \ \ \forall \, \Utest \in \FEspaceUdivfree
\end{align}

\item
Similarly, we denote $\Pi_{\FEspacePhase}:\ltwo \longrightarrow \FEspacePhase$ the $\ltwo$ projection operator onto the space $\FEspacePhase$. 

\item
We define the Stokes projection
$(\SPvel\bv{w},\SPpress r) \in \FEspaceU \times \FEspaceP$ of
$(\bv{w},r)\in \hzerod\times\ltwod$ as the 
pair that solves
\begin{align}
\label{stokesproj}
\left\{
\begin{aligned}
( \gradv{}\SPvel\bv{w}, \nabla\Utest ) - ( \SPpress r, \diver{}\Utest ) &= 
( \gradv{w} , \nabla\Ptest ) - ( r, \diver{}\Utest ) &&\forall \Utest \in \FEspaceU \\
( \Ptest, \diver{}\SPvel\bv{w} ) &= ( \Ptest, \diver{}\bv{W} ) &&\forall \Ptest \in \FEspaceP  \, .
\end{aligned}
\right.
\end{align}
which has the following well-known approximation properties (see for instance \cite{Girault,HeyRann})
\begin{align}\label{approximab}
\|\bv{w} -\SPvel\bv{w} \|_{\ltwods}
+ h \|\bv{w} -\SPvel\bv{w} \|_{\honeds}
+ h \|r - \SPpress r \|_{\ltwos} &
\leq c \, h^{\polydegree+1} \,
\big(\|\bv{w}\|_{\bv{H}^{\polydegree+1}}
+ \|r\|_{H^{\polydegree}} \big) \, , 
\end{align}
for all $(\bv{w},r) \in \bv{H}^{\polydegree+1}(\Omega)\times H^{\polydegree}(\Omega)$, with $c$ independent of $h$, $\bv{w}$ and $r$.
\end{enumerate}
%
%
%
%
%
%
%
%
%
%
%
%
%
\section{Simplification of the model}
\label{Ssimplification}
We can simplify model \eqref{Themodel} by eliminating the magnetostatics problem \eqref{systemPhi} and setting the effective magnetizing field to be $\heff := \ha$. The purpose of this section is to explain, at least with an heuristic argument, under which circumstances this is a reasonable physical approximation. The variational formulation for the magnetostatics problem \eqref{phiNeuIIItwo} is
\begin{align}
\label{varforphi05}
\int_{\Omega} \nabla\hdpot \cdot \nabla \phitest \, dx 
=  \int_{\Omega} ( \ha - \bv{m} )\cdot \nabla \phitest  \, dx \ \ \ \forall \, \phitest \in \hone / \mathbb{R} \, ,
\end{align}
and $\heff=\nabla\hdpot$ according to \eqref{totalhTwPh}.
We realize that $\heff \approx \ha$ provided the magnetization
$\bv{m}$ is small relative to $\ha$.
On the other hand, as explained in Section~\ref{Sderivation}, the
evolution of the magnetization $\bv{m}$ close to equilibrium is
dictated by \eqref{functional} and \eqref{magderiv}, which imply
\begin{align*}
\|\bv{m} - \suscepch \heff\|_{\ltwods} \approx 0 \, \ \ \Longrightarrow  \ \ 
\bv{m} \approx \suscepch \heff.
\end{align*}
It thus becomes clear that if $\suscepch < < 1$ the magnetization
$\bv{m}$ is small, and we can neglect its contribution
in \eqref{varforphi05}.

Therefore, a simplification of the model \eqref{Themodel} is to discard the Poisson problem \eqref{systemPhi} and set $\heff := \ha$. This will only be physically realistic for ferrofluids with a small susceptibility $\suscep$. Water based ferrofluids subject to slowly varying magnetic fields (and/or small characteristic times $\chartime$) could be modeled under these assumptions, since they usually exhibit a small magnetic susceptibility in the low frequency regime \cite{Pak2006,FerrotecWebpage}. It is worth mentioning that the simplification $\heff = \ha$ is not particularly new: it has been used for analytic computations of the Rosensweig model and still retains a significant amount of valid quantitative information as shown for instance in \cite{Rinal02,Zahn95,Sunil2007}; it has also been suggested in the analysis of stationary configurations of free surfaces of ferrofluids \cite{Tob2006}.

We consider the following ultra-weak formulation for the model defined
by equations \eqref{systemCH1}, \eqref{systemCH2}, \eqref{systemM},
\eqref{systemNS1},  \eqref{systemNS2}
upon integrating by parts the time derivatives and dealing with
test functions that vanish at $t = \tf$:
find $(\phvar,\chpot,\bv{m},\bv{u})\in L^2(([0,\tf);\hone) \times L^2(([0,\tf);\hone) \times L^2(([0,\tf);\ltwod) \times L^2(([0,\tf);\Vspace) $ that satisfy
\begin{subequations}
\label{weakfor}
\begin{align}
\label{SsystemCH1}
- \int_{0}^{\tf}( \phvar,\phasetest_t ) + (\bv{u} \phvar,\nabla\phasetest) + \mobility (\nabla\chpot,\nabla\phasetest) &= (\phvar(0),\phasetest(0) ) \, ,  \\
\label{SsystemCH2} 
\int_{0}^{\tf} \layerthick (\nabla\phvar,\nabla\chpotest)
+ \tfrac{1}{\layerthick} (f(\phvar),\chpotest )
+ (\chpot,\chpotest) &= 0 \, , \\
\label{SsystemM}
- \int_{0}^{\tf} (\bv{m},\mtest_t) + \tril(\bv{u},\mtest,\bv{m})
- \tfrac{1}{\chartime} (\bv{m},\mtest) &= 
(\bv{m}(0),\mtest(0)) 
+ \tfrac{1}{\chartime} \int_{0}^{\tf} (\suscepc \heff,\mtest) \, ,  \\  %
\label{SsystemNS1} %
\int_{0}^{\tf} - ( \bv{u},\utest_t ) + \tril(\bv{u},\bv{u},\utest) +( \nu_{\phvar} \, \bv{T}(\bv{u}),\bv{T}(\utest)) &=
( \bv{u}(0),\utest(0) )
+ \int_{0}^{\tf} \mu_0 \tril( \bv{m} ,\heff, \utest) 
+ \tfrac{\capcoeff}{\layerthick} (\phvar\nabla\chpot,\utest) \, ,
\end{align}
\end{subequations}
for all $\phasetest,\chpotest \in \mathcal{C}^{\infty}_0([0,\tf) \times \Omega)$, $\mtest \in \boldsymbol{\mathcal{C}}^\infty_0([0,\tf) \times \Omega)$, $\utest$ $\in$ $\{ \bv{w}\in \boldsymbol{\mathcal{C}}^\infty_0([0,\tf) \times \Omega)$ $\, | \, \diver{}\bv{w} = 0 $ $\text{ in } \Omega \, \}$, where now the magnetic field $\heff$ is not determined by the Poisson problem \eqref{systemPhi}, but rather $\heff = \ha$ is a given harmonic (curl-free and div-free) smooth vector field.

\subsection{A convergent scheme}
\label{Sconvscheme}
We first initialize the system \eqref{weakfor} as in \eqref{initialization}.
We do not discretize \eqref{weakfor} directly but rather
we compute $\{\phvarh^{k},\chpoth^k, \bv{M}^k,\bv{U}^k,P^k\} \in \FEspacePhase \times \FEspaceChemP \times \FEspaceM \times 
\FEspaceU \times \FEspaceP$ for every $k\in \{1,\ldots,K\}$ that solves
\begin{subequations}
\label{convsystem}
\begin{align}
\label{SCahn1}
\alp \tfrac{\inc\phvarh^{k}}{\dt},\Phasetest \arp 
- (\bv{U}^{k} \phvarh^{k-1},\nabla\Phasetest)
- \mobility (\nabla\chpoth^k,\nabla\Phasetest) &= 0 \, , \\
\label{SCahn2}
(\chpoth^k, \Chpotest ) 
+ \layerthick (\nabla\phvarh^k,\nabla\Chpotest) 
+ \tfrac{1}{\layerthick} (f(\phvarh^{k-1}),\Chpotest)
+ \tfrac{1}{\eta} (\inc\phvarh^k,\Chpotest)
&= 0 \, , \\
\begin{split}\label{SMag1}
\alp \tfrac{\inc\bv{M}^{k}}\dt,\Mtest \arp 
- \trilm\lp \bv{U}^{k},\Mtest,\bv{M}^{k}\rp
+ \tfrac{1}{\chartime} \lp\bv{M}^k,\Mtest\rp &= 
\tfrac{1}{\chartime} \lp \suscepch \bv{H}^{k}, \Mtest \rp \, ,
\end{split} \\
\begin{split}\label{SNS1}
\alp \tfrac{ \inc\bv{U}^{k}}\dt, \Utest \arp
+ \tril_h\lp\bv{U}^{k-1},\bv{U}^{k},\Utest\rp
+ \lp \nu_{\phvarh} \bv{T}(\bv{U}^{k}),\bv{T}(\Utest) \rp - \lp P^{k}, \diver{}\Utest \rp & =  \mu_0 \trilm\lp \Utest, \bv{H}^{k}, \bv{M}^{k}\rp
\\
&+ \tfrac{\capcoeff}{\layerthick} (\phvarh^{k-1} \nabla\chpoth^k, \Utest) \, ,
\end{split} \\
\label{SNS2}
\lp \Ptest, \diver{U}^k\rp &= 0 \, ,
\end{align}
\end{subequations}
for all $\lb \Phasetest,\Chpotest,\Mtest,\Utest,\Ptest\rb \in 
\FEspacePhase \times \FEspaceChemP \times \FEspaceM \times \FEspaceU
\times \FEspaceP$. As before, we set $\nu_{\phvarh} = \nu( \phvarh^{k-1})$ and 
$\suscepch = \varkappa( \phvarh^{k-1})$. The magnetic field $\bv{H}^k$ is given by 
\begin{align}\label{newHdef}
\bv{H}^k := \text{I}_{\FEspaceM}[\ha^k] \, ,
\end{align}
where $\text{I}_{\FEspaceM}$ was defined in \eqref{interpolants}. The latter implies that $\bv{H}^k$ does not jump across interelement boundaries.

The choice of spaces $\FEspacePhase$, $\FEspaceChemP$, $\FEspaceM$, $\FEspaceU$ and $\FEspaceP$ does need to be made precise now, we will provide a specific construction in Remark \ref{choicespacessec}. Right now we only need to say that, in order for \eqref{convsystem} to be convergent (in addition to the requirement that the spaces $\FEspacePhase$ and $\FEspaceChemP$ are stable for a mixed discretization of the bilaplacian, and the LBB compatibility condition \eqref{discreteinfsup} for the spaces $\lb \FEspaceU, \FEspaceP \rb$) we will also require that:
\begin{enumerate}
\item[(A1).] The pressure space $\FEspaceP$ should be made of discontinuous elements, and it should contain a continuous subspace of degree 1 or higher, such that $\FEspaceP \cap \mathcal{C}^{0}(\overline\Omega) \neq \emptyset$.
\item[(A2).] For all $\Mtest \in \FEspaceM$, we want each space component $\Mtest^i$ ($i=1, \cdots , d$) to belong to the same finite element space as the pressure, i.e. we require $\FEspaceM = [\FEspaceP]^d$.
\item[(A3).] The Stokes projector $\SPvel$ (defined in \eqref{stokesproj}) should guarantee strong convergence in $\bv{W}_{\!\infty}^1$-norm for smooth functions. More precisely, we require that
\begin{align}\label{LinfConvAss}
\|\nabla(\utest -\SPvel\utest) \|_{\linfds} \xrightarrow[]{h \rightarrow 0} \utest \ \ \forall \utest \text{ in }\boldsymbol{\mathcal{C}}_0^{\infty}(\Omega) \, .
\end{align}
Property \eqref{LinfConvAss} cannot be taken for granted for any arbitrary construction (i.e. choice of finite element spaces $\lb \FEspaceU,\FEspaceP\rb$ and mesh $\triangulation$). For $d= 2$, a partial list of finite element pairs $\lb \FEspaceU,\FEspaceP\rb$ and the requirements on the triangulation $\triangulation$ such that \eqref{LinfConvAss} can be guaranteed can be found in \cite{DurNoch1990}. For the case of $d = 3$ see Remark \ref{StkStab3d}.
\item[(A4).] The $L^2$-projection operator $\Pi_{\FEspacePhase}$, is $\hone$-stable, namely
\begin{align}\label{honestab}
\|\nabla\Pi_{\FEspacePhase}\phasetest\|_{\ltwos} \leq c \, \|\phasetest\|_{\hones} \ \
\forall \, \phasetest \in \hone
\end{align}
with $c$ independent of $h$ and $\phasetest$. In the context of quasi-uniform meshes the reader can check the classical references \cite{Ciar78,Girault}, and for non quasi-uniform meshes and different norms \cite{Stein2002,Crou1987,MR3150226}. 
\item[(A5).] The $L^2$-projection operator $\Pi_{\FEspaceUdivfree}$, defined in \eqref{ltwodef}, is $\hone$-stable meaning that
\begin{align}\label{Vhonestab}
\|\nabla\Pi_{\FEspaceUdivfree}\utest\|_{\ltwos} \leq c \, \|\utest\|_{\honeds} \ \ 
\forall \, \utest \in \Vspace
\end{align}
with $c$ independent of $h$ and $\utest$. For the proof of this
stability result we refer to \cite{Girault,HeyRann}. Property \eqref{Vhonestab} can be easily established provided that the pair $\lb \FEspaceU,\FEspaceP\rb$ admits a Fortin projector with optimal approximation properties in $\ltwod$. In \cite[p. 226]{salga2013} it was proved that such a Fortin operator exists for every LBB stable pair, provided that $\Omega$ is convex or of class $\mathcal{C}^{1,1}$.
\end{enumerate}

Using discontinuous pressures (requirement (A1)) allows us to localize the incompressibility constraint \eqref{SNS2} from the Stokes problem to each element, that is
\begin{align}\label{localizorth}
(\Ptest,\diver{}\bv{U}^k)_{\element} = 0 \ \ \forall \Ptest \in \FEspaceP, \, \forall \element \in \triangulation \, .
\end{align}
The constraint $\FEspaceM = [\FEspaceP]^d$ (requirement A2) together with \eqref{localizorth} means that:
\begin{align}\label{localizorth2}
\lp \Mtest^i, \diver{U}^k\rp_{\element} = 0  \ \ \forall \Mtest \in \FEspaceM \quad
\forall \, i = 1, \cdots , d \, , \ \forall \element \in \triangulation \, .
\end{align}

Note that \eqref{SNS1} utilizes the definition \eqref{trilineardef} for the  Kelvin force $\mu_0 \trilm\lp \Utest, \bv{H}^{k}, \bv{M}^{k}\rp$. However, not all the terms of the trilinear form $\trilm(\cdot,\cdot,\cdot)$ are used. More precisely, all the jump terms disappear since $\lj \bv{H}^{k} \rj\!\big|_{F} = 0$ for all $F \in \mathcal{F}^i$, which is a consequence of definition \eqref{newHdef}. This is a very convenient feature which will greatly simplify the a priori estimates and consistency analysis.

The main difference between schemes \eqref{firstscheme} and
\eqref{convsystem}, apart from the fact that the Poisson problem for
$\hdpoth^k$ was eliminated, are the new requirements on the spaces
$\FEspaceU$, $\FEspaceP$ and $\FEspaceM$. Adapting the arguments of
Propostion~\ref{discenerglemm00} we can show that the scheme
\eqref{convsystem} is stable, and proceeding as in
Theorem~\ref{thm:existence} we can establish existence of
solutions. We do this next.
\begin{remark}[max-norm error estimates for the Stokes projector in three dimensions]\label{StkStab3d} The development of max-norm error estimates in three dimensions is rather recent (\cf \cite{GirNochScott,GuzLeyk2012,Demlow2013,Manuel2015,MR3422453}), and is limited to a handful of finite element pairs $\lb \FEspaceU,\FEspaceP\rb$, such as the second and third order Taylor-Hood element, and the lowest order Bernardi-Raugel element. All the max-norm estimates reported in \cite{GirNochScott,GuzLeyk2012,Demlow2013,Manuel2015,MR3422453} use finite element pairs $\lb \FEspaceU,\FEspaceP\rb$ with continuous velocities combined with continuous pressures, or discontinuous pressures of order zero (piecewise constants), which do not satisfy the assumption (A1) of our list above. We are not aware of max-norm error estimates for stable finite element pairs $\lb \FEspaceU,\FEspaceP\rb$ using continuous velocities $\FEspaceU$ and higher-order (at least first-order) discontinuous pressures $\FEspaceP$, thus, satisfying (A1).
\end{remark}

\begin{proposition}[properties of the scheme]\label{discenerglemma2} Assume that $\eta \leq \layerthick$. In this setting, for every $k=1,\ldots,K$ there is $\lb\phvarh^k, \chpoth^k, \bv{M}^k,\bv{U}^k,P^k \rb\in \FEspacePhase \times \FEspaceChemP \times \FEspaceM \times \FEspaceU \times \FEspaceP$ that solves \eqref{convsystem}, with $\bv{H}^k$ defined in \eqref{newHdef}. Moreover this solution satisfies the following stability estimate
\begin{align}
\label{discenergy04}
\begin{split}
\|\bv{U}^{K} \|_{\ltwods}^2 
& + \tfrac{\mu_0}{2} \|\bv{M}^{K} \|_{\ltwods}^2
+ \capcoeff \|\nabla\phvarh^K\|_{\ltwods}^2 
+ \tfrac{2 \capcoeff}{\layerthick^2} (F(\phvarh^{K}),1) 
\\
&+ \sum_{k=1}^{K} \Big( \|\inc\bv{U}^{k} \|_{\ltwods}^2
+ \mu_0 \|\inc\bv{M}^{k} \|_{\ltwods}^2 
+ \capcoeff \|\inc\nabla\phvarh^k\|_{\ltwods}^2 
+ \tfrac{2 \capcoeff \mobility \dt}{\layerthick} \|\nabla\chpoth^k\|_{\ltwods}^2 \Big)\\
%
&+ \sum_{k=1}^{K} \Big( 2 \dt   \|\sqrt{\nu_{\phvarh}} \, \bv{T}(\bv{U}^{k})\|_{\ltwods}^2 
+ \tfrac{\mu_0 \dt }{\chartime} \|\bv{M}^k\|_{\ltwods}^2 
+ \tfrac{2 \mu_0 \dt}{\chartime} \| \sqrt{\suscepch} \, \bv{H}^{k}\|_{\ltwods}^2 \Big) \\
& \leq \|\bv{U}^{0} \|_{\ltwods}^2
+ 2\mu_0 \|\bv{M}^{0} \|_{\ltwods}^2 
+ \capcoeff \|\nabla\phvarh^{0}\|_{\ltwods}^2
+ \tfrac{2 \capcoeff}{\layerthick^2} (F(\phvarh^{0}),1) 
+ \mu_0 \|\bv{H}^0\|_{\ltwods}^2 
+ 2 \mu_0  \|\bv{H}^K \|_{\ltwods}^2 \\
&+ \sum_{k=1}^{K} \tfrac{3 \mu_0 \dt}{\chartime} (1 + \suscep^2) \|\bv{H}^{k}\|_{\ltwods}^2
+ 3 \mu_0 \chartime \sum_{k=1}^{K-1} \dt
\LN\tfrac{\inc\bv{H}^{k+1}}{\dt} \RN_{\ltwods}^2 \, .
\end{split}
\end{align}
The scheme is mass preserving
\begin{align}\label{masspres}
(\phvarh^k, 1) = (\phvarh^0, 1) \ \ \forall \, 1 \leq k \leq K \, ,
\end{align}
and the following additional estimate holds
\begin{align}\label{H1chempot}
 \|\chpoth^{\dt}\|_{\ell^{2}(H^1)} \leq c < \infty \, .
\end{align}
\end{proposition}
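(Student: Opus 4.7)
The proof has four parts---mass conservation, stability, existence, and the $H^1$ estimate for the chemical potential---and closely parallels the arguments of Proposition~\ref{discenerglemm00} and Theorem~\ref{thm:existence}. The only genuine novelty is how one absorbs the Kelvin force now that $\bv{H}^k = \text{I}_{\FEspaceM}[\ha^k]$ is data rather than the solution of a sub-problem. I would organize the proof as follows. Mass conservation \eqref{masspres} is immediate: testing \eqref{SCahn1} with $\Phasetest \equiv 1$ yields $(\inc\phvarh^k, 1) = 0$, and induction closes the argument.

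For stability I would mimic Proposition~\ref{discenerglemm00}, testing \eqref{SCahn1}--\eqref{SNS1} with $\Phasetest = \tfrac{2\capcoeff\dt}{\layerthick}\chpoth^k$, $\Chpotest = \tfrac{2\capcoeff}{\layerthick}\inc\phvarh^k$, $\Mtest = 2\mu_0\dt\,\bv{M}^k$, and $\Utest = 2\dt\,\bv{U}^k$. The skew-symmetry properties \eqref{eq:bhmskewns}--\eqref{bmskew}, identity \eqref{sumid}, the Taylor bounds \eqref{stabCH1}--\eqref{stabCH2} under $\eta\le\layerthick$, and the cancellation between the phase-convective and capillary terms produce an estimate structurally analogous to \eqref{discenergy02}, the unabsorbed leftover being the Kelvin contribution $2\mu_0\dt\,\trilm(\bv{U}^k,\bv{H}^k,\bv{M}^k)$. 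The crucial new step is to use $\bv{H}^k\in\FEspaceM$, which holds by construction \eqref{newHdef}, as an admissible test in \eqref{SMag1}; choosing $\Mtest = 2\mu_0\dt\,\bv{H}^k$ gives
\begin{equation*}
2\mu_0\dt\,\trilm(\bv{U}^k,\bv{H}^k,\bv{M}^k) = 2\mu_0(\inc\bv{M}^k,\bv{H}^k) + \tfrac{2\mu_0\dt}{\chartime}(\bv{M}^k,\bv{H}^k) - \tfrac{2\mu_0\dt}{\chartime}(\suscepch\bv{H}^k,\bv{H}^k),
\end{equation*}
and summation by parts in time
\begin{equation*}
\sum_{k=1}^K (\inc\bv{M}^k,\bv{H}^k) = (\bv{M}^K,\bv{H}^K) - (\bv{M}^0,\bv{H}^0) - \sum_{k=1}^{K-1} (\bv{M}^k,\inc\bv{H}^{k+1})
\end{equation*}
transfers the time increment onto the data $\bv{H}$. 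Cauchy--Schwarz and Young's inequality, together with $|\suscepch|\le\suscep$ from \eqref{coeffbounds}, absorb the resulting $\bv{M}$-contributions into the dissipation on the left of \eqref{discenergy04}, leaving precisely the data terms $\mu_0\|\bv{H}^0\|^2$, $2\mu_0\|\bv{H}^K\|^2$, $\tfrac{3\mu_0\dt}{\chartime}(1+\suscep^2)\|\bv{H}^k\|^2$, and $3\mu_0\chartime\dt\|\inc\bv{H}^{k+1}/\dt\|^2$ on the right.

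Existence then follows from the Leray--Schauder argument of Theorem~\ref{thm:existence} applied almost verbatim: the elimination of the magnetostatic potential only simplifies well-posedness of the linearized map, the $\alpha$-dependent a priori bound is obtained by repeating the stability calculation with $\delta_\alpha$ in place of $\inc$, and compactness is automatic in finite dimensions. For \eqref{H1chempot} it suffices to control the mean of $\chpoth^k$, since \eqref{discenergy04} already gives $\nabla\chpoth^k \in \ell^2(L^2)$ and the Poincar\'e--Wirtinger inequality then closes the gap. Testing \eqref{SCahn2} with $\Chpotest \equiv 1$ and using \eqref{masspres} yields $(\chpoth^k,1) = -\tfrac{1}{\layerthick}(f(\phvarh^{k-1}),1)$, which by \eqref{doublewellbound} combined with the $\ell^\infty(L^2)$ bound on $\phvarh^k$ inherited from \eqref{discenergy04} is uniformly bounded.

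The main technical obstacle is the book-keeping in the stability step: one must tune the weights in Young's inequality so that the factor $1+\suscep^2$ and the numerical constants in \eqref{discenergy04} come out exactly right, and so that every $\bv{M}$-contribution produced by the Abel summation is absorbed into the pre-existing dissipation rather than surviving on the right-hand side.
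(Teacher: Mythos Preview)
Your proposal is correct and follows essentially the same approach as the paper: the same test functions, the same use of $\bv{H}^k\in\FEspaceM$ as an admissible test in \eqref{SMag1}, summation by parts on $\sum_k(\inc\bv{M}^k,\bv{H}^k)$, Leray--Schauder for existence, and testing \eqref{SCahn2} with $\Chpotest=1$ for the mean of $\chpoth^k$. One small remark: when you write ``the $\ell^\infty(L^2)$ bound on $\phvarh^k$ inherited from \eqref{discenergy04}'', note that \eqref{discenergy04} controls only $\|\nabla\phvarh^k\|$, so you need the Poincar\'e--Wirtinger inequality together with mass conservation \eqref{masspres} to recover the full $L^2$ (or $H^1$) bound---the paper records this explicitly as \eqref{poinca}.
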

\begin{proof}
We proceed as in Proposition \ref{discenerglemm00} (discrete energy stability),
except that now $\bv{H}^k$ is given by \eqref{newHdef}:
set $\Phasetest = \tfrac{2 \capcoeff \dt}{\layerthick} \chpoth^k $,
$\Chpotest = \tfrac{2 \capcoeff}{\layerthick} \inc\phvarh^k$, $\Utest
= 2\dt \bv{U}^k$, $\Mtest = 2\dt\mu_0 \bv{M}^k$, and $\Mtest =
2\dt\mu_0 \bv{H}^k$ in \eqref{convsystem} and add the resulting
expressions. We thus obtain again equality \eqref{discenergy01}, now
without terms involving $\nabla\Phi^k$ and with the right-hand side
\[
\tfrac{2 \mu_0 \dt}{\chartime} \lp \suscepch \bv{H}^{k}, \bv{M}^k \rp  
+ 2 \mu_0 \alp \inc\bv{M}^{k},\bv{H}^k \arp 
+ \tfrac{2 \mu_0 \dt }{\chartime} \lp\bv{M}^k,\bv{H}^k\rp.
\]
The rest is just a matter of adding over $k$ from $1$ to $K$,
applying summation by parts 
\begin{align}\label{summation}
\sum_{k = 1}^{K} a^k \,  \inc b^k = a^K b^K - a^0 b^0 
- \sum_{k = 1}^{K-1} b^k \, \inc a^{k+1}
\end{align}
to $\sum_{k = 1}^{K} \alp \inc\bv{M}^{k},\bv{H}^k \arp$, and employing
Cauchy-Schwarz and Young's inequalities with appropriate
constants. Estimate \eqref{discenergy04}, in conjunction with
arguments similar to those of Theorem~\ref{thm:existence}, yield local
existence of solutions via the Leray-Schauder's theorem \cite[p. 280]{GT2001}.

The mass preserving property \eqref{masspres} can be easily verified by taking $\Phasetest = 1$ in \eqref{SCahn1}. Notice that \eqref{masspres} also implies the following Poincar\'e-type inequality
\begin{align}\label{poinca}
\|\phvarh^k\|_{\ltwos} \leq c \, \bigg( \|\nabla\phvarh^k\|_{\ltwods} + \Big|\int_{\Omega}  \phvarh^k\Big| \bigg) = 
c \, \bigg(\|\nabla\phvarh^k\|_{\ltwods} + \Big|\int_{\Omega}  \phvarh^0\Big| \bigg) \, . 
\end{align}

Estimate \eqref{H1chempot} follows by taking $\Chpotest = 1$ in \eqref{SCahn2}
\begin{align}\label{chppotmean}
 \Big| \int_{\Omega} \chpoth^{k} \Big| &\lesssim \int_{\Omega} |\inc\phvarh^k| + |f(\phvarh^{k-1})|  \,.
\end{align}
To control $\int_{\Omega} |f(\phvarh^{k-1})|$ we use \eqref{doublewellbound} together with the bound on $\|\nabla\phvarh^{\dt}\|_{\ell^{\infty}(\ltwods)}$ given in \eqref{discenergy04} and Poincar\'e inequality \eqref{poinca}, from this we conclude that
\begin{align}\label{meanchempot}
 \max_{k}\Big| \int_{\Omega} \chpoth^{k} \Big| \leq c < \infty \, .
\end{align}
Finally, \eqref{H1chempot} follows by combining \eqref{meanchempot} with \eqref{discenergy04}. 
This concludes the proof.
\end{proof}

\begin{remark}[technical assumption] From now on we will assume that $\int_{\Omega} \phvarh^0 = 0$ in order to simplify the presentation.
\end{remark}

\begin{remark}[range of susceptibility]
Notice that, in contrast to Proposition~\ref{prop:formalenergy}, we do
not require that the susceptibility satisfies $\suscep \leq 4$ 
in Proposition~\ref{discenerglemma2}. This is due to the fact that the magentic field $\bv{H}^\dt$ is part of the problem data. However, as explained at the beginning of this Section, the simplification $\heff = \ha$ is physically meaningful only for small values of $\suscep$.
\end{remark}

\begin{lemma}[estimates for the discrete time derivatives]\label{derivestlemma}
The following estimates for $\tfrac{\inc\phvarh^{k}}{\dt}$ and $\tfrac{\inc\bv{U}^{k}}{\dt}$ hold 
\begin{align}\label{addest}
\LN \tfrac{\inc\phvarh^\dt}{\dt}\RN_{\ell^2(H^*)} + \LN \tfrac{\inc\bv{U}^{\dt}}{\dt}\RN_{\ell^{4/3}(\Vspace^{*}) } \leq c < \infty
\end{align}
with $c$ depending only on $\ha$ but not on $h$ and $\dt$.
Here $H^*$ is the dual of $H^1(\Omega)$ and $\Vspace^{*}$ that of
  $\Vspace$ defined in \eqref{spaces:VH}.
\end{lemma}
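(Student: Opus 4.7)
The strategy is to test the discrete equations \eqref{SCahn1} and \eqref{SNS1} against projections of smooth test functions onto the appropriate discrete spaces, and exploit the $H^1$-stability of those projectors (assumptions (A4)--(A5)) together with the a priori bounds from Proposition~\ref{discenerglemma2} to convert the scheme into an estimate for the discrete time derivative in a dual norm. Note that $\inc\phvarh^k/\dt \in \FEspacePhase$ and $\inc\bv{U}^k/\dt \in \FEspaceUdivfree$, so testing against $\Pi_{\FEspacePhase}\phasetest$ and $\Pi_{\FEspaceUdivfree}\utest$ recovers $(\inc\phvarh^k/\dt,\phasetest)$ and $(\inc\bv{U}^k/\dt,\utest)$ respectively.

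\textbf{Phase field bound.} Given $\phasetest \in H^1(\Omega)$, setting $\Phasetest = \Pi_{\FEspacePhase}\phasetest$ in \eqref{SCahn1} yields
\[
\Big(\tfrac{\inc\phvarh^k}{\dt},\phasetest\Big)
= (\bv{U}^k\,\phvarh^{k-1},\nabla\Pi_{\FEspacePhase}\phasetest)
+ \mobility(\nabla\chpoth^k,\nabla\Pi_{\FEspacePhase}\phasetest).
\]
Using the continuous embedding $H^1(\Omega)\hookrightarrow L^4(\Omega)$ (valid for $d\le 3$) together with (A4) and the bound $\|\phvarh\|_{\ell^\infty(H^1)} \le c$ from \eqref{discenergy04}, we obtain
\[
\Big\|\tfrac{\inc\phvarh^k}{\dt}\Big\|_{H^*}
\le c\bigl(\|\nabla\bv{U}^k\|_{\ltwods}\,\|\phvarh^{k-1}\|_{H^1}
+ \mobility\,\|\nabla\chpoth^k\|_{\ltwods}\bigr).
\]
Squaring, multiplying by $\dt$ and summing over $k$, the first term is controlled by $\|\phvarh\|_{\ell^\infty(H^1)}^2\sum_k\dt\|\nabla\bv{U}^k\|_{\ltwods}^2$ and the second by \eqref{H1chempot}, both being finite thanks to \eqref{discenergy04}. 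This yields the $\ell^2(H^*)$ bound.

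\textbf{Velocity bound.} For $\utest\in\Vspace$, set $\Utest = \Pi_{\FEspaceUdivfree}\utest$ in \eqref{SNS1}; the discrete pressure term $(P^k,\diver{}\Pi_{\FEspaceUdivfree}\utest)$ vanishes by definition of $\FEspaceUdivfree$, giving
\[
\Big(\tfrac{\inc\bv{U}^k}{\dt},\utest\Big)
= -\tril_h(\bv{U}^{k-1},\bv{U}^k,\Pi_{\FEspaceUdivfree}\utest)
- (\nu_{\phvarh}\bv{T}(\bv{U}^k),\bv{T}(\Pi_{\FEspaceUdivfree}\utest))
+ \mu_0\trilm(\Pi_{\FEspaceUdivfree}\utest,\bv{H}^k,\bv{M}^k)
+ \tfrac{\capcoeff}{\layerthick}(\phvarh^{k-1}\nabla\chpoth^k,\Pi_{\FEspaceUdivfree}\utest).
\]
By (A5) and continuity of $\Pi_{\FEspaceUdivfree}\utest\in \FEspaceU\subset\bv{H}^1_0$, each test quantity is controlled by $\|\utest\|_{\honeds}$. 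The viscous term gives $c\|\bv{T}(\bv{U}^k)\|_{\ltwods}\|\utest\|_{\honeds}$, and the capillary term gives $c\|\nabla\chpoth^k\|_{\ltwods}\|\utest\|_{\honeds}$ (using $\phvarh\in\ell^\infty(L^4)$). For the Kelvin term, the jumps in $\trilm$ vanish since $\bv{H}^k = \text{I}_{\FEspaceM}[\ha^k]$ is globally continuous by \eqref{newHdef}, and smoothness of $\ha$ gives $\|\bv H^k\|_{W^{1,\infty}}\le c$, hence this term is bounded by $c\|\bv{M}^k\|_{\ltwods}\|\utest\|_{\honeds}$.

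\textbf{The main obstacle.} The convective term controls the summability exponent. Using skew-symmetry of $\tril_h$ on continuous finite element functions (so $\tril_h$ reduces to the Temam form) and the standard $3$-$d$ Ladyzhenskaya interpolation, we have
\[
|\tril_h(\bv{U}^{k-1},\bv{U}^k,\Pi_{\FEspaceUdivfree}\utest)|
\le c\,\|\bv{U}^{k-1}\|_{\ltwods}^{1/2}\,\|\nabla\bv{U}^{k-1}\|_{\ltwods}^{1/2}\,\|\nabla\bv{U}^k\|_{\ltwods}\,\|\utest\|_{\honeds}.
\]
Since $\|\bv{U}^{k-1}\|_{\ltwods}\le c$ by \eqref{discenergy04}, we then obtain
\[
\sum_{k=1}^K \dt\,\Big\|\tfrac{\inc\bv{U}^k}{\dt}\Big\|_{\Vspace^*}^{4/3}
\le c\sum_{k=1}^K \dt\,\|\nabla\bv{U}^{k-1}\|_{\ltwods}^{2/3}\,\|\nabla\bv{U}^k\|_{\ltwods}^{4/3}
+\text{(easier terms)},
\]
and H\"older's inequality with exponents $(3,3/2)$ bounds the right hand side by $(\sum_k\dt\|\nabla\bv{U}^{k-1}\|_{\ltwods}^2)^{1/3}(\sum_k\dt\|\nabla\bv{U}^k\|_{\ltwods}^2)^{2/3}\le c$; the viscous, Kelvin and capillary contributions are handled analogously (all four are summable in $\ell^{4/3}$). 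It is precisely this nonlinear convective balance that forces the exponent $4/3$ in \eqref{addest}, rather than $2$ as in the phase field case, and constitutes the central technical point of the argument.
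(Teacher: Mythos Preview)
Your argument is correct and follows the same strategy as the paper: test \eqref{SCahn1} and \eqref{SNS1} with $\Pi_{\FEspacePhase}\phasetest$ and $\Pi_{\FEspaceUdivfree}\utest$, invoke the $H^1$-stability assumptions (A4)--(A5), and control each term with the a~priori bounds of Proposition~\ref{discenerglemma2}. The only cosmetic difference is in the convective term: the paper bounds it by $\|\nabla\bv{U}^{k-1}\|_{\ltwods}^{3/2}+\|\nabla\bv{U}^{k}\|_{\ltwods}^{3/2}$ via interpolation and Young's inequality before summing, whereas you keep the product $\|\nabla\bv{U}^{k-1}\|_{\ltwods}^{1/2}\|\nabla\bv{U}^{k}\|_{\ltwods}$ and close with H\"older's inequality with exponents $(3,3/2)$ after summation --- both routes are equivalent and lead to the same $\ell^{4/3}(\Vspace^*)$ bound.
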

\begin{proof} Following \cite{Feng2006} we first use definition
  \eqref{ltwodef} and stability estimate \eqref{Vhonestab} to obtain
\begin{align*}
\begin{split}
\LN \tfrac{\inc\bv{U}^{k}}{\dt}\RN_{\Vspace^*}
&= \sup_{ \bv{v}\in \Vspace}
\frac{\alp \frac{ \inc\bv{U}^{k}}\dt, \bv{v} \arp}{\ \|\bv{v}\|_{\honeds}}
= \sup_{ \bv{v}\in \Vspace }
\frac{\alp \frac{ \inc\bv{U}^{k}}\dt, \Pi_{\FEspaceUdivfree}[\bv{v}] \arp}{\ \|\bv{v}\|_{\honeds}} \lesssim
\sup_{ \bv{v}\in \Vspace } 
\frac{\alp \frac{ \inc\bv{U}^{k}}\dt, \Pi_{\FEspaceUdivfree}[\bv{v}] \arp}{\ \|\Pi_{\FEspaceUdivfree}[\bv{v}]\|_{\honeds}} \, . 
\end{split}
\end{align*}
We next utilize \eqref{SNS1} and definition \eqref{trilineardef} to get
\begin{align*}
\begin{split}
\LN \tfrac{\inc\bv{U}^{k}}{\dt}\RN_{\Vspace^*}
&\lesssim \|\gradv{U}^{k}\|_{\ltwods}
+ \|\bv{U}^{k-1}\|_{\bv{L}^3} \|\bv{U}^{k}\|_{\bv{L}^6}
+ \|\diver{U}^{k-1}\|_{L^2} \|\bv{U}^{k}\|_{\bv{L}^3}  \\
&+ \|\nabla\bv{H}^{k}\|_{\linfds} \|\bv{M}^{k}\|_{\ltwods}
+ \|\bv{H}^{k}\|_{\linfds} \|\bv{M}^{k}\|_{\ltwods}
+ \|\phvarh^{k-1}\|_{L^3} \|\nabla\chpoth^k\|_{\bv{L}^2} \, . \\
\end{split}
\end{align*}
We employ estimate \eqref{discenergy04} and the interpolation inequality
\begin{align}
\|\bv{U}^{k}\|_{\bv{L}^3} \leq \|\bv{U}^{k}\|_{\bv{L}^2}^{1/2} 
\|\bv{U}^{k}\|_{\bv{L}^6}^{1/2} 
\lesssim \|\bv{U}^{k}\|_{\bv{L}^6}^{1/2}
\lesssim \|\nabla\bv{U}^{k}\|_{\ltwods}^{1/2} \, , 
\end{align}
to deduce that 
\begin{align}\label{disctimederest}
\begin{split}
\LN \tfrac{\inc\bv{U}^{k}}{\dt}\RN_{\Vspace^*} 
&\lesssim \|\gradv{U}^{k}\|_{\ltwods}
+ \|\nabla\bv{U}^{k-1}\|_{\ltwods}^{3/2}
+ \|\nabla\bv{U}^{k}\|_{\ltwods}^{3/2}
+ \|\bv{M}^{k}\|_{\ltwods} 
+ \|\nabla\chpoth^k\|_{\bv{L}^2} \\
&\lesssim \Big( \|\gradv{U}^{k}\|_{\ltwods}^{4/3}
+ \|\nabla\bv{U}^{k-1}\|_{\ltwods}^{2} 
+ \|\nabla\bv{U}^{k}\|_{\ltwods}^{2} 
+ \|\bv{M}^{k}\|_{\ltwods}^{4/3}
+ \|\nabla\chpoth^k\|_{\bv{L}^2}^{4/3}  \Big)^{\frac{3}{4}}  \, . 
\end{split}
\end{align}
Raise \eqref{disctimederest} to the power $4/3$, multiply by $\dt$, add in time, and resort to \eqref{discenergy04} to get the desired estimate on $\dt^{-1} \inc \bv{U}^k$.

For the term $\LN\tfrac{\inc\phvarh^k}{\dt}\RN_{\ell^2(H^\star)}$ we proceed
analogously, this time using \eqref{SCahn1} and \eqref{honestab}:
\begin{align}
\label{phasetiemder}
\begin{split}
\LN \tfrac{\inc\phvarh^k}{\dt}\RN_{H^\star} 
&= \sup_{ \Phasetest\in \hone} 
\frac{\alp \tfrac{\inc\phvarh^{k}}{\dt},\Phasetest \arp }{\|\Phasetest\|_{\hones}} 
\lesssim \sup_{ \Phasetest\in \hone} 
\frac{\alp \tfrac{\inc\phvarh^{k}}{\dt},\Pi_{\FEspacePhase}[\Phasetest] \arp }{\|\Pi_{\FEspacePhase}[\Phasetest]\|_{\hones}} \\
&\lesssim \|\bv{U}^k\|_{\bv{L}^4} \|\phvarh^{k-1}\|_{L^4} + \|\nabla\chpoth^k\|_{\ltwods}
\lesssim \|\nabla\bv{U}^k\|_{\ltwods} + \|\nabla\chpoth^k\|_{\ltwods} \, ; 
\end{split}
\end{align}
here we have used the Sobolev embedding inequality in three
dimensions, the equivalence between $\|\phvarh^k\|_{\hones}$ and
$\|\nabla\phvarh^k\|_{\ltwods}$ given by the Poincar\'e inequality
\eqref{poinca}, and the fact that $\|\nabla\phvarh^{\dt}\|_{\ell^{\infty}(\ltwods)}$
is bounded in view of estimate \eqref{discenergy04}.
We finally square \eqref{phasetiemder} and add over $k$ to get the desired estimate for $\dt^{-1} \inc\phvarh^k$.
\end{proof}
%
%
\subsection{Convergence}
\label{convschemesec}
We want to show that solutions generated by the scheme \eqref{convsystem} converge to the ultra-weak solutions of \eqref{weakfor}. The proof relies on classical compactness arguments. We first need the (already proved) basic energy estimates, and then the estimates on the time derivatives in dual norms. Applying Aubin's lemma we can establish existence of strongly convergent subsequences in $L^2(L^2)$ norms, which is enough to pass to the limit in each term. The construction combines some elements from both \cite{Feng2006} and \cite{Walk2005}.

The scheme \eqref{convsystem} generates a sequence of functions $\lb \phvarh^{\dt}, \chpoth^{\dt}, \bv{M}^{\dt}, \bv{U}^{\dt}, \bv{P}^{\dt}\rb$ corresponding to the nodes $\lb t^k \rb_{k=0}^K$, rather than space-time functions. In addition, the scheme \eqref{convsystem} does not have a variational structure in time. In order to reconcile these differences we rewrite scheme \eqref{convsystem} as a space-time variational formulation. For this purpose, we start by defining the functions $\phvarh_{h\dt}$, $\chpoth_{h\dt}$, $\bv{M}_{h\dt}$, $\bv{U}_{h\dt}$, $P_{h\dt}$ such that
\begin{align}\label{pwconst}
\phvarh_{h\dt} = \phvarh^{k} , \, 
\chpoth_{h\dt} = \chpoth^{k} , \, 
\bv{M}_{h\dt} = \bv{M}^{k} , \,
\bv{U}_{h\dt} = \bv{U}^{k} , \, 
P_{h\dt} = P^{k} \ \ \forall \, t \in (t_{k-1},t_k], \  k=1,\ldots,K \, ,
\end{align}
which are piecewise constant in time. Note that these functions are discontinuous in time, but their point values are well defined, in particular they are left-continuous at the nodes $\lb t_k \rb_{k=0}^K$, for instance:
\begin{align}\label{lefcont}
\bv{U}^k = \bv{U}_{h\dt} (t_{k}) = 
\lim_{t \nearrow t_k} \bv{U}_{h\dt} (t) \ \ \ \forall \ 0 \leq k \leq K \, .
\end{align}
Given a finite element space $\mathbb{A}_h$, say either 
$\FEspacePhase, \FEspaceChemP, \FEspaceM, \FEspaceU$ or $\FEspaceP$,
we define the space-time finite element space $\mathbb{A}_{h\tau}$ as
\[
\mathbb{A}_{h\tau} :=
\lb A_{h,\dt} \in L^2(0,\tf;\mathbb{A}_h) \ \Big| \ A_{h,\dt}\big|_{(t_{k-1},t_k]} \in
\mathbb{A}_h \otimes \mathbb{P}_0((t_{k-1},t_k]) \, , \ 1 \leq k \leq K  \rb.
\]
To interpret \eqref{convsystem} variationally, we multiply each
equation by $\tau$ and add over $k$ using \eqref{summation}. We end up
with the following discrete version of \eqref{weakfor}: find
$\lb \phvarh_{h\dt}, \chpoth_{h\dt}, \bv{M}_{h\dt}, \bv{U}_{h\dt}, P_{h\dt} \rb \in
\FEspacePhaseht \times \FEspaceChemPht \times \FEspaceMht \times
\FEspaceUht \times \FEspacePht$ satisfying the system of equations
\begin{subequations}
\label{timeDGsystem}
\begin{align}
\begin{split}
\label{SCahn1DG}
&(\phvarh_{h\dt}(\tf),\Phasetest_{h\dt}(\tf)) 
- \int_{0}^{\tf-\dt} \alp \phvarh_{h\dt},\tfrac{\Phasetest_{h\dt}(\cdot+\dt) - \Phasetest_{h\dt}}{\dt} \arp 
- \int_{0}^{\tf} (\bv{U}_{h\dt} \phvarh_{h\dt}(\cdot-\dt),\nabla\Phasetest_{h\dt}) \\
& \hskip3.cm
+ \mobility (\nabla\chpoth_{h\dt},\nabla\Phasetest_{h\dt}) = (\phvarh_{h\dt}(0),\Phasetest_{h\dt}(0)) \, , 
\end{split} \\
\label{SCahn2DG}
&\int_{0}^{\tf} (\chpoth_{h\dt}, \Chpotest_{h\dt} ) 
+ \tfrac{1}{\eta} (\phvarh_{h\dt} - \phvarh_{h\dt}(\cdot - \dt),\Chpotest_{h\dt}) 
+ \layerthick (\nabla\phvarh_{h\dt},\nabla\Chpotest_{h\dt}) 
+ \tfrac{1}{\layerthick}
(f(\phvarh_{h\dt}(\cdot-\dt)),\Chpotest_{h\dt}) = 0 \, , \\ 
\begin{split}\label{SMag1DG}
&(\bv{M}_{h\dt}(\tf),\Mtest_{h\dt}(\tf)) 
- \int_{0}^{\tf-\dt} \alp \bv{M}_{h\dt},\tfrac{\Mtest_{h\dt}(\cdot+\dt) - \Mtest_{h\dt}}{\dt} \arp 
- \int_{0}^{\tf}  \trilm\lp \bv{U}_{h\dt},\Mtest_{h\dt},\bv{M}_{h\dt}\rp \\
& \hskip3.cm - \tfrac{1}{\chartime} \lp\bv{M}_{h\dt},\Mtest_{h\dt}\rp 
= (\bv{M}_{h\dt}(0),\Mtest_{h\dt}(0) ) 
+ \tfrac{1}{\chartime} \int_{0}^{\tf} \lp \suscepch \bv{H}_{h\dt}, \Mtest_{h\dt} \rp \, , 
\end{split} \\
\begin{split}\label{SNS1DG}
&(\bv{U}_{h\dt}(\tf),\Utest_{h\dt}(\tf)) 
- \int_{0}^{\tf-\dt} \alp \bv{U}_{h\dt},\tfrac{\Utest_{h\dt}(\cdot+\dt) - \Utest_{h\dt}}{\dt} \arp \\
& \hskip3.cm
+ \int_{0}^{\tf} \lp \nu_{\phvarh} \bv{T}(\bv{U}_{h\dt}),\bv{T}(\Utest_{h\dt}) \rp
+ \tril_h\lp\bv{U}_{h\dt},\bv{U}_{h\dt},\Utest_{h\dt}\rp - \lp P_{h\dt}, \diver{}\Utest_{h\dt} \rp \\
&\hskip3.cm
= (\bv{U}_{h\dt}(0),\Utest_{h\dt}(0))
+ \mu_0 \int_{0}^{\tf} \trilm\lp \Utest_{h\dt}, \bv{H}_{h\dt}, \bv{M}_{h\dt}\rp
+ \tfrac{\capcoeff}{\layerthick} (\phvarh_{h\dt}(\cdot - \dt) \nabla\chpoth_{h\dt}, \Utest_{h\dt}) \, , 
\end{split} \\
\label{SNS2DG}
& \int_{0}^{\tf} \lp \Ptest_{h\dt}, \diver{U}_{h\dt}\rp = 0 \, , 
\end{align}
\end{subequations}
for every $\lb \Chpotest_{h\dt}, \Phasetest_{h\dt}, \Mtest_{h\dt}, \Utest_{h\dt}, \Ptest_{h\dt}\rb \in \FEspacePhaseht \times \FEspaceChemPht \times \FEspaceMht \times \FEspaceUht \times \FEspacePht$, where
$\cdot + \dt$ and $\cdot - \dt$ denote positive and negative shifts in time of size $\dt$. Expressions \eqref{pwconst}--\eqref{timeDGsystem} are the reinterpretation of the Backward-Euler method as a zero-order discontinuous Galerkin scheme (see for instance \cite{ErnGuermond,Walk2005,LiuWalk2007,MR2249024}). The difference between \eqref{convsystem} and \eqref{timeDGsystem} is merely cosmetic, since they are equivalent formulations of the same scheme, but clearly \eqref{timeDGsystem} has the right structure if we want to compare it with \eqref{weakfor}. Note also that the choice of half-open intervals $(t_{k-1},t_k]$ in \eqref{pwconst}, leading to the left-continuity \eqref{lefcont}, is consistent with upwinding fluxes (i.e. we chose traces from the direction where information is coming from, which is also consistent with causality).
\begin{lemma}[weak convergence]\label{weakconv} The family of functions $\lb\phvarh_{h\dt},\chpoth_{h\dt},\bv{M}_{h\dt},\bv{U}_{h\dt}\rb_{h,\dt>0}$, defined in \eqref{pwconst} have the following convergence properties:
\begin{align*}
&\phvarh_{h\dt} \xrightharpoonup[]{h,\dt \rightarrow 0}_*  \phvar^* \ \text{in }L^{\infty}(0,\tf;\hone) \, , \\
&\chpoth_{h\dt} \xrightharpoonup[]{h,\dt \rightarrow 0 } \chpot^* \ \text{in }L^2(0,\tf;\hone) \, , \\
&\bv{M}_{h\dt} \xrightharpoonup[]{h,\dt \rightarrow 0 }_* \bv{m}^* \ \text{in }L^{\infty}(0,\tf;\ltwod) \, , \\
&\bv{U}_{h\dt}  \xrightharpoonup[]{h,\dt \rightarrow 0}_* \bv{u}^* \ \text{in }L^{\infty}(0,\tf;\ltwod) \, ,\\
&\bv{U}_{h\dt} \xrightharpoonup[]{h,\dt \rightarrow 0 } \bv{u}^* \ \text{in }L^2(0,\tf;\honed) \, ,
\end{align*}
for some functions $\phvar^*$, $\chpot^*$, $\bv{m}^*$ and $\bv{u}^*$. Here $\xrightharpoonup[]{}_*$ denotes weak-star convergence. 
\end{lemma}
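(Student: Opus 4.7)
The plan is to invoke standard weak compactness (Banach-Alaoglu for $L^\infty$ spaces viewed as duals of separable $L^1$-spaces, and reflexivity for $L^2(0,\tf;X)$ with $X$ reflexive), applied to the uniform bounds already established in Proposition~\ref{discenerglemma2}. Because the functions defined in \eqref{pwconst} are piecewise constant in time, the discrete norms $\|\cdot\|_{\ell^\infty(E)}$ and $\|\cdot\|_{\ell^2(E)}$ coincide with the Bochner norms $\|\cdot\|_{L^\infty(0,\tf;E)}$ and $\|\cdot\|_{L^2(0,\tf;E)}$, so all uniform discrete bounds transfer without loss to continuous space-time bounds.

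First, I would assemble the necessary a priori bounds. Estimate \eqref{discenergy04} yields uniform bounds on $\|\bv{M}_{h\dt}\|_{L^\infty(0,\tf;\ltwod)}$, $\|\bv{U}_{h\dt}\|_{L^\infty(0,\tf;\ltwod)}$ and $\|\nabla\phvarh_{h\dt}\|_{L^\infty(0,\tf;\ltwod)}$; combining the latter with the Poincar\'e-type inequality \eqref{poinca} and the normalization $\int_\Omega \phvarh^0 = 0$ promotes it to a uniform bound on $\phvarh_{h\dt}$ in $L^\infty(0,\tf;\hone)$. The control of $\|\sqrt{\nu_{\phvarh}}\,\bv{T}(\bv{U}_{h\dt})\|_{L^2(0,\tf;\ltwod)}$ in \eqref{discenergy04}, together with the positive lower bound on $\nu_{\phvarh}$ given in \eqref{coeffbounds}, Korn's inequality, and the homogeneous Dirichlet condition encoded by $\FEspaceU \subset \hzerod$, yields a uniform bound for $\bv{U}_{h\dt}$ in $L^2(0,\tf;\honed)$. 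The remaining estimate \eqref{H1chempot} provides directly the uniform $L^2(0,\tf;\hone)$ bound on $\chpoth_{h\dt}$.

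With these uniform bounds in hand, the Banach-Alaoglu theorem produces weakly-star convergent subsequences in $L^\infty(0,\tf;\hone)$ and $L^\infty(0,\tf;\ltwod)$ (which are duals of the separable Banach spaces $L^1(0,\tf;\hone)$ and $L^1(0,\tf;\ltwod)$), while reflexivity of the Hilbert spaces $L^2(0,\tf;\hone)$ and $L^2(0,\tf;\honed)$ yields weakly convergent subsequences in those spaces. A standard diagonal extraction produces a single (non-relabeled) subsequence along which all five convergences hold simultaneously; the two topologies in which $\bv{U}_{h\dt}$ converges share the same limit $\bv{u}^*$ by uniqueness of distributional limits. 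The proof is essentially routine, and the only point that deserves brief comment is the identification of discrete-time and Bochner norms on piecewise-constant-in-time functions, which is immediate from definition \eqref{pwconst}; no genuine obstacle arises here, since strong compactness (which will require the dual-norm estimates of Lemma~\ref{derivestlemma} and Aubin's lemma) is postponed to the subsequent step of passing to the limit in the nonlinear terms.
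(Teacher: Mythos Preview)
Your argument is correct and follows exactly the paper's approach, which is the one-line observation that the result is a direct consequence of Proposition~\ref{discenerglemma2}, definition~\eqref{pwconst}, and the Banach--Alaoglu theorem. One small technical slip: $L^\infty(0,\tf;\hone)$ is the dual of $L^1(0,\tf;(\hone)')$ rather than of $L^1(0,\tf;\hone)$, but since $\hone$ is reflexive with separable predual this does not affect the weak-$*$ compactness conclusion.
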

\begin{proof} The proof is a direct consequence of Proposition \ref{discenerglemma2}, definition \eqref{pwconst}, and the Banach-Alaoglu theorem. 
\end{proof}
Note that these modes of convergence are not strong enough to pass to the limit in every term of \eqref{timeDGsystem}, so that the weak limits $\phvar^*$, $\chpot^*$, $\bv{m}^*$ and $\bv{u}^*$ of the previous lemma might not necessarily be solutions of \eqref{weakfor}. In order to improve these estimates we will use the classical Aubin-Lions Lemma in the following form \cite{Lions1969}.
\begin{lemma}[Aubin-Lions]\label{aubinlemma} Let $B_0$, $B$ and $B_1$ denote three Banach spaces such that
\begin{align*}
 B_0 \subset B \subset B_1 \, , 
\end{align*}
with $B_0$ and $B_1$ being reflexive, and $ B_0 \subset \subset B$. We define the space $W$ \begin{align*}
W = \lb w \, \big| \, w \in L^{p_0}(0,\tf;B_0) , \, w_t \in L^{p_1}(0,\tf;B_1)\rb \, , 
\end{align*}
with $1 < p_0, p_1 < \infty$, endowed with the following norm
\begin{align*}
\|w\|_{W} = \|w\|_{L^{p_0}(0,\tf;B_0)} +  \|w_t\|_{L^{p_1}(0,\tf;B_1)} \, .
\end{align*}
Then, the space $W$ is compactly embedded in $L^{p_0}(0,\tf;B)$.
\end{lemma}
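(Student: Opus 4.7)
The plan is to cite the classical Aubin--Lions compactness theorem \cite{Lions1969} directly, since the statement of Lemma~\ref{aubinlemma} is verbatim the standard result. For self-containment I would record the standard three-step argument that underlies it.

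The first step is the Ehrling--Lions interpolation inequality: the compact embedding $B_0 \subset\subset B$ together with the continuous embedding $B \subset B_1$ implies that for every $\epsilon > 0$ there exists $C_\epsilon > 0$ with
\begin{equation*}
\|v\|_B \le \epsilon\,\|v\|_{B_0} + C_\epsilon\,\|v\|_{B_1} \qquad \forall\, v \in B_0.
\end{equation*}
This is proved by contradiction: otherwise one could extract a normalized sequence in $B_0$ converging in $B$ (by compactness) to a limit that must be zero in $B_1$ (by continuity of $B \subset B_1$) while having unit norm in $B$, a contradiction. This inequality is the bridge that transfers the time-derivative control (which lives in $B_1$) into strong control in $B$.

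The second step is to use the derivative bound to produce uniform time equicontinuity. For $w \in W$ and $h > 0$, the fundamental theorem of calculus together with H\"older's inequality in time yields
\begin{equation*}
\|w(\cdot + h) - w(\cdot)\|_{L^{p_1}(0,\tf-h;B_1)} \le h^{1 - 1/p_1}\,\|w_t\|_{L^{p_1}(0,\tf;B_1)}.
\end{equation*}
Applying the Ehrling inequality pointwise in time to $w_n(\cdot + h) - w_n(\cdot)$ for a bounded sequence $\{w_n\} \subset W$, raising to the $p_0$-th power, integrating, and using H\"older once more to reconcile the exponents $p_0$ and $p_1$, I would obtain, uniformly in $n$,
\begin{equation*}
\|w_n(\cdot + h) - w_n(\cdot)\|_{L^{p_0}(0,\tf-h;B)} \le \epsilon\,C + C_\epsilon\,\omega(h),
\end{equation*}
with $\omega(h) \to 0$ as $h \to 0$. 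Choosing $\epsilon$ small first and then $h$ small delivers a uniform modulus of continuity for time translates in $L^{p_0}(0,\tf;B)$.

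The third step is the compactness extraction. The uniform bound in $L^{p_0}(0,\tf;B_0)$, reflexivity of $B_0$, and the compact embedding $B_0 \subset\subset B$ together yield pointwise-in-time strong compactness in $B$ after a diagonal extraction on a countable dense set of times. Combining this pointwise compactness with the equicontinuity established above, the Fr\'echet--Kolmogorov criterion in $L^{p_0}(0,\tf;B)$ produces a subsequence strongly convergent in $L^{p_0}(0,\tf;B)$. The most delicate point I would expect is the bookkeeping needed to reconcile the two different exponents $p_0$ and $p_1$ in the integration step, together with handling the boundary strip $(\tf-h,\tf)$ via the uniform $L^{p_0}(0,\tf;B_0)$ bound; the remainder is a standard diagonal argument.
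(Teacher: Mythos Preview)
Your proposal is correct and matches the paper's treatment: the paper does not prove Lemma~\ref{aubinlemma} at all but simply states it as the classical Aubin--Lions result with a citation to \cite{Lions1969}, which is exactly what your first sentence does. The three-step sketch you add for self-containment goes beyond what the paper provides, but it is standard and accurate.
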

\begin{lemma}[strong $L^2(0,t_F;\ltwo)$ convergence]\label{strongL2lem} The family of functions $\lb\phvarh_{h\dt},\bv{U}_{h\dt}\rb_{h,\dt>0}$ defined in \eqref{pwconst} has the following additional convergence properties:
\begin{align*}
&\phvarh_{h\dt} \xrightarrow[]{h,\dt \rightarrow 0} \phvar^* \ \text{in }L^2(0,\tf;\ltwo) \, , \\
&\bv{U}_{h\dt} \xrightarrow[]{h,\dt \rightarrow 0} \bv{u}^* \ \text{in }L^2(0,\tf;\ltwod) \, , 
\end{align*}
for some functions $\phvar^*$ and $\bv{u}^*$.
\end{lemma}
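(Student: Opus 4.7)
The plan is to invoke the Aubin-Lions compactness lemma (Lemma \ref{aubinlemma}) together with the uniform bounds from Proposition \ref{discenerglemma2} and Lemma \ref{derivestlemma}. However, the piecewise-constant-in-time functions $\phvarh_{h\dt}$ and $\bv{U}_{h\dt}$ defined in \eqref{pwconst} do not possess weak time derivatives in any standard sense, so the lemma cannot be applied to them directly. The standard workaround is to introduce auxiliary piecewise-linear-in-time interpolants $\widetilde{\phvarh}_{h\dt}$ and $\widetilde{\bv{U}}_{h\dt}$, defined on each subinterval $(t_{k-1},t_k]$ by linear interpolation between $\phvarh^{k-1}$, $\phvarh^{k}$ and between $\bv{U}^{k-1}$, $\bv{U}^{k}$ respectively. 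Their classical time derivatives are constant on each subinterval and coincide with the discrete backward differences $\dt^{-1}\inc \phvarh^k$ and $\dt^{-1}\inc\bv{U}^k$.

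With this construction, the estimates of Lemma \ref{derivestlemma} translate into
\[
\|\partial_t \widetilde{\phvarh}_{h\dt}\|_{L^2(0,\tf;H^*)} \leq c, \qquad
\|\partial_t \widetilde{\bv{U}}_{h\dt}\|_{L^{4/3}(0,\tf;\Vspace^*)} \leq c,
\]
while Proposition \ref{discenerglemma2} yields $\|\widetilde{\phvarh}_{h\dt}\|_{L^\infty(H^1)} \leq c$ and $\|\widetilde{\bv{U}}_{h\dt}\|_{L^2(H^1_0)} \leq c$. Then I would apply Lemma \ref{aubinlemma} with $(B_0,B,B_1) = (H^1(\Omega), L^2(\Omega), H^*(\Omega))$ and $(p_0,p_1)=(2,2)$ for the phase variable, and with $(B_0,B,B_1) = (\Vspace, \Hspace, \Vspace^*)$ and $(p_0,p_1)=(2,4/3)$ for the velocity; the compact embedding $B_0 \subset\subset B$ in both cases is the Rellich-Kondrachov theorem. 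This produces subsequences (not relabeled) converging strongly in $L^2(0,\tf;L^2(\Omega))$ to some $\phvar^*$ and $\bv{u}^*$.

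It only remains to transfer strong convergence from the linear interpolants back to the piecewise-constant functions \eqref{pwconst}. A direct computation shows
\[
\|\widetilde{\phvarh}_{h\dt} - \phvarh_{h\dt}\|_{L^2(0,\tf;L^2)}^2
\;\lesssim\; \dt \sum_{k=1}^K \|\inc\phvarh^k\|_{L^2}^2
\;\lesssim\; \dt \, \sum_{k=1}^K \|\inc\phvarh^k\|_{L^2}^2,
\]
and the sum is controlled by $\|\nabla\phvarh^\dt\|_{\ell^\infty(L^2)}^2 \cdot T$ via a trivial bound, or, more sharply, directly by the dissipative contribution in \eqref{discenergy04}. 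In any case the right-hand side tends to zero as $\dt \to 0$, so $\phvarh_{h\dt}\to \phvar^*$ strongly in $L^2(0,\tf;L^2)$; the identification of the limit with the weak-$\ast$ limit of Lemma \ref{weakconv} follows by uniqueness of limits in $\mathcal{D}'$. The analogous estimate for the velocity uses \eqref{discenergy04} to bound $\sum_k \|\inc\bv{U}^k\|_{L^2}^2$, again yielding an $\mathcal{O}(\dt)$ bound that forces $\widetilde{\bv{U}}_{h\dt} - \bv{U}_{h\dt}\to 0$ in $L^2(L^2)$, hence $\bv{U}_{h\dt} \to \bv{u}^*$ strongly in $L^2(0,\tf;\ltwod)$.

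The main technical obstacle is the first step: $\phvarh_{h\dt}$ is discontinuous in time, so Aubin-Lions does not apply directly and one must carefully introduce the linear interpolant and verify that the discrete time derivatives provided by Lemma \ref{derivestlemma} indeed coincide with $\partial_t \widetilde{\phvarh}_{h\dt}$ and $\partial_t \widetilde{\bv{U}}_{h\dt}$ in the distributional sense. The rest amounts to standard application of Banach-Alaoglu-type arguments and interpolation-error bounds in time.
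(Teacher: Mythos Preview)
Your proposal is correct and follows essentially the same route as the paper: introduce the piecewise-linear-in-time interpolants, apply Aubin--Lions (Lemma~\ref{aubinlemma}) using the a priori bounds of Proposition~\ref{discenerglemma2} and the dual estimates of Lemma~\ref{derivestlemma}, and then transfer strong $L^2(L^2)$ convergence back to the piecewise-constant functions via the identity $\|\bv{U}_{h\dt}-\widetilde{\bv{U}}_{h\dt}\|_{L^2(L^2)}^2=\tfrac{\dt}{3}\sum_k\|\inc\bv{U}^k\|_{\ltwods}^2$ together with \eqref{discenergy04}. One small slip: the discrete velocities are only \emph{discretely} divergence-free, so $\widetilde{\bv{U}}_{h\dt}\notin\Vspace$ in general; the correct triple is $(B_0,B,B_1)=(\hzerod,\ltwod,\Vspace^*)$, which still satisfies $B_0\subset\subset B\subset B_1$.
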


\begin{proof} We would like to apply the estimates from
Proposition~\ref{discenerglemma2} and Lemmas~\ref{derivestlemma} and
\ref{aubinlemma} directly to the family of functions
$\lb\phvarh_{h\dt},\bv{U}_{h\dt}\rb_{h,\dt>0}$. However, this
is not possible because they are discontinuous functions in time. Therefore, we define the auxiliary functions $\widehat{\phvarh}_{h\dt}$ and $\widehat{\bv{U}}_{h\dt}$ by:
\begin{align*}
\widehat{\phvarh}_{h\dt} = \ell_{k-1}(t) \phvarh^{k-1} 
+ \ell_{k}(t) \phvarh^{k} \ \ \forall \, t \in (t_{k-1},t_k] \, , \\
\widehat{\bv{U}}_{h\dt} = \ell_{k-1}(t) \bv{U}^{k-1} 
+ \ell_{k}(t) \bv{U}^k \ \ \forall \, t \in (t_{k-1},t_k] \, , 
\end{align*}
where $\ell_{k-1}(t) = (t_k - t)/\dt$ and $\ell_{k}(t) = (t - t_{k-1})/\dt$.
Since $\widehat{\phvarh}_{h\dt}$ and $\widehat{\bv{U}}_{h\dt}$ are
continuous functions in time, we have:
\begin{enumerate}[\itemizebullet]
\item
The functions $\widehat{\bv{U}}_{h\dt}$ and $\widehat{\phvarh}_{h\dt}$ converge strongly to some $\bv{u}^*$ and $\phvar^*$ in the $L^2(L^2)$ norm, i.e.
 \begin{align}\label{strongL2L2}
\|\widehat{\bv{U}}_{h\dt} - \bv{u}^*\|_{L^2(0,\tf;\ltwods)} + \|\widehat{\phvarh}_{h\dt} - \phvar^*\|_{L^2(0,\tf;L^2)} \xrightarrow[]{h,\dt \rightarrow 0} 0  \, .
\end{align} 
This is a direct consequence Proposition \ref{discenerglemma2}, the dual norm estimates for the time derivatives of Lemma~\ref{derivestlemma}, and an application of Lemma~\ref{aubinlemma}.

\item
The previous bullet implies that $\bv{U}_{h\dt}$ and $\phvarh_{h\dt}$ also converge strongly to the same limits $\bv{u}^*$ and $\phvar^*$ in the $L^2(L^2)$ norm. For the velocity $\bv{U}_{h\dt}$ this is easy to show using the triangle inequality
\begin{align*}
\|\bv{U}_{h\dt} - \bv{u}^*\|_{L^2(0,\tf;\ltwods)} \leq 
\|\bv{U}_{h\dt} - \widehat{\bv{U}}_{h\dt}\|_{L^2(0,\tf;\ltwods)} 
+ \|\widehat{\bv{U}}_{h\dt} - \bv{u}^*\|_{L^2(0,\tf;\ltwods)} \, ,
\end{align*}
and the fact that both terms tend to zero, the second because of
\eqref{strongL2L2} and the first in view of the property
\begin{align*}
\|\bv{U}_{h\dt} - \widehat{\bv{U}}_{h\dt}\|_{L^2(0,\tf;\ltwods)}^2 = 
\tfrac{\dt}{3} \sum_{k = 1}^{K} \|\inc\bv{U}^k\|_{\ltwods}^2
\end{align*}
and estimate \eqref{discenergy04}.
The same argument applies to the phase field $\phvarh_{h\dt}$.
\end{enumerate} 
This completes the proof. \end{proof}
At this point we are in the position to show the main convergence result.
\begin{theorem}[convergence]\label{mainconvlemma} The family of functions $\lb\phvarh_{h\dt},\chpoth_{h\dt},\bv{M}_{h\dt},\bv{U}_{h\dt}\rb_{h,\dt>0}$, defined in \eqref{pwconst} has the following convergence properties
\begin{align}\label{convmodes}
\begin{split}
&\phvarh_{h\dt} \xrightarrow[]{h,\dt \rightarrow 0} \phvar^* \ \text{in } L^2(0,\tf;\ltwo) \, , \\
&\phvarh_{h\dt} \xrightharpoonup[]{h,\dt \rightarrow 0 } \phvar^* \ \text{in } L^2(0,\tf;\hone) \, , \\
&\chpoth_{h\dt} \xrightharpoonup[]{h,\dt \rightarrow 0 } \chpot^* \ \text{in }L^2(0,\tf;\hone) \, , \\
&\bv{M}_{h\dt} \xrightharpoonup[]{h,\dt \rightarrow 0 } \bv{m}^* \ \text{in }L^2(0,\tf;\ltwod) \, , \\
&\bv{U}_{h\dt} \xrightarrow[]{h,\dt \rightarrow 0} \bv{u}^* \ \text{in }L^2(0,\tf;\ltwod) \, , \\
&\bv{U}_{h\dt} \xrightharpoonup[]{h,\dt \rightarrow 0 } \bv{u}^* \ \text{in }L^2(0,\tf;\honed) \, ,
\end{split}
\end{align}
where $\{\phvar^*, \chpot^*, \bv{m}^*,\bv{u}^*\} \in L^2(0,\tf;\hone) \times L^2(0,\tf;\hone) \times L^2(0,\tf;\ltwod) \times L^2(0,\tf;\honed)$ is an ultra-weak solution, namely it satisfies \eqref{weakfor}.
\end{theorem}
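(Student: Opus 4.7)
The plan is to pass to the limit $h,\dt\to 0$ in the space-time discrete formulation \eqref{timeDGsystem}, equation by equation, and identify the resulting limiting equalities with \eqref{weakfor}. As data for the limit I would take smooth test functions $\phasetest,\chpotest,\mtest,\utest$ supported in $[0,t_F)\times\Omega$ (with $\utest$ solenoidal), and then project them into the corresponding discrete finite-element-in-space, $\mathbb{P}_0$-in-time spaces using the interpolants $\text{I}_{\mathbb{A}_h}$ from \eqref{interpolants} and the Stokes projector $\Pi_V^S$ from \eqref{stokesproj}. The optimal approximation properties \eqref{optestim} and the $\bv{W}^1_\infty$ stability \eqref{LinfConvAss} from assumption (A3) give strong convergence of each projected test function, together with its discrete time difference quotient (since $\utest_t$ is smooth), to the smooth test function in the strongest norms that will be required below. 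From Lemmas \ref{weakconv} and \ref{strongL2lem} I already have, for a further subsequence, $\phvarh_{h\dt}\to\phvar^*$ a.e.\ in $(0,t_F)\times\Omega$ as well. The weak convergences \eqref{convmodes} follow immediately by the Banach--Alaoglu argument already used in Lemma \ref{weakconv}, so the remaining work is verifying that $(\phvar^*,\chpot^*,\bv m^*,\bv u^*)$ fulfills \eqref{SsystemCH1}--\eqref{SsystemNS1}.

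The linear contributions pass to the limit trivially: the discrete-time integration by parts in \eqref{SCahn1DG}, \eqref{SMag1DG}, \eqref{SNS1DG} reproduces the ultra-weak time derivatives upon using weak $L^2(L^2)$ convergence of $\phvarh_{h\dt},\bv M_{h\dt},\bv U_{h\dt}$ together with strong convergence of the backward difference quotients of the projected test functions to $\phasetest_t,\mtest_t,\utest_t$. The viscous and diffusive terms converge as well: for the Cahn--Hilliard diffusion this is weak convergence in $L^2(H^1)$; for $(\nu_{\phvarh}\bv T(\bv U_{h\dt}),\bv T(\Utest_{h\dt}))$, I would extract the a.e.\ subsequence of $\phvarh_{h\dt}(\cdot-\dt)$, invoke Lipschitz continuity of $\nu_{\phvar}$, the uniform bound \eqref{coeffbounds}, and dominated convergence to obtain $\nu_{\phvarh}\bv T(\Utest_{h\dt})\to\nu_{\phvar^*}\bv T(\utest)$ strongly in $L^2$, which pairs with the weak $L^2(H^1)$ convergence of $\bv U_{h\dt}$. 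An identical Lipschitz/dominated-convergence argument handles $\suscepch\bv H_{h\dt}\to\varkappa_{\phvar^*}\ha$ in the magnetization equation and $f(\phvarh_{h\dt}(\cdot-\dt))\to f(\phvar^*)$ in \eqref{SCahn2DG} via \eqref{doublewellbound}. The time-shift $\phvarh_{h\dt}(\cdot-\dt)\to\phvar^*$ in $L^2(L^2)$ is free of charge because $\|\phvarh_{h\dt}-\phvarh_{h\dt}(\cdot-\dt)\|_{L^2(L^2)}^2=\dt\sum_k\|\inc\phvarh^k\|_{L^2}^2\to 0$ by the energy estimate \eqref{discenergy04}.

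The trickier terms are the convective/coupling ones. The phase-field convection $(\bv U_{h\dt}\phvarh_{h\dt}(\cdot-\dt),\nabla\Phasetest_{h\dt})$ passes to the limit because both factors converge strongly in $L^2(L^2)$ while $\nabla\Phasetest_{h\dt}$ converges strongly in $L^\infty$. The Navier--Stokes trilinear form $\tril_h(\bv U_{h\dt},\bv U_{h\dt},\Utest_{h\dt})$ reduces to $\tril(\bv u^*,\bv u^*,\utest)$ in the limit by combining strong $L^2(L^2)$ convergence of $\bv U_{h\dt}$ with weak $L^2(H^1)$ convergence (one strong-weak product), after noting that the jump terms present in $\tril_h$ for discontinuous arguments disappear for the smooth continuous limit. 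The capillary coupling $(\phvarh_{h\dt}(\cdot-\dt)\nabla\chpoth_{h\dt},\Utest_{h\dt})$ is handled by strong $L^2$ convergence of $\phvarh_{h\dt}$, weak $L^2$ convergence of $\nabla\chpoth_{h\dt}$ (bounded in $L^2(L^2)$ by \eqref{discenergy04}), and $L^\infty$ convergence of $\Utest_{h\dt}$. For the Kelvin force $\mu_0\trilm(\Utest_{h\dt},\bv H_{h\dt},\bv M_{h\dt})$ the crucial observation is that $\bv H_{h\dt}=\text{I}_{\FEspaceM}[\ha^k]$ is continuous across interelement faces (assumption that $\ha$ is smooth together with (A2)), so the face jump terms in the definition \eqref{trilineardef} vanish identically; what remains is a bulk Temam-type expression that converges to $\mu_0\tril(\bv u^*,\ha,\bv m^*)$, and this equals $\mu_0\tril(\bv m^*,\ha,\bv u^*)$ because $\ha$ is curl-free, giving exactly the Kelvin force in \eqref{SsystemNS1}. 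Finally, the stabilization $\frac{1}{\eta}(\inc\phvarh^k,\Chpotest)$ contributes, after summation in time, at most $\eta^{-1}\|\phvarh_{h\dt}-\phvarh_{h\dt}(\cdot-\dt)\|_{L^2(L^2)}\|\Chpotest_{h\dt}\|_{L^2(L^2)}\to 0$ by the same shift estimate as above.

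The main obstacle will be the magnetization convection $\trilm(\bv U_{h\dt},\Mtest_{h\dt},\bv M_{h\dt})$, because $\bv M_{h\dt}$ is only weakly convergent and discontinuous, so direct passage to the limit in the face-jump terms is not available. To handle it, I plan to exploit the skew-symmetry \eqref{bmskew} together with the localized incompressibility \eqref{localizorth2} enforced by assumption (A2): rewrite $\trilm(\bv U_{h\dt},\Mtest_{h\dt},\bv M_{h\dt})=-\trilm(\bv U_{h\dt},\bv M_{h\dt},\Mtest_{h\dt})$ and integrate by parts element-wise so that the derivative falls on the smooth projected test function $\Mtest_{h\dt}$. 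The inter-element face integrals then involve only jumps of $\bv M_{h\dt}$ paired with continuous $\Mtest_{h\dt}$, which, combined with the bound $\sum_k\dt\|\bv M^k\|_{L^2}^2\lesssim 1$, vanish in the limit via strong $L^\infty$ convergence of the projected test function and a DG-type trace estimate; the remaining bulk integrals converge by strong $L^2(L^2)$ convergence of $\bv U_{h\dt}$ against weak $L^2(L^2)$ convergence of $\bv M_{h\dt}$. Initial data is recovered from the initialization \eqref{initialization} and the left-continuity \eqref{lefcont}: the boundary-in-time terms $(\phvarh_{h\dt}(0),\Phasetest_{h\dt}(0))$, $(\bv M_{h\dt}(0),\Mtest_{h\dt}(0))$, $(\bv U_{h\dt}(0),\Utest_{h\dt}(0))$ converge to the corresponding smooth test functions paired with $\phvar(0),\bv m(0),\bv u(0)$. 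Assembling all the limits reproduces \eqref{weakfor} with data $\heff=\ha$, completing the identification of the limit as an ultra-weak solution.
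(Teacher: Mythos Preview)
Your overall strategy coincides with the paper's: project smooth test functions into the discrete spaces via \eqref{interpolants} and the Stokes projector, then pass to the limit term by term in \eqref{timeDGsystem2}. Most of your arguments (time derivatives, viscous term via Lipschitz continuity of $\nu_\theta$ and (A3), stabilization, capillary coupling, nonlinearity $f$) are correct and essentially identical to the paper's.

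There is, however, a genuine gap in your treatment of the magnetization convection $\trilm(\bv U_{h\dt},\Mtest_{h\dt},\bv M_{h\dt})$. Your plan to make the face integrals ``vanish in the limit via a DG-type trace estimate'' cannot work: such trace inequalities give $\|\lj\bv M\rj\|_{L^2(F)}\lesssim h^{-1/2}\|\bv M\|_{L^2(T)}$, so the face contributions scale like $h^{-1/2}$ and do not tend to zero. The paper's route is much simpler and avoids skew-symmetry altogether: by construction \eqref{interpolants}, $\text{I}_{\FEspaceM}$ maps into $\FEspaceM\cap\boldsymbol{\mathcal C}^0(\overline\Omega)$, so $\Mtest_{h\dt}$ is \emph{continuous} and $\lj\Mtest_{h\dt}\rj|_F=0$; the face terms in \eqref{trilineardef} therefore drop out identically, with no limit argument needed. (If you insist on your detour through skew-symmetry and elementwise integration by parts, the face terms you produce actually cancel \emph{exactly} against the jump terms already present in $-\trilm(\bv U,\bv M,\Mtest)$, again because $\Mtest$ is continuous; they do not merely tend to zero.)

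More importantly, you do not address the Temam correction $\tfrac12\sum_T\int_T\diver{}\bv U_{h\dt}\,(\Mtest_{h\dt}\cdot\bv M_{h\dt})$, which survives either route. Here $\diver{}\bv U_{h\dt}$ converges to zero only weakly and $\bv M_{h\dt}$ only weakly, so ``strong $L^2$ of $\bv U$ against weak $L^2$ of $\bv M$'' does not apply. The paper's key device is the local orthogonality \eqref{localizorth2}, which is precisely where assumptions (A1)--(A2) enter: since $\langle\Mtest_{h\dt}\rangle_T$ is constant on each element and $\bv M_{h\dt}\in[\FEspaceP]^d$, the scalar $\langle\Mtest_{h\dt}\rangle_T\cdot\bv M_{h\dt}$ lies in $\FEspaceP|_T$, so one may subtract the element average of $\Mtest_{h\dt}$ at no cost and obtain
\[
\Big|\sum_T\int_T\diver{}\bv U_{h\dt}\,\big(\Mtest_{h\dt}-\langle\Mtest_{h\dt}\rangle_T\big)\cdot\bv M_{h\dt}\Big|
\lesssim \|\nabla\bv U_{h\dt}\|_{L^2(\ltwods)}\, h\,\|\nabla\mtest\|_{L^\infty}\,\|\bv M_{h\dt}\|_{L^2(\ltwods)}\xrightarrow[h\to0]{}0.
\]
The same subtraction (with $\langle\bv H_{h\dt}\rangle_T$) disposes of the Temam correction in the Kelvin force; your sentence ``what remains is a bulk Temam-type expression that converges'' skips exactly this step.
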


\begin{proof} The modes of convergence (weak or strong and their norm) in \eqref{convmodes} are a consequence of Lemmas \ref{weakconv} and \ref{strongL2lem}. It only remains to show that weak limits $\phvar^*$, $\chpot^*$, $\bv{m}^*$ and $\bv{u}^*$ are solutions of the variational problem \eqref{weakfor}. For this purpose we set $\lb\Phasetest_{h\dt},\Chpotest_{h\dt},\Mtest_{h\dt},\Utest_{h\dt} \rb$ to be the space-time interpolants/projections of the smooth test functions $\lb \phasetest, \chpotest, \mtest, \utest \rb$ of the variational formulation \eqref{weakfor}:
\begin{align}\label{disctest}
\Phasetest_{h\dt} := \text{I}_{\FEspacePhase}[\phasetest^k] \, ,  \ 
\Chpotest_{h\dt} := \text{I}_{\FEspaceChemP}[\chpotest^k] \, ,  \ 
\Mtest_{h\dt} := \text{I}_{\FEspaceM}[\mtest^k] \, , \
\Utest_{h\dt} := \SPvel[\utest^k] \, ,  \
\ \ \forall \, t \in (t_{k-1},t_k] \, . 
\end{align}
Note that we employ the Stokes projector $\SPvel\utest^k$ as a
discrete test function $\Utest_{h\dt}$, which means that we test with
discretely divergence-free functions. Inserting these discrete test
functions into \eqref{timeDGsystem} we get:
\begin{subequations}
\label{timeDGsystem2}
\begin{align}
\label{SCahn1DG2}
\begin{split}
&- \int_{0}^{\tf-\dt} \alp \phvarh_{h\dt},\tfrac{\Phasetest_{h\dt}(\cdot+\dt) - \Phasetest_{h\dt}}{\dt} \arp 
- \int_{0}^{\tf} (\bv{U}_{h\dt} \phvarh_{h\dt}(\cdot-\dt),\nabla\Phasetest_{h\dt}) +  \mobility (\nabla\chpoth_{h\dt},\nabla\Phasetest_{h\dt}) = (\phvarh_{h\dt}(0),\Phasetest_{h\dt}(0)) \, , 
\end{split} \\
\label{SCahn2DG2}
&\int_{0}^{\tf} (\chpoth_{h\dt}, \Chpotest_{h\dt} ) 
+ \tfrac{1}{\eta} (\phvarh_{h\dt} - \phvarh_{h\dt}(\cdot - \dt),\Chpotest_{h\dt}) 
+ \layerthick (\nabla\phvarh_{h\dt},\nabla\Chpotest_{h\dt}) 
+ \tfrac{1}{\layerthick} (f(\phvarh_{h\dt}(\cdot-\dt)),\Chpotest_{h\dt}) = 0 \, , \\
\begin{split}\label{SMag1DG2}
&- \int_{0}^{\tf-\dt} \alp \bv{M}_{h\dt},\tfrac{\Mtest_{h\dt}(\cdot+\dt) - \Mtest_{h\dt}}{\dt} \arp 
- \int_{0}^{\tf} \trilm\lp \bv{U}_{h\dt},\Mtest_{h\dt},\bv{M}_{h\dt}\rp
- \tfrac{1}{\chartime} \lp\bv{M}_{h\dt},\Mtest_{h\dt}\rp \\
& \hskip3.cm
= (\bv{M}_{h\dt}(0),\Mtest_{h\dt}(0) ) 
+ \tfrac{1}{\chartime} \int_{0}^{\tf} \lp \suscepch \bv{H}_{h\dt}, \Mtest_{h\dt} \rp \, ,
\end{split} \\
\label{SNS1DG2}
\begin{split}
&- \int_{0}^{\tf-\dt} \alp \bv{U}_{h\dt},\tfrac{\Utest_{h\dt}(\cdot+\dt) - \Utest_{h\dt}}{\dt} \arp 
+ \int_{0}^{\tf} \lp \nu_{\phvarh} \bv{T}(\bv{U}_{h\dt}),\bv{T}(\Utest_{h\dt}) \rp 
+ \tril_h\lp\bv{U}_{h\dt},\bv{U}_{h\dt},\Utest_{h\dt}\rp - \lp P_{h\dt}, \diver{}\Utest_{h\dt} \rp \\
& \hskip3.cm
= (\bv{U}_{h\dt}(0),\Utest_{h\dt}(0)) + \int_{0}^{\tf} \trilm\lp \Utest_{h\dt}, \bv{H}_{h\dt}, \bv{M}_{h\dt}\rp 
+ \tfrac{\capcoeff}{\layerthick} (\phvarh_{h\dt}(\cdot - \dt)
\nabla\chpoth_{h\dt}, \Utest_{h\dt}) \, ,
\end{split} \\
\label{div}
& \int_{0}^{\tf} \lp \Ptest_{h\dt}, \diver{U}_{h\dt}\rp = 0 \, ,  
\end{align}
\end{subequations}
where the terms evaluated at time $t = \tf$ have disappeared because
of the compact support of the test functions $\lb \phasetest,
\chpotest, \mtest, \utest \rb$.  Note also that the pressure term of
the Navier Stokes equation has vanished as well, which is a
consequence of the definition \eqref{disctest} of the discrete test function
$\Utest_{h\dt} := \SPvel[\utest^k]$ for all $t \in (t^{k-1},t^k]$ involving the Stokes projector $\SPvel$ defined in \eqref{stokesproj}. Now we will pass to the limit term by term in \eqref{timeDGsystem2}:
\begin{enumerate}[\itemizebullet]
\item
{\it Time derivatives}.
The convergence of the following three terms is straightforward
\begin{align*}
- \int_{0}^{\tf-\dt} \alp \phvarh_{h\dt},\tfrac{\Phasetest_{h\dt}(\cdot+\dt) - \Phasetest_{h\dt}}{\dt} \arp  
\ \ \ &\xrightarrow[]{h,\dt \rightarrow 0} \ \ \ - \int_{0}^{\tf}( \phvar^*,\phasetest_t )  \, , \\
- \int_{0}^{\tf-\dt} \alp \bv{M}_{h\dt},\tfrac{\Mtest_{h\dt}(\cdot+\dt) - \Mtest_{h\dt}}{\dt} \arp  
\ \ \ &\xrightarrow[]{h,\dt \rightarrow 0} \ \ \ - \int_{0}^{\tf} (\bv{m}^*,\mtest_t) \, , \\
- \int_{0}^{\tf-\dt} \alp \bv{U}_{h\dt},\tfrac{\Utest_{h\dt}(\cdot+\dt) - \Utest_{h\dt}}{\dt} \arp 
\ \ \ &\xrightarrow[]{h,\dt \rightarrow 0} \ \ \ - \int_{0}^{\tf} ( \bv{u}^*,\utest_t ) \, , 
\end{align*}
because of the weak $L^2(L^2)$ convergence of $\phvarh_{h\dt}$, $\bv{M}_{h\dt}$ and $\bv{U}_{h\dt}$, and the strong convergence of the finite differences $\tfrac{\Phasetest_{h\dt}(\cdot+\dt) - \Phasetest_{h\dt}}{\dt}$, $\tfrac{\Mtest_{h\dt}(\cdot+\dt) - \Mtest_{h\dt}}{\dt}$ and $\tfrac{\Utest_{h\dt}(\cdot+\dt) - \Utest_{h\dt}}{\dt}$ guaranteed by the regularity of the test functions.

\item
{\it Convective terms.} We start with the convective term of 
\eqref{SCahn1DG2}
\begin{align*}
\int_{0}^{\tf} (\bv{U}_{h\dt} \phvarh_{h\dt}(\cdot-\dt),\nabla\Phasetest_{h\dt})
\ \ \ \xrightarrow[]{h,\dt \rightarrow 0} \ \ \ \int_{0}^{\tf}(\bv{u}^* \phvar^*,\nabla\phasetest) 
\end{align*}
for which the convergence modes in \eqref{convmodes} are more than
enough: for $\bv{U}_{h\dt}$ and $\phvarh_{h\dt}$ we just need
one weak and one strong convergence in $L^2(L^2)$, and the strong $L^{\infty}$-convergence of $\nabla\Phasetest_{h\dt}$ guaranteed by \eqref{optestim} and the regularity of the test function $\phasetest$.

We continue with the convective term of \eqref{SMag1DG2}. Using definition \eqref{trilineardef} we get
\begin{align}
\label{trilimlimit}
\begin{aligned}
&\int_{0}^{\tf} \trilm\lp \bv{U}_{h\dt},\Mtest_{h\dt},\bv{M}_{h\dt}\rp 
= \int_{0}^{\tf} \sum_{\element \in \triangulation} \int_\element \big( (\bv{U}_{h\dt} \cdot \nabla )\bv{Z}_{h\dt} \cdot \bv{M}_{h\dt} 
+ \tfrac{1}{2} \diver{}\bv{U}_{h\dt} \, \bv{Z}_{h\dt} \cdot \bv{M}_{h\dt} \big) \, .
\end{aligned}
\end{align} 
Note that the consistency terms with the jumps have disappeared since $\lj\bv{Z}_{h\dt}\rj\big|_{F} = 0$ for all $F\in \mathcal{F}^i$, which is a consequence of definitions \eqref{interpolants} and \eqref{disctest}. Passage to the limit of the first part of \eqref{trilimlimit}, that is
\begin{align*}
\int_{0}^{\tf} \sum_{\element \in \triangulation} \int_\element (\bv{U}_{h\dt} \cdot \nabla )\bv{Z}_{h\dt} \cdot \bv{M}_{h\dt} \ \ \ \xrightarrow[]{h,\dt \rightarrow 0} \ \ \ \int_{0}^{\tf} \tril(\bv{u}^*,\bv{z},\bv{m}^*) \, , 
\end{align*}
is carried out using the strong $L^2(L^2)$ convergence of $\bv{U}_{h\dt}$, the weak $L^2(L^2)$ convergence of $\bv{M}_{h\dt}$ and the strong $L^{\infty}$-convergence of $\nabla\bv{Z}_{h\dt}$ guaranteed by \eqref{optestim}. By consistency, we need the second part of \eqref{trilimlimit} to vanish when $h,\dt \rightarrow 0$. In view of the local orthogonality property \eqref{localizorth2}, we obtain
\begin{align}
\label{temamconsistency}
\int_{0}^{\tf} \sum_{\element \in \triangulation} \int_\element \tfrac{1}{2} \diver{}\bv{U}_{h\dt} \, \bv{Z}_{h\dt} \cdot \bv{M}_{h\dt} 
=  \int_{0}^{\tf} \sum_{\element \in \triangulation} \int_\element
\tfrac{1}{2} \diver{}\bv{U}_{h\dt} \, (\bv{Z}_{h\dt} -
\langle\bv{Z}_{h\dt} \rangle_{\element} )\cdot \bv{M}_{h\dt} \rp \, ,
\end{align}
where $\langle\bv{Z}_{h\dt} \rangle_{\element} =
\tfrac{1}{|\element|}\int_{\element} \bv{Z}_{h\dt}$. Invoking the
uniform bounds on $\bv{U}_{h\dt}$ and $\bv{M}_{h\dt}$ already proved
in Proposition \ref{discenerglemma2} (properties of the scheme), and the
regularity of the test function $\mtest$, we deduce
\begin{equation*}
\int_{0}^{\tf} \sum_{\element \in \triangulation} \int_\element
\big| \diver{}\bv{U}_{h\dt} \, \bv{Z}_{h\dt} \cdot \bv{M}_{h\dt} \big|
\lesssim \|\nabla\bv{U}_{h\dt}\|_{L^2(0,\tf;\ltwods)} \ h \|\nabla\Mtest\|_{L^\infty(0,\tf,\linfds)} \ \|\bv{M}_{h\dt}\|_{L^2(0,\tf;\ltwods)}
\  \xrightarrow[]{h,\dt \rightarrow 0} \ 0 \, .
\end{equation*}

Passage to the limit of the convective term in \eqref{SNS1DG2} follows standard procedures and its treatment can be found in other works such as \cite{Temam,MarTem}.

\item
For the Kelvin force in \eqref{SNS1DG2}, using again definition \eqref{trilineardef}, we get
\begin{equation*}
\int_{0}^{\tf} \trilm\lp \Utest_{h\dt}, \bv{H}_{h\dt},\bv{M}_{h\dt}\rp 
= \int_{0}^{\tf} \sum_{\element \in \triangulation} \int_\element \lp (\Utest_{h\dt} \cdot \nabla )\bv{H}_{h\dt} \cdot \bv{M}_{h\dt}
+ \tfrac{1}{2} \diver{}\Utest_{h\dt} \, \bv{H}_{h\dt} \cdot
\bv{M}_{h\dt} \rp.
\end{equation*}
Arguments similar to those used in  \eqref{temamconsistency} show
that the second term tends to zero, this time by adding a term of the
form $\langle\bv{H}_{h\dt} \rangle_{\element}\cdot\bv{M}_{h\dt}$. For
the first term we exploit the weak $L^2(L^2)$ convergence of
$\bv{M}_{h\dt}$, the strong $L^2(H^1)$ convergence of $\bv{H}_{h\dt}$,
the strong $L^{\infty}$ convergence the test function $\Utest_{h\dt}$,
and the fact that $\nabla\heff = \nabla\heff^\text{T}$ (because
$\curl\heff = 0$). This is all what we need to pass to the limit
\begin{equation*}
\int_{0}^{\tf}  \trilm\lp \Utest_{h\dt},\bv{H}_{h\dt},\bv{M}_{h\dt}\rp
\xrightarrow[]{h,\dt \rightarrow 0}
\int_{0}^{\tf} \int_{\Omega} (\utest \cdot \nabla)\bv{h} \,  \bv{m}^* = \int_{0}^{\tf} \int_{\Omega} (\bv{m}^* \cdot \nabla)\bv{h} \,  \utest \, . 
\end{equation*}

\item

{\it Stabilization.} To show that the stabilization term
in \eqref{SCahn2DG2} vanishes in the limit
\begin{align*}
\int_{0}^{\tf} \tfrac{1}{\eta} \big(\phvarh_{h\dt} - \phvarh_{h\dt}(\cdot - \dt),\Chpotest_{h\dt}\big) = \tfrac{1}{\eta} \sum_{k=1}^{K} \dt (\inc\phvarh^k,\Chpotest^k )
\leq \tfrac{1}{\eta} \alp \sum_{k=1}^{K} \dt \|\inc\phvarh^k\|_{\ltwos}^2 \arp^{\!\! 1/2}
\alp \sum_{k=1}^{K} \dt \|\Chpotest^k\|_{\ltwos}^2 \arp^{\!\! 1/2} \lesssim \dt^{1/2} \, , 
\end{align*}
we resort to the stability estimate \eqref{discenergy04} and
Poincar\'e inequality \eqref{poinca}.

\item
{\it Dissipation.}
The only critical term left in \eqref{SNS1DG2} is $\int_{0}^{\tf} \lp \nu_{\phvarh} \bv{T}(\bv{U}_{h\dt}),\bv{T}(\Utest_{h\dt}) \rp$. Passage to the limit
\begin{align}
\int_{0}^{\tf} \lp \nu_{\phvarh} \bv{T}(\bv{U}_{h\dt}),\bv{T}(\Utest_{h\dt}) \rp 
\ \ \xrightarrow[]{h,\dt \rightarrow 0} \ \ \int_{0}^{\tf} \lp \nu_{\phvar^*}  \bv{T}(\bv{u}^*),\bv{T}(\utest) \rp
\end{align}
uses strong $L^2(L^2)$ convergence of $\phvar^*$, the Lipschitz continuity property of $\nu_{\phvar}$, the weak $L^2(L^2)$ convergence of $\nabla\bv{U}_{h\dt}$, and the strong $L^{\infty}$ convergence of $\gradv{}\Utest_{h\dt}$ from assumption \eqref{LinfConvAss}.
\end{enumerate}
The remaining terms require little or no explanation, or the passage
to the limit can be found in other works such as
\cite{Feng2006,LiuWalk2007,grun2013}.
This completes the proof.
\end{proof}

\begin{remark}[stabilization] For the sake of simplicity, we have presented the numerical scheme \eqref{convsystem} without any form of stabilization (upwinding). Unlike Continuous Galerkin methods, DG schemes do not need any form additional numerical stabilization in order to work. However without some form of linear stabilization they will deliver sub-optimal convergence rates to smooth solutions (see for instance \cite{DiPi10}). Numerical stabilization can be considered by adding the term
\begin{align}\label{stabterm}
\stab(\bv{U}^k,\bv{M}^k,\Mtest) = \tfrac{1}{2} \sum_{F\in\mathcal{F}^i}
\bdryint{F}{|\bv{U}^k\cdot \normal_F| \lj \bv{M}^k\rj \cdot \lj\Mtest\rj}
\end{align}
to the left-hand side of \eqref{SMag1}. With such a modification all the results presented above remain unchanged.
\end{remark}

\begin{remark}[choice of finite element spaces]\label{choicespacessec} Now we specify a set of finite element spaces which satisfy the key assumptions required in the proof of convergence of scheme \eqref{convsystem}. In space dimension two, the spaces $\FEspacePhase$ and $\FEspaceChemP$  can be the same as those defined in \eqref{FEspaces}, while the pair $\lb\FEspaceU,\FEspaceP\rb$ can be chosen as
\begin{align}
\label{CrouRavPair}
\begin{split}
\FEspaceU &= \lb \bv{U} \in \bm{\mathcal{C}}^0\bigl(\overline\Omega\bigr) \, \big|
\, \bv{U}|_{T} \in [\simplex_{2}(\element) \oplus \text{Span} \, \mathcal{B}(\element)]^d \, , \forall \, \element \in \triangulation \rb \cap \hzerod \, , \\
\FEspaceP &= \lb \Ptest \in L^2\bigl(\overline\Omega\bigr) \ |
\ \Ptest|_{T} \in \simplex_{{1}}(\element) \, , \forall \, \element \in \triangulation \rb \, ,  
\end{split}
\end{align}
where $\mathcal{B}(\element) = \prod_{i =1}^{d+1} \lambda_i$ is the cubic bubble function and $\lb \lambda_i \rb_{i=1}^{d+1}$ are the barycentric coordinates. This finite element pair is known as enriched Taylor-Hood element and it is well known to be LBB stable (\cf \cite{ErnGuermond}) in space dimension two. Approximation (convergence) properties of the Stokes projector in $L^{\infty}$-norm (guaranteeing that assumption \eqref{LinfConvAss} holds true) for this finite element pair were established in \cite{DurNoch1990}. In space dimension three, the reader could consider using again the finite element spaces $\FEspacePhase$ and $\FEspaceChemP$ defined in \eqref{FEspaces} and the second-order Bernardi-Raugel element (see \cite[p. 148]{Girault}) which uses $\simplex_{1}$ discontinuous elements for the pressure space, but the convergence assumption \eqref{LinfConvAss} is to be verified. In every case, the magnetization space $\FEspaceM$ should be set to be $\FEspaceM := [\FEspaceP]^d$.
\end{remark}

%
%
\section{Numerical Experiments}
\label{Sexperiment}
%
Let us now explore model \eqref{Themodel} and scheme \eqref{firstscheme} with a series of examples. The main goal of these experiments is to assess the robustness of scheme \eqref{firstscheme} and to demostrate the ability of the model to capture some well-known phenomena observed in real ferrofluids.  
%
\subsection{General considerations}
In all the experiments we consider the gradient of the potential of a point dipole $\nabla\hapot_s$ as a prototype for an applied magnetizing field where
\begin{align}\label{dipole}
\hapot_s(\bv{x}) = \frac{\dipdir\cdot (\bv{x}_s - \bv{x})}{|\bv{x}_s- \bv{x}|^3} \, , 
\end{align}
$|\dipdir| = 1 $ indicates the direction of the dipole, and $\bv{x}_s = (x_s,y_s,z_s) \in \mathbb{R}^3$ is its location. It is clear that $\curl{}\nabla\hapot_s=0$, and it is straightforward to verify that $\diver{}\nabla\hapot_s = \Delta\hapot_s = 0$ for every $\bv{x} \neq \bv{x}_s$, so that $\nabla\hapot_s$ defines a harmonic vector field. This is a physical requirement in the context of non-conducting media: a magnetic field must satisfy the equations of magnetostatics, which boils down to $\ha$ being harmonic (both curl and div-free). 

Formula \eqref{dipole}, however, is intrinsically three dimensional \cite{Jack1998}. For this reason, we consider an alternative definition which leads to a two-dimensional harmonic vector field in $\mathbb{R}^2$:
\begin{align}\label{dipole2D}
\hapot_s(\bv{x}) = \frac{\dipdir\cdot (\bv{x}_s - \bv{x})}{|\bv{x}_s- \bv{x}|^2} \, ,
\end{align}
where now $\bv{d}, \bv{x} , \bv{x}_s \in \mathbb{R}^2$. In all our numerical experiments we will use linear combinations of dipoles as applied magnetizing field $\ha$:
\begin{align}\label{haformula}
\ha = \sum_{s} \alpha_s \nabla\hapot_s.
\end{align}

On the other hand, in order to carry out meaningful computations of phase-field models it is crucial to resolve the transition layer, otherwise artificial spurious oscillations may arise (\cf \cite{VPN1990,Elli1993,Braides2012}). Even in the context of two dimensional simulations, using for instance $\layerthick = 0.01$, and resolving the transition layer of thickness $\layerthick$ by means of uniform meshes (say for instance, using $h = 10^{-3}$) can turn out to be prohibitively expensive and slow. If we want to obtain results in a timely fashion, computations of phase-field models claiming to have any practical value do invariably need some form of adaptivity. Our work is no exception, and for that reason we use adaptivity in space. This entails using graded meshes and coarsening, the latter not being covered by the theory developed in this paper; we refer to \cite{Bart2010,Kess2004}. 

The implementation is carried out with the help of the \texttt{deal.II} library \cite{BHK2007,DealIIReference}. In particular the parallel-adaptive framework discussed in \cite{Heister2011, TimoThesis} has been extensively used in this work. Regarding error indicators we resort to the simplest element indicator $\eta_{\element}$ \cite{Kelly1983}:
\begin{align}
\label{errorindic}
\eta_{\element}^2 = h_{\element} \bdryint{\partial\element}{ \big| \! \lj \tfrac{\partial \phvarh}{\partial n} \rj \! \big|^2}  \ \ \forall \element \in \triangulation . 
\end{align}
Computationally, it is well-known that \eqref{errorindic} performs reasonably well for second order elliptic and parabolic problems. This error indicator is already implemented in the library \texttt{deal.II}, being that the main reason for its selection. From a mathematical point of view, using \eqref{errorindic} is questionable, since residual a posteriori error indicators for phase-field models (Allen-Cahn and Cahn-Hilliard) have been an area of major research; the interested reader can check \cite{Kess2004,Bart2010} and references therein. The marking strategy follows the D\"orfler (or bulk chasing) approach, which in this time dependent context requires both marking for refinement and coarsening (a judiciously small fraction). The mesh is refined-coarsened once every 5 time steps.
%
\subsection{Parametric study of the Rosensweig instability}

\label{exp1} The purpose of this section is to run a series of
parametric studies with respect to mesh size $h$ and time step
size $\dt$ in order to assess the robustness of scheme
\eqref{firstscheme} with respect to these two parameters. In order to
run such studies, we selected the {\it Rosensweig instability} (also called normal-field instability) as an example. We start by explaining the setup of the parameters of the model. Then we explain what the Rosensweig instability is, provide some well-known analytic results, and point to some background references for the interested reader. Finally, we use these analytic results to tweak the numerical experiment, and run the parametric study.

Regarding inertial scalings, we work in a rectangular domain of $1$ unit of width and $0.6$ units of height, with vertices at $(0,0)$, $(0,0.6)$, $(1,0.6)$ and $(1,0)$, whence $\textsl{diam}\,\Omega \approx 1$. We choose the viscosities
\begin{align*}
\nu_w = 1.0 \ \text{and} \ \nu_f = 2.0 \, , 
\end{align*}
and the density $\rho$ implicitly taken (from the very beginning of
the paper) to be unitary, so that the Reynolds number obeys the rule
$\textsl{Re} = \mathcal{O}(\|\bv{u}\|_{\bv{L}^{\infty}(\Omega \times (0,\tf))})$. On the other hand, we take the parameters
\begin{align*}
\mu_0 = 1 \, , \ 
\permit = 0.5 \, , \ 
\mobility = 0.0002 \, , \ 
\capcoeff = 0.05 \, , \ 
r = 0.1 \, .
\end{align*}
The main goal of such an arbitrary choice of parameters ($\nu_w$, $\nu_f$, $\rho$, $\mu_0$, $\permit$, $\mobility$ and $\capcoeff$) is to have a very stable PDE system. Note that in \eqref{enerestTwPh} and \eqref{finaldiscenergy} all the natural estimates for the phase-field $\phvar$ and chemical potential $\chpot$ depend on $\capcoeff$, thus we should expect the stability of the interfaces (and the whole PDE system \eqref{Themodel} in general) to deteriorate severely for small values of $\capcoeff$. With such a deliberate choice of parameters we have a very stable system of equations. Paradoxically, we now try to come up with a smart scaling of the forces in order to get an interesting (unstable) behavior as it will be detailed in the following paragraphs.

The Rosensweig instability is perhaps the simplest nontrivial phenomena observed in ferrofluids. Basically, if we have a pool of ferrofluid lying horizontally, subject to both the force of gravity and a uniform magnetic field $\ha$ pointing upwards, it is well known that a flat profile is not stable for all values of the magnetic field and a regular pattern of peaks and valleys forms. The formation of these patterns is the result of competing forces: both gravity and surface tension favor a flat surface, but above a critical magnetic field strength, the flat profile is no longer the most stable configuration. A sufficiently strong magnetic field triggers the instability and the pattern formation.

There are analytical expressions for the distance between peaks and for the critical magnetic field strength that triggers the instability. There is a vast literature on this topic and it is impossible to do justice to all possible references; here we will just comment on a few of them as  background for the interested reader. The work of Cowley and Rosensweig \cite{CowRos1967} is most probably the first one to provide analytical results based on linear stability analysis (dependence of the most unstable modes on the constitutive parameters) valid only in the asymptotic limit of vanishing magnetic susceptibility $\suscep$:
the quantities
\begin{align}\label{RosLin}
\distpeaks = 2 \pi \alp \frac{\surftens}{g \Delta\rho} \arp ^{\!\!\nicefrac{1}{2}} \ , \ \
m_c^2 = \tfrac{2}{\mu_0} \big(\tfrac{2 + \permit}{1+\permit} \big) \alp g \Delta\rho \surftens \arp ^{\!\nicefrac{1}{2}} \, , 
\end{align}
are the critical spacing between peaks and the critical magnitude of
magnetization, whereas $\surftens$ is the surface tension coefficient in the sharp interface limit, $g = |\bv{g}|$ is the magnitude of gravity, and $\Delta\rho$ is the jump of the density across the interface. The work of Gailitis \cite{Gaili1977}, considered to be the first attempt to include nonlinear effects, uses an energetic approach (minimization of a functional), and is able to describe the shape of the patterns (hexagons, squares, etc.). However \cite{Gaili1977} suffers from the same limitations of \cite{CowRos1967} (small susceptibilities, finite depth, etc), which have been overcome to some degree in \cite{Engel2001}. Validation of all these analytical results is far from complete, requiring carefully crafted experiments which mimic ideal conditions; some efforts in this direction can be found in \cite{Abou2000,Tob2007} and references therein.

Most of these results possess intrinsic qualitative value, but they are far from quantitative for any realistic context which could include finite magnetic susceptibilities, finite depth of the ferrofluid pool, nonlinear effects (large displacements of the interface between both phases), and diffusive effects (partial mixture). In particular, to the best of our knowledge, there are not analytical results for highly paramagnetic ferrofluids ($\suscep > 1$), and the treatment (or inclusion) of effects related to the demagnetizing field is quite poor.

For instance, we cannot expect the linear stability result \eqref{RosLin} to accurately predict the behavior of system \eqref{Themodel} in the context of bounded domains (with non-periodic boundary conditions), finite depth, finite magnetic susceptibility ($\suscep = \mathcal{O}(1)$), highly deformed transition layer (far from a straight line), and finite interaction length (layer thickness $\layerthick$) involving additional diffusive effects. We also point out that our phase-field model is not a genuine variable density model, so that the term $\Delta\rho$ has very little meaning in the context of the model \eqref{Themodel}, and gravitational effects are only included approximately via \eqref{Bouapprox}. Yet, from  \eqref{Bouapprox} we can easily realize that 
\begin{align}\label{densformula}
r \approx \tfrac{\Delta\rho}{\rho}   \, . 
\end{align}
Finally, the relationship between the capillary coefficient $\capcoeff$ and the surface tension $\surftens$ (see for instance \cite{Lowen1998,Jacq1996}) is only known approximately
\begin{align}\label{surfcaprel}
\capcoeff \approx \surftens \layerthick \, , 
\end{align}
where the constant involved in this relationship is unknown but of $\mathcal{O}(1)$. Nevertheless, it can be shown that the linear relationship \eqref{surfcaprel} is particularly accurate for small mobilities $\mobility$ (see \cite{WaLiu01}), being that the reason why we choose $\gamma =  0.0002$.

\begin{figure}[h!]
\begin{center} 
    \setlength\fboxsep{0pt}
    \setlength\fboxrule{1pt}
  \begin{tabular}{cccc}

    \fbox{\includegraphics[width=32mm]{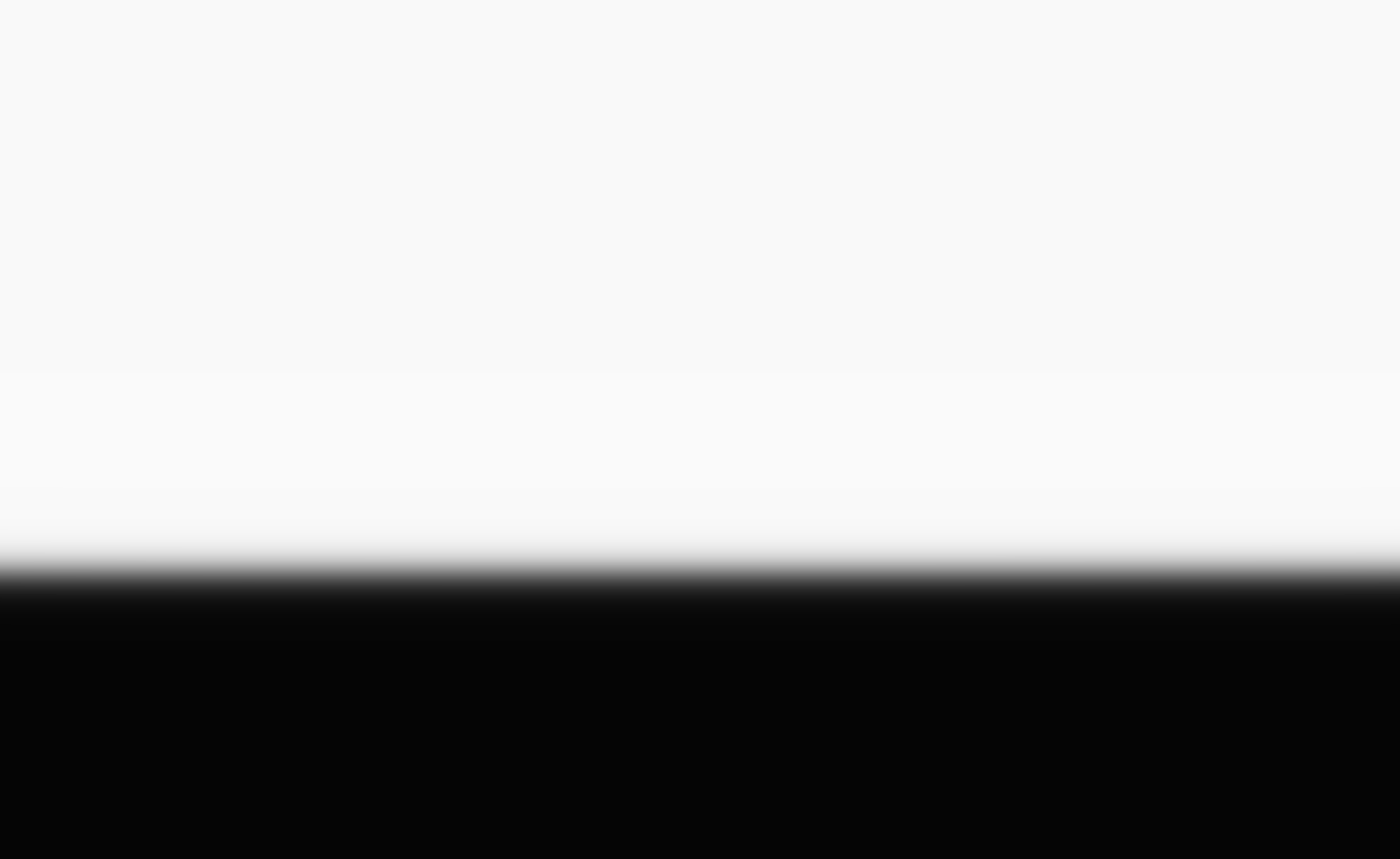}}&
    \fbox{\includegraphics[width=32mm]{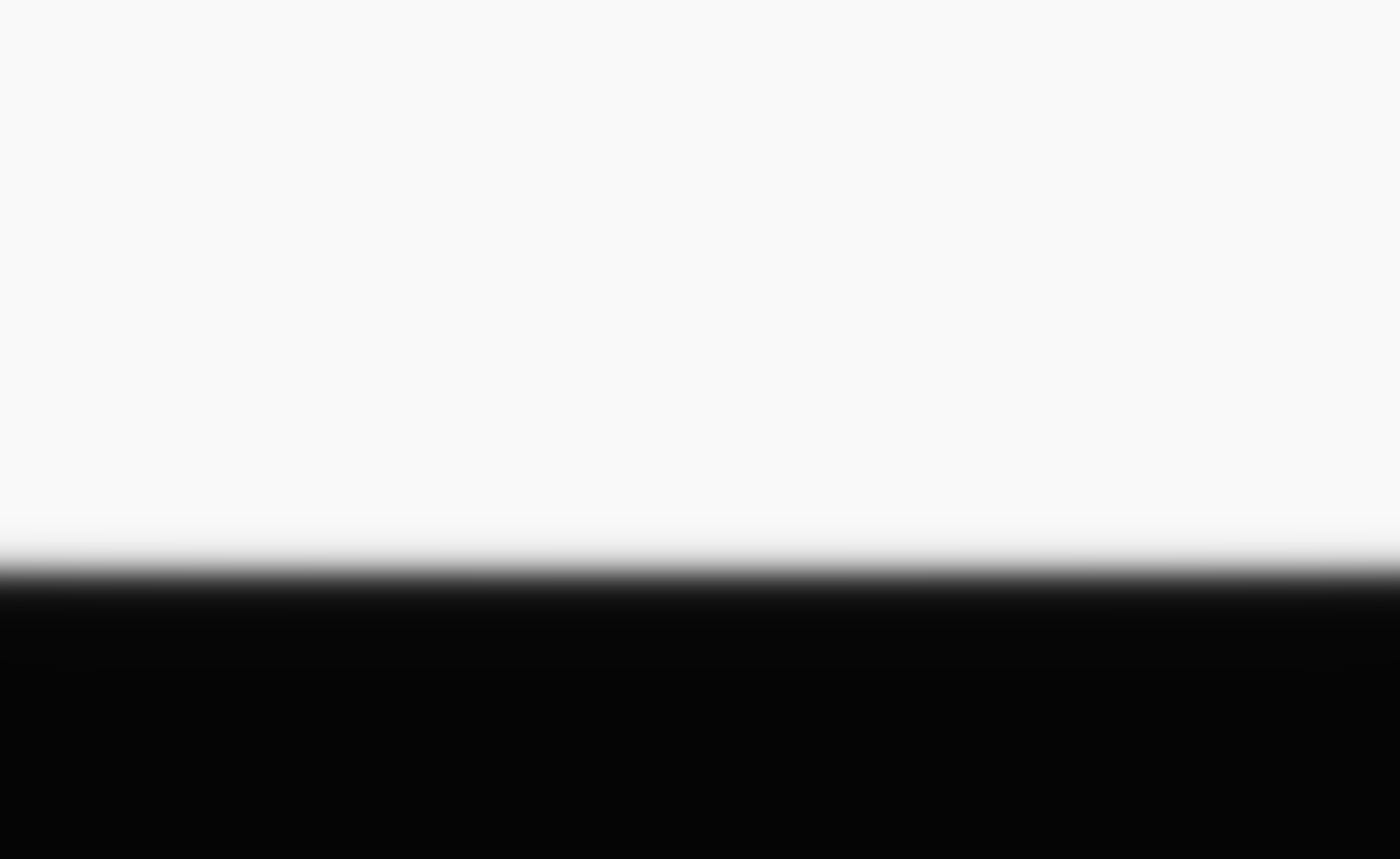}}&
    \fbox{\includegraphics[width=32mm]{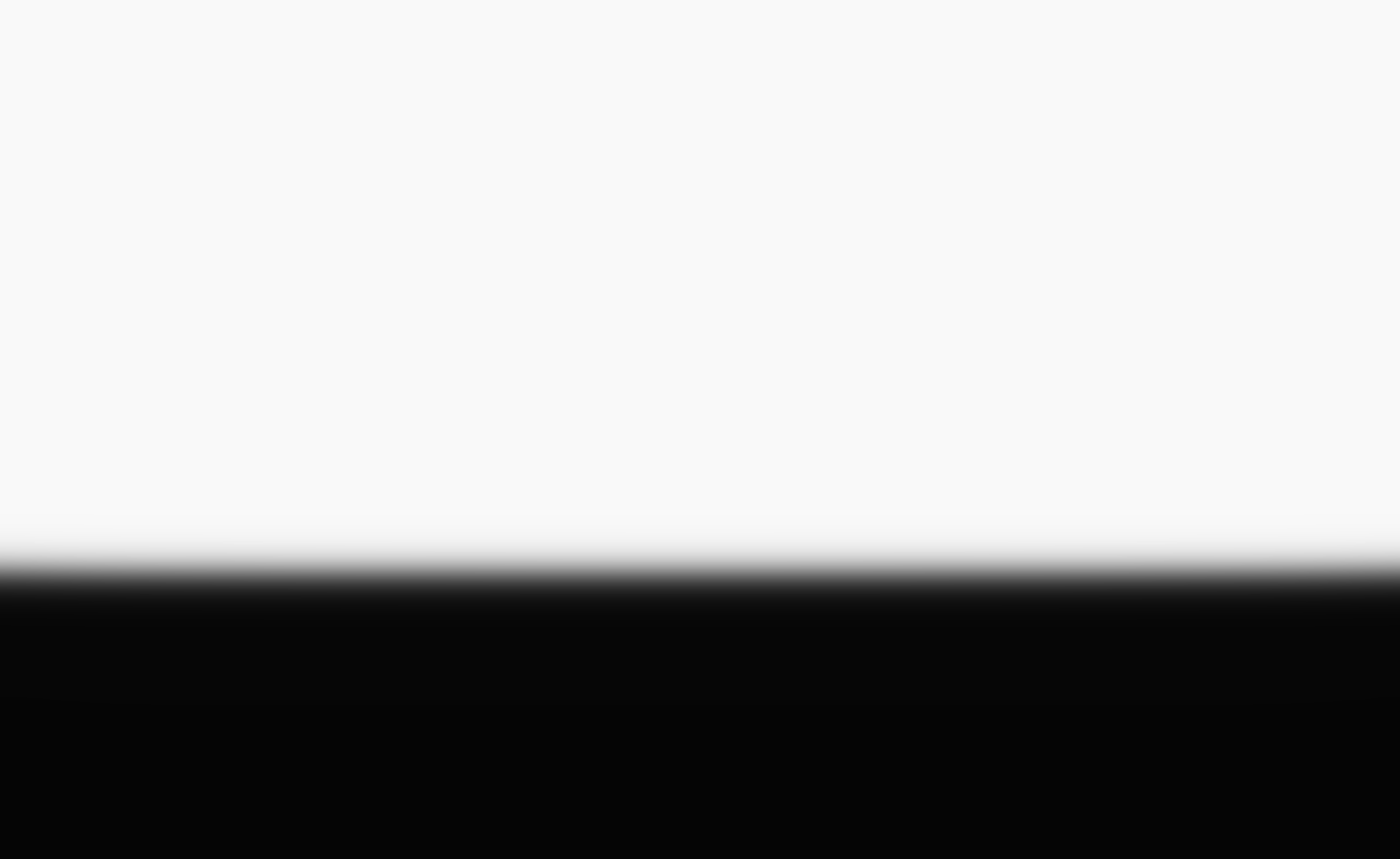}}&
    \fbox{\includegraphics[width=32mm]{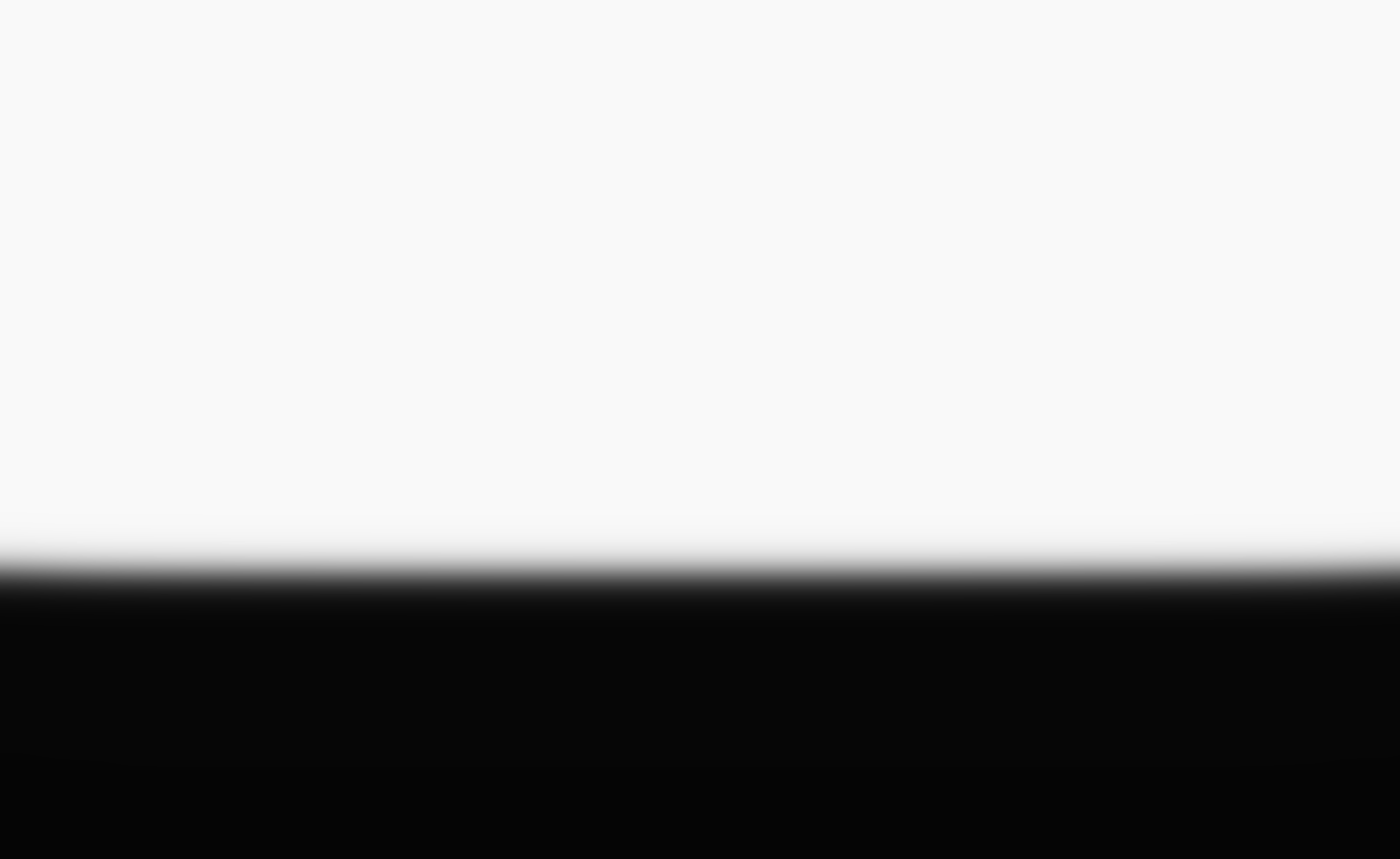}}\\

    \fbox{\includegraphics[width=32mm]{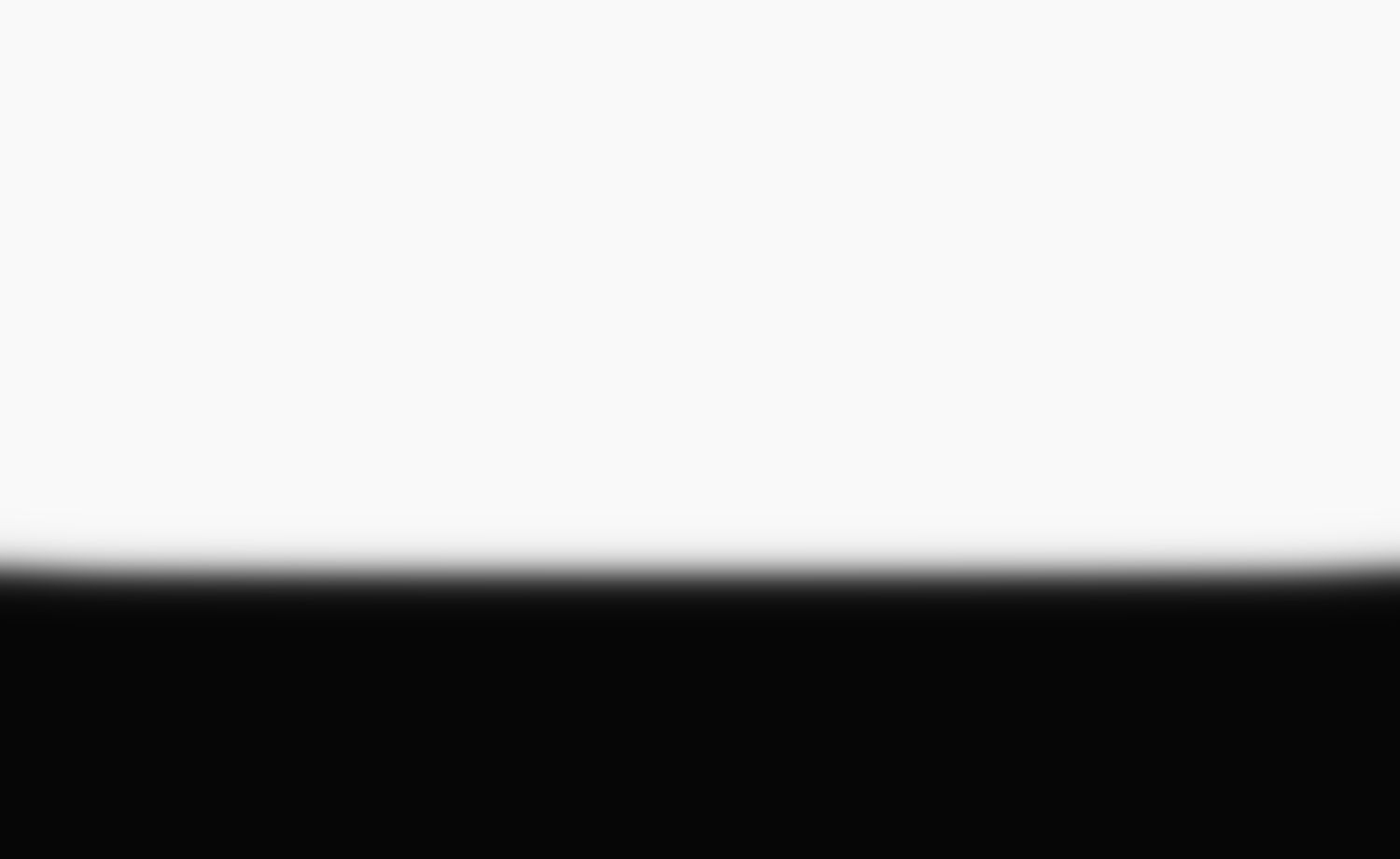}}&
    \fbox{\includegraphics[width=32mm]{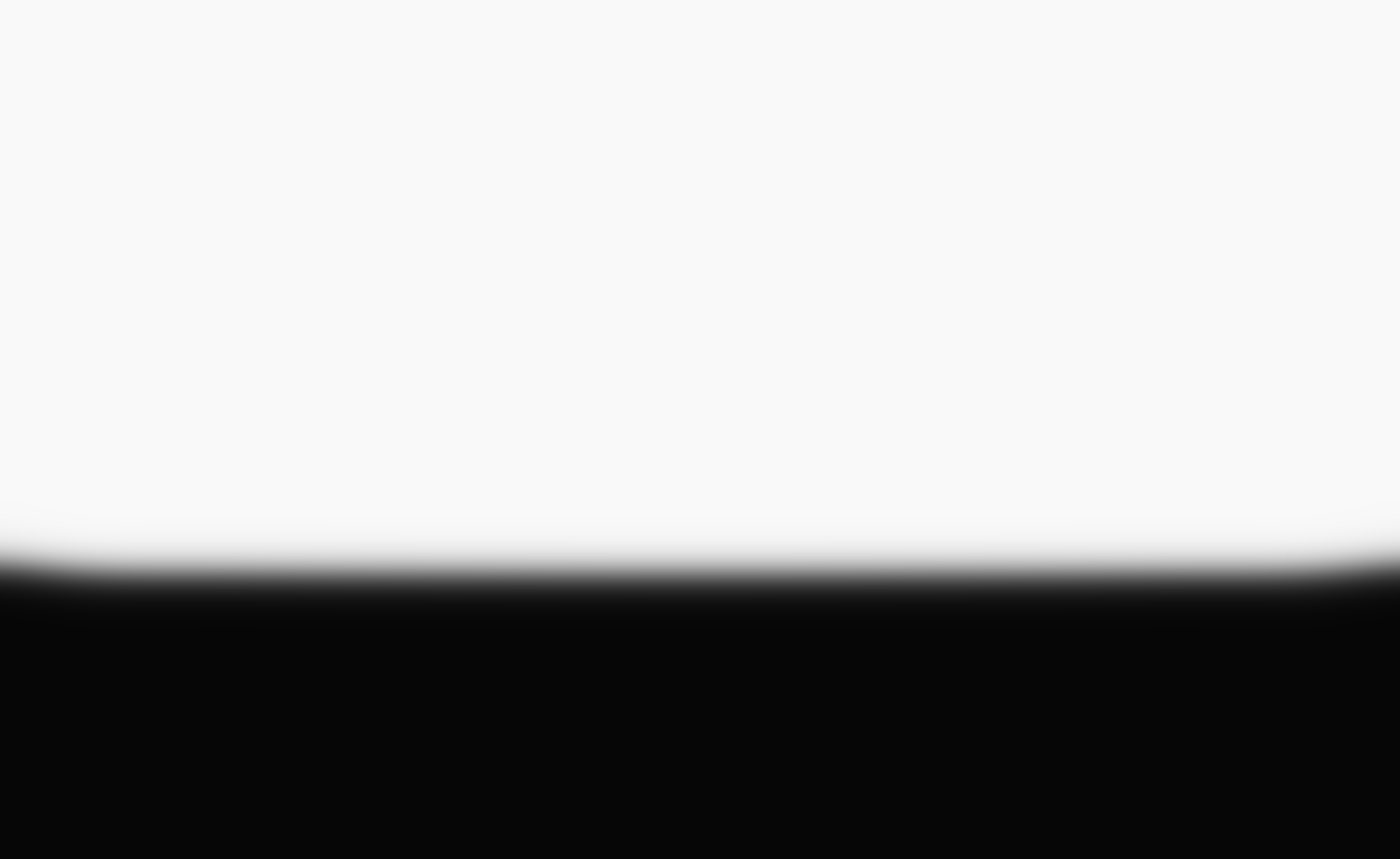}}&
    \fbox{\includegraphics[width=32mm]{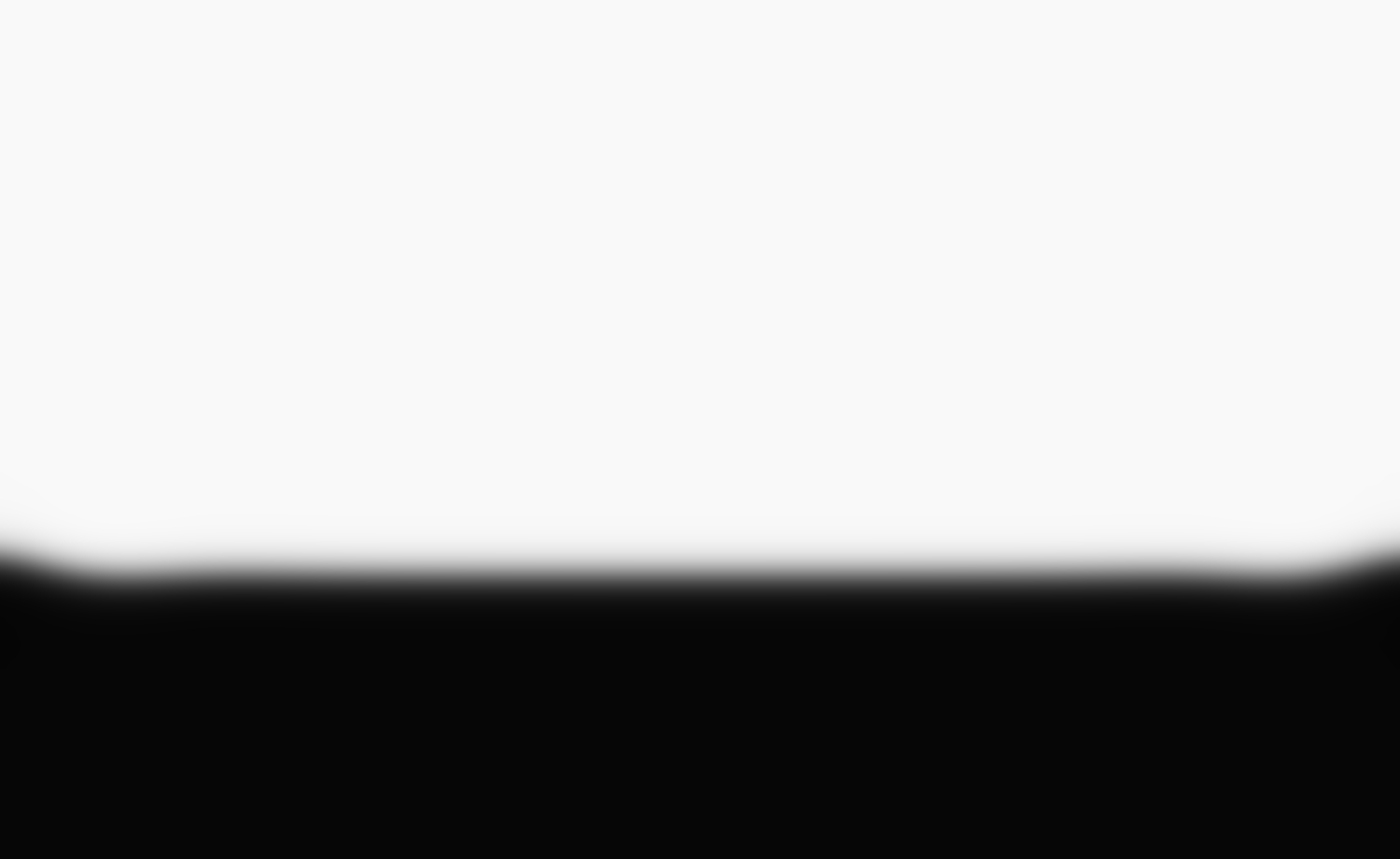}}&
    \fbox{\includegraphics[width=32mm]{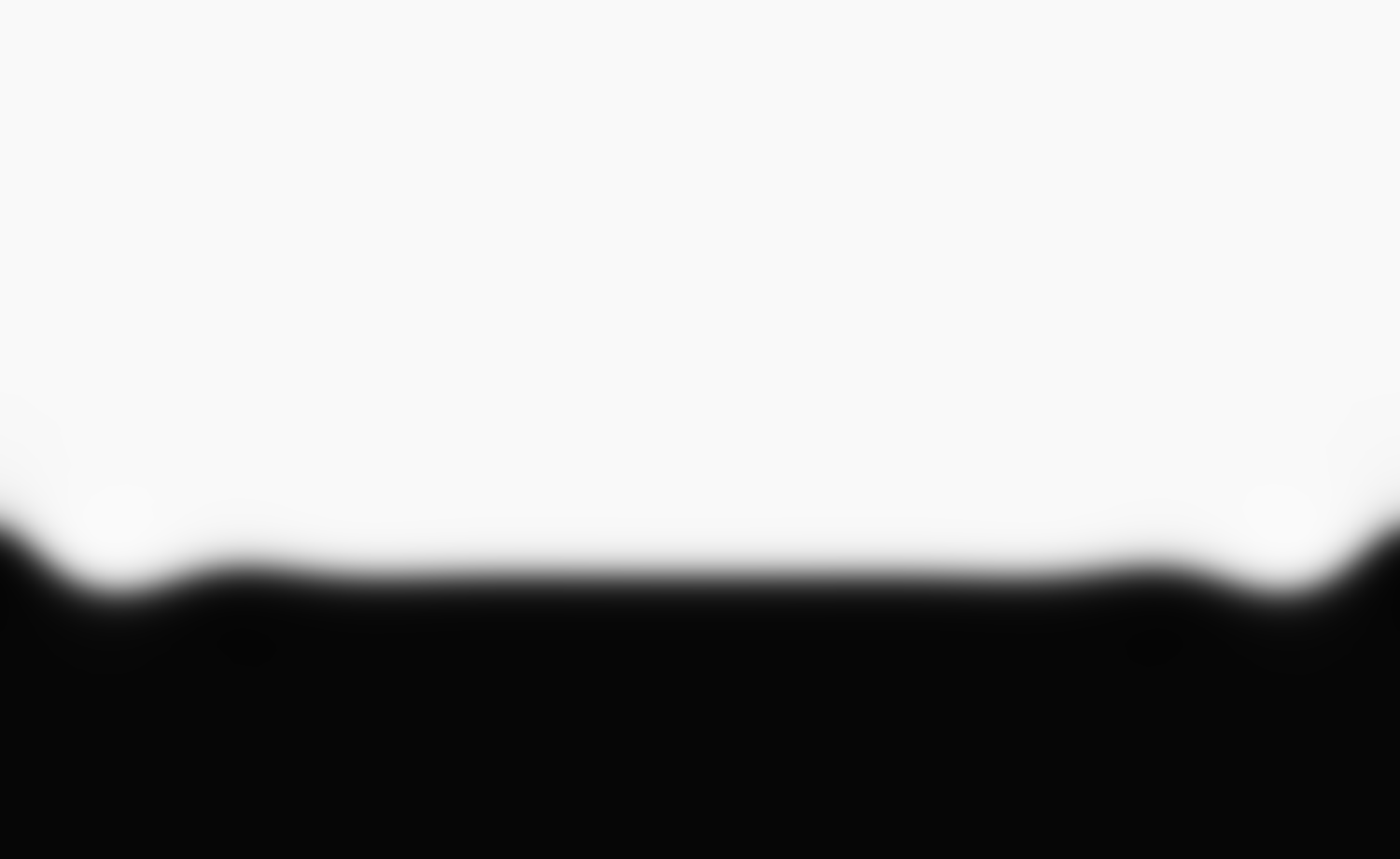}}\\

    \fbox{\includegraphics[width=32mm]{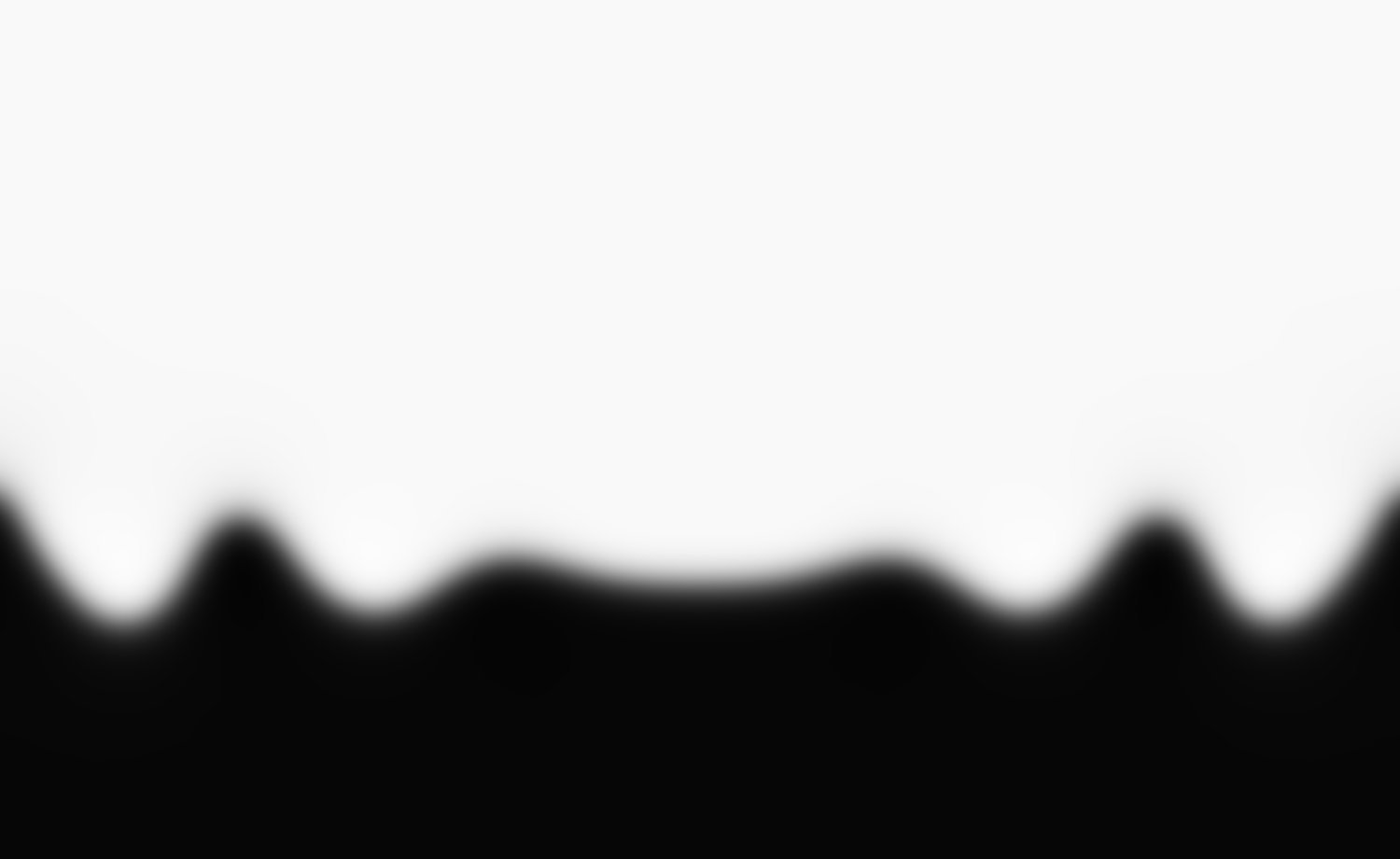}}&
    \fbox{\includegraphics[width=32mm]{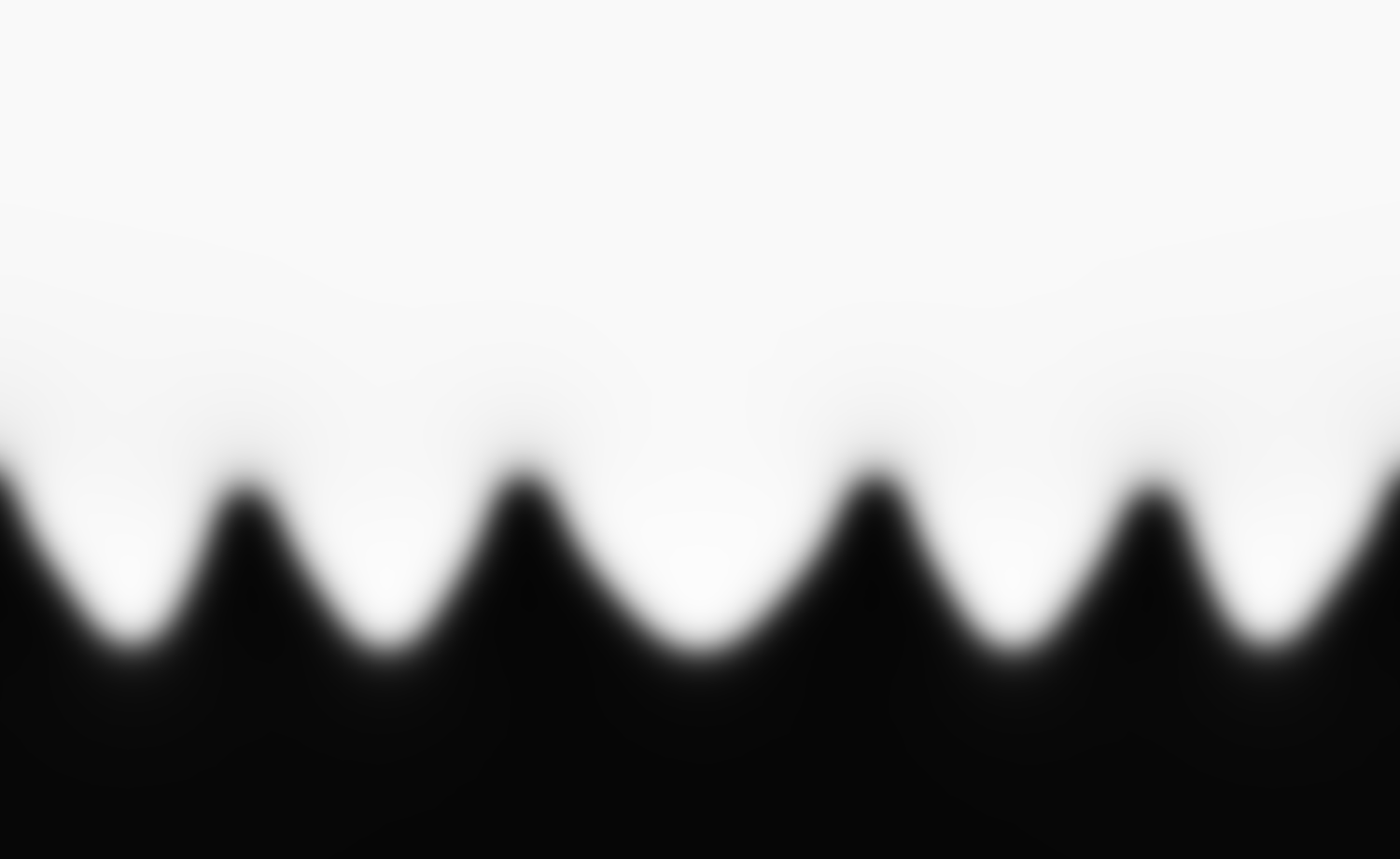}}&
    \fbox{\includegraphics[width=32mm]{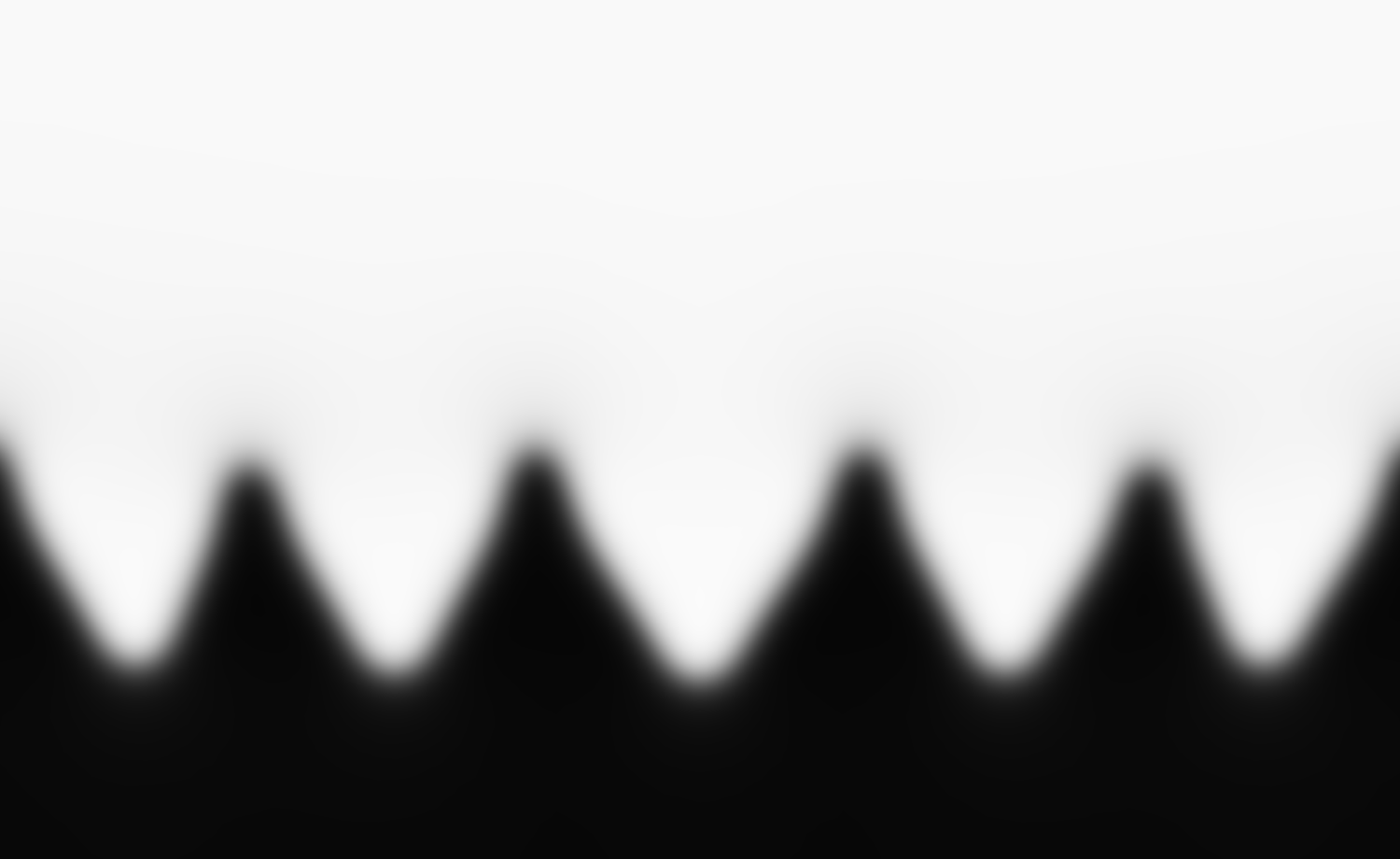}}&
    \fbox{\includegraphics[width=32mm]{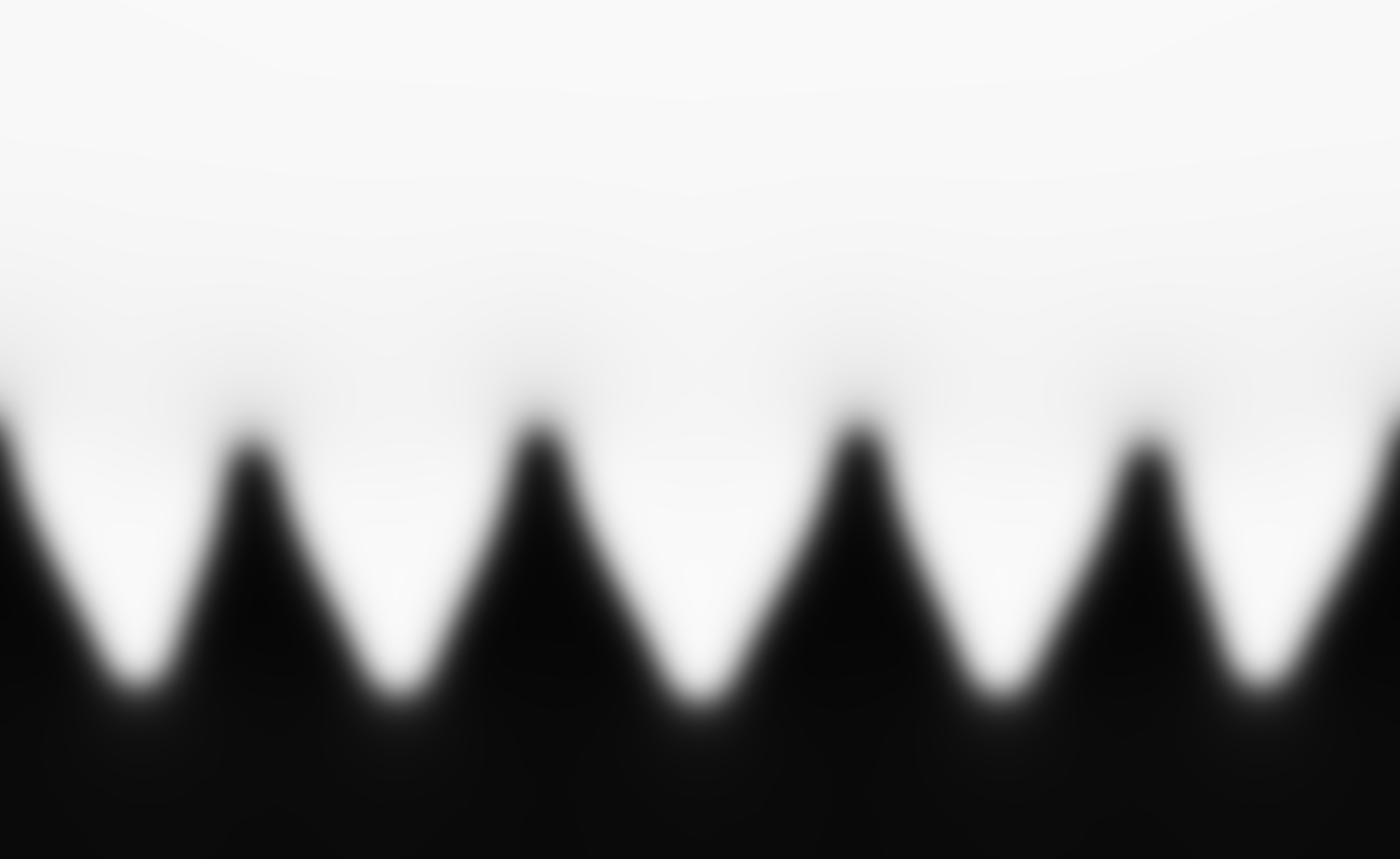}}\\

    \fbox{\includegraphics[width=32mm]{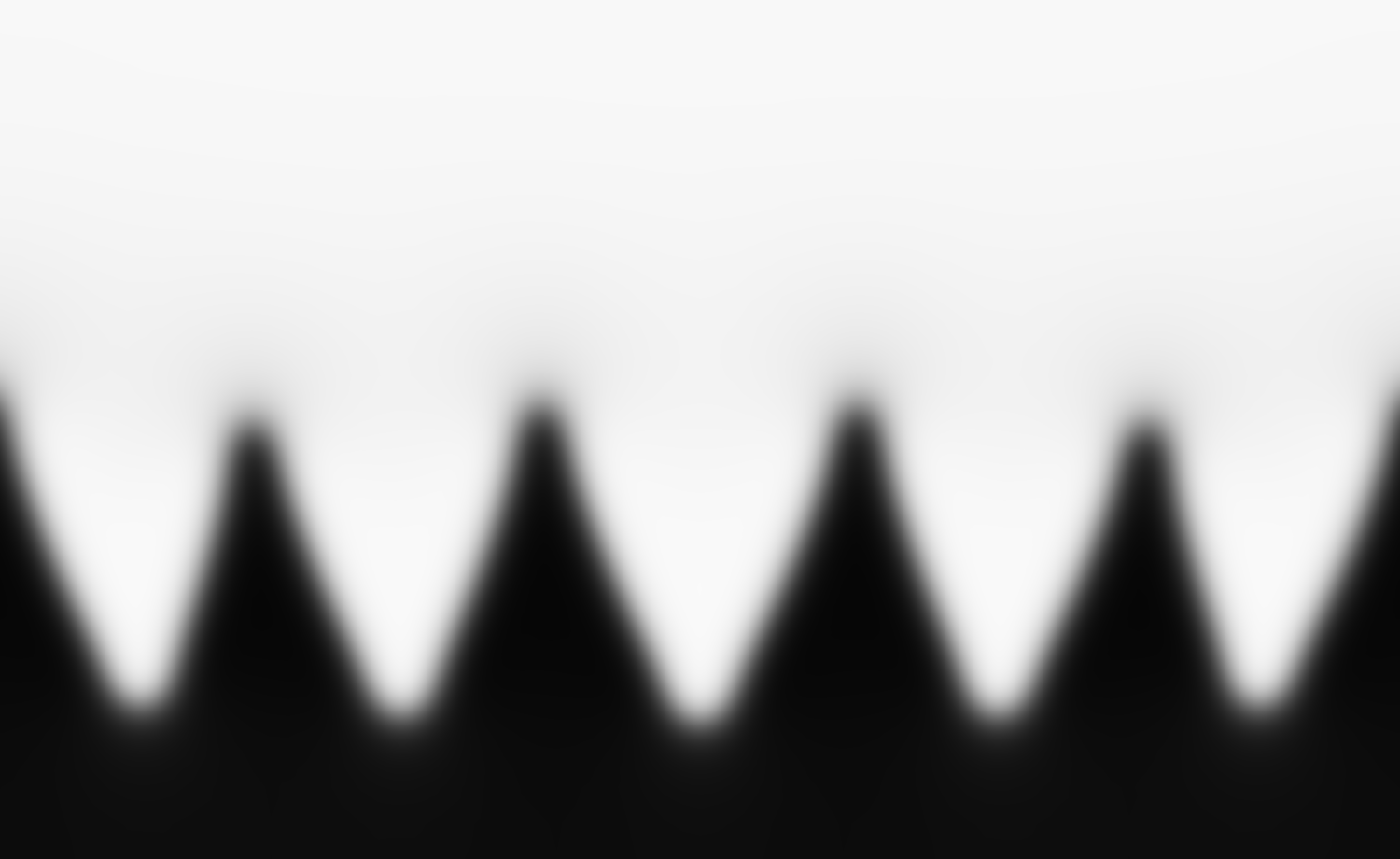}}&
    \fbox{\includegraphics[width=32mm]{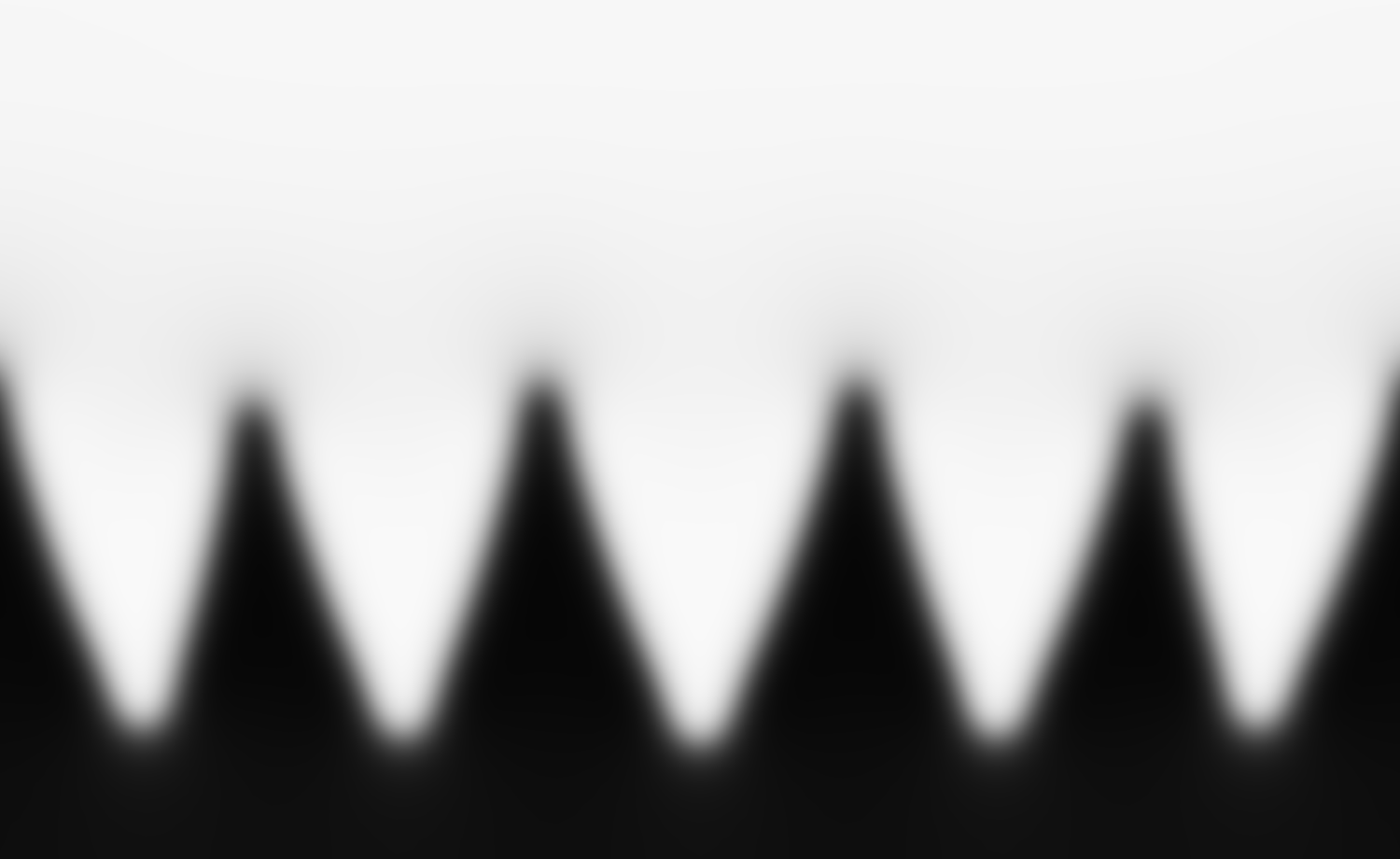}}&
    \fbox{\includegraphics[width=32mm]{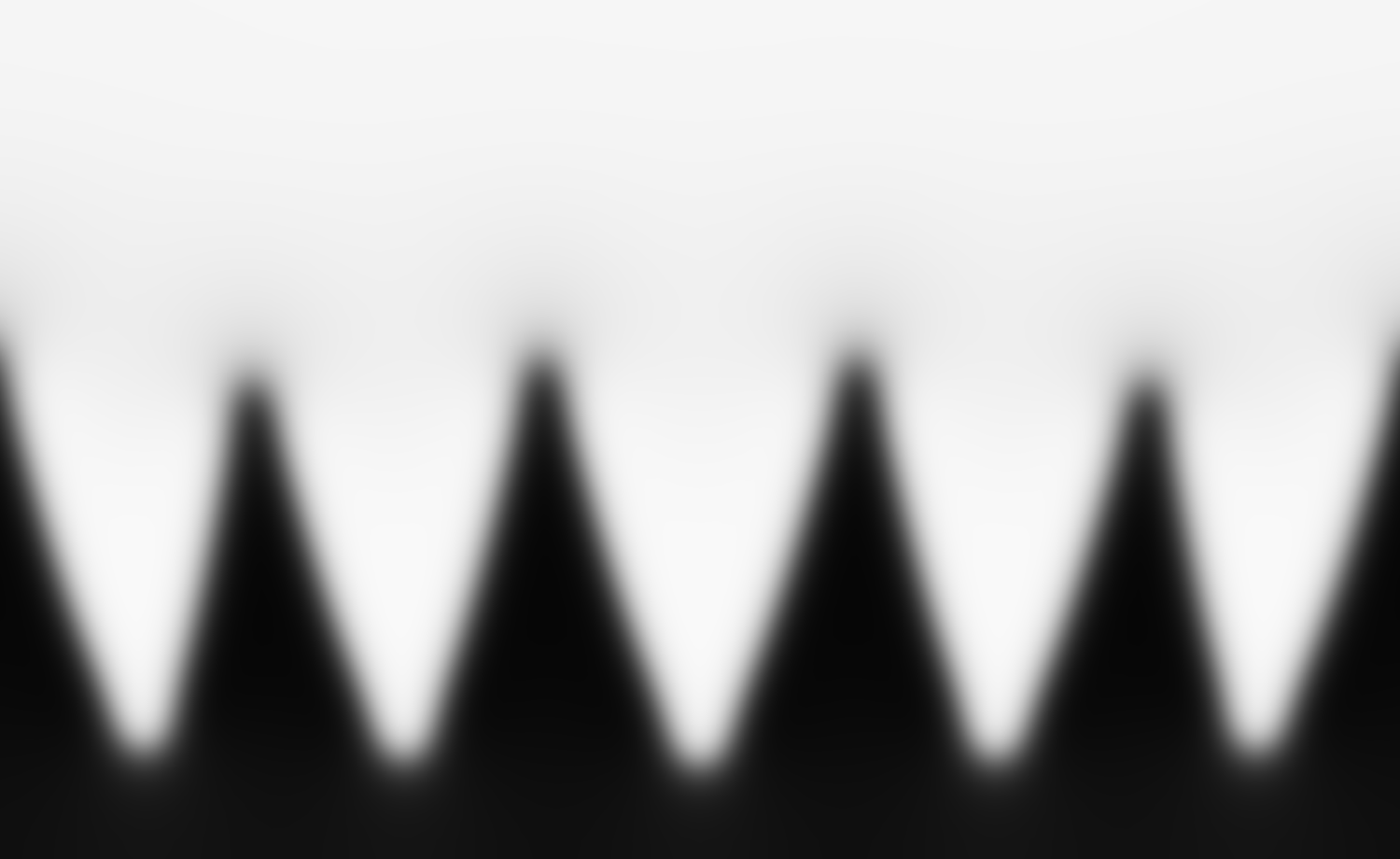}}&
    \fbox{\includegraphics[width=32mm]{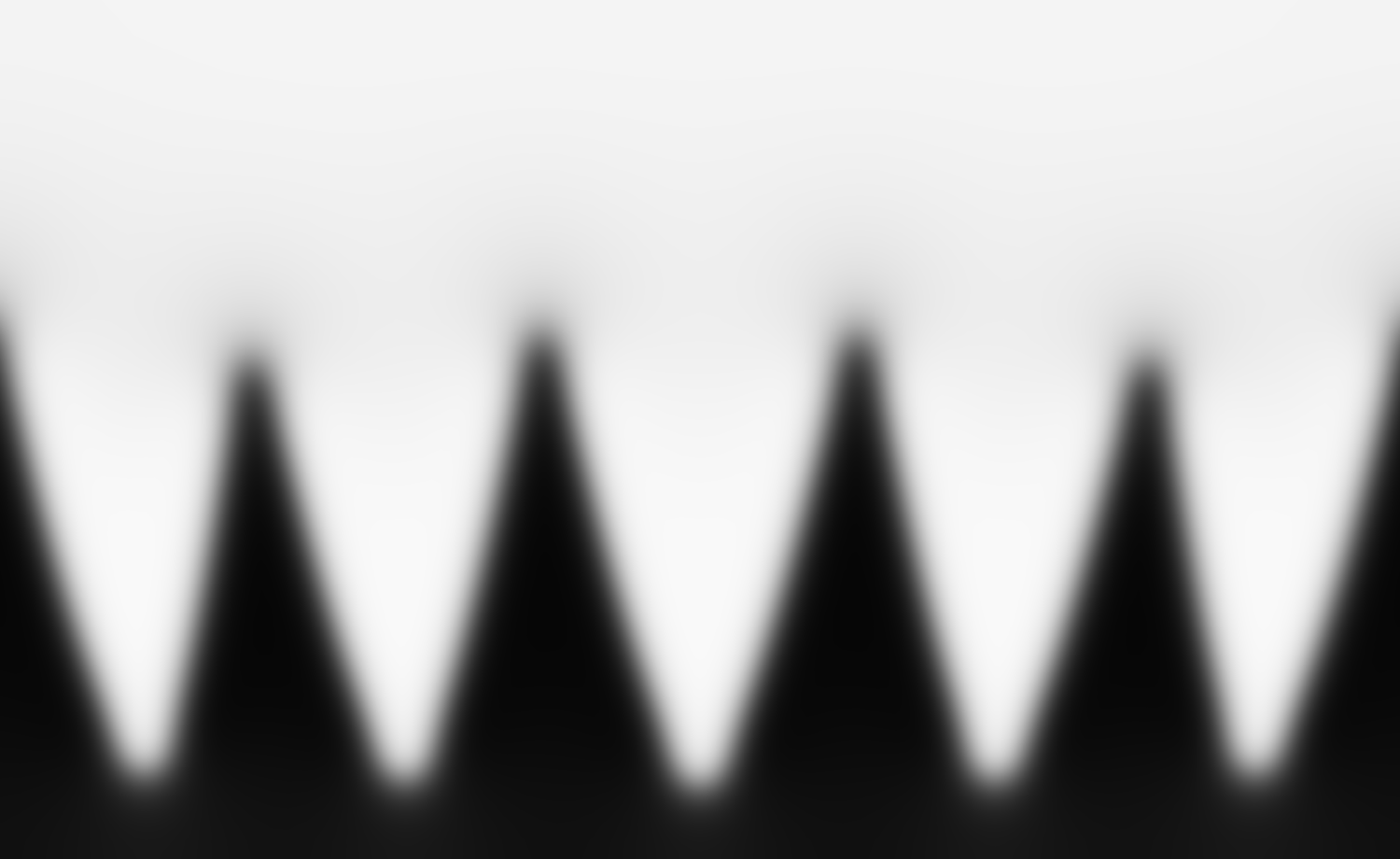}} \\

    \fbox{\includegraphics[width=32mm]{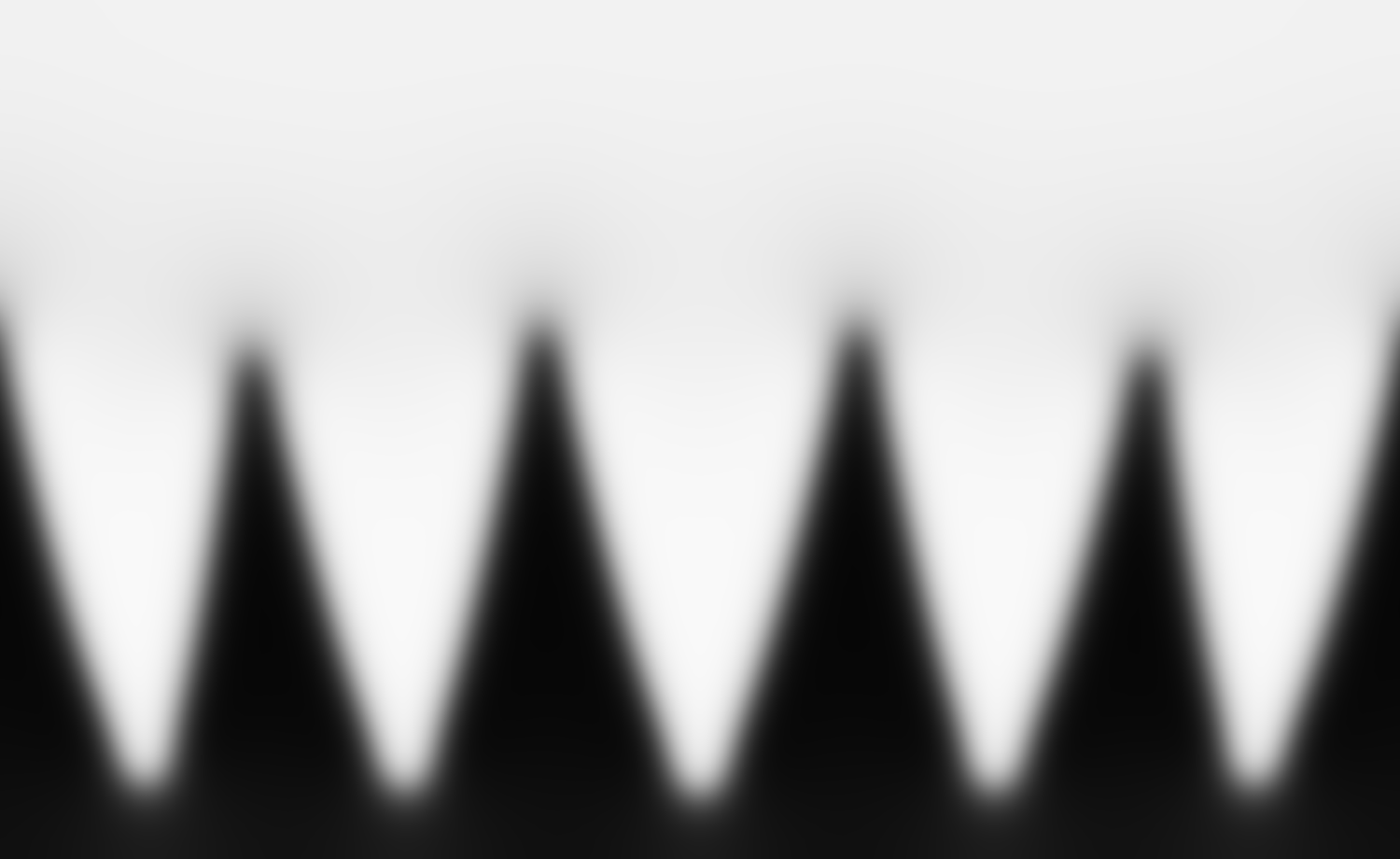}}&
    \fbox{\includegraphics[width=32mm]{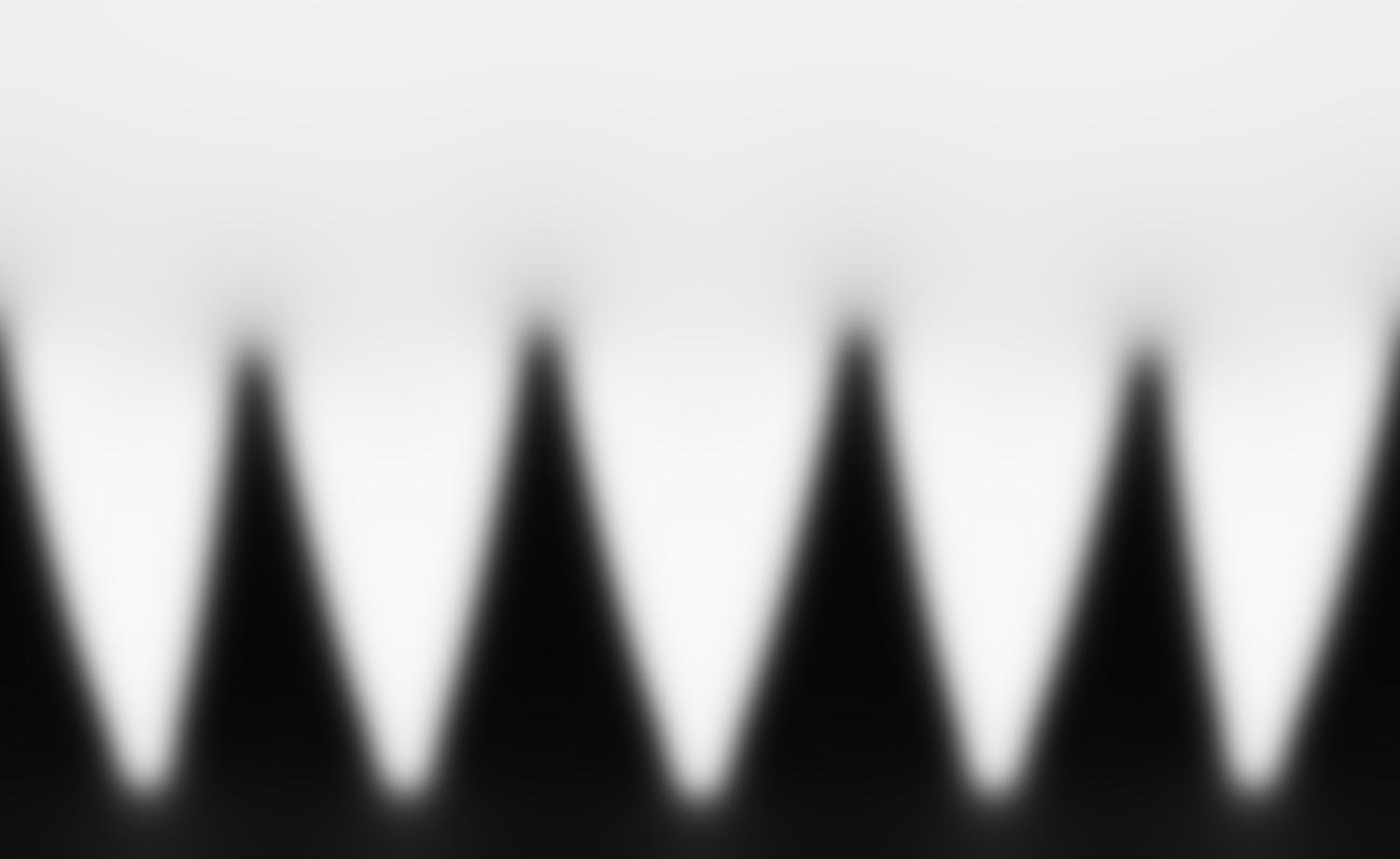}}&
    \fbox{\includegraphics[width=32mm]{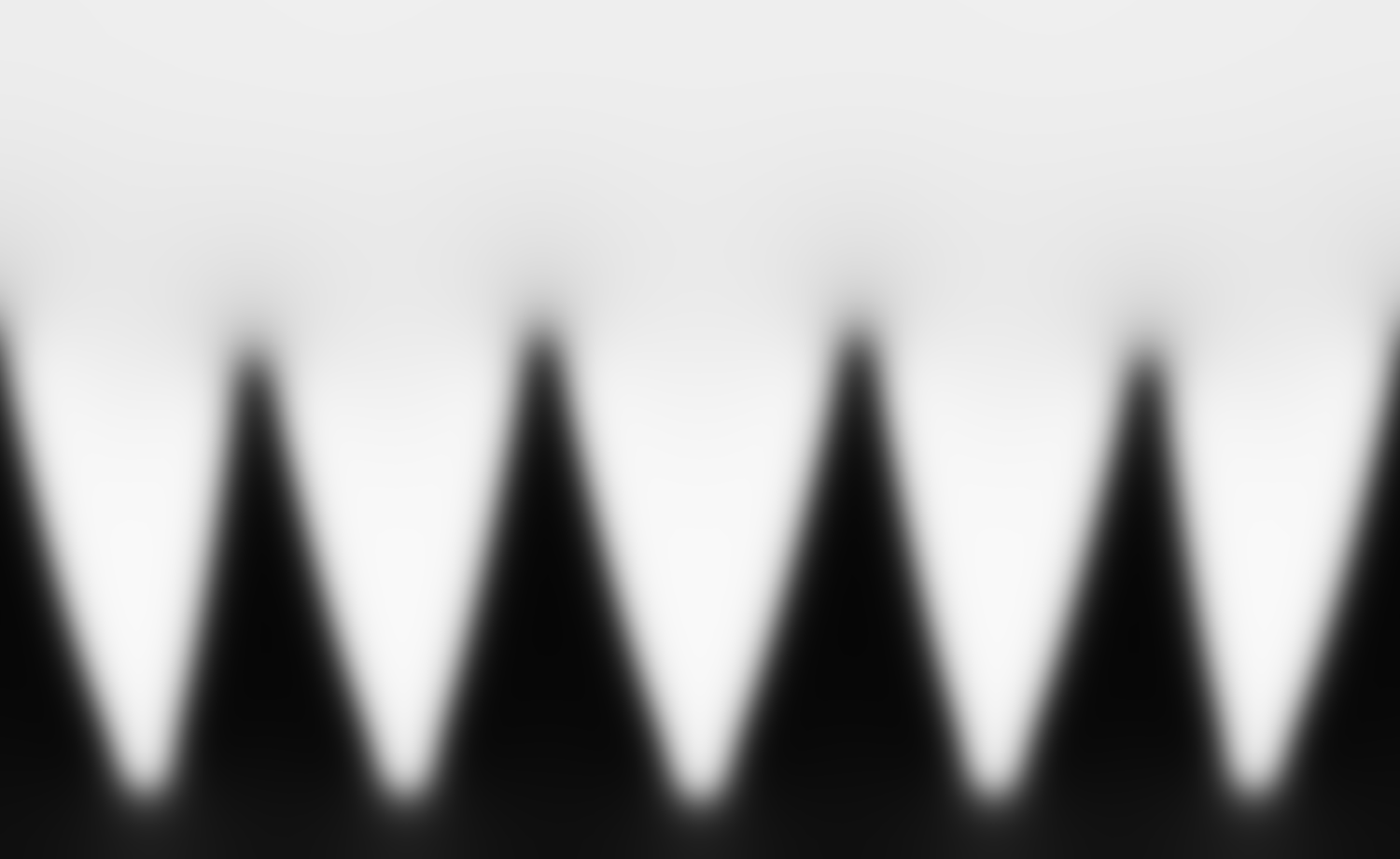}}&
    \fbox{\includegraphics[width=32mm]{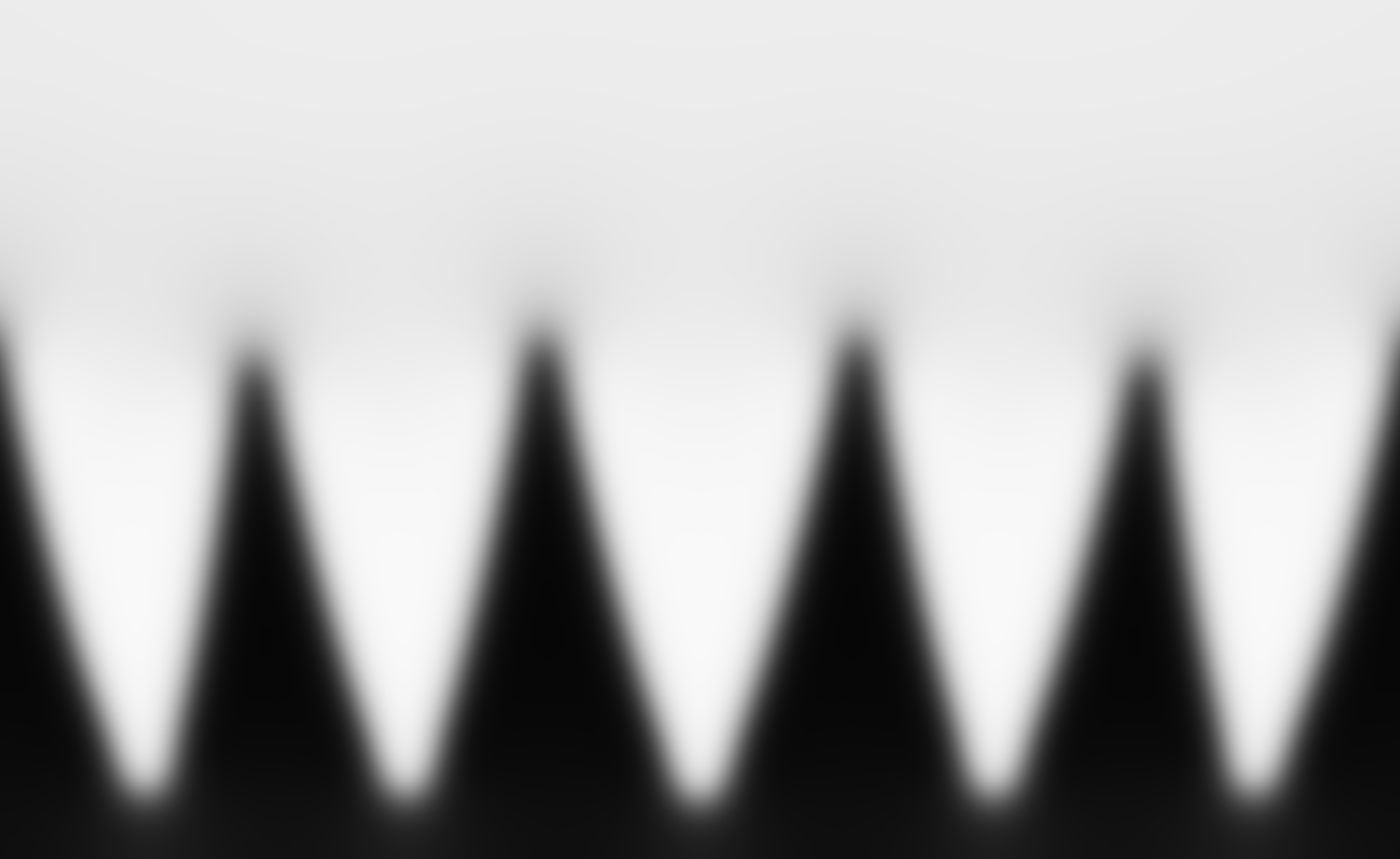}}
  \end{tabular}
\end{center}
\caption[Rosensweig instability: evolution
  screenshots.]{\textbf{Rosensweig instability: evolution
    screenshots.} Sequence of screenshots from time $t= 0.1$ to $t =
  2.0$ in regular time intervals of $0.1$ showing the evolution of the
  phase-field variable $\phvarh$ (read from left-to-right and
  top-to-bottom). As it can be appreciated, we obtained in the order
  of $4$ peaks inside the box, showing us that the crude estimate
  \eqref{gravity} was a very good initial guess for the scaling
  between the capillary coefficient $\capcoeff$ and the gravity
  $g$. Note that diffusive effects are quite noticeable as we are
  using $\layerthick = 0.01$. Most of the interesting transient
  behavior happens from time $t = 0.7$ to $t = 1.3$ (reading from
  left-to-right and top-to-bottom: boxes 7 to 13), so we will focus on
  this interval for a parametric study. This simulation was obtained
  using 6 refinement levels in space, and 4000 time steps for a
  total of $t_F=2$ time units.
  \label{figureResolved}}
\end{figure}

\begin{figure}
\begin{center}
\setlength\fboxsep{0pt}
\setlength\fboxrule{1pt}
\includegraphics[scale=0.25]{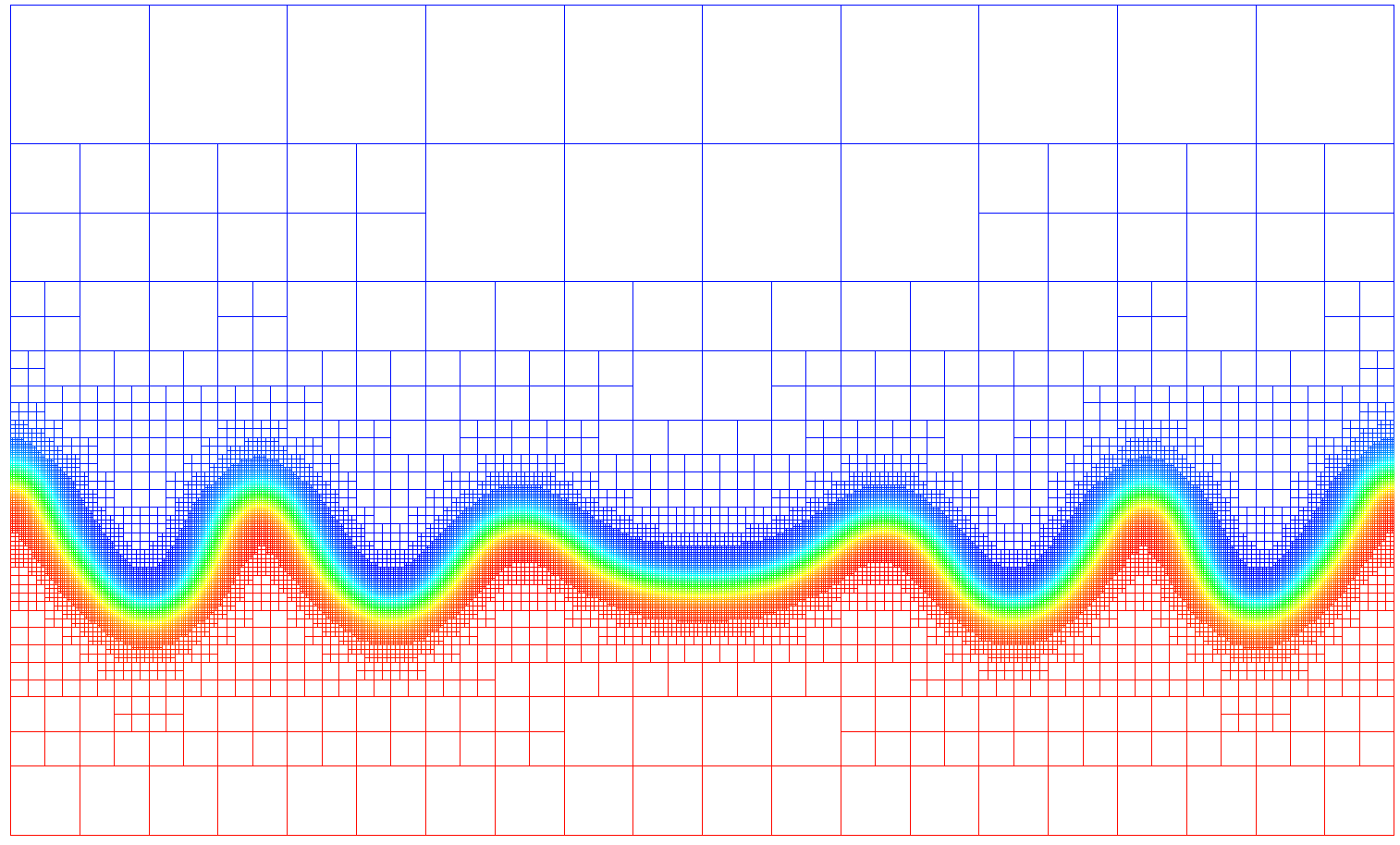}
\end{center}
\caption[Rosensweig instability: sample graded
  mesh]{\textbf{Rosensweig instability: sample graded mesh.} Finite
  element mesh with 6 levels of refinement at time $t=0.92$,
  corresponding with the simulation of Figure \ref{figureResolved}. In
  order to have physically meaningful simulations we must resolve
  the transition layer well. We thus enforce approximately 20 elements of the finest level resolving the transition layer. \label{meshsamplefig}}
\end{figure}

\begin{figure}
\begin{center}
    \setlength\fboxsep{0pt}
    \setlength\fboxrule{1pt}
  \begin{tabular}{cccc}

    \fbox{\includegraphics[width=32mm]{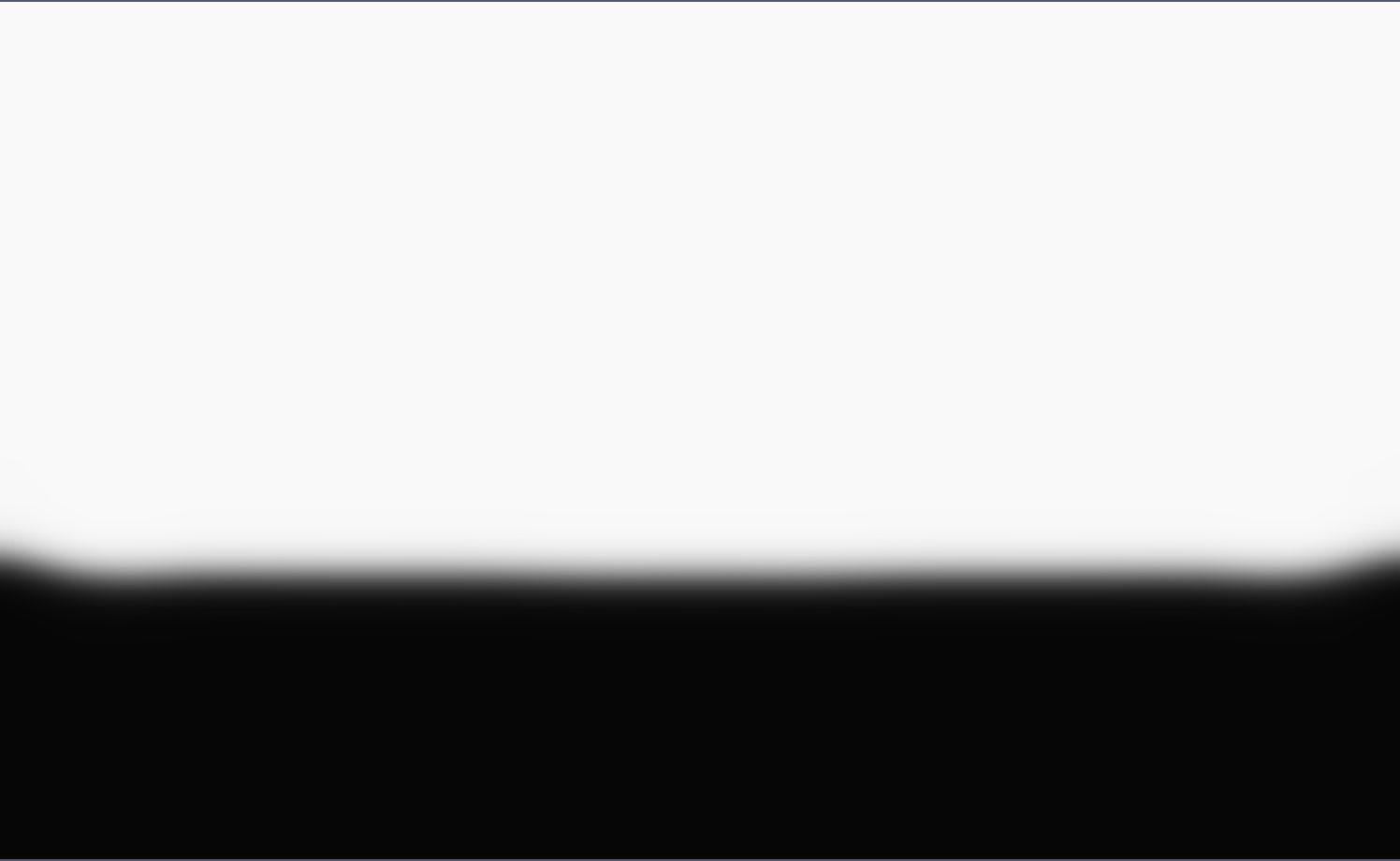}}&
    \fbox{\includegraphics[width=32mm]{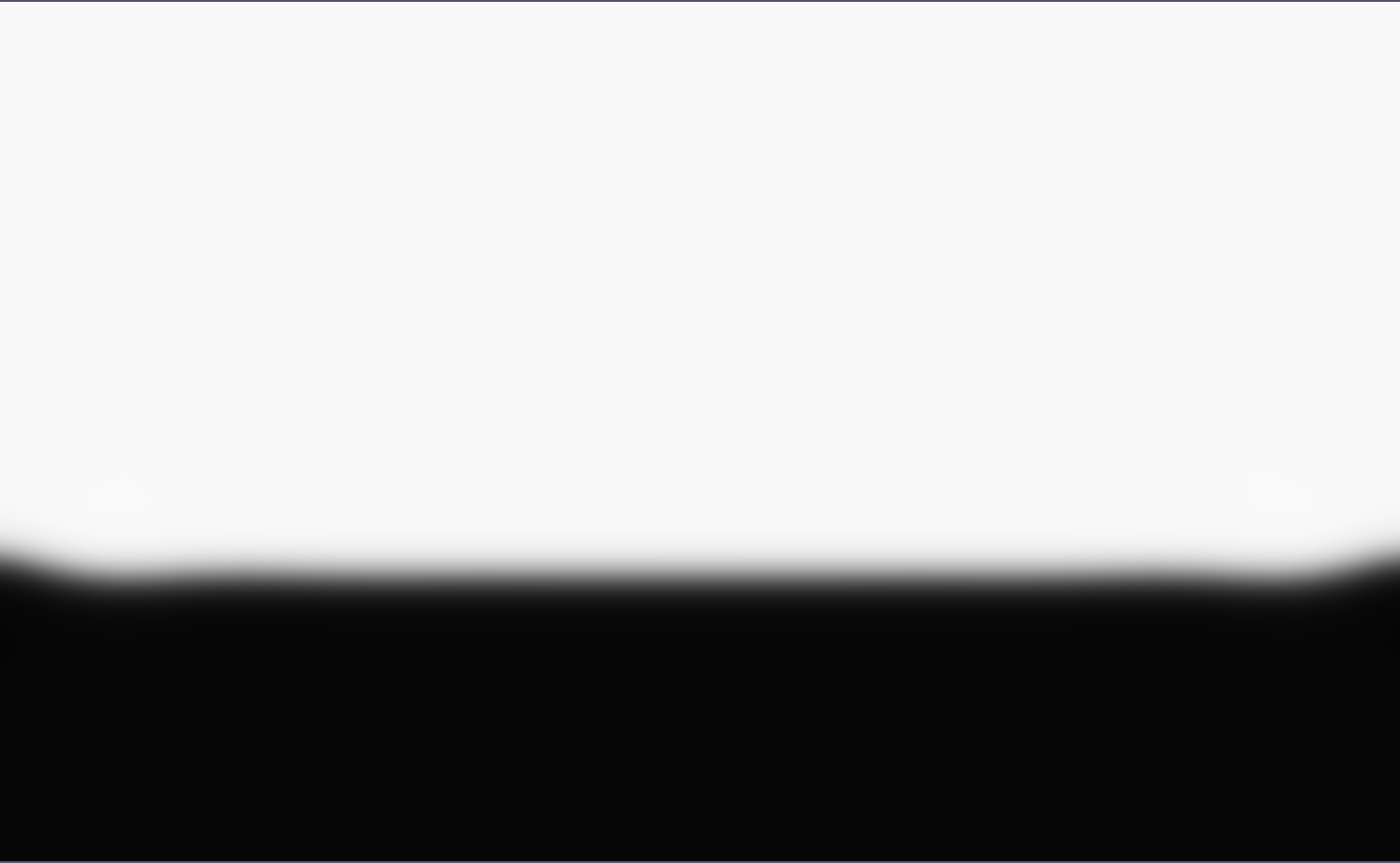}}&
    \fbox{\includegraphics[width=32mm]{Pictures/6L4000ts/20seq0007.png}}&
    \fbox{\includegraphics[width=32mm]{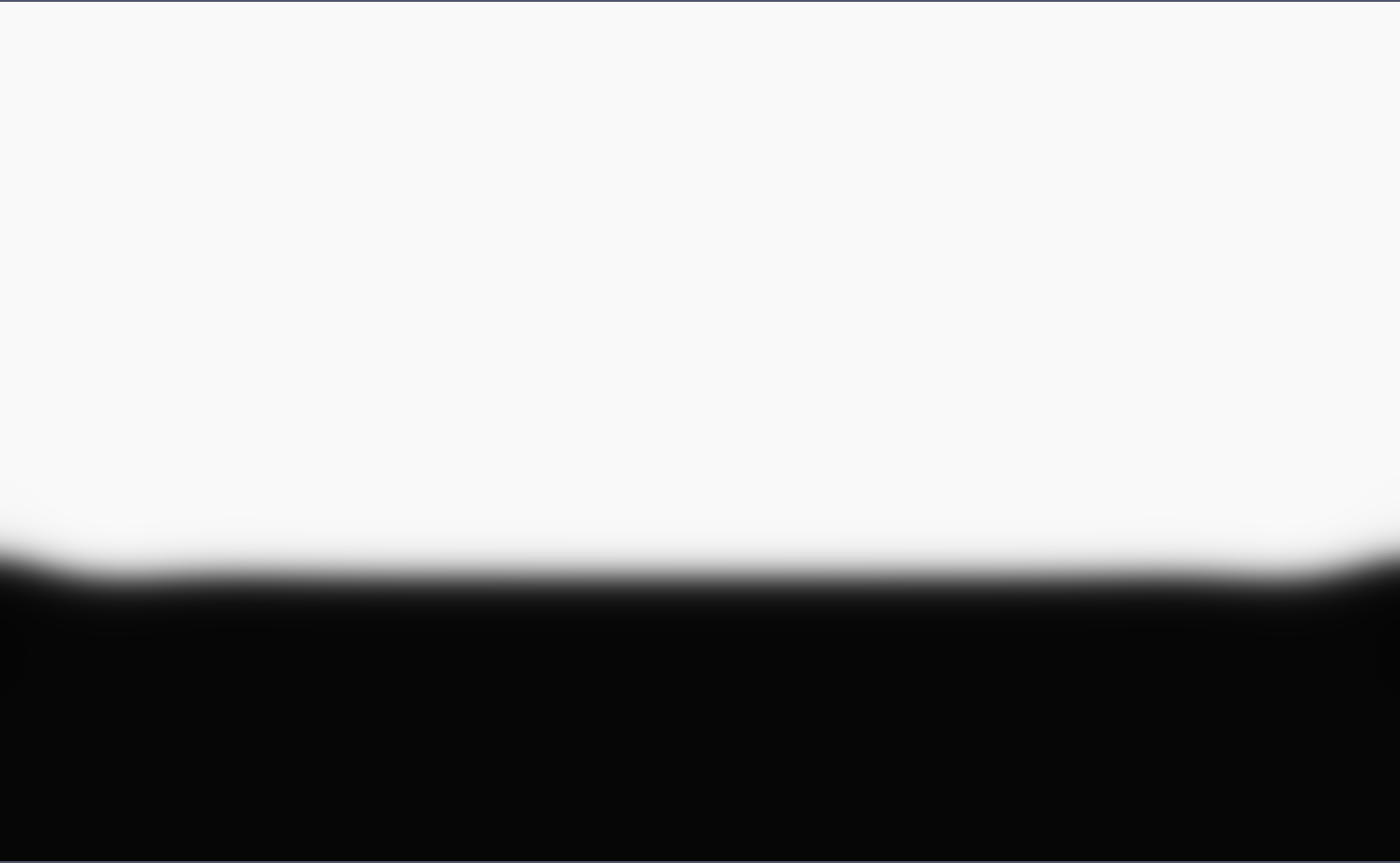}}\\

    \fbox{\includegraphics[width=32mm]{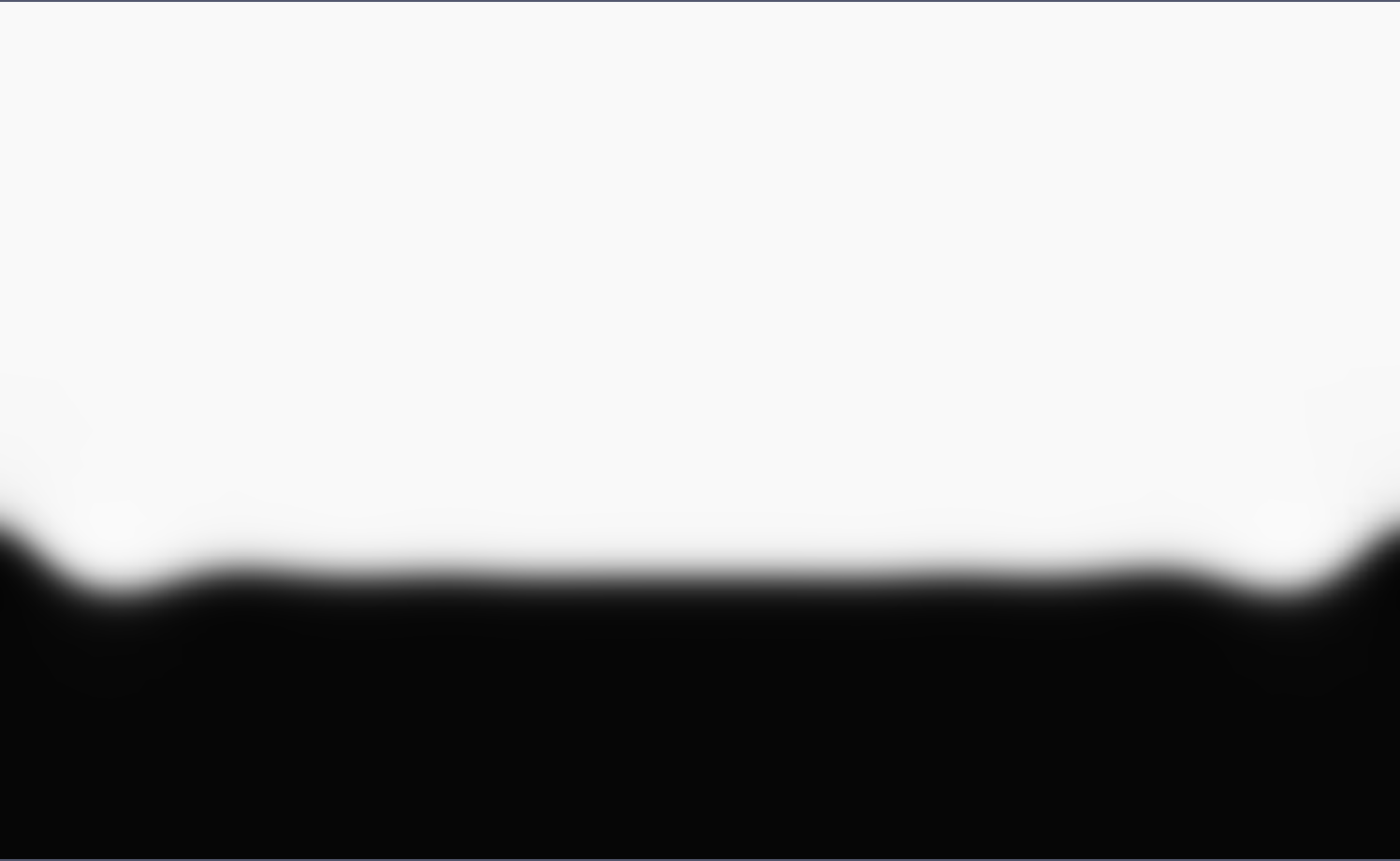}}&
    \fbox{\includegraphics[width=32mm]{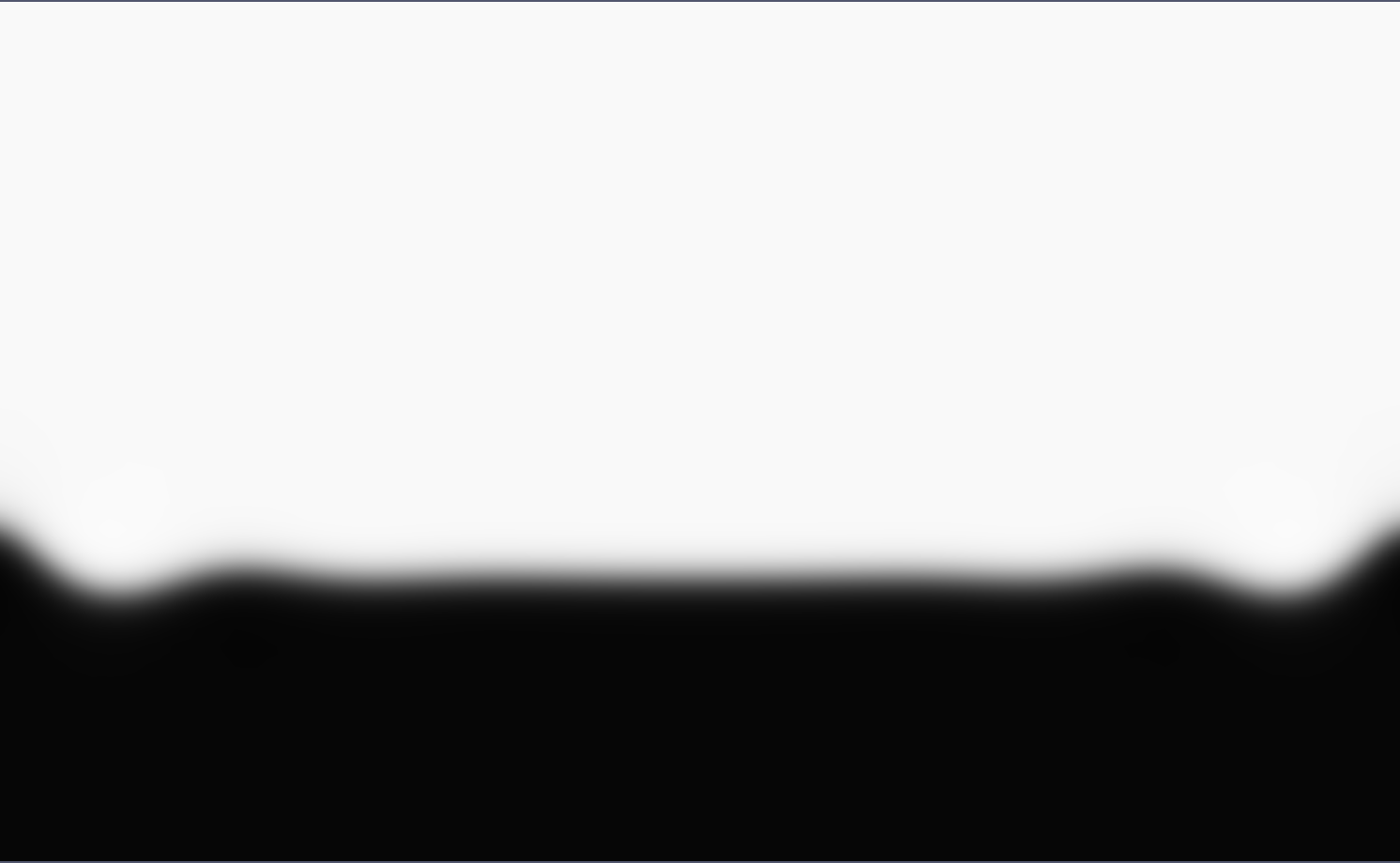}}&
    \fbox{\includegraphics[width=32mm]{Pictures/6L4000ts/20seq0008.png}}&
    \fbox{\includegraphics[width=32mm]{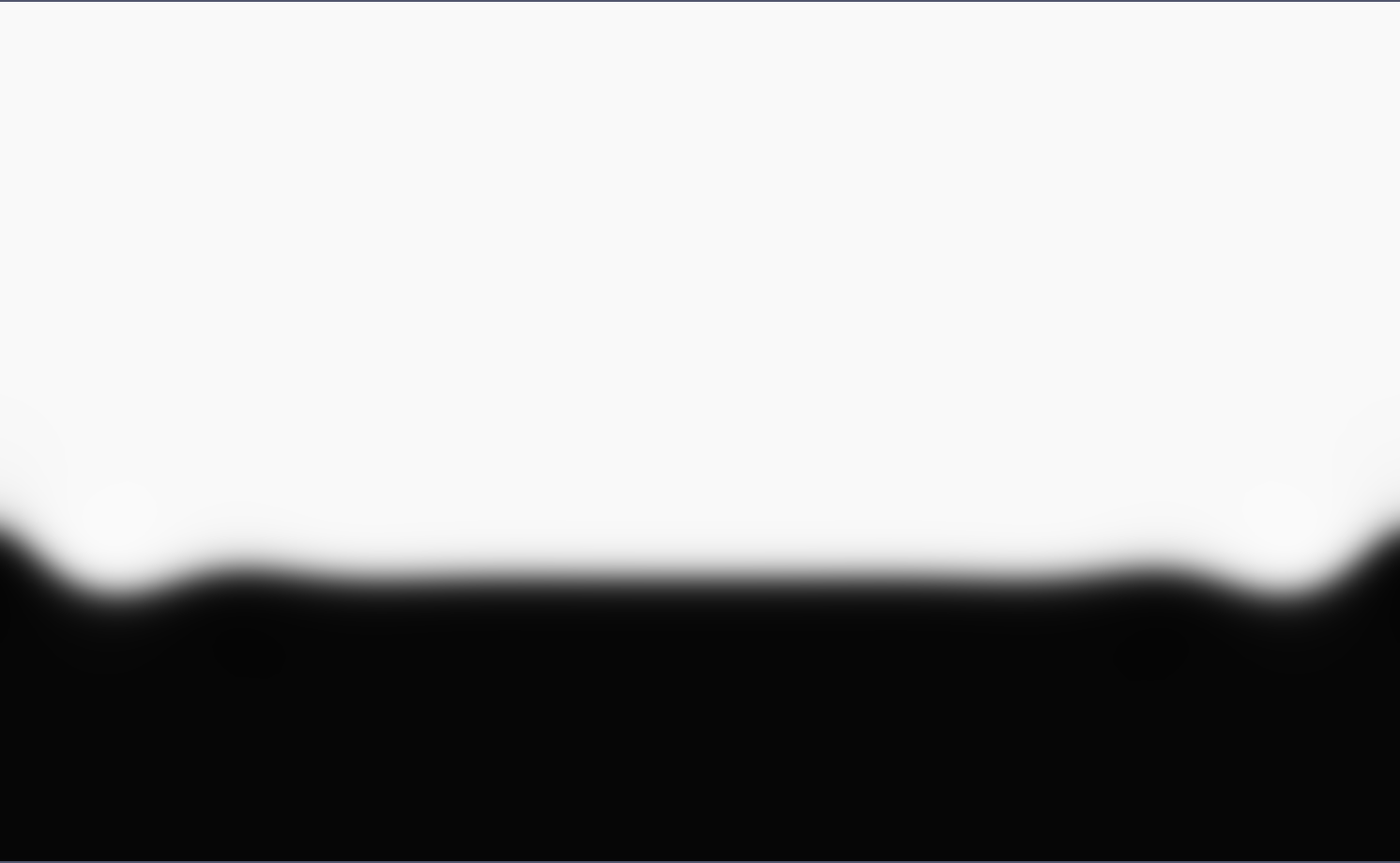}}\\

    \fbox{\includegraphics[width=32mm]{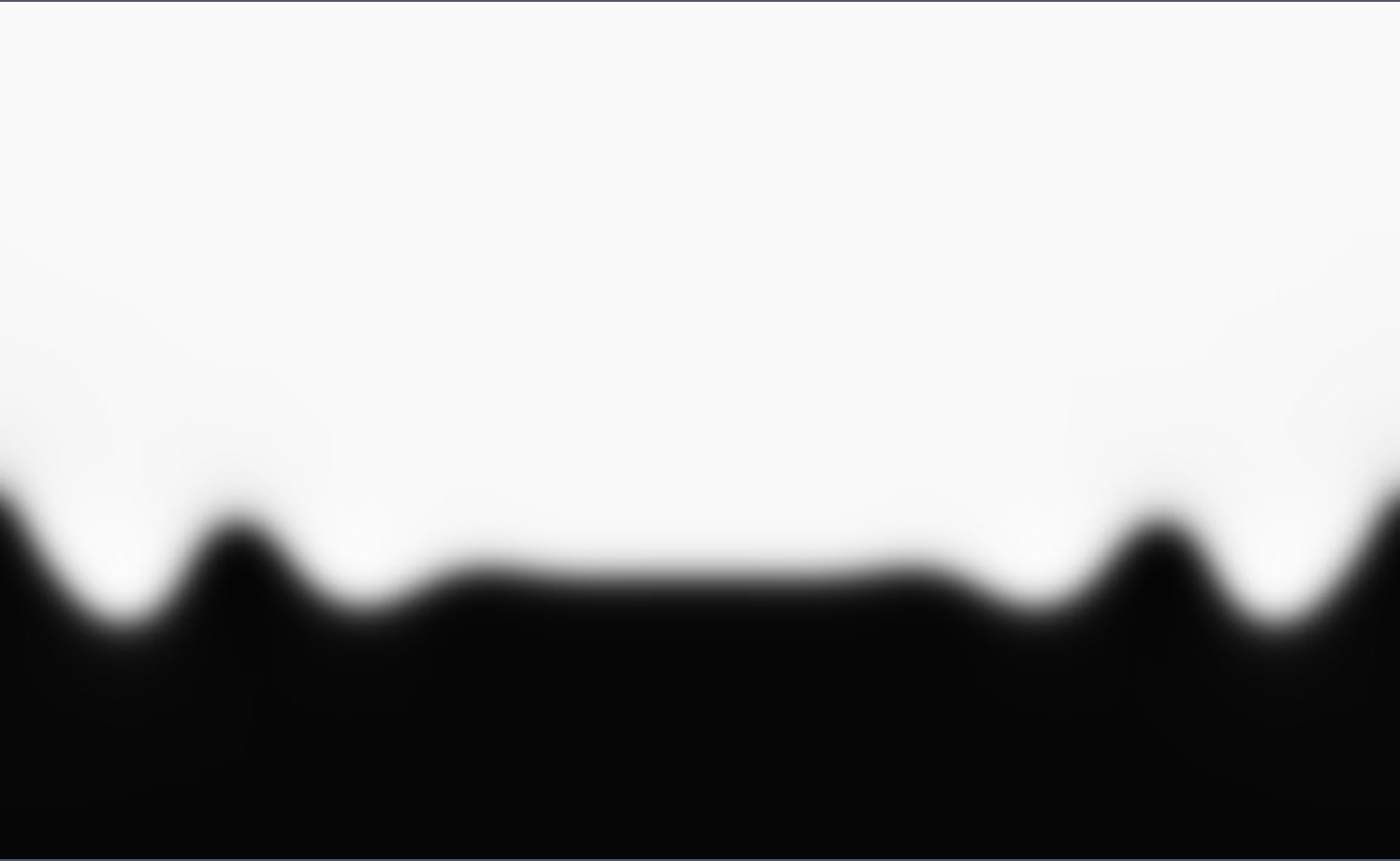}}&
    \fbox{\includegraphics[width=32mm]{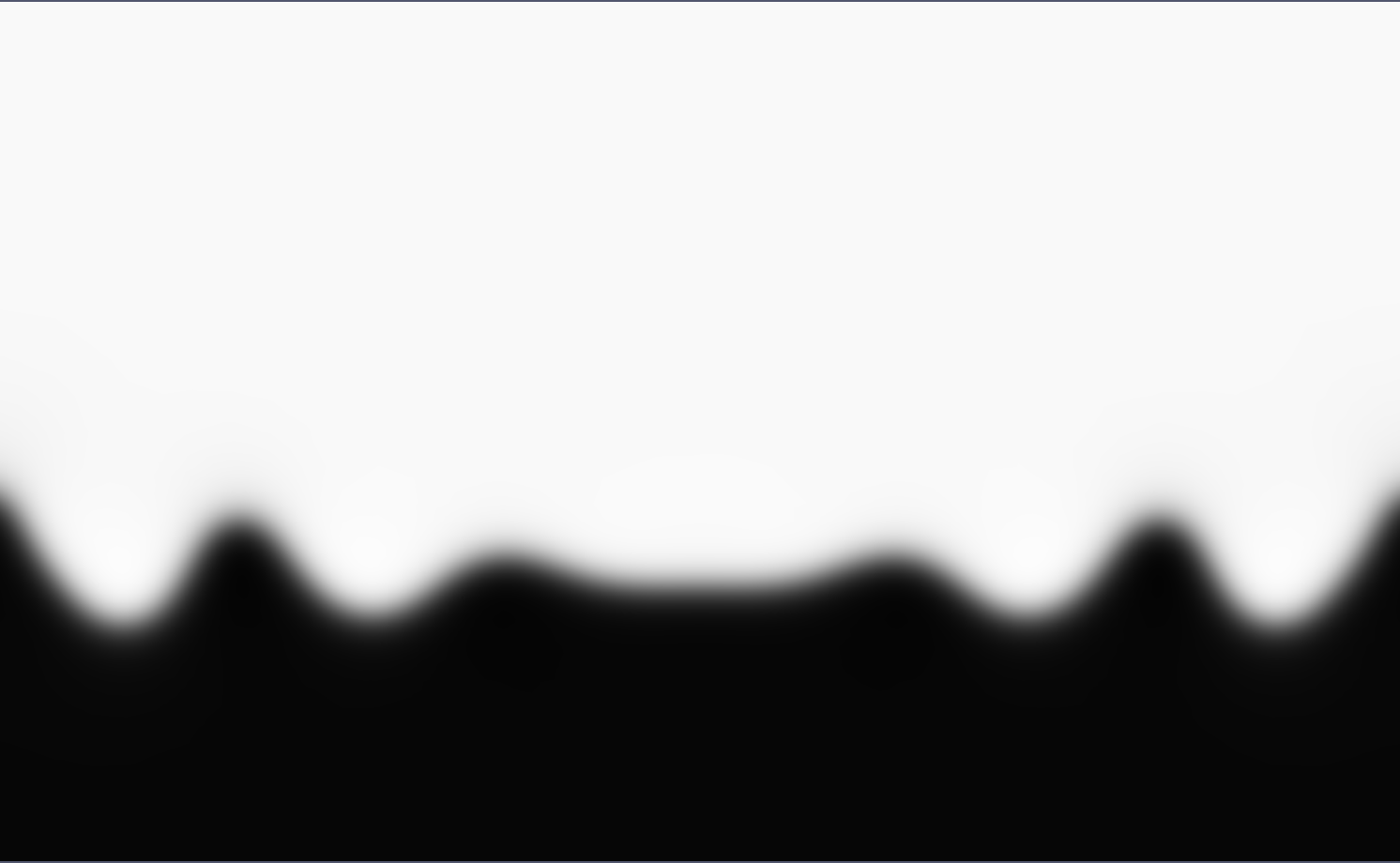}}&
    \fbox{\includegraphics[width=32mm]{Pictures/6L4000ts/20seq0009.png}}&
    \fbox{\includegraphics[width=32mm]{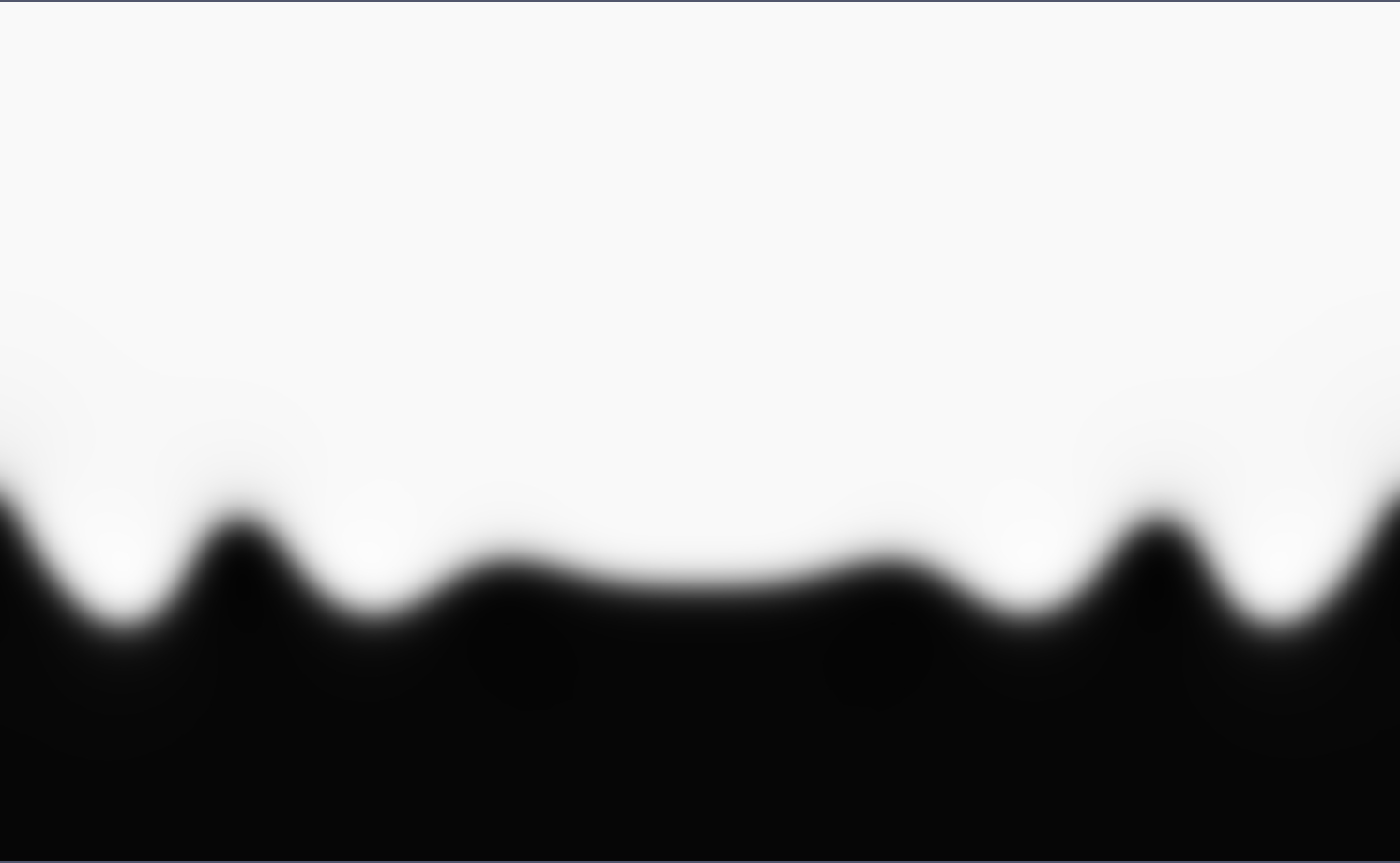}}\\

    \fbox{\includegraphics[width=32mm]{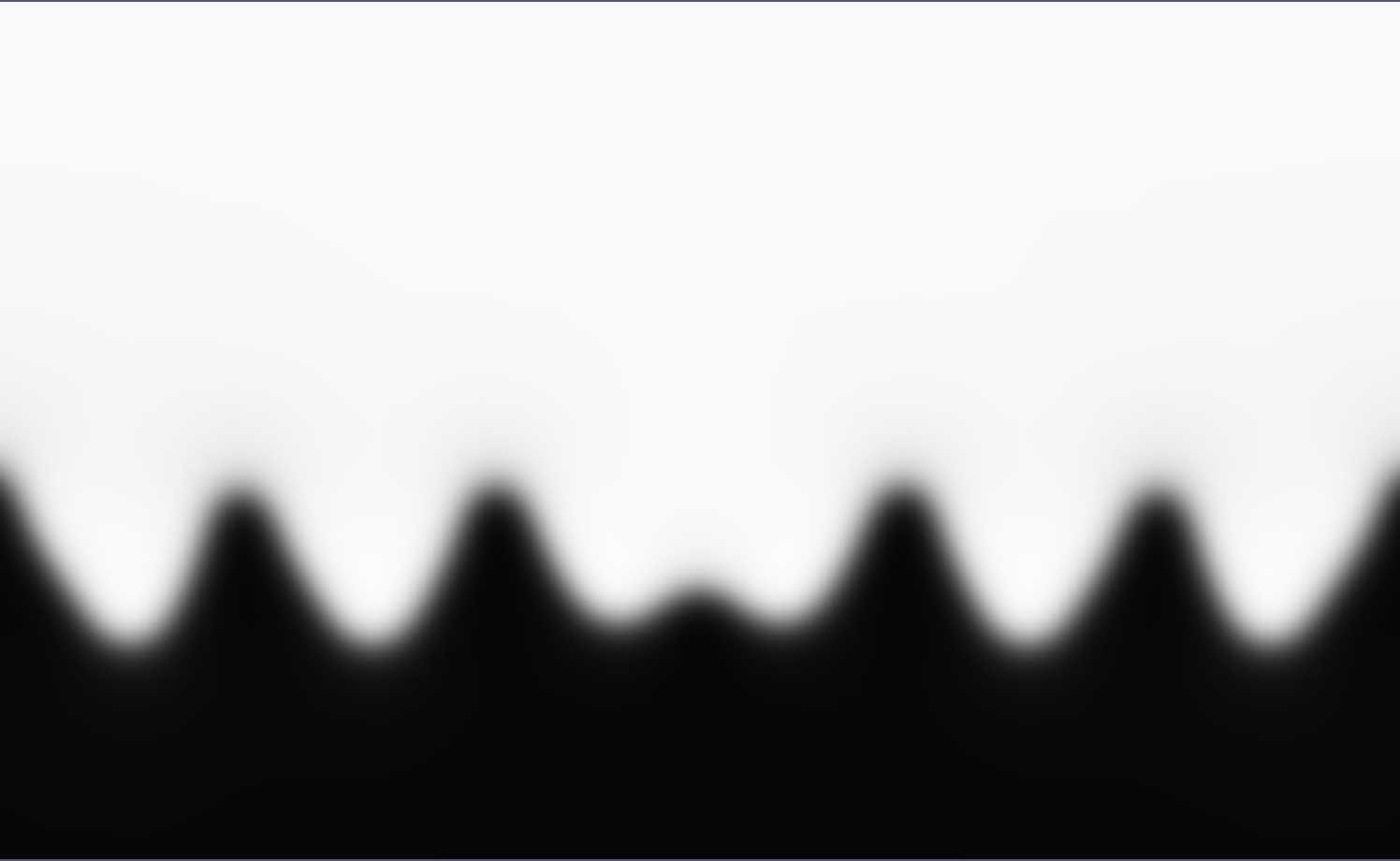}}&
    \fbox{\includegraphics[width=32mm]{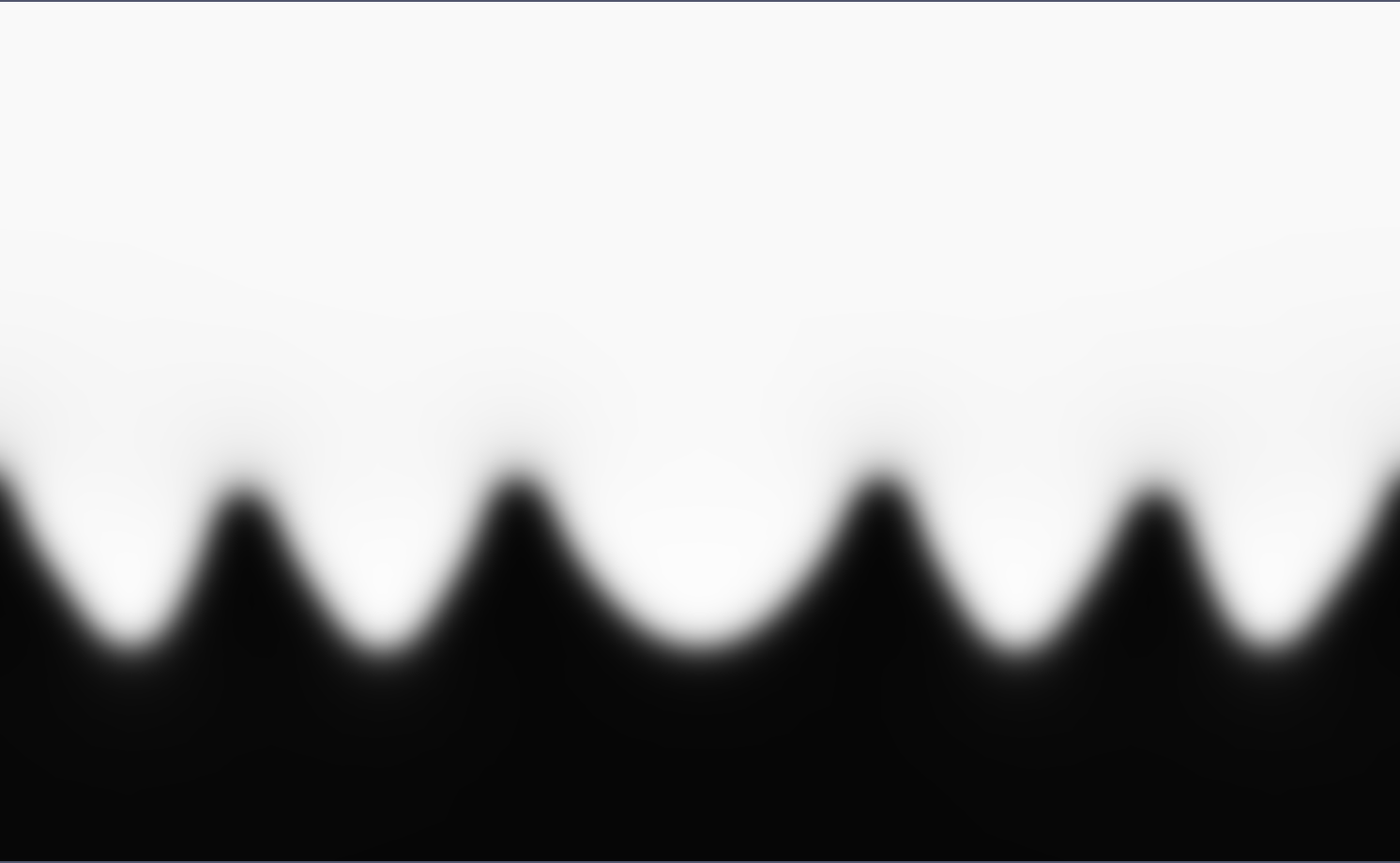}}&
    \fbox{\includegraphics[width=32mm]{Pictures/6L4000ts/20seq0010.png}}&
    \fbox{\includegraphics[width=32mm]{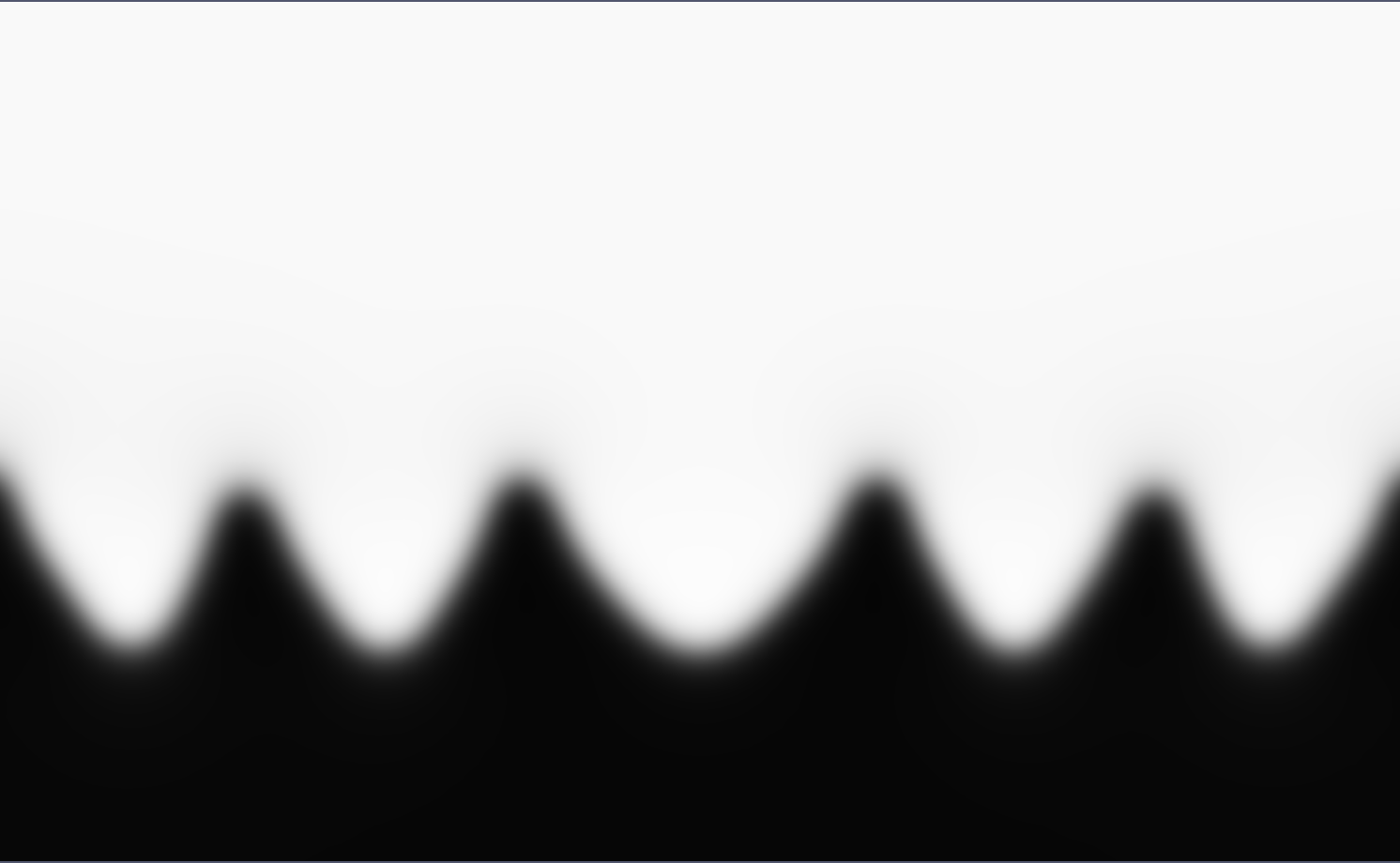}}\\

    \fbox{\includegraphics[width=32mm]{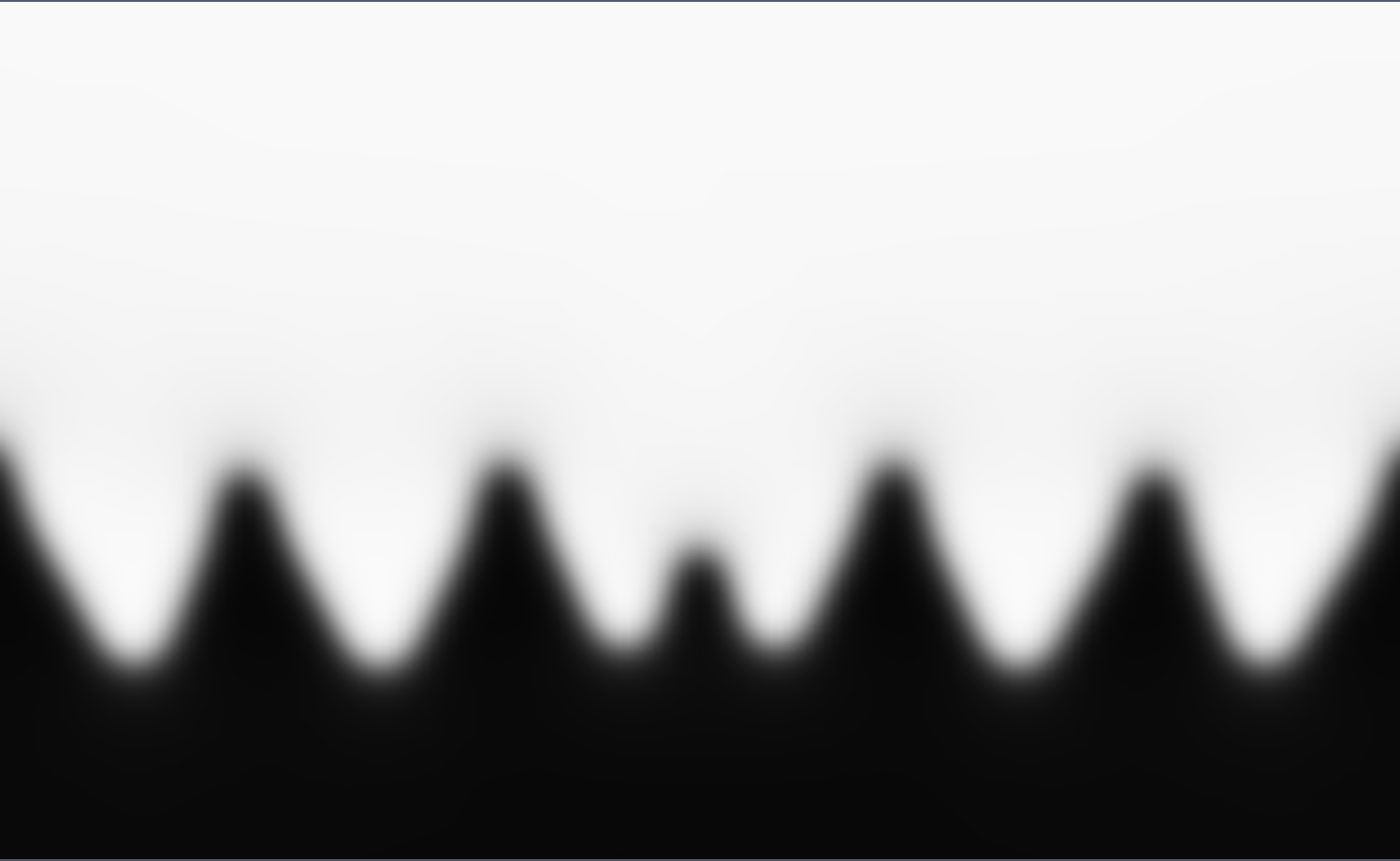}}&
    \fbox{\includegraphics[width=32mm]{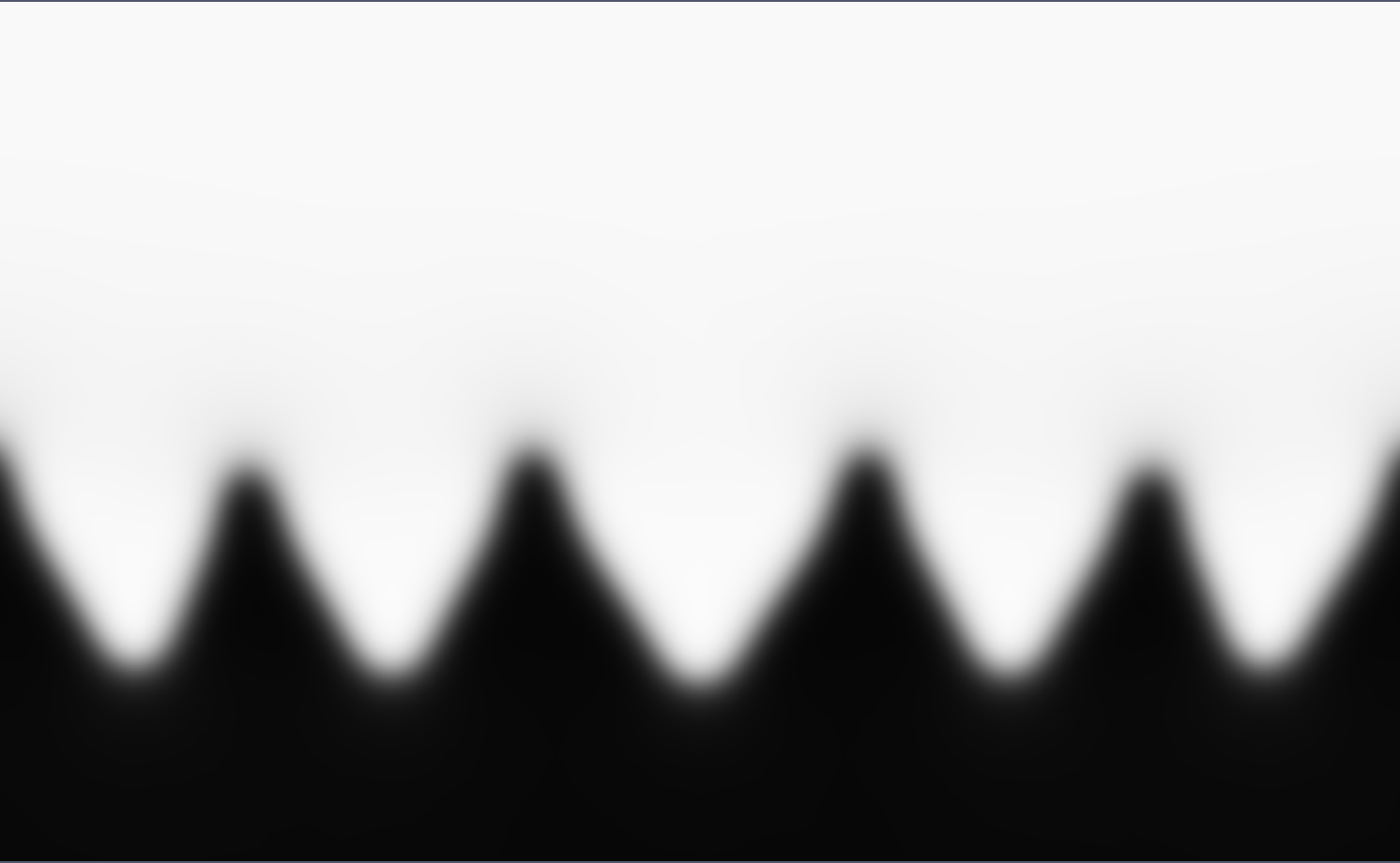}}&
    \fbox{\includegraphics[width=32mm]{Pictures/6L4000ts/20seq0011.png}}&
    \fbox{\includegraphics[width=32mm]{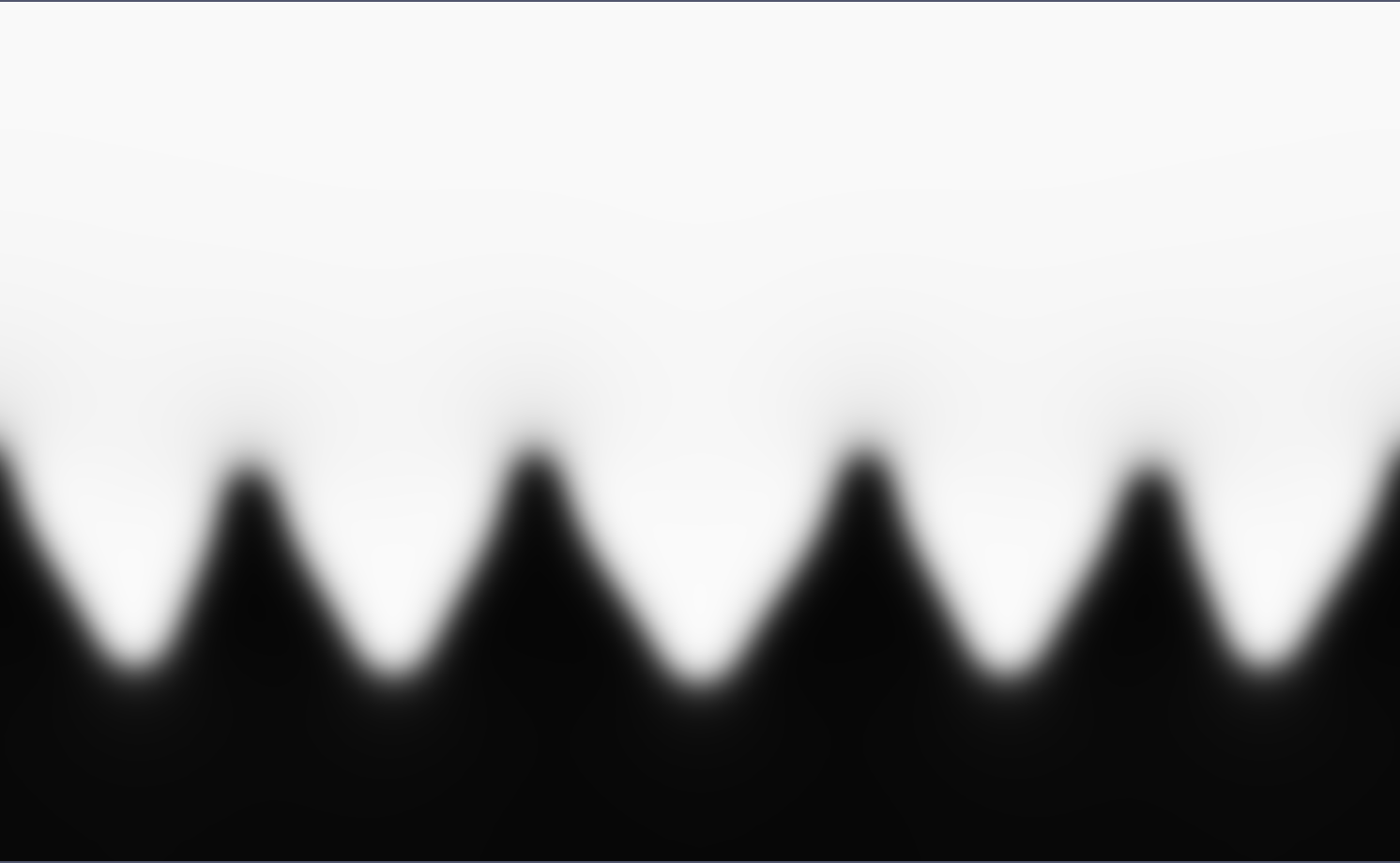}}\\
    
    \fbox{\includegraphics[width=32mm]{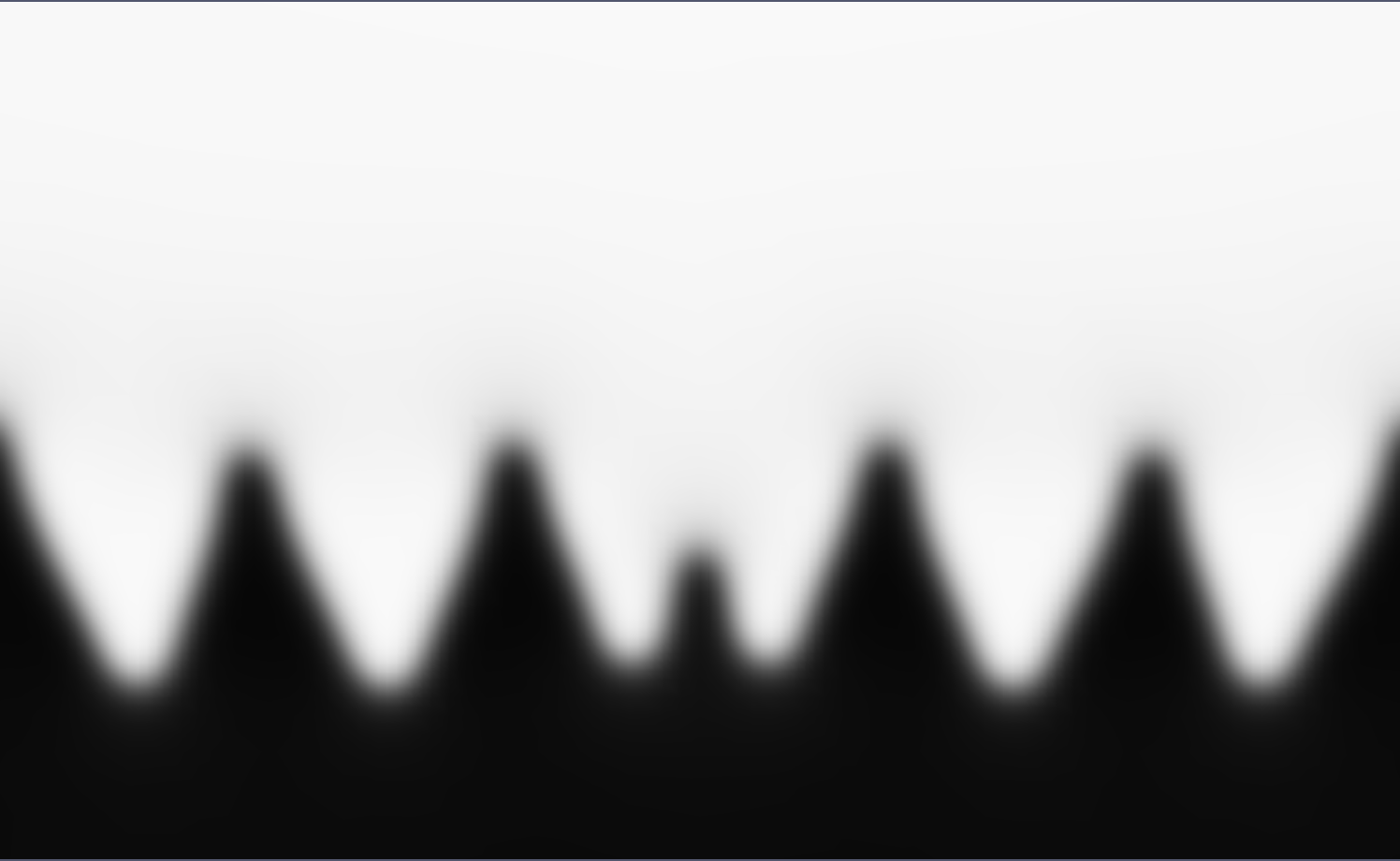}}&
    \fbox{\includegraphics[width=32mm]{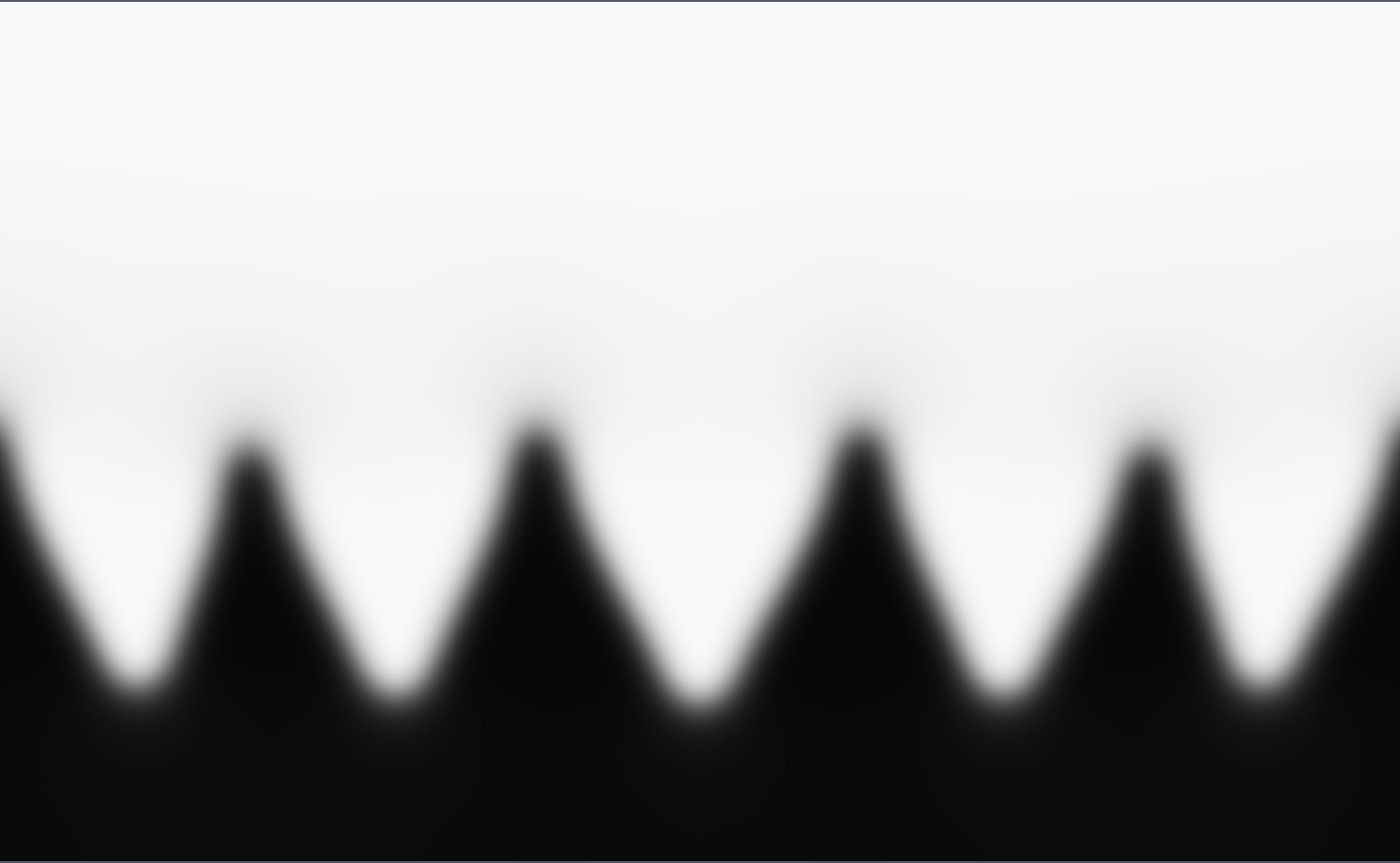}}&
    \fbox{\includegraphics[width=32mm]{Pictures/6L4000ts/20seq0012.png}}&
    \fbox{\includegraphics[width=32mm]{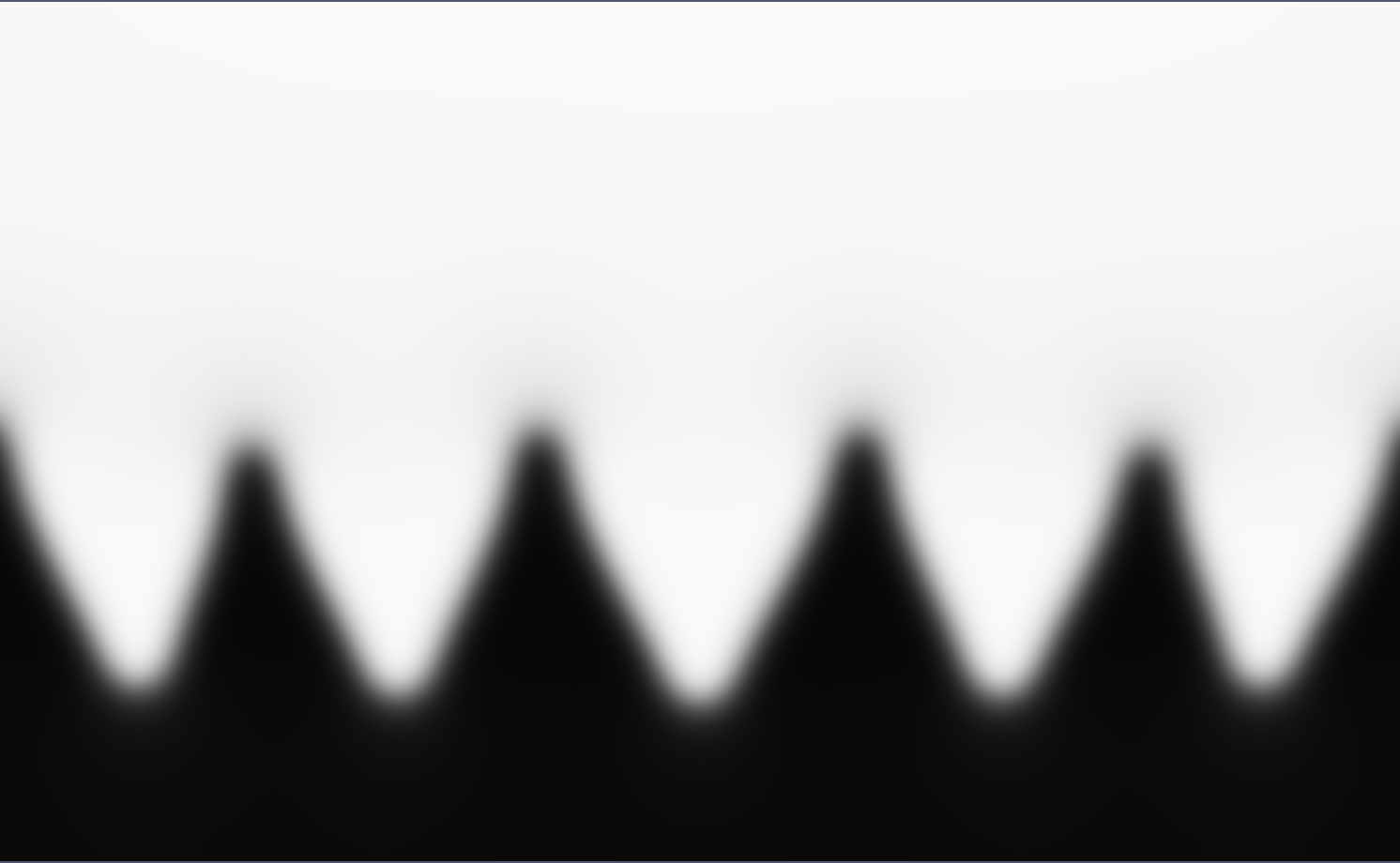}}\\

    \fbox{\includegraphics[width=32mm]{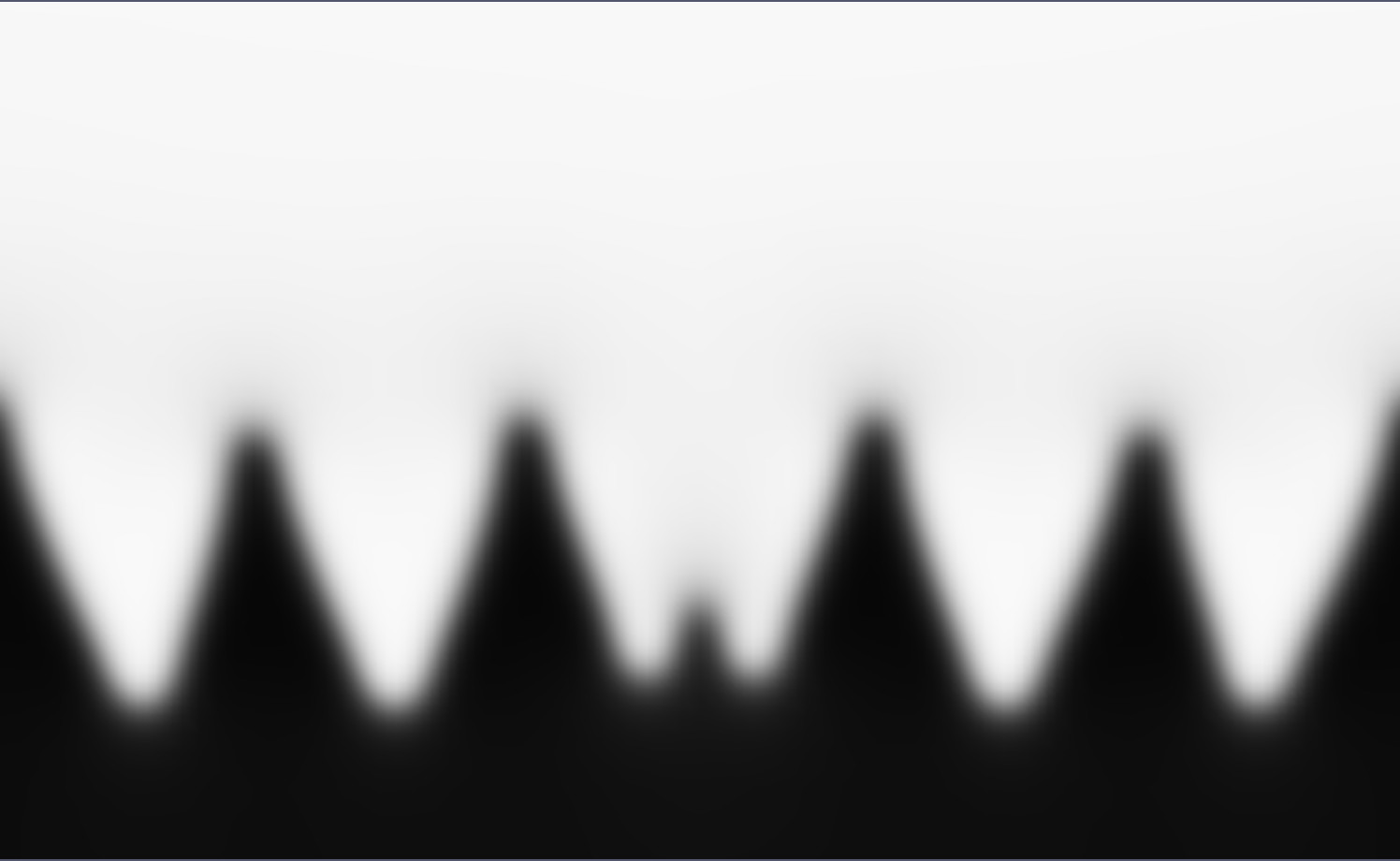}}&
    \fbox{\includegraphics[width=32mm]{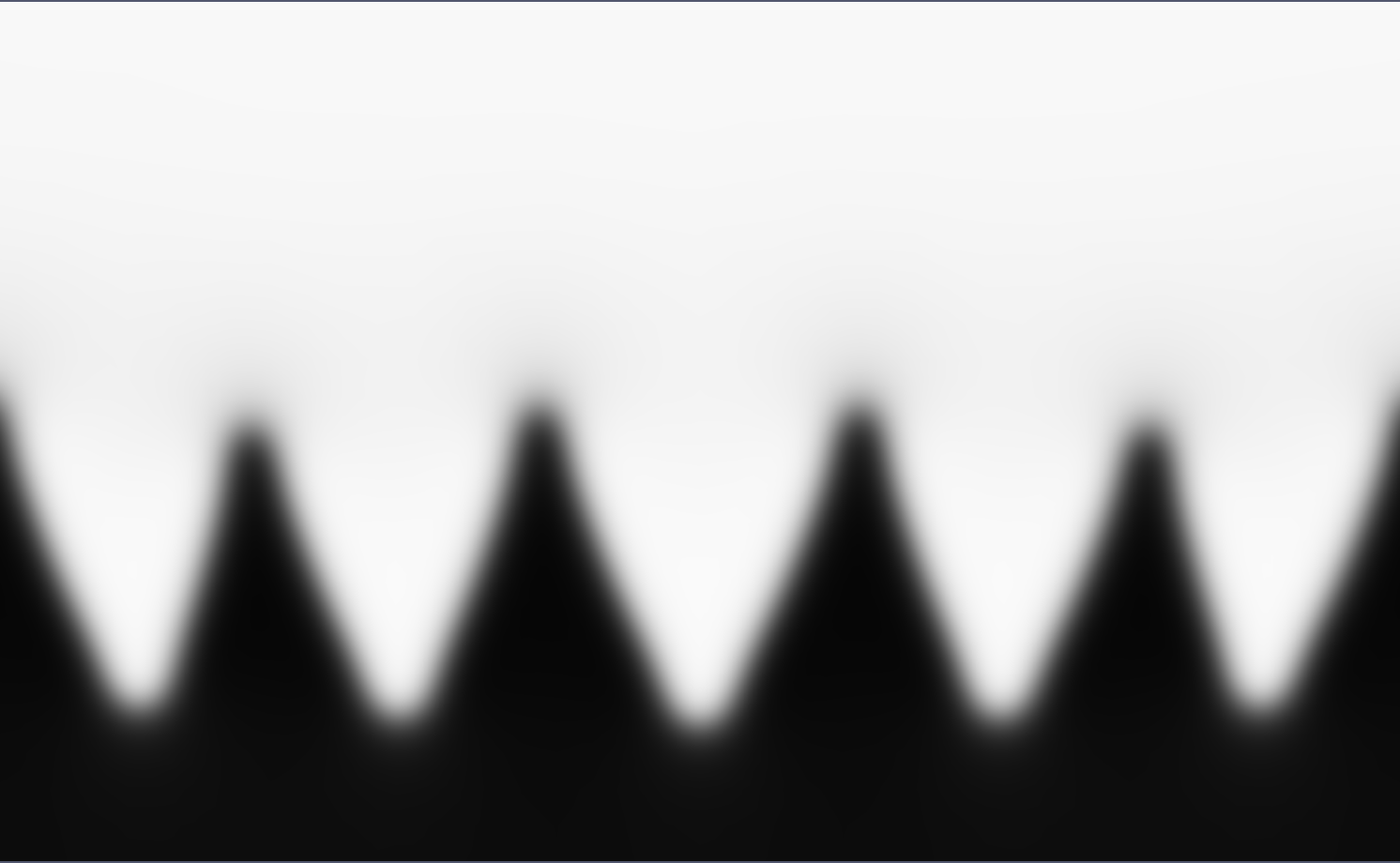}}&
    \fbox{\includegraphics[width=32mm]{Pictures/6L4000ts/20seq0013.png}}&
    \fbox{\includegraphics[width=32mm]{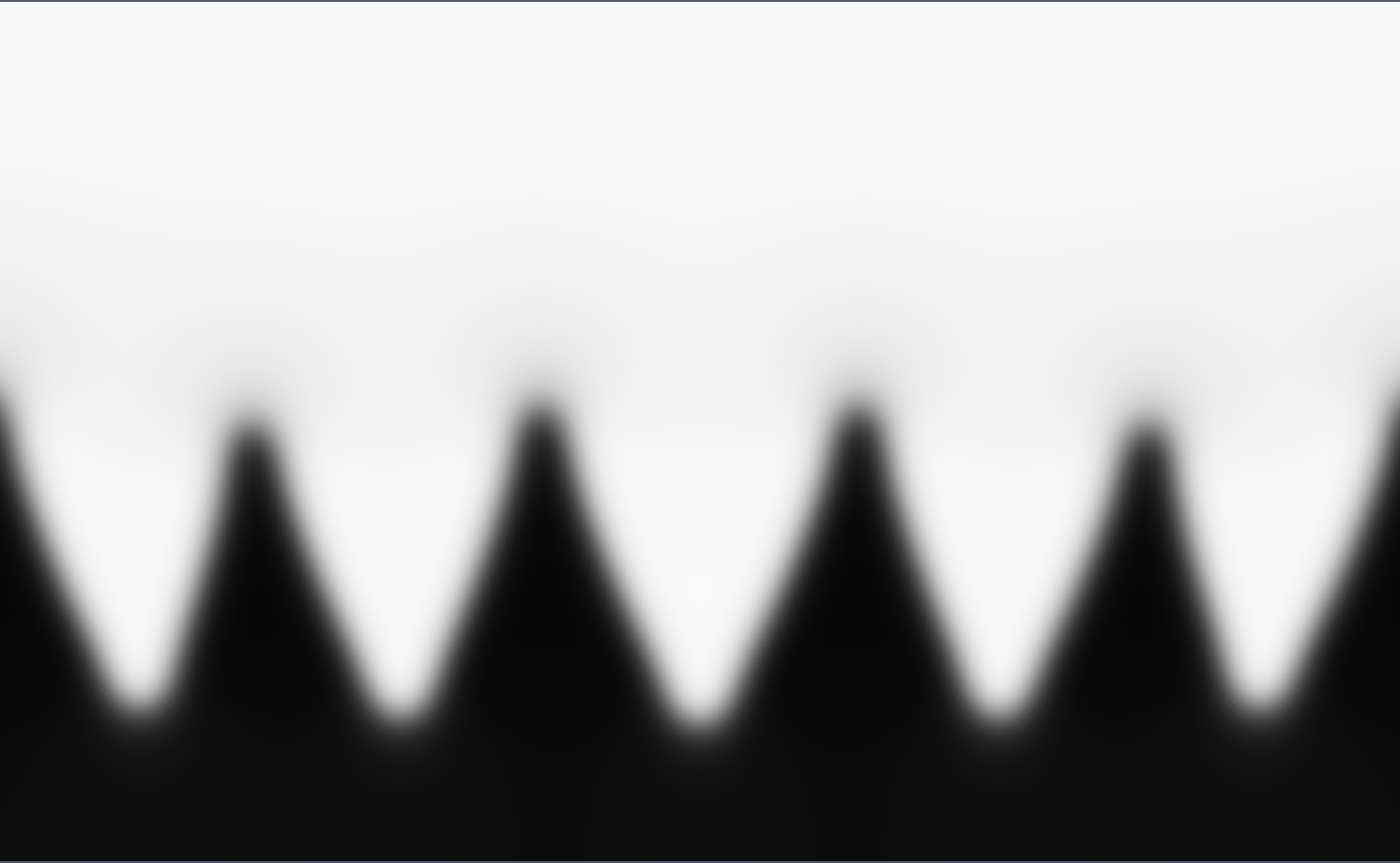}}
  \end{tabular}
\end{center}
\caption[Rosensweig instability: parametric study on the space
  discretization]{\textbf{Parametric study: space discretization.}
  This figure shows the results obtained from time $t=0.7$ (uppermost
  row) to time $t= 1.3$ (lowermost row) in time intervals $\dt = 0.1$
  using 4000 time steps and four different levels of refinement in
  space: the coarsest mesh uses 4 levels of refinement (first column),
  5 levels (second column), 6 levels (third column), and the finest
  mesh uses 7 levels (fourth column). The reader can appreciate that
  with the coarsest mesh (leftmost column) the numerical solution
  exhibits artificial features which do not survive additional
  refinement. One of them is an additional spike in the middle
  of the diffuse interface which is not present in the second, third and fourth columns. This simple example illustrates the importance of parametric studies in the context of phase-field methods. \label{parafigspace}}
%
\end{figure}

\begin{figure}
\begin{center}
    \setlength\fboxsep{0pt}
    \setlength\fboxrule{1pt}
  \begin{tabular}{cccc}

    \fbox{\includegraphics[width=32mm]{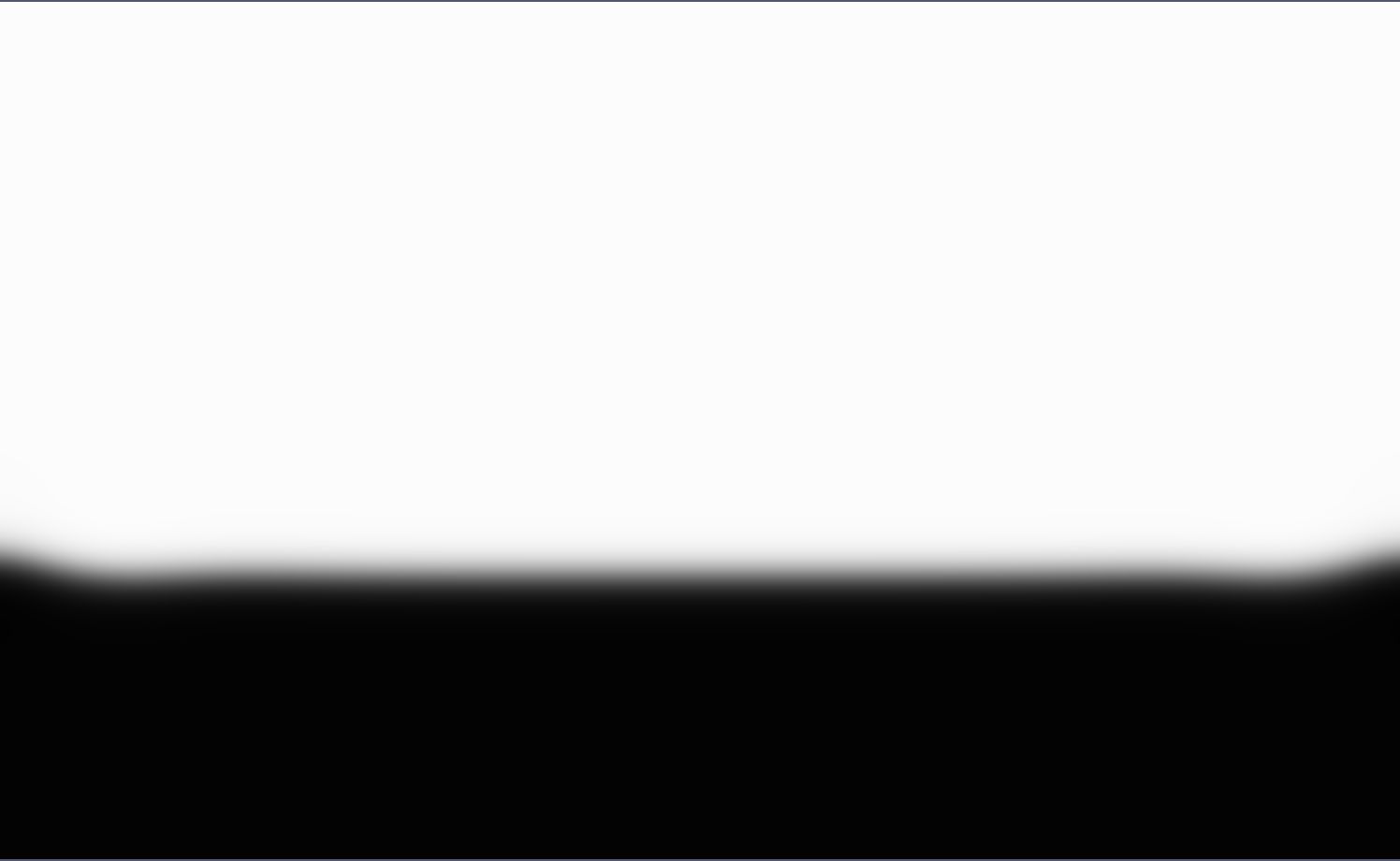}}&
    \fbox{\includegraphics[width=32mm]{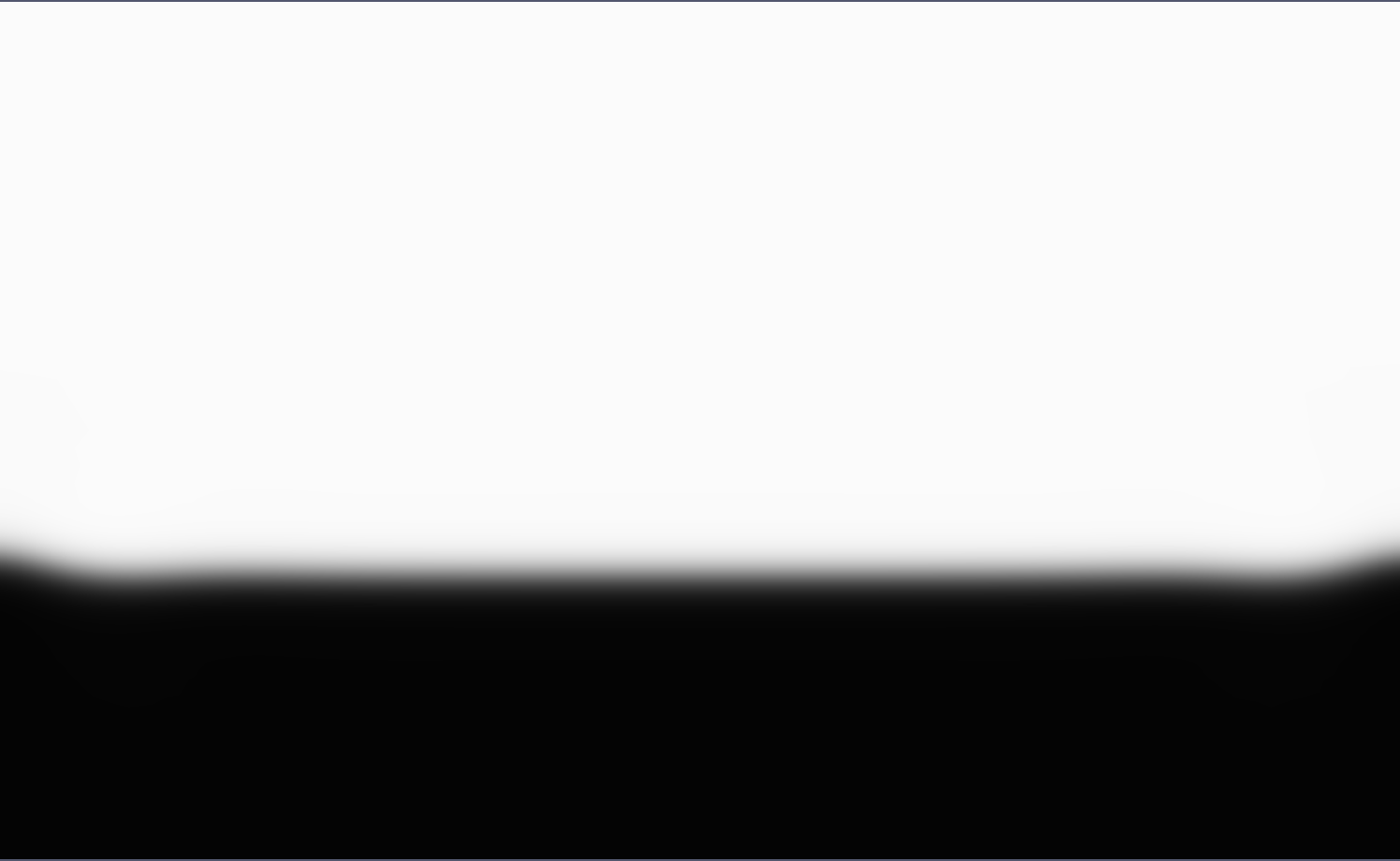}}&
    \fbox{\includegraphics[width=32mm]{Pictures/6L4000ts/20seq0007.png}}&
    \fbox{\includegraphics[width=32mm]{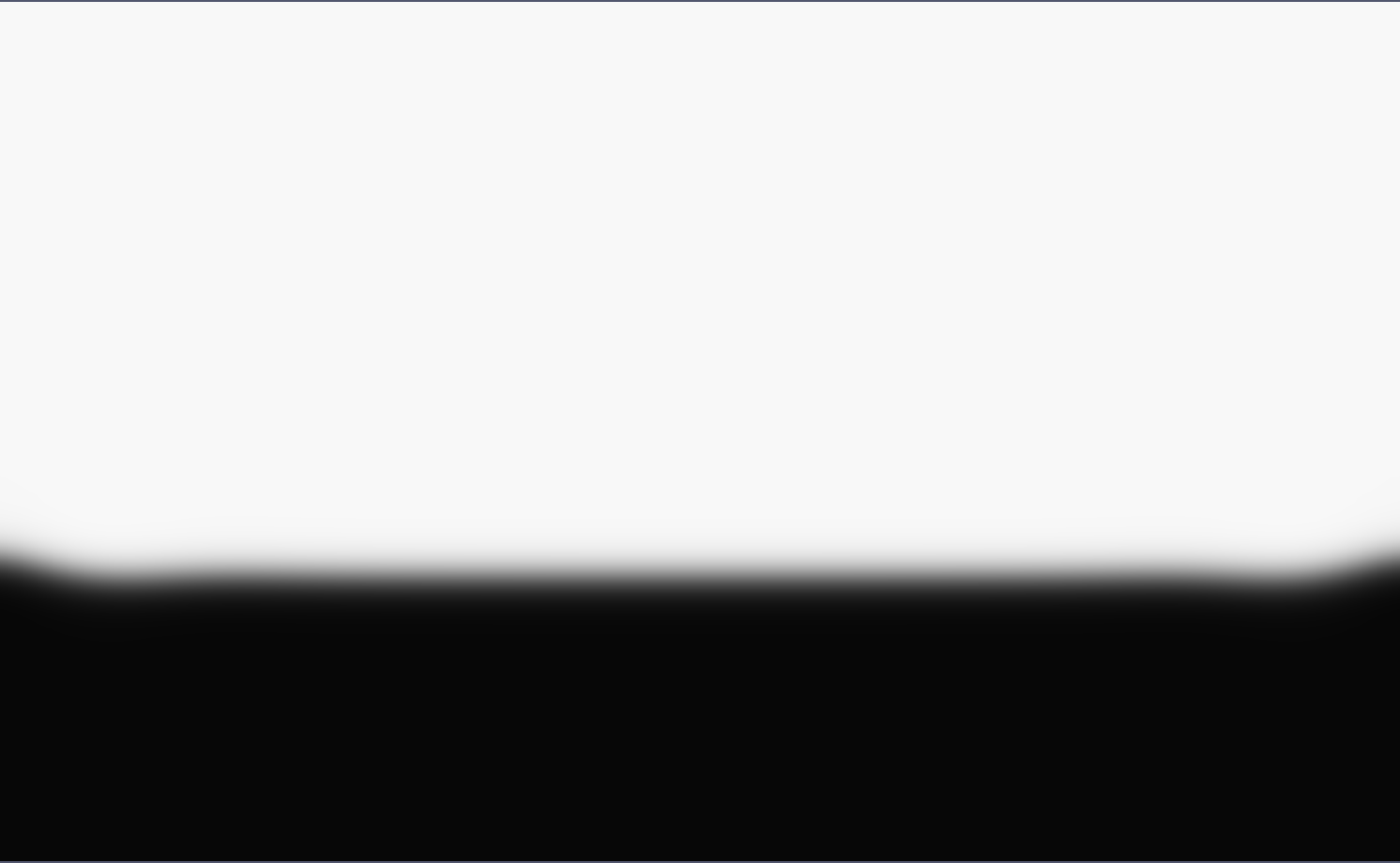}}\\

    \fbox{\includegraphics[width=32mm]{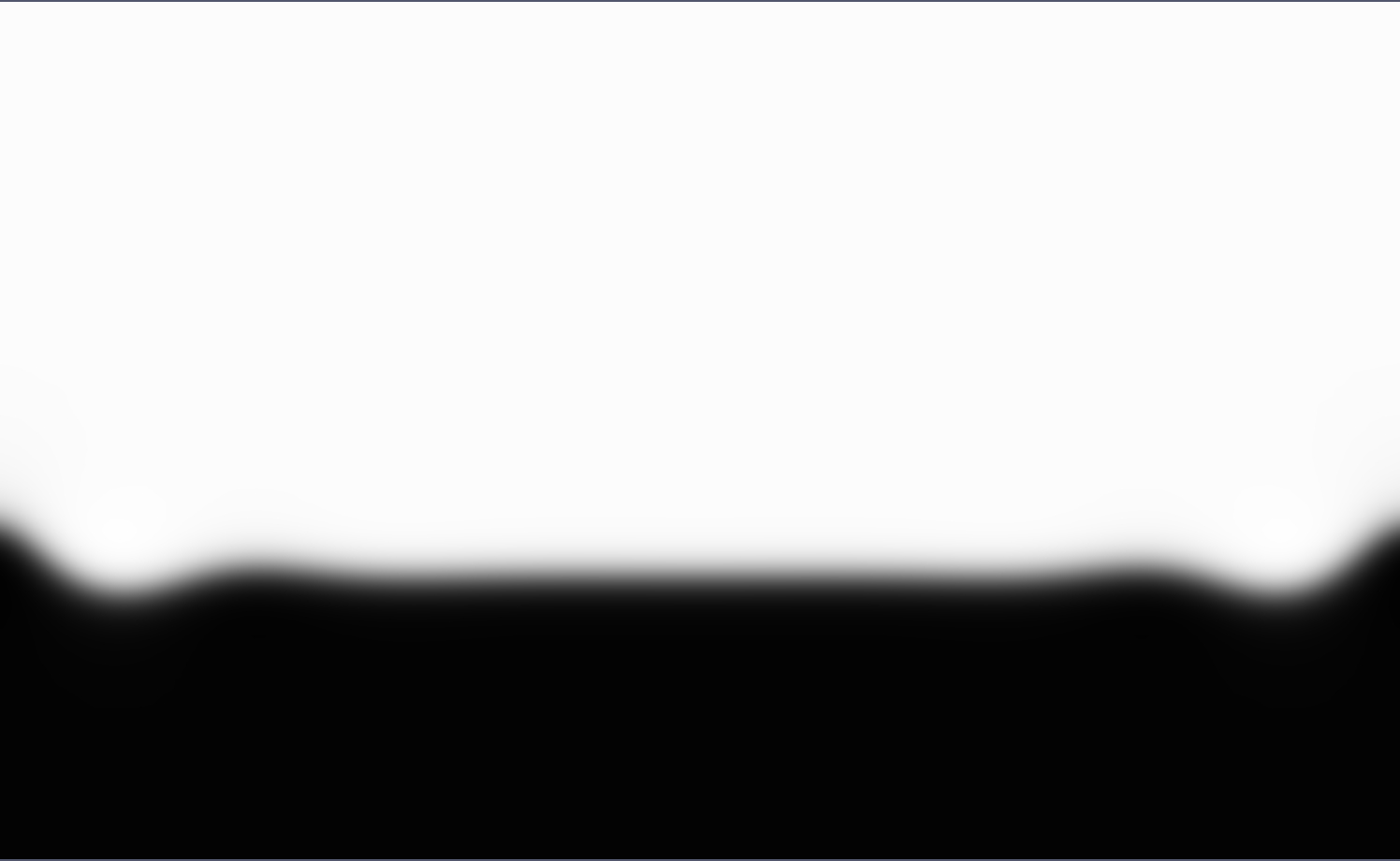}}&
    \fbox{\includegraphics[width=32mm]{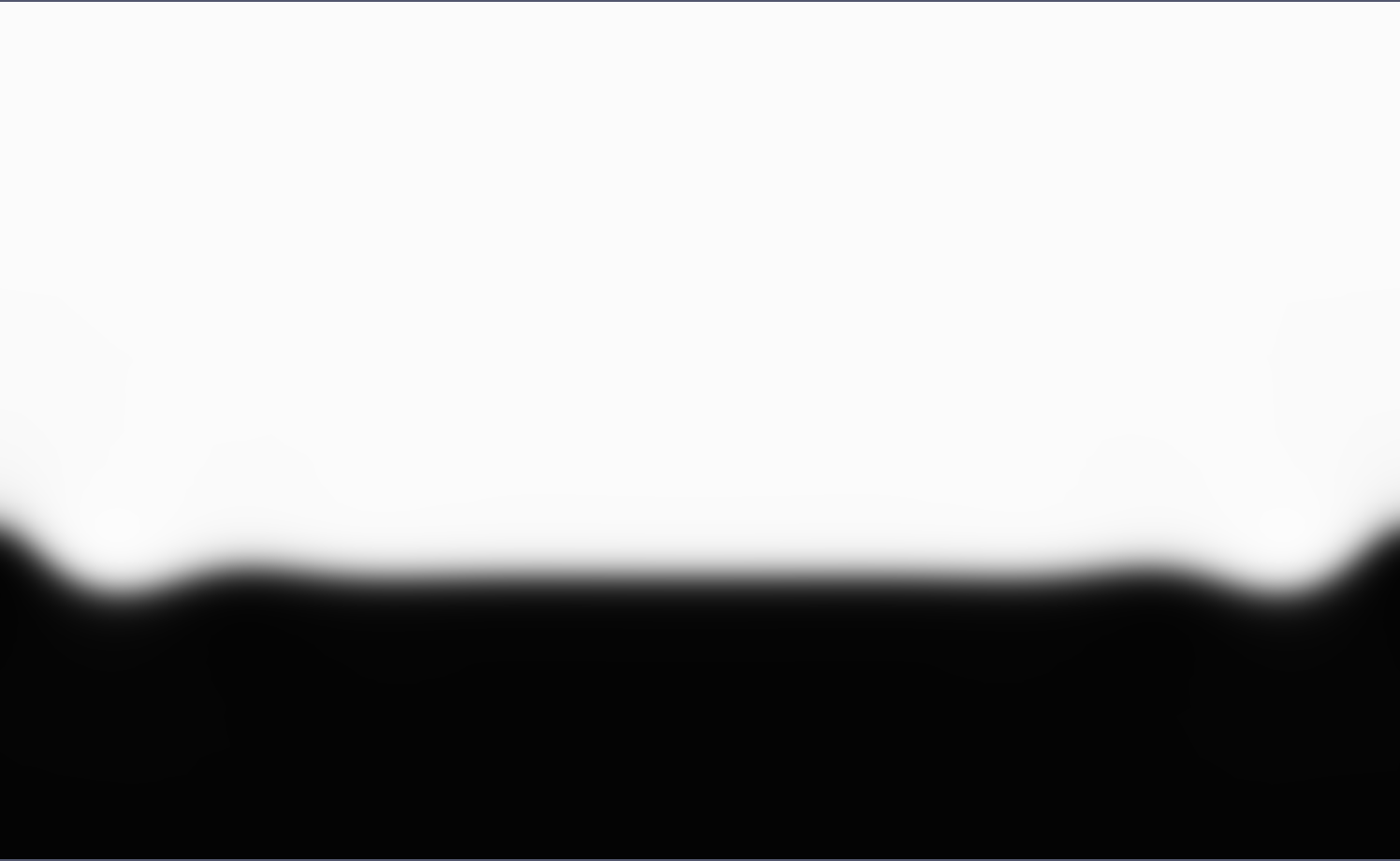}}&
    \fbox{\includegraphics[width=32mm]{Pictures/6L4000ts/20seq0008.png}}&
    \fbox{\includegraphics[width=32mm]{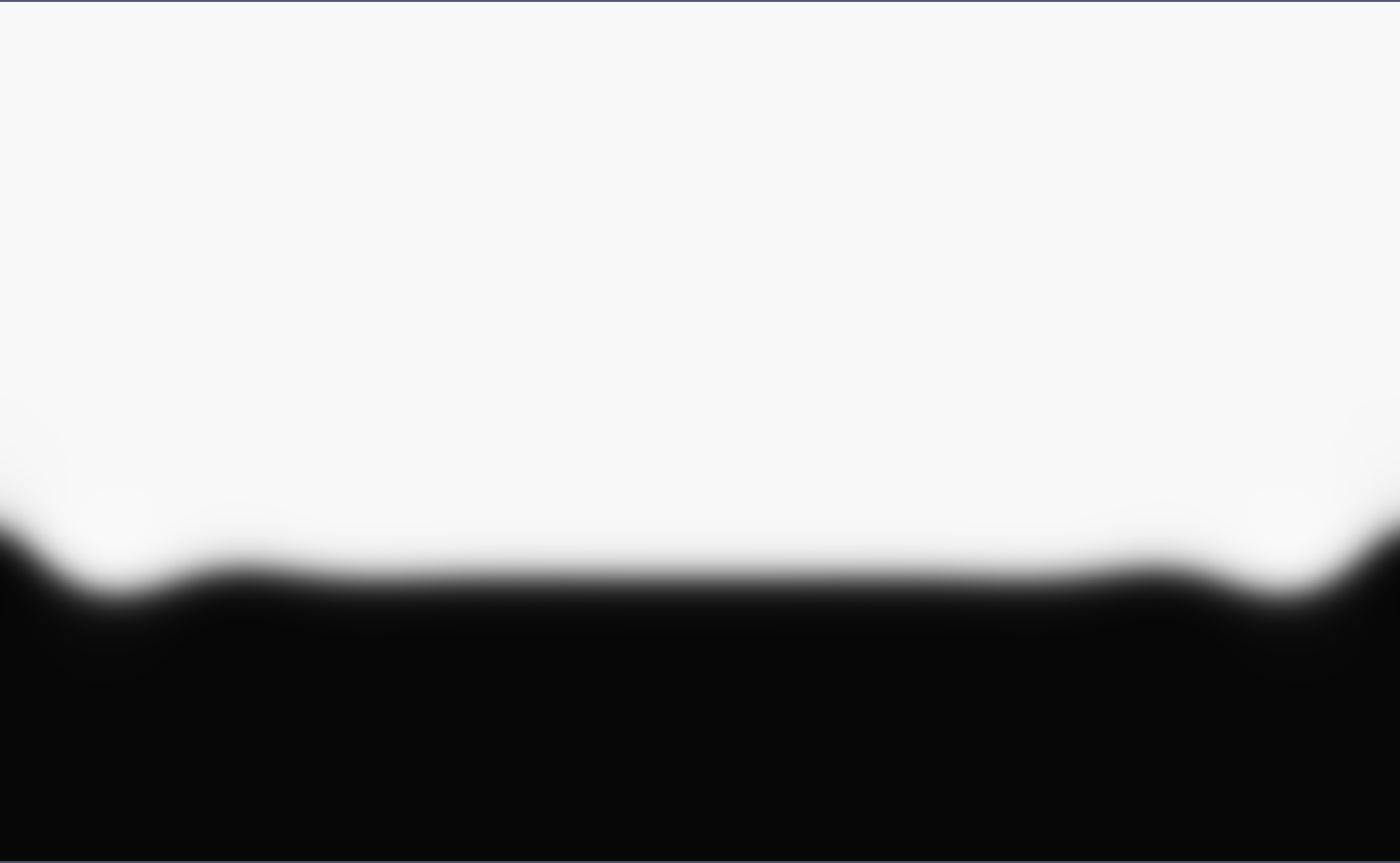}}\\

    \fbox{\includegraphics[width=32mm]{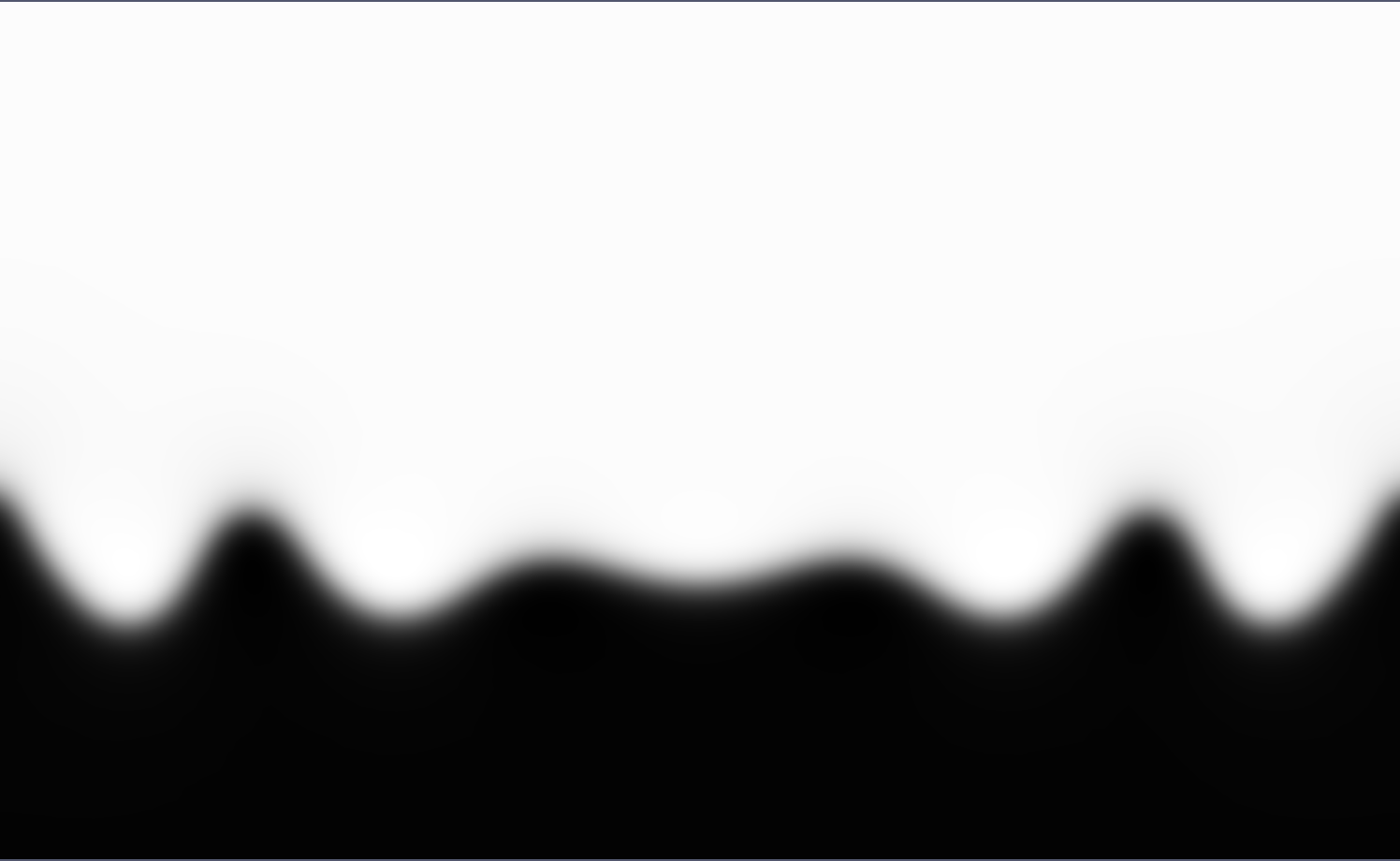}}&
    \fbox{\includegraphics[width=32mm]{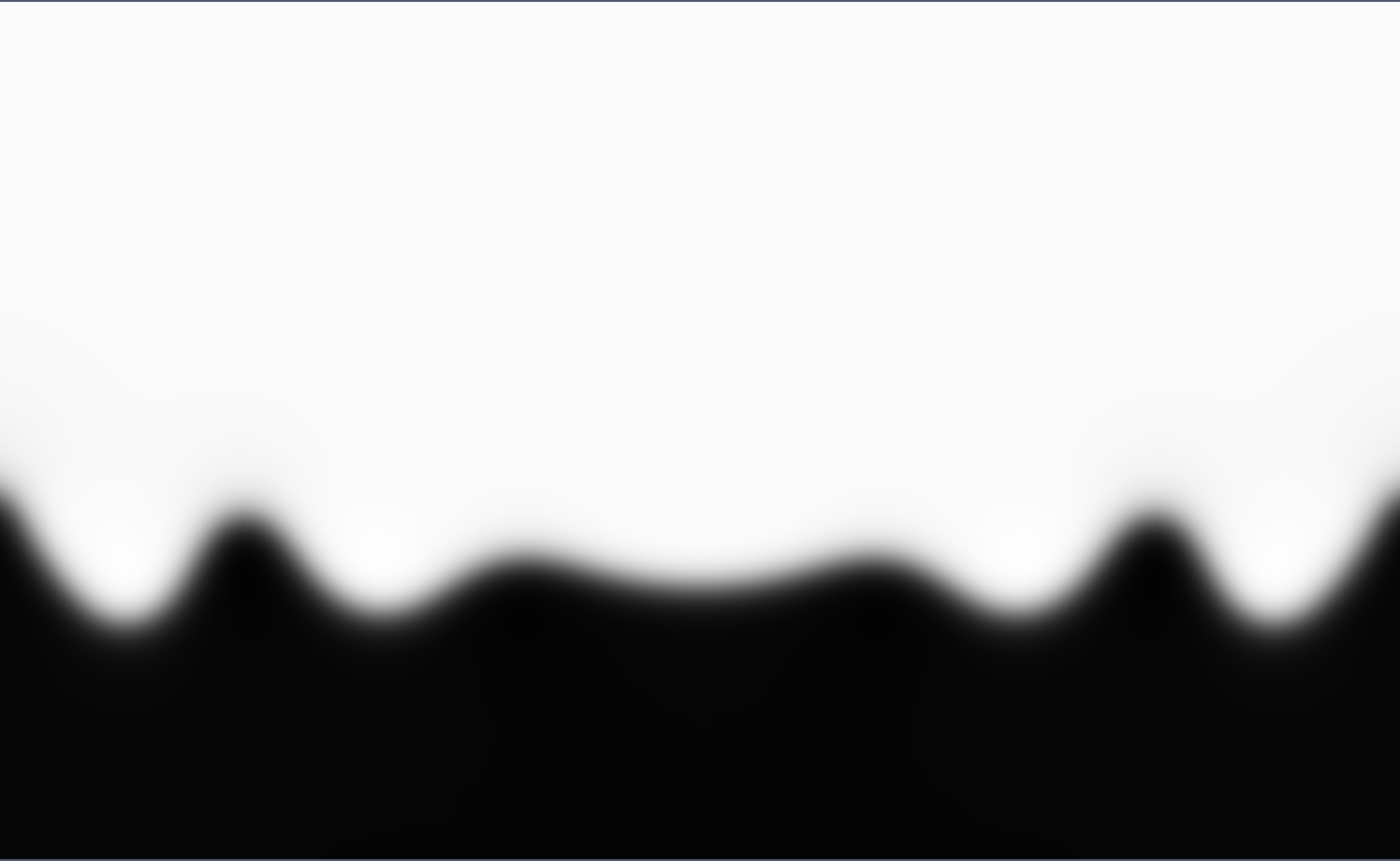}}&
    \fbox{\includegraphics[width=32mm]{Pictures/6L4000ts/20seq0009.png}}&
    \fbox{\includegraphics[width=32mm]{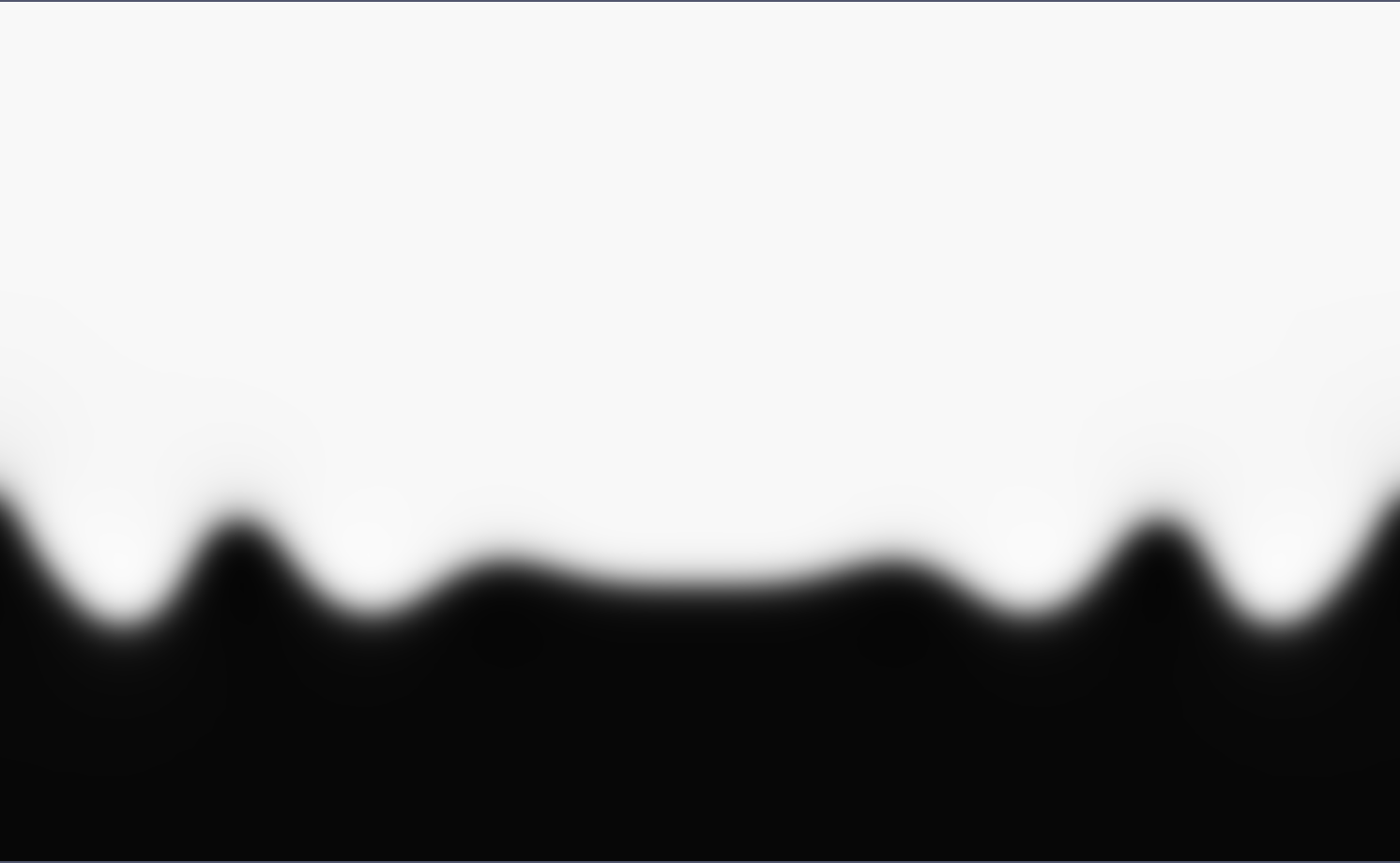}}\\

    \fbox{\includegraphics[width=32mm]{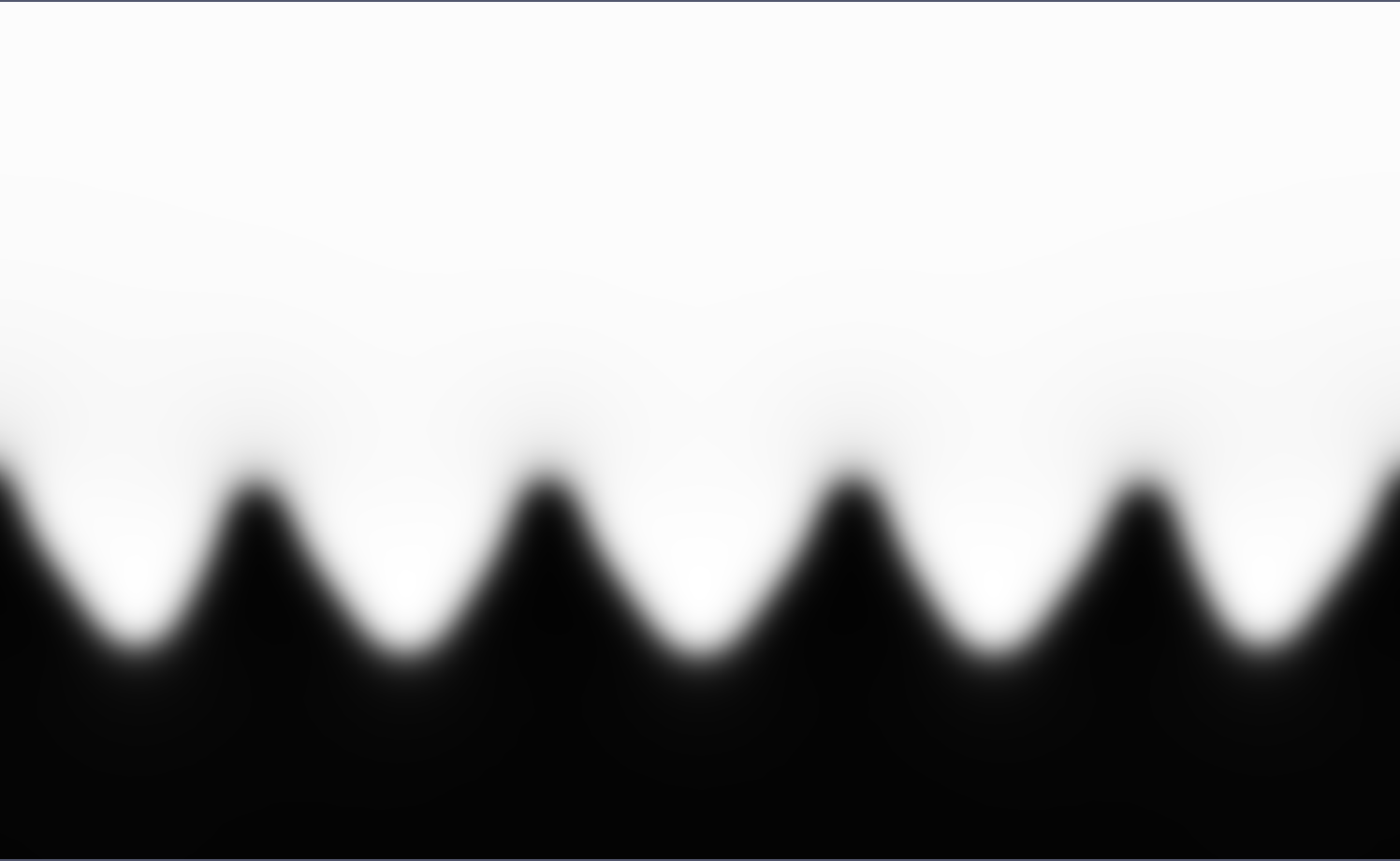}}&
    \fbox{\includegraphics[width=32mm]{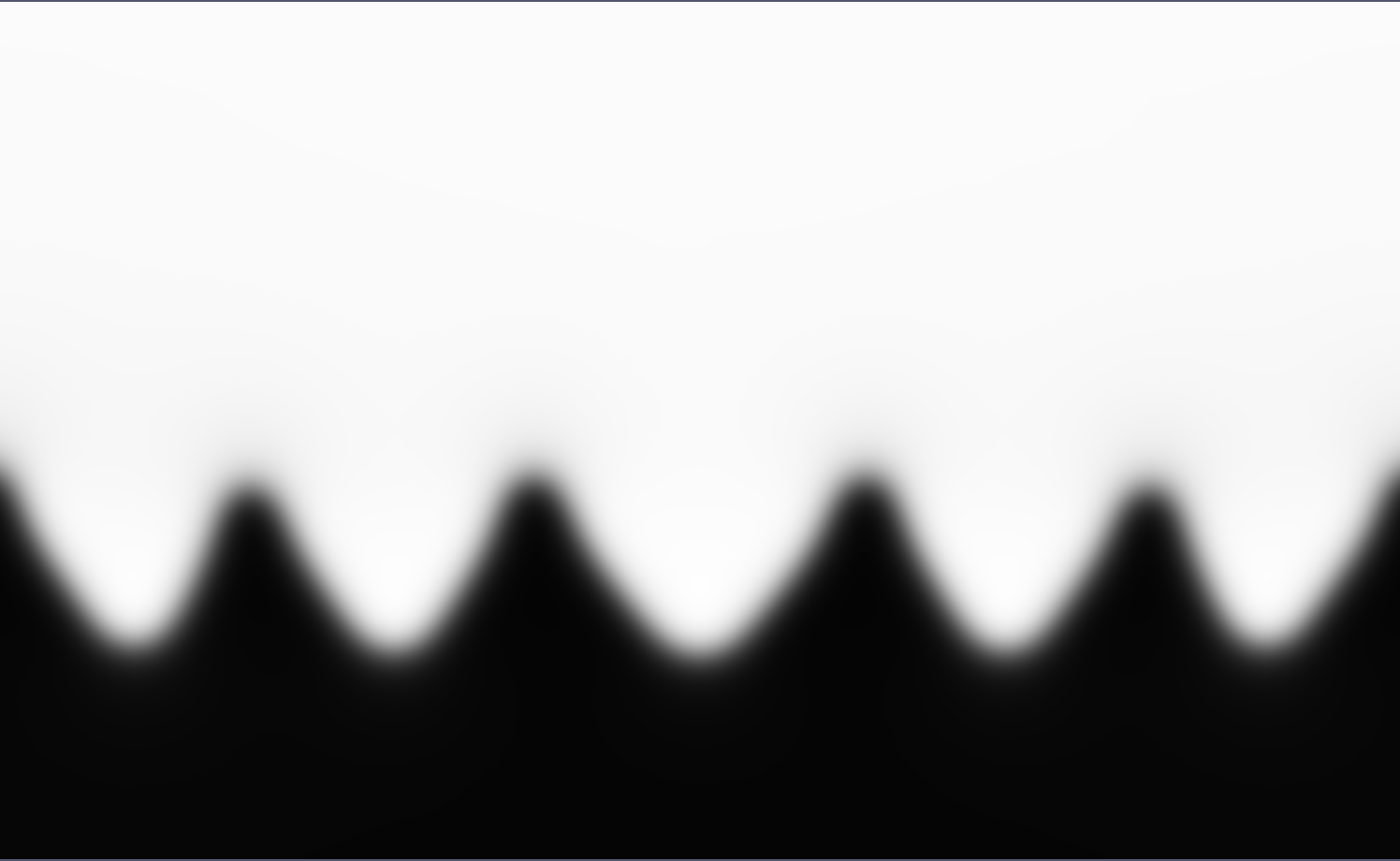}}&
    \fbox{\includegraphics[width=32mm]{Pictures/6L4000ts/20seq0010.png}}&
    \fbox{\includegraphics[width=32mm]{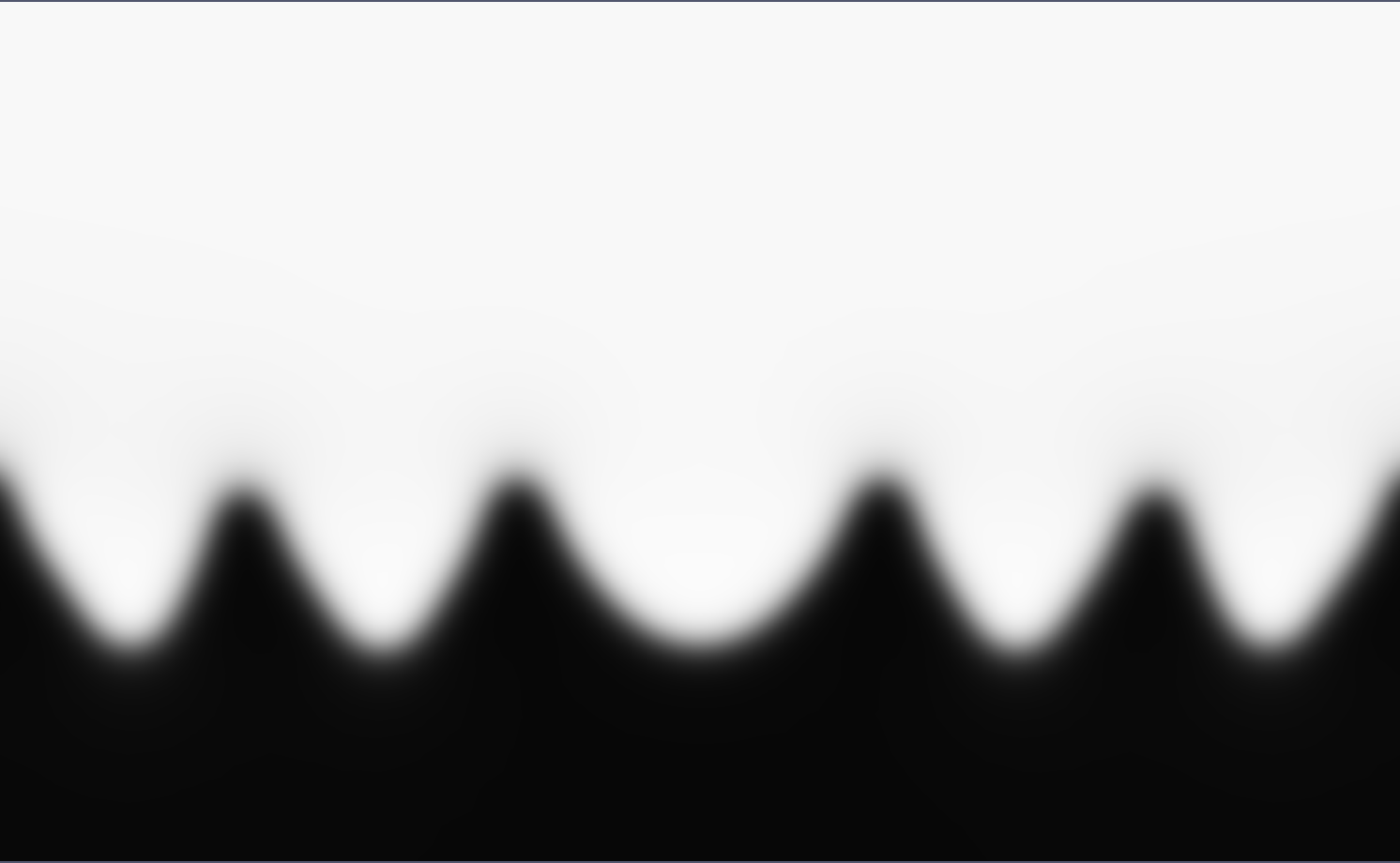}}\\
    
    \fbox{\includegraphics[width=32mm]{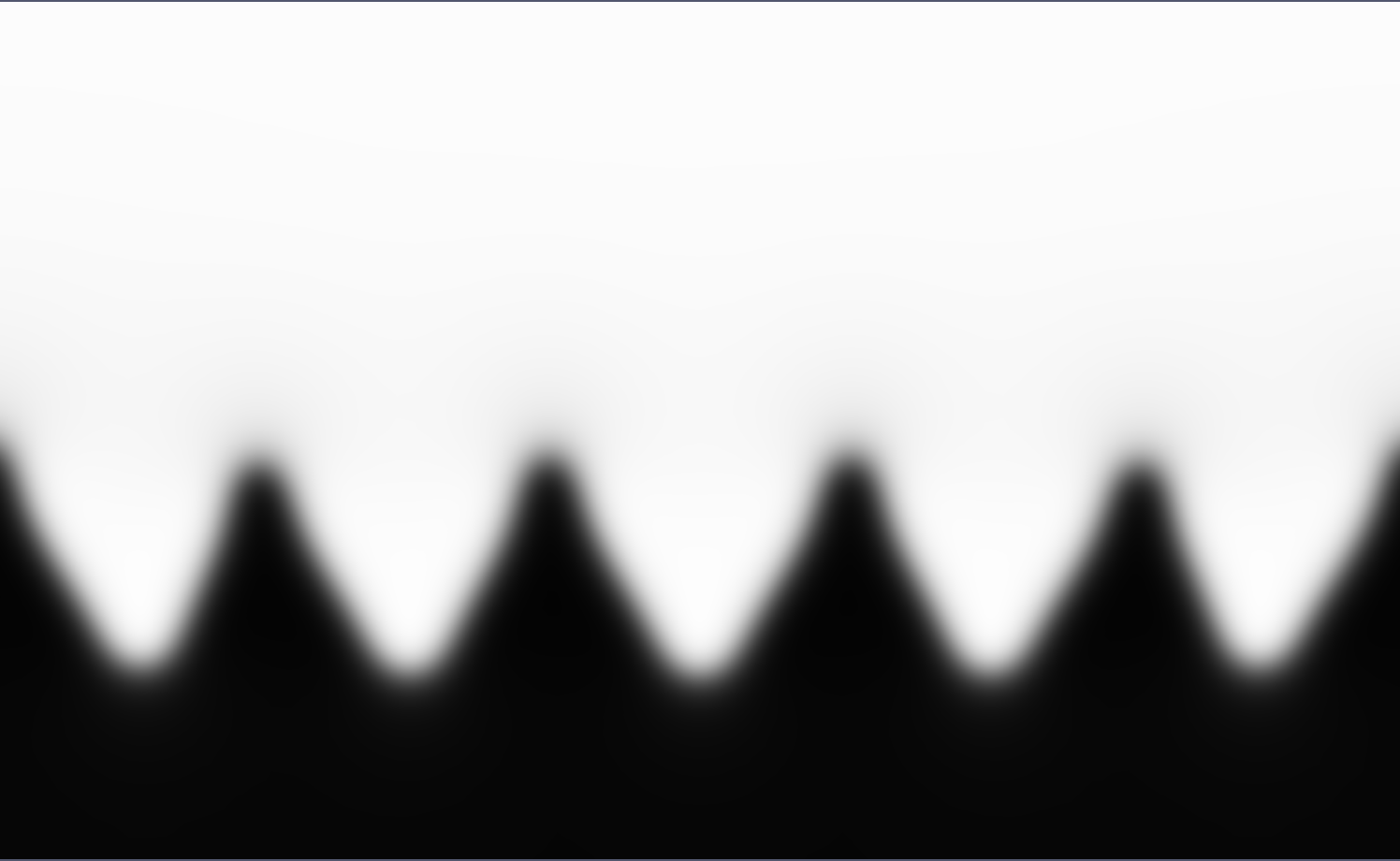}}&
    \fbox{\includegraphics[width=32mm]{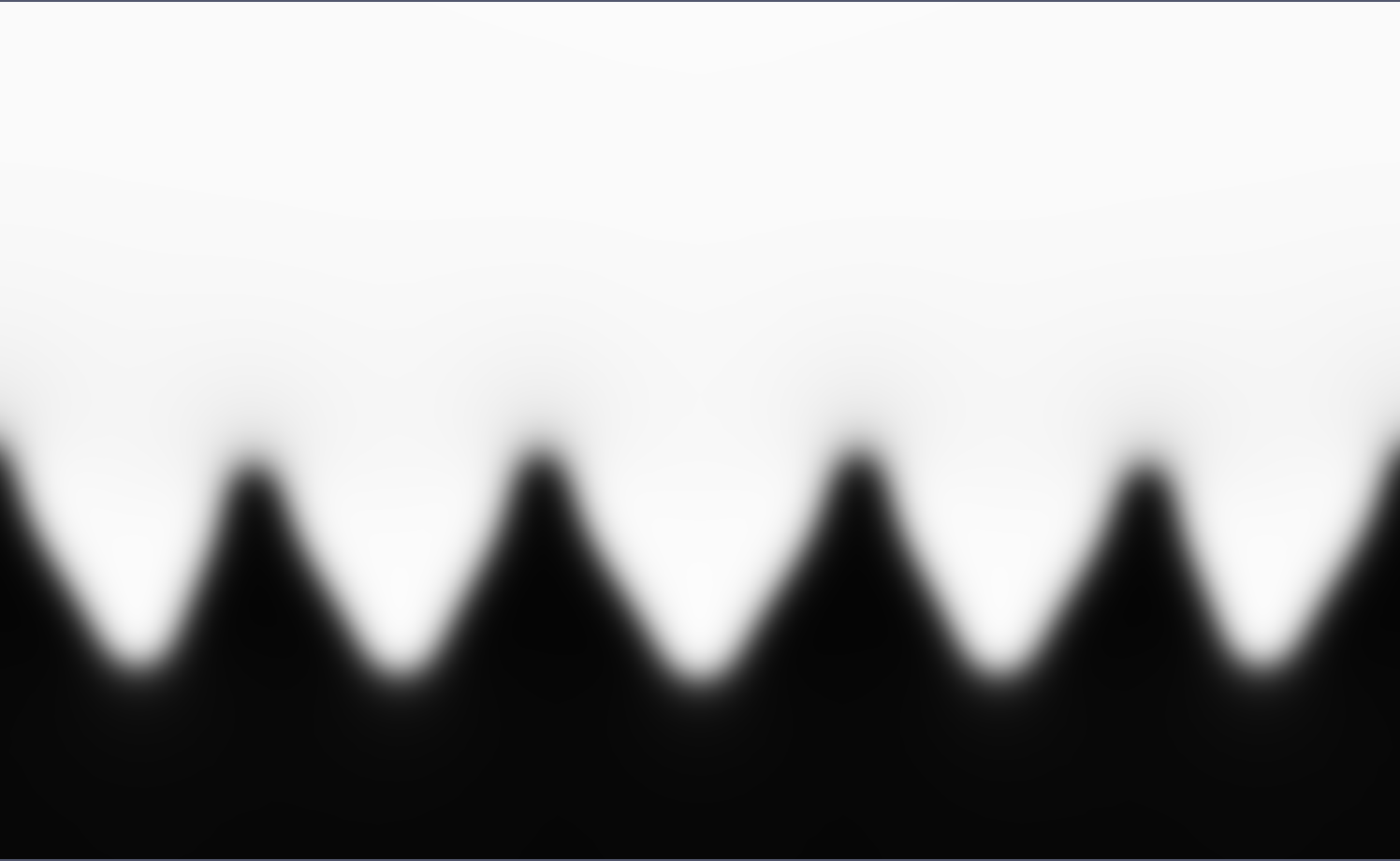}}&
    \fbox{\includegraphics[width=32mm]{Pictures/6L4000ts/20seq0011.png}}&
    \fbox{\includegraphics[width=32mm]{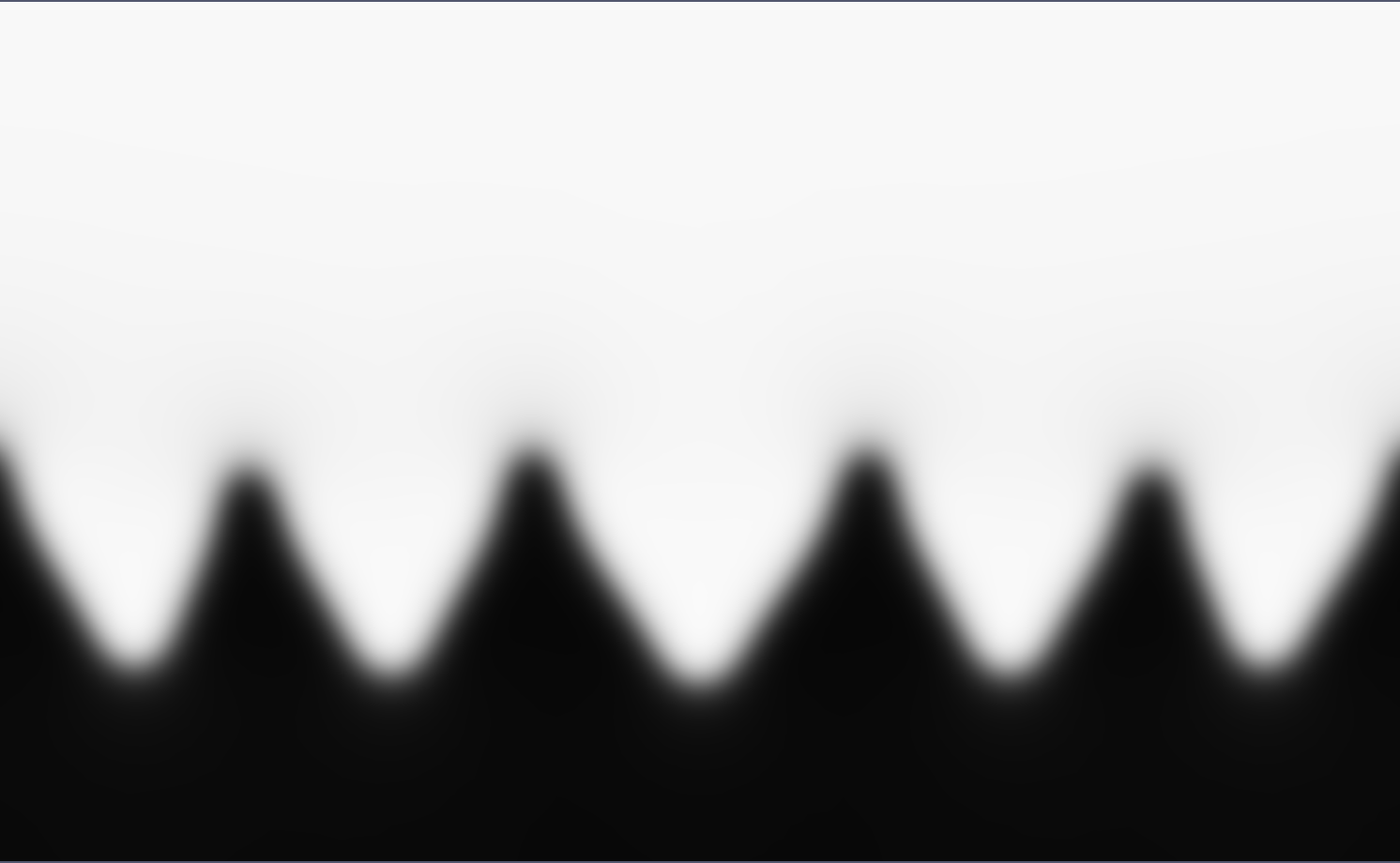}}\\
    
    \fbox{\includegraphics[width=32mm]{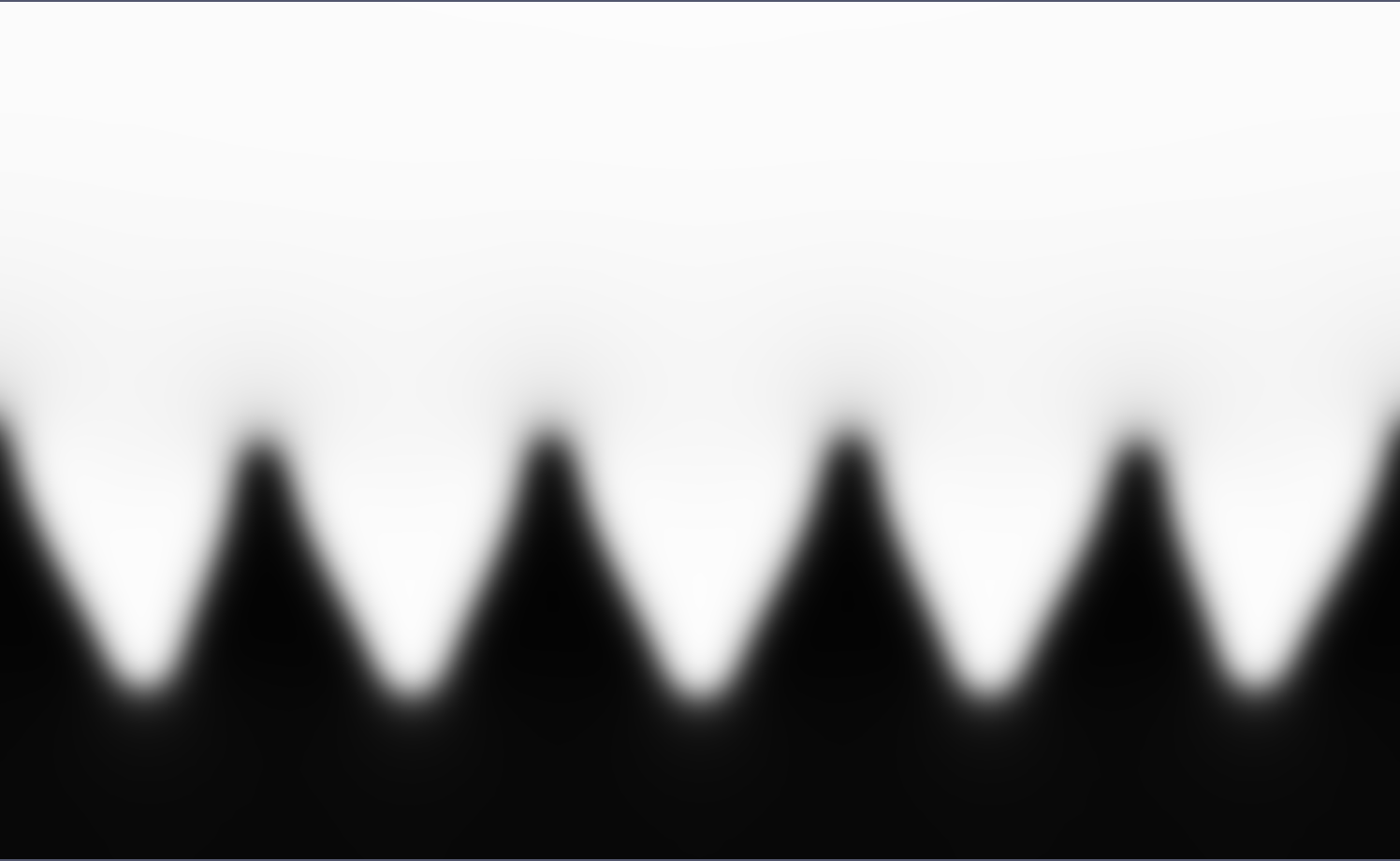}}&
    \fbox{\includegraphics[width=32mm]{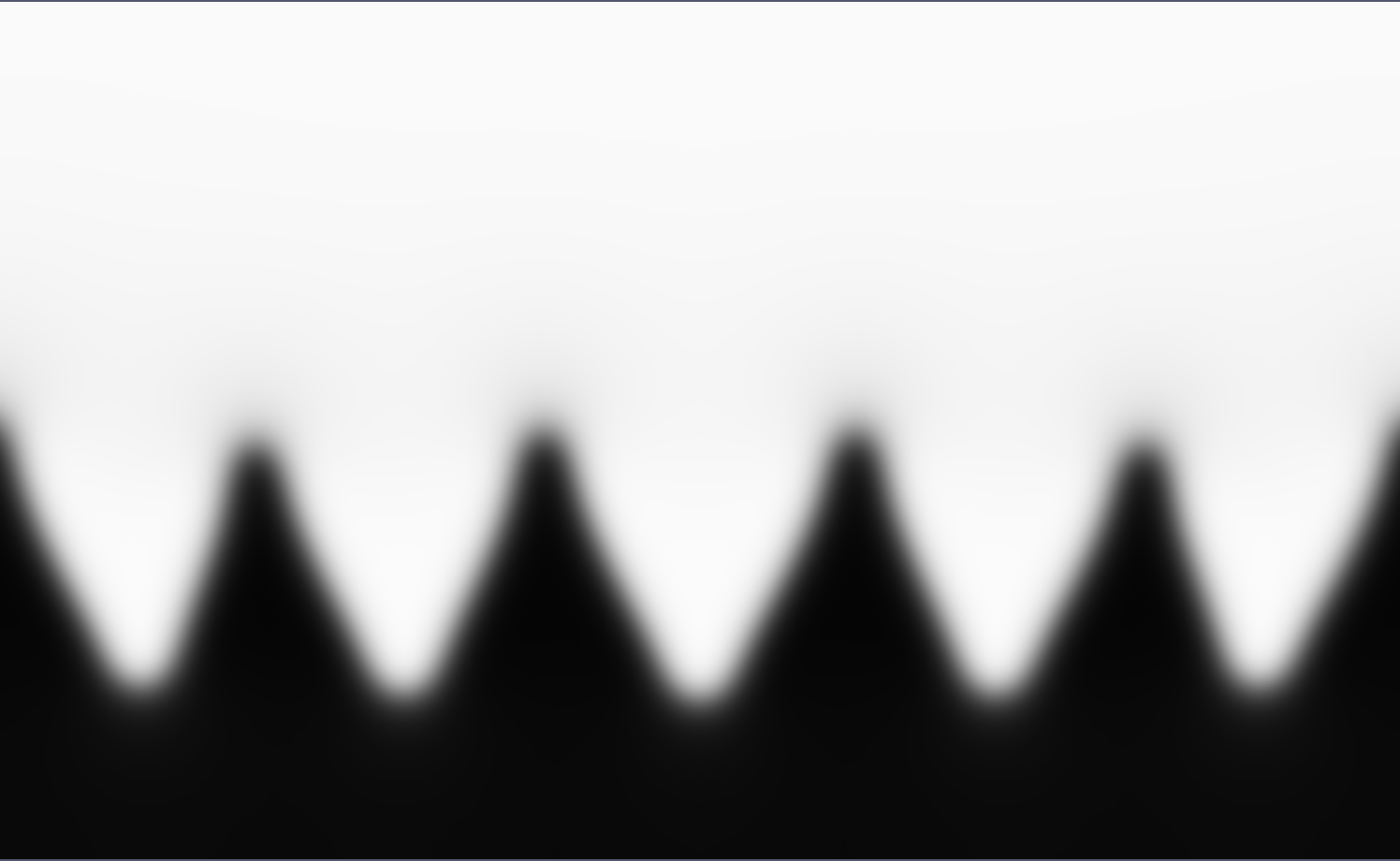}}&
    \fbox{\includegraphics[width=32mm]{Pictures/6L4000ts/20seq0012.png}}&
    \fbox{\includegraphics[width=32mm]{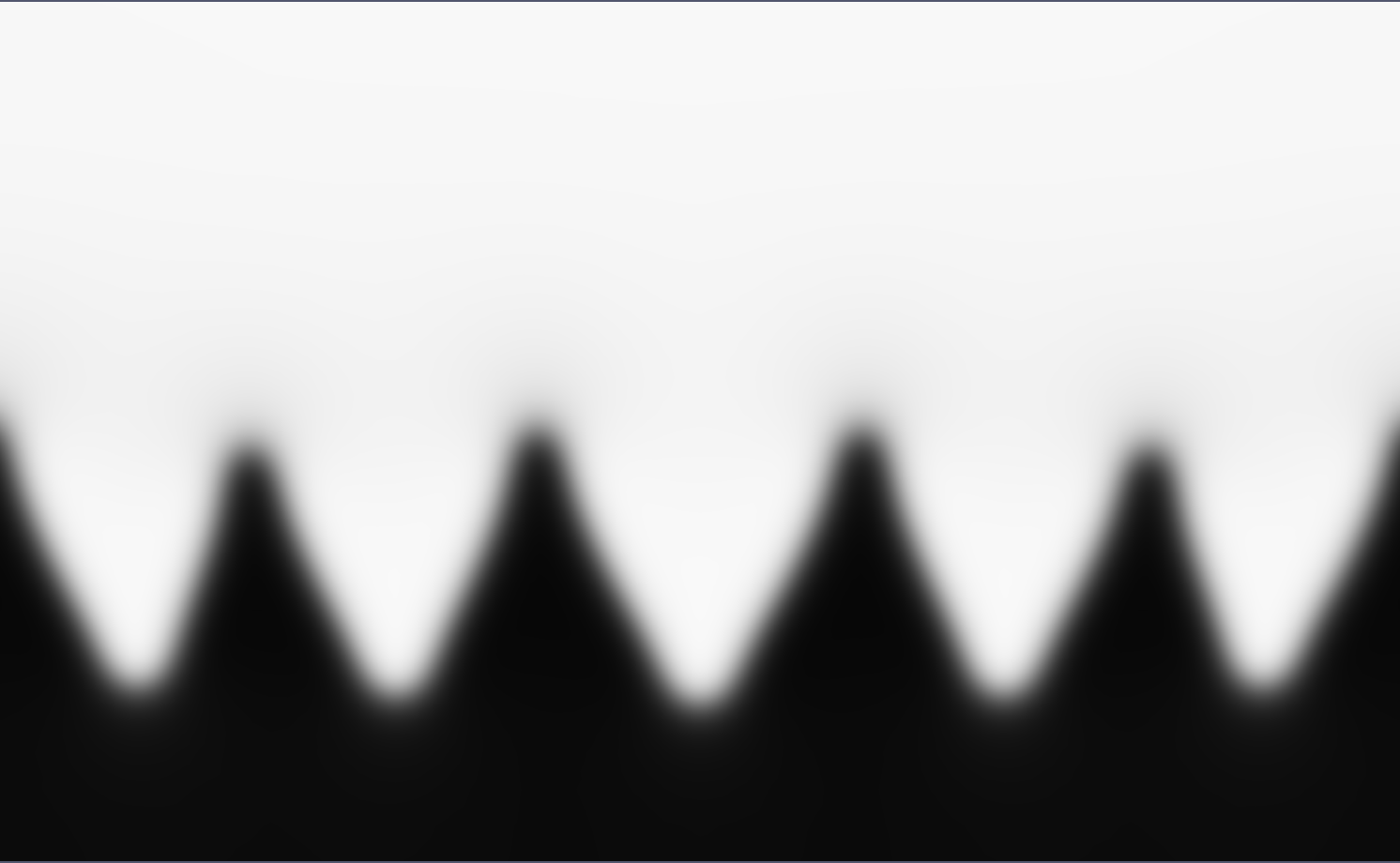}}\\

    \fbox{\includegraphics[width=32mm]{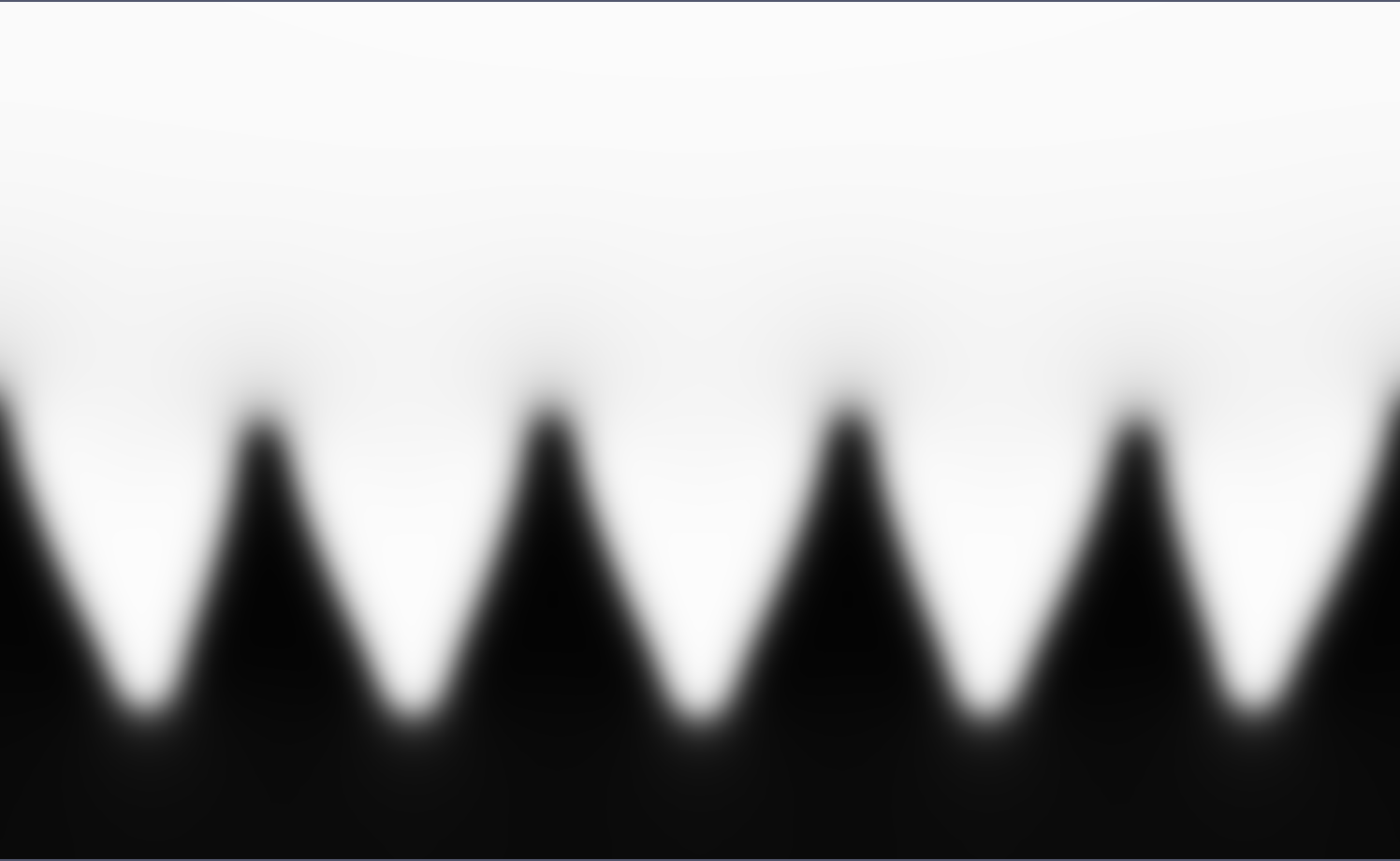}}&
    \fbox{\includegraphics[width=32mm]{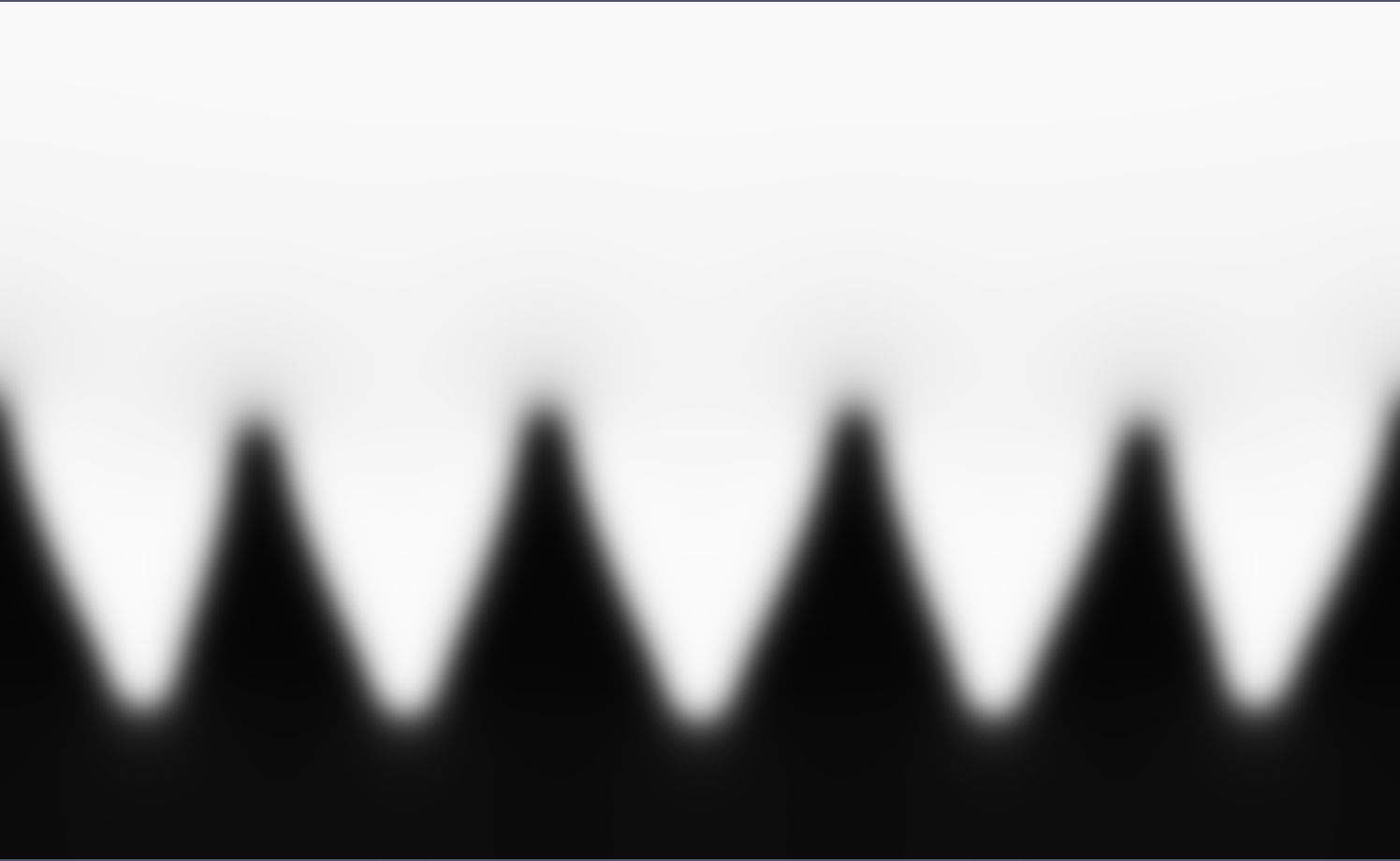}}&
    \fbox{\includegraphics[width=32mm]{Pictures/6L4000ts/20seq0013.png}}&
    \fbox{\includegraphics[width=32mm]{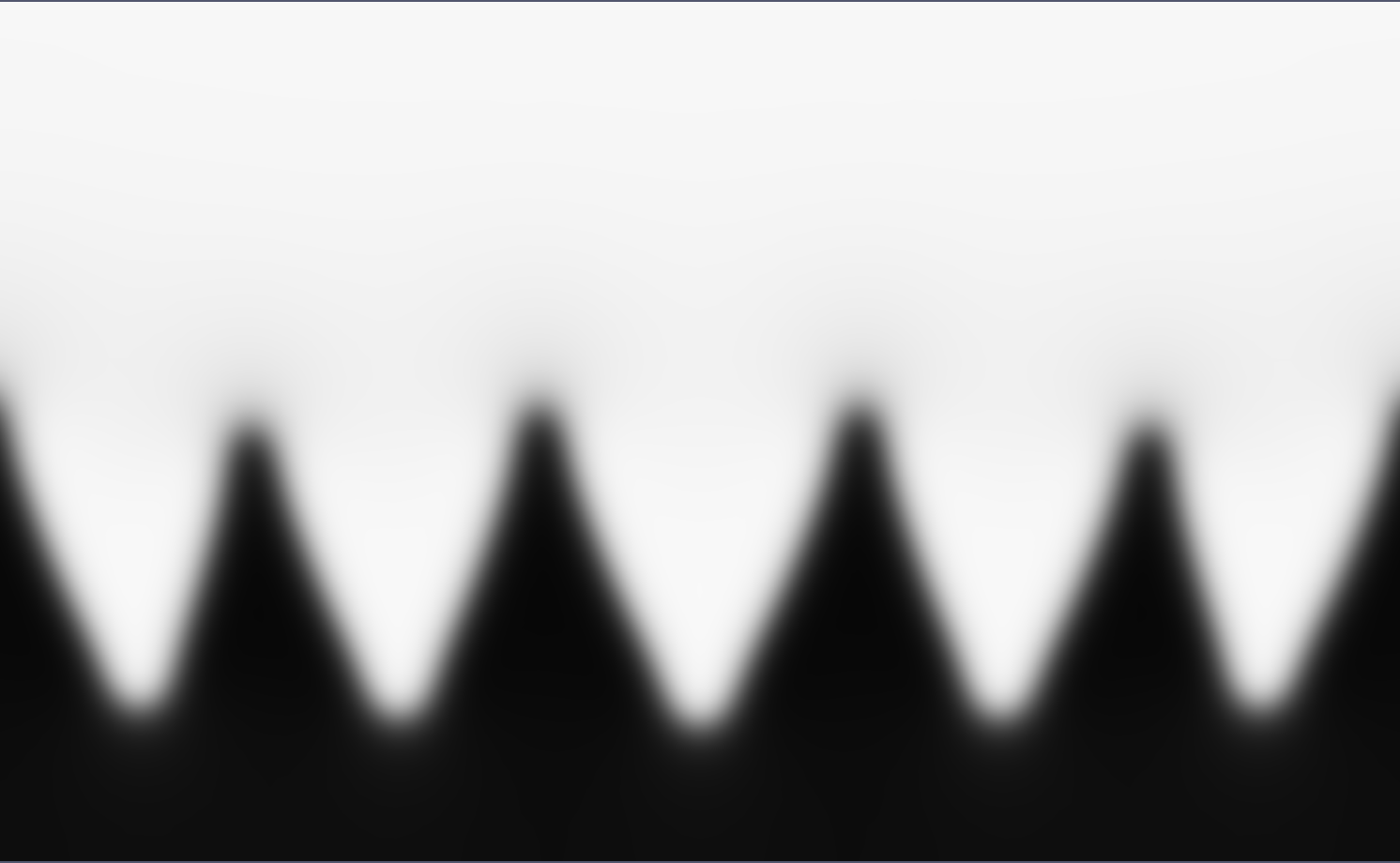}}
  \end{tabular}
\end{center}
\caption[Rosensweig instability: parametric study on the time discretization]{\textbf{Parametric study: time discretization.} This figure shows the results obtained from time $t=0.7$ (uppermost row) to time $t= 1.3$ (lowermost row) in intervals $\dt=0.1$ using 6 levels of refinement in space and four different time discretizations: the coarsest time discretization uses 1000 times steps (first column), 2000 time steps (second column), 4000 time steps (third column), and the finest discretization 8000 time steps (fourth column). The reader can appreciate that even the coarsest time discretization does not generate artificial or spurious features in the numerical solution.\label{parafigtime}}
\end{figure}

Taking a leap of faith, we can only expect \eqref{RosLin} to be able to deliver the right order of magnitude for the relationship between the gravity $g$ and the surface tension coefficient $\surftens$ which could yield a predetermined number of peaks. Consider that we want four peaks inside our unit length box, that is $\distpeaks = 0.25$, combining  \eqref{RosLin}, \eqref{densformula} and \eqref{surfcaprel}, and inserting our choice of parameters ($\layerthick = 0.01$, $\capcoeff = 0.05$, $\Delta\rho \approx 0.1$, $\distpeaks = 0.25$) we get
\begin{align}\label{gravity}
g = \frac{4 \pi^2 \surftens}{\distpeaks^2 \Delta\rho} \approx
\frac{4 \pi^2 \capcoeff}{\distpeaks^2 \Delta\rho \layerthick} \approx 3 \cdot 10^4 \, . 
\end{align}
Therefore, to obtain four peaks inside our unit-size box, the appropriate order of magnitude for the gravity is $10^4$. We use \eqref{gravity} as an educated guess and load $\bv{g} = (0,-30000)^T$ in the computer code.

In order to generate an almost uniform magnetic field, we place 5 dipoles pointing upwards, that is $\bv{d} = (0,1)^T$ (see formula \eqref{dipole2D}), sufficiently far away from our rectangular box, so that for most practical purposes the magnetic field is uniform, having only a slight  gradient (decay) in the $y$ direction. The coordinates $\bv{x}_s$ of the dipoles are $(-0.5,-15)$, $(0,-15)$, $(0.5,-15)$, $(1,-15)$ and $(1.5,-15)$. The intensity $\alpha_s$ (see expression \eqref{haformula}) is the same for each dipole but evolves in time. More precisely, $\alpha_s$ is increased linearly in time, starting from $\alpha_s = 0$ at time $t=0$ to its maximum value $\alpha_s = 6000$ at time $t = 1.6$, and from time $t = 1.6$ to $t = 2.0$ the intensity of the dipoles is kept constant in order to let the system rest and develop a stable configuration. 

Regarding the space discretization, the initial mesh has 10 elements in the $x$ direction and $6$ in the $y$ direction, and allow for a maximum refinement of 4, 5, 6 and 7 levels. On the other hand, regarding time discretization we  use 1000, 2000, 4000, and 8000 time steps for a total of $\tf=2$ units of simulation time.

We display numerical results in Figure \ref{figureResolved} achieved with this non-trivial setup, involving a choice of coefficients, a specific configuration of the external magnetic field $\ha$, and space adaptivity. The simulation starts with a ferrofluid pool of $0.2$ units of depth at rest at time $t = 0$, and at time $t=2.0$ there are five peaks inside the box rather than the expected four. Clearly, \eqref{gravity} is able to deliver a very reasonable initial guess. In Figure \ref{meshsamplefig} we show a sample finite element mesh corresponding to the simulation of Figure \ref{figureResolved}. We can see that most of the interesting dynamics in Figure \ref{figureResolved} happens from times $t = 0.7$ to $t = 1.3$, therefore we focus on the interval of time $[0.7,1.3]$ for a parametric study in order to show the robustness of this simulation with respect to the discretization parameters $h$ and $\dt$. Figures \ref{parafigspace} and \ref{parafigtime} depict the results of the 
parametric study with respect to space and time discretization respectively. The results from Figure \ref{figureResolved} correspond to the third column of Figures \ref{parafigspace} and \ref{parafigtime} which, as it can be appreciated, is a meaningful (well-resolved) solution.

The Rosensweig instability considered in this section, in practice, can only be reproduced under carefully controlled laboratory conditions. That is, this instability is not the most common form of ferrofluid instability observed in everyday experiments (such as commercial ferrofluid toys) since in practice most magnetic fields are by no means uniform nor have magnetic field lines very aligned. This is the reason why in \S\ref{exp2} we consider a much more mundane form of the Rosensweig instability, involving non-uniform magnetic fields with relatively poor alignment of the magnetic field lines.


\subsection{The ferrofluid hedgehog}\label{exp2} In this section we carry out two numerical experiments in order to explore the effects of a non-uniform magnetic field, depth of the ferrofluid pool, and the effects of the demagnetizing field. For these experiments we take the same constitutive constants as in \S\ref{exp1}, with the exception of 
\begin{align*}
 \suscep = 0.9 \ , \ \ \layerthick = 0.005 \ \ \text{and} \ \ \capcoeff = 0.025 \, . 
\end{align*}
We increase the magnetic susceptibility $\suscep$ to make the effects of the demagnetizing field $\hd$ much more pronounced than those of \S\ref{exp2}, we reduce the layer thickness $\layerthick$ to diminish diffusive effects and get sharper interfaces, and reduce $\capcoeff$ to get slightly more unstable (more sensitive to perturbations) interfaces. The depth of the ferrofluid pool is now of $0.11$ units. We use 6 levels of refinement, but the initial mesh has $15$ elements in the $x$ direction and $9$ elements in the $y$ direction. Regarding temporal discretization we use a total of $24000$ times steps for $\tf=6$ units of simulation time.

It is clear that changing many parameters at the same time (magnetic susceptibility, pool depth, capillarity coefficient, and layer thickness) makes it hard to understand the separate influence of each of them on the behavior of the system. Doing a parametric/sensitivity study of each variable at a time would be highly desirable, but that would involve an ambitious separate analysis. Our objective is just to showcase other interesting phenomena (another instance of the Rosensweig instability) that can be captured with this simple PDE model. We call this instability the ``ferrofluid hedgehog'', because of its natural resemblance with the spiny mammal.

The experiments are carried out in the same rectangular domain as in
\S\ref{exp1} (with vertices at $(0,0)$, $(0,0.6)$, $(1,0.6)$ and
$(1,0)$). The magnetic field $\ha = \sum_{s} \alpha_s \nabla\hapot_s$
is generated by a set of $42$ dipoles. More precisely, we want to
create a crude ``discrete'' approximation of what would be the
magnetic field due to a bar magnet of 0.4 units of width and $0.5$
units of height pointing upwards (i.e. $\bv{d} = (0,1)^T$ again). The
dipoles are located in three rows, at $y = -0.5$, $y = -0.75$ and
  $y = -1.0$, and the 14 dipoles within of each row are
equi-distributed in the $x$ direction as shown in Figure
\ref{FigDipsHedge}. The main idea of this setup is to recreate a
non-uniform magnetic field, with an open pattern of magnetic field
lines (as sketched in Figure \ref{FigDipsHedge}) rather than aligned
magnetic field lines (as in \S\ref{exp1}).

\begin{figure}[h!]
\begin{center}
  \includegraphics[scale=0.30]{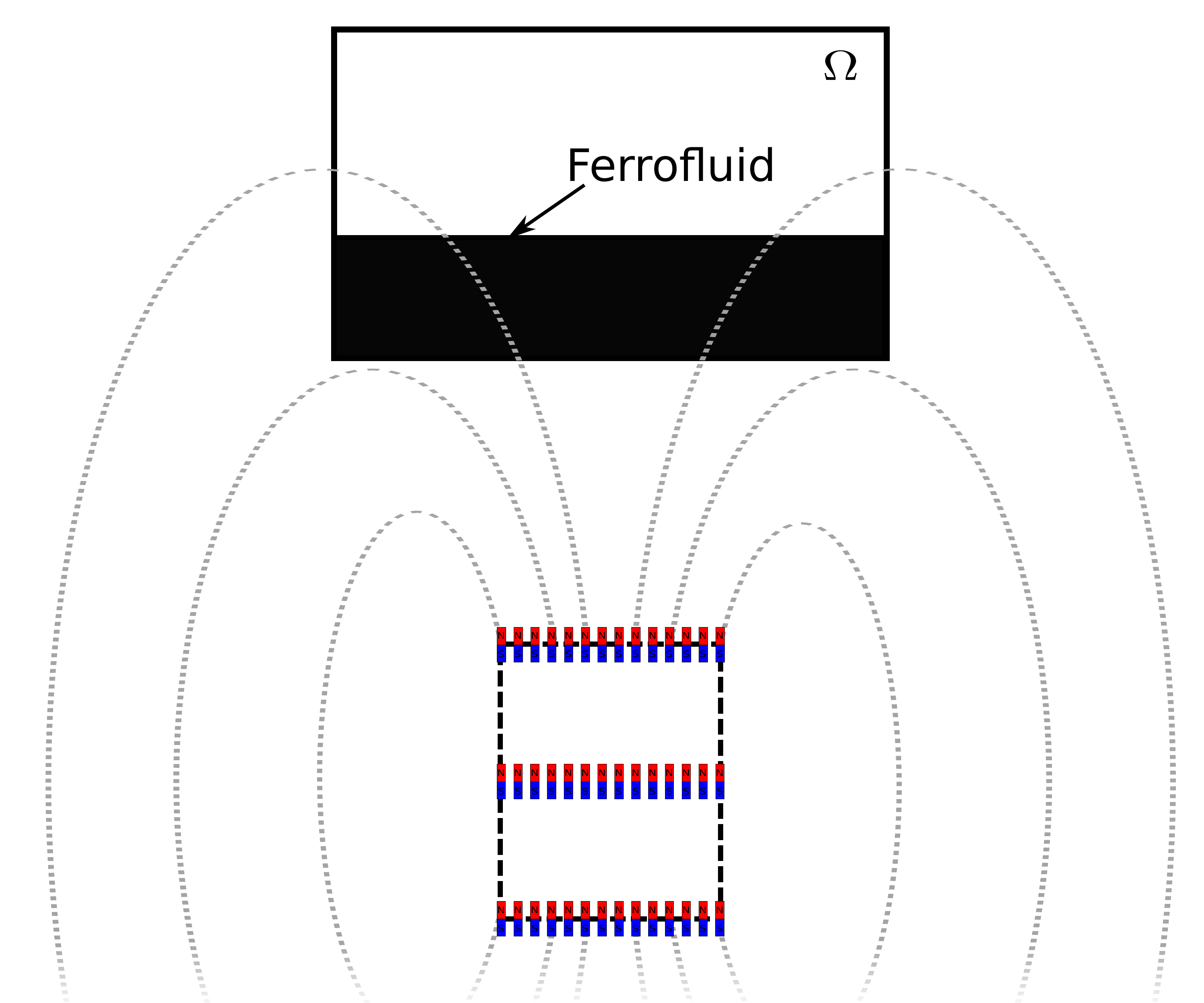}
\end{center}
\caption[The ferrofluid hedgehog: setup of the dipoles]{\textbf{The ferrofluid hedgehog: setup of the dipoles.} Setup of the dipoles for the experiment of \S\ref{exp2}, showing our rectangular domain $\Omega = (0,1)\times(0,0.6)$ with the ferrofluid (dark region) in the bottom of the box, and the arrangement of the dipoles below $\Omega$. The 42 dipoles are located in three rows in the lower part of the picture, here represented like small bar magnets, delivering a coarse approximation of what would be the magnetic field due to a bar magnet. The idea of such a configuration is to obtain an open pattern of magnetic field lines and steeper gradients than those of \S\ref{exp1}. \label{FigDipsHedge}}
\end{figure}

\begin{figure}
\begin{center}
    \setlength\fboxsep{0pt}
    \setlength\fboxrule{1pt}
  \begin{tabular}{cccc}

    \fbox{\includegraphics[width=32mm]{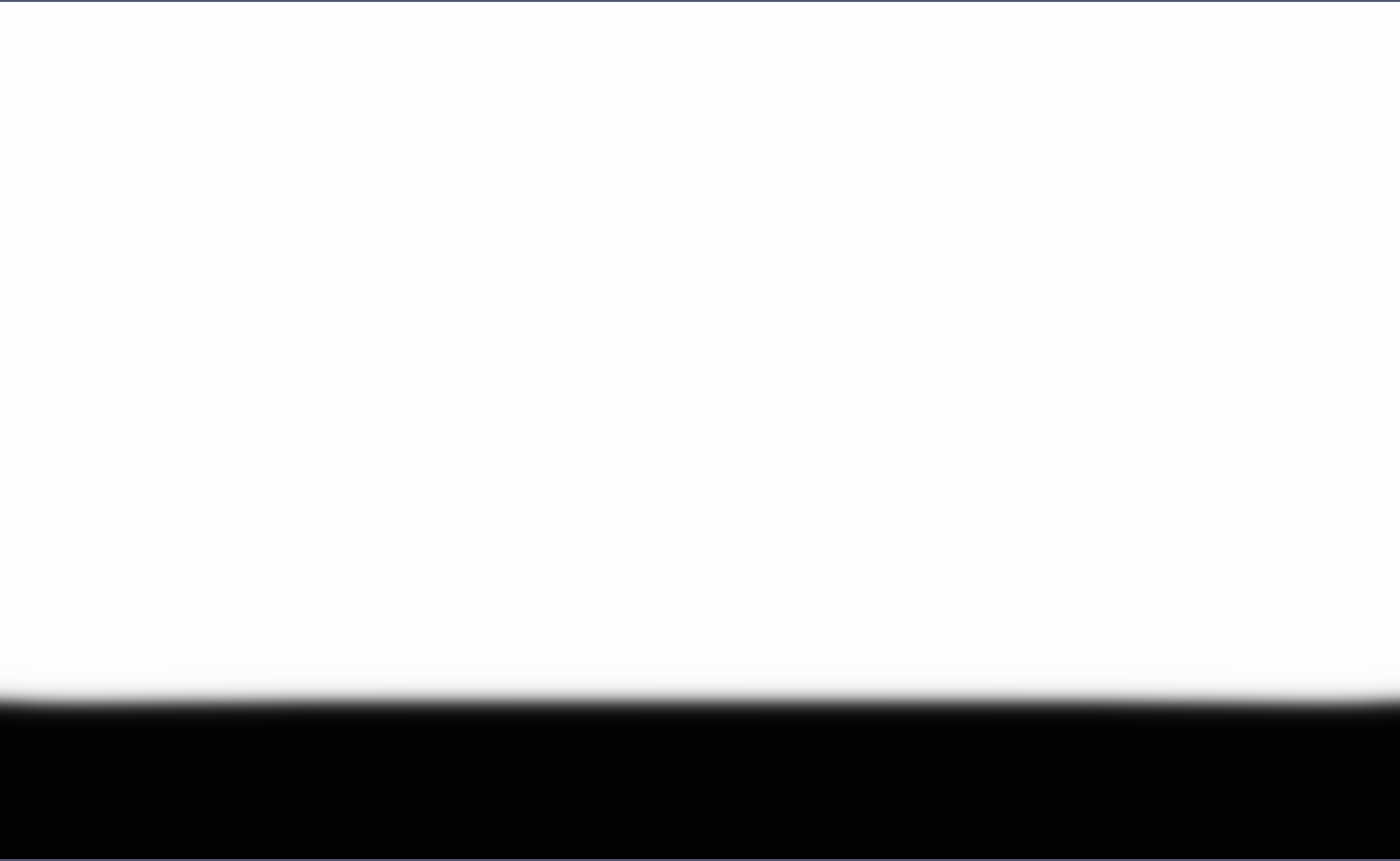}}&
    \fbox{\includegraphics[width=32mm]{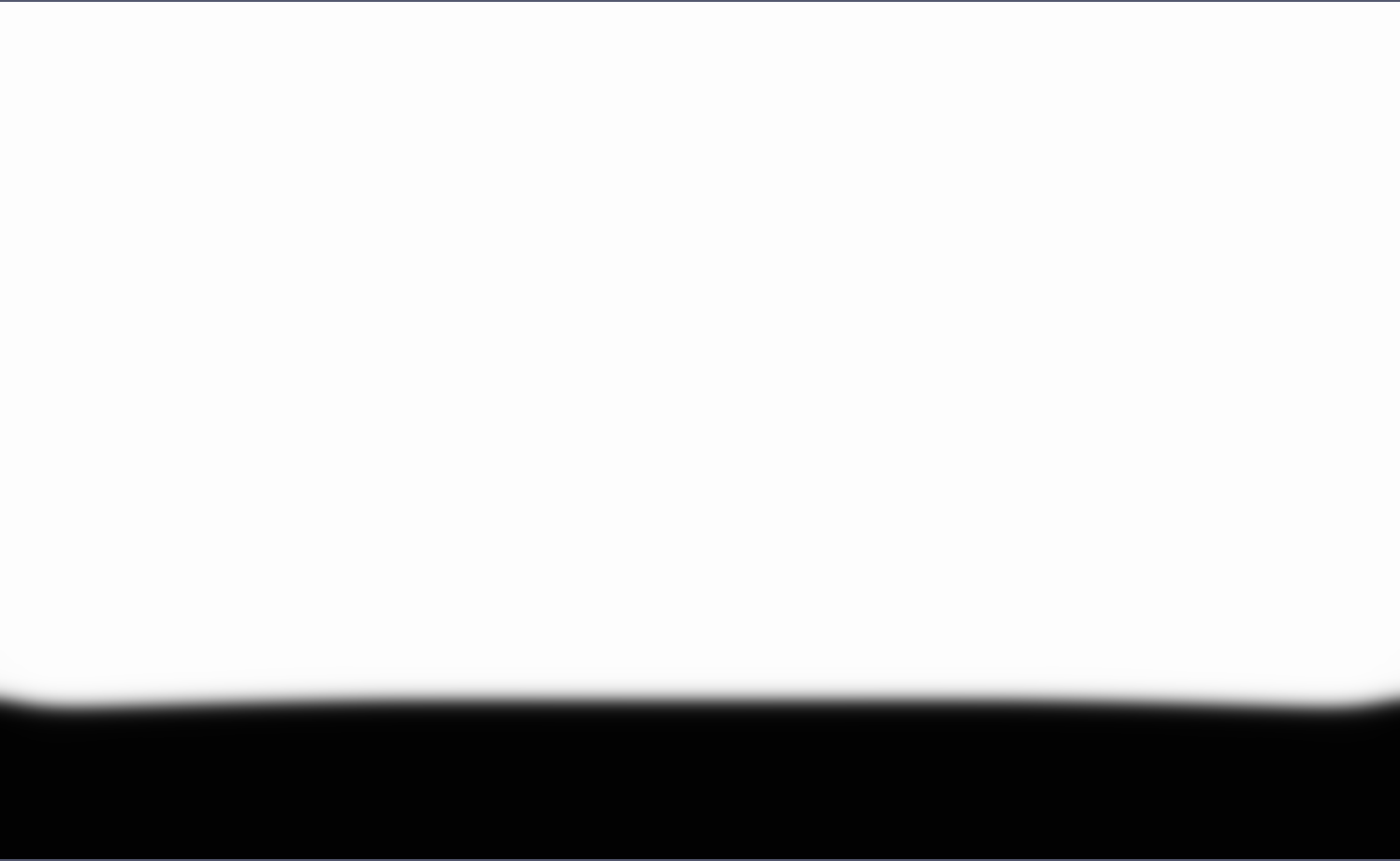}}&
    \fbox{\includegraphics[width=32mm]{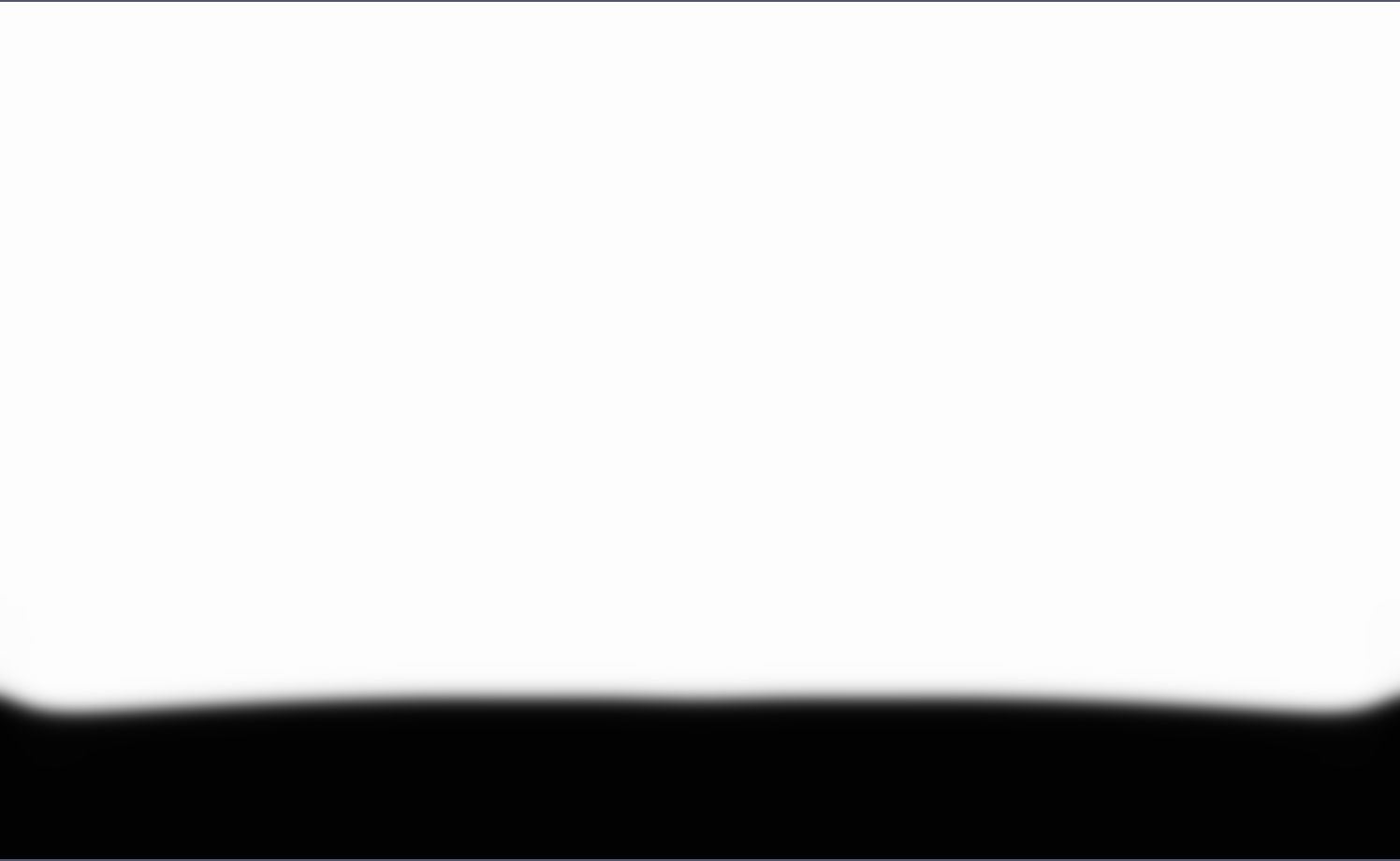}}&
    \fbox{\includegraphics[width=32mm]{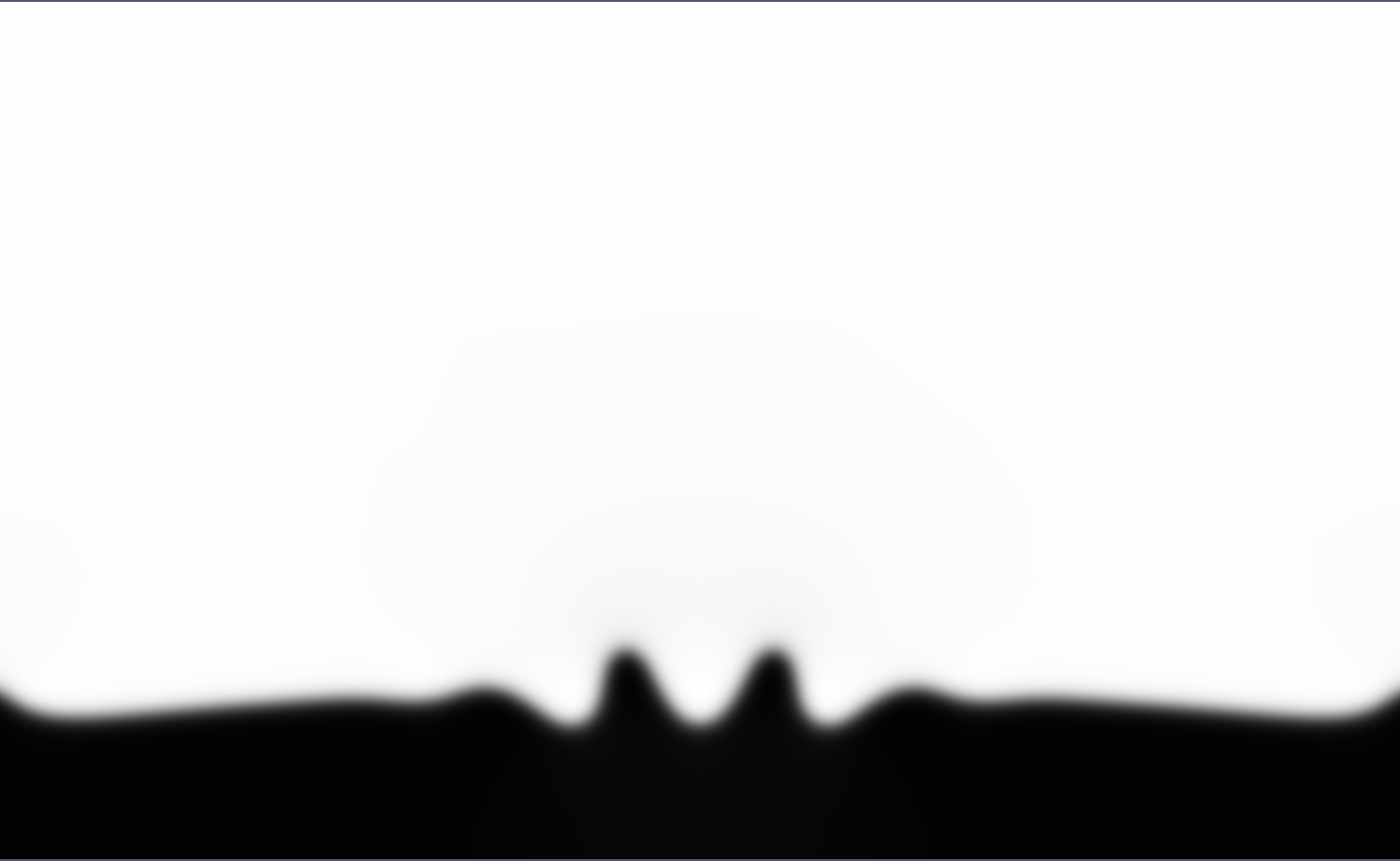}}\\

    \fbox{\includegraphics[width=32mm]{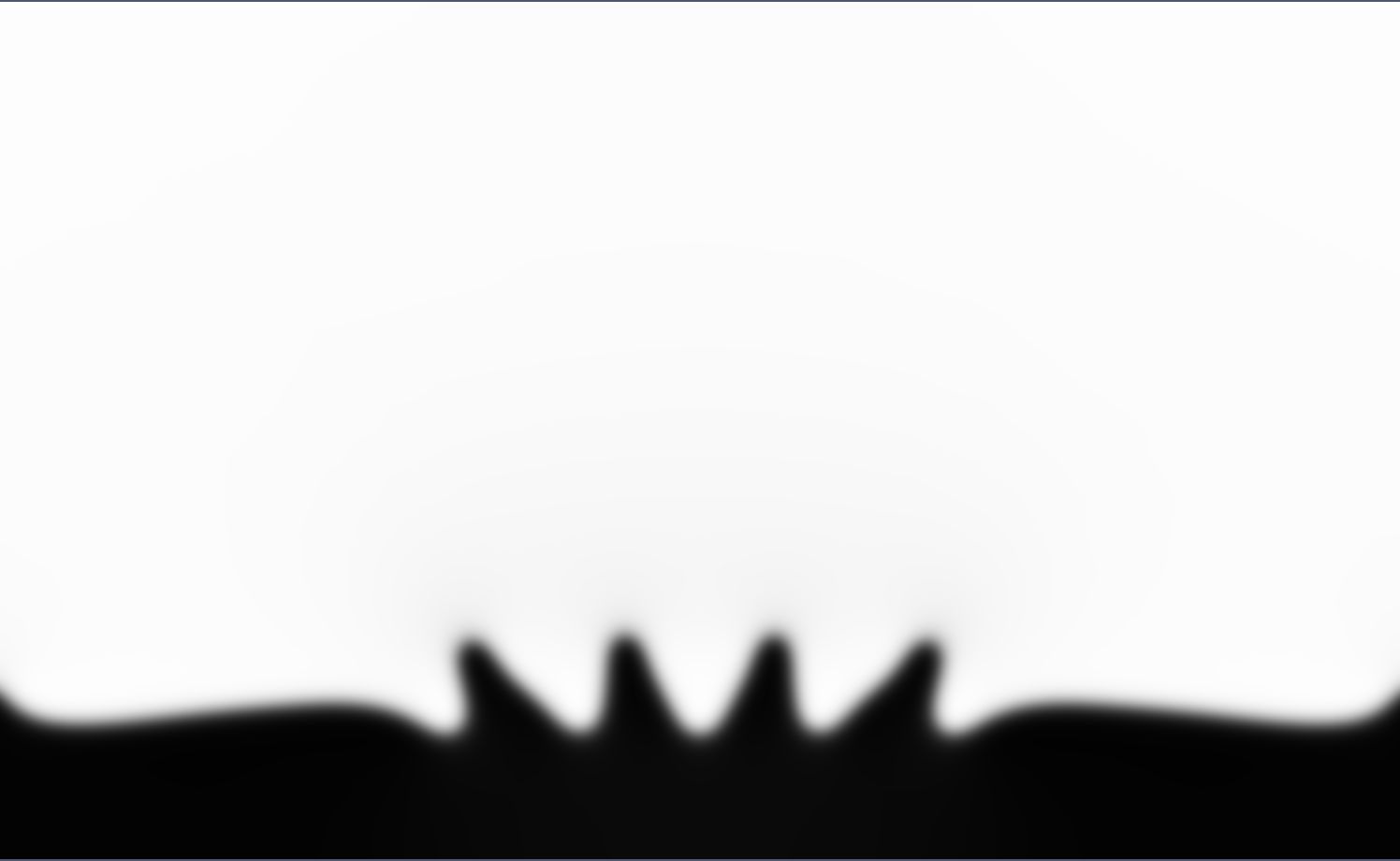}}&
    \fbox{\includegraphics[width=32mm]{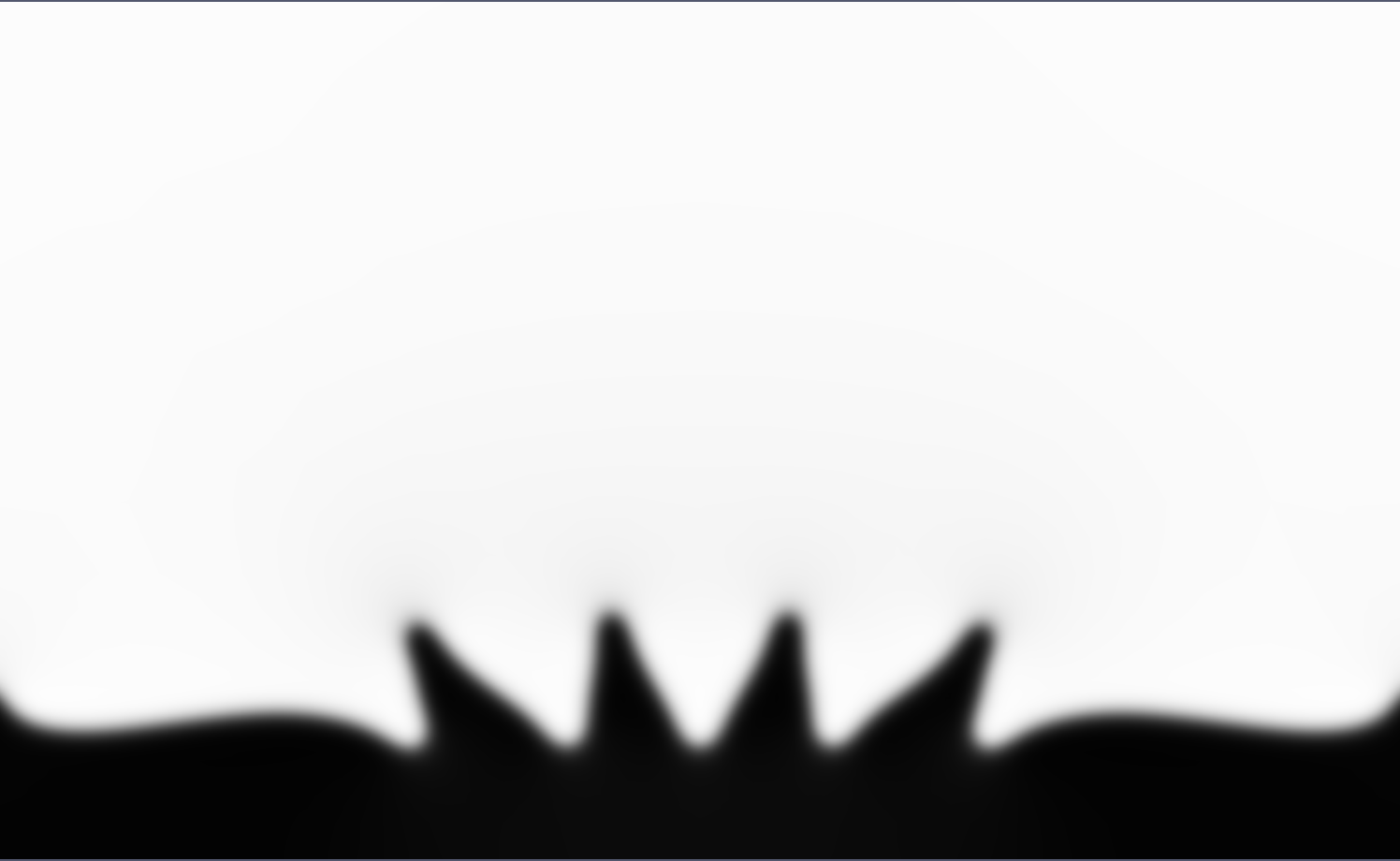}}&
    \fbox{\includegraphics[width=32mm]{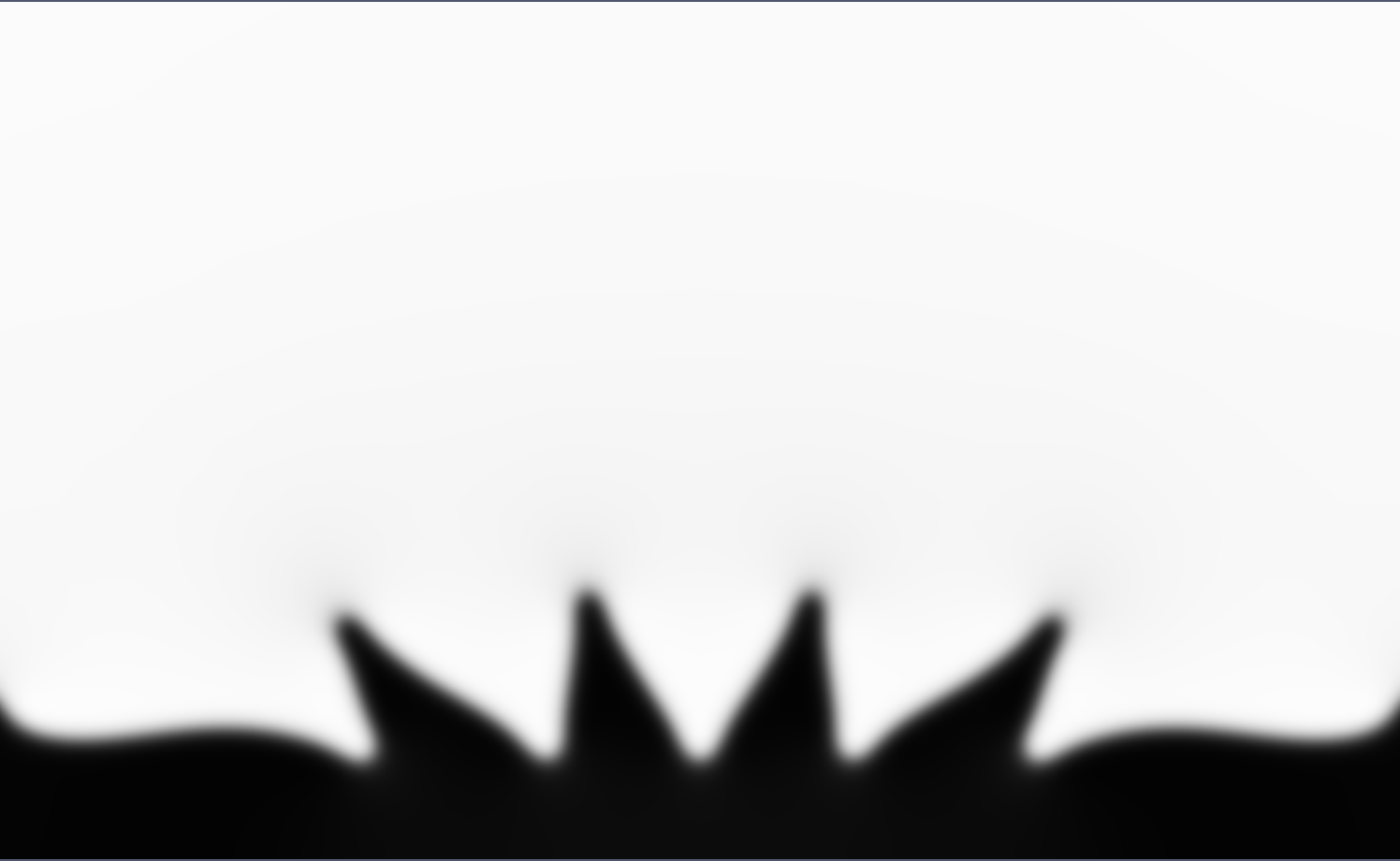}}&
    \fbox{\includegraphics[width=32mm]{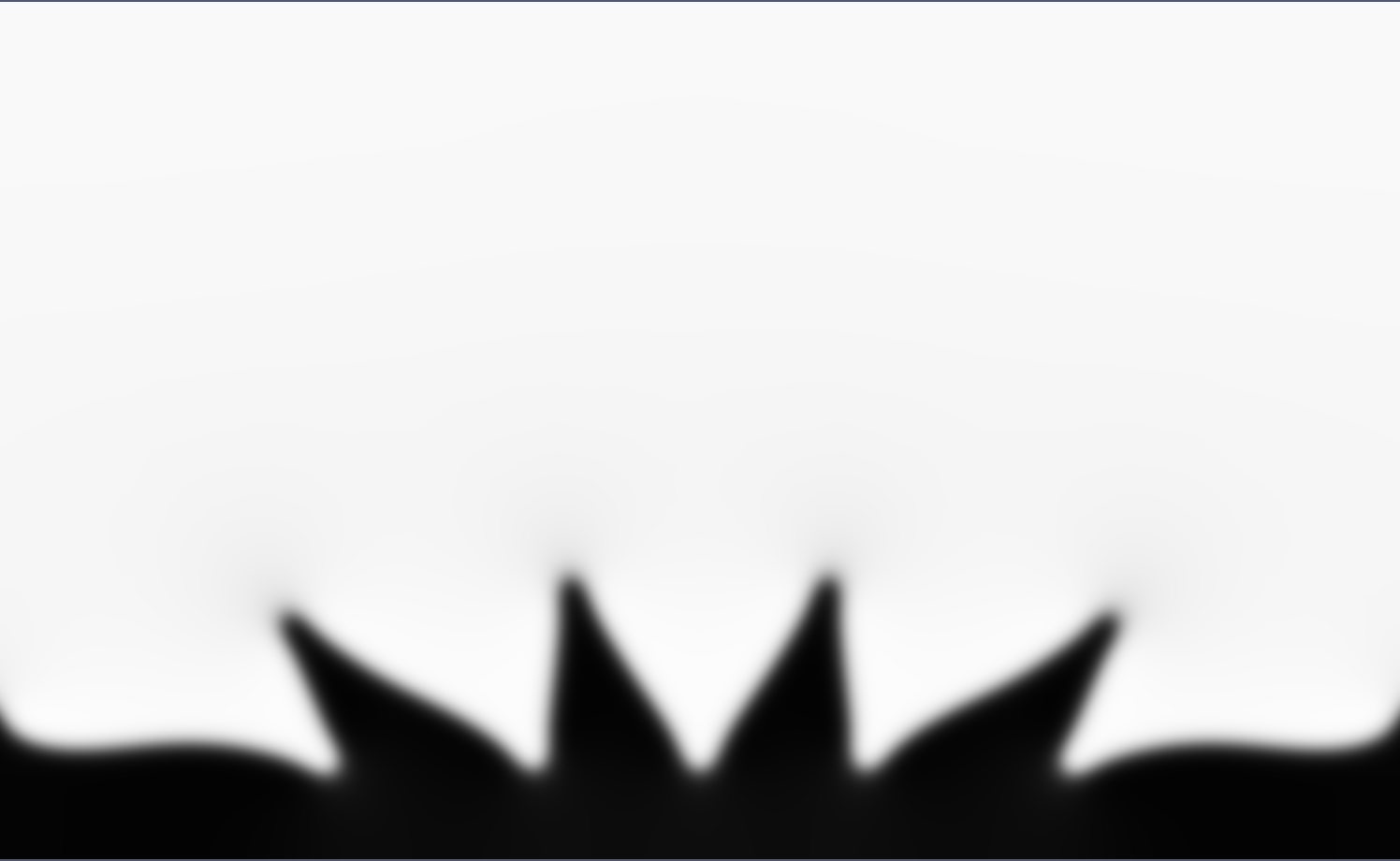}}\\

    \fbox{\includegraphics[width=32mm]{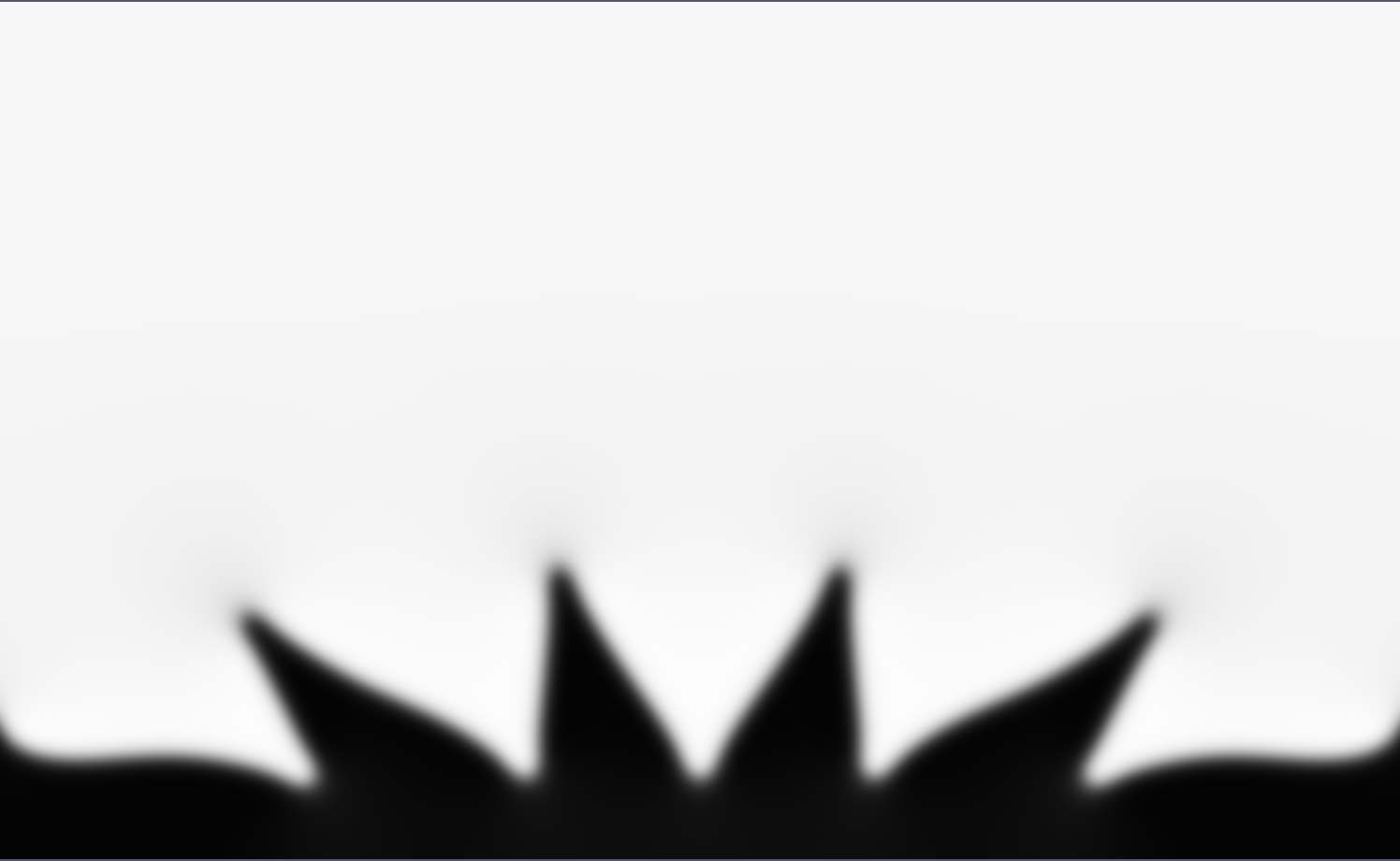}}&
    \fbox{\includegraphics[width=32mm]{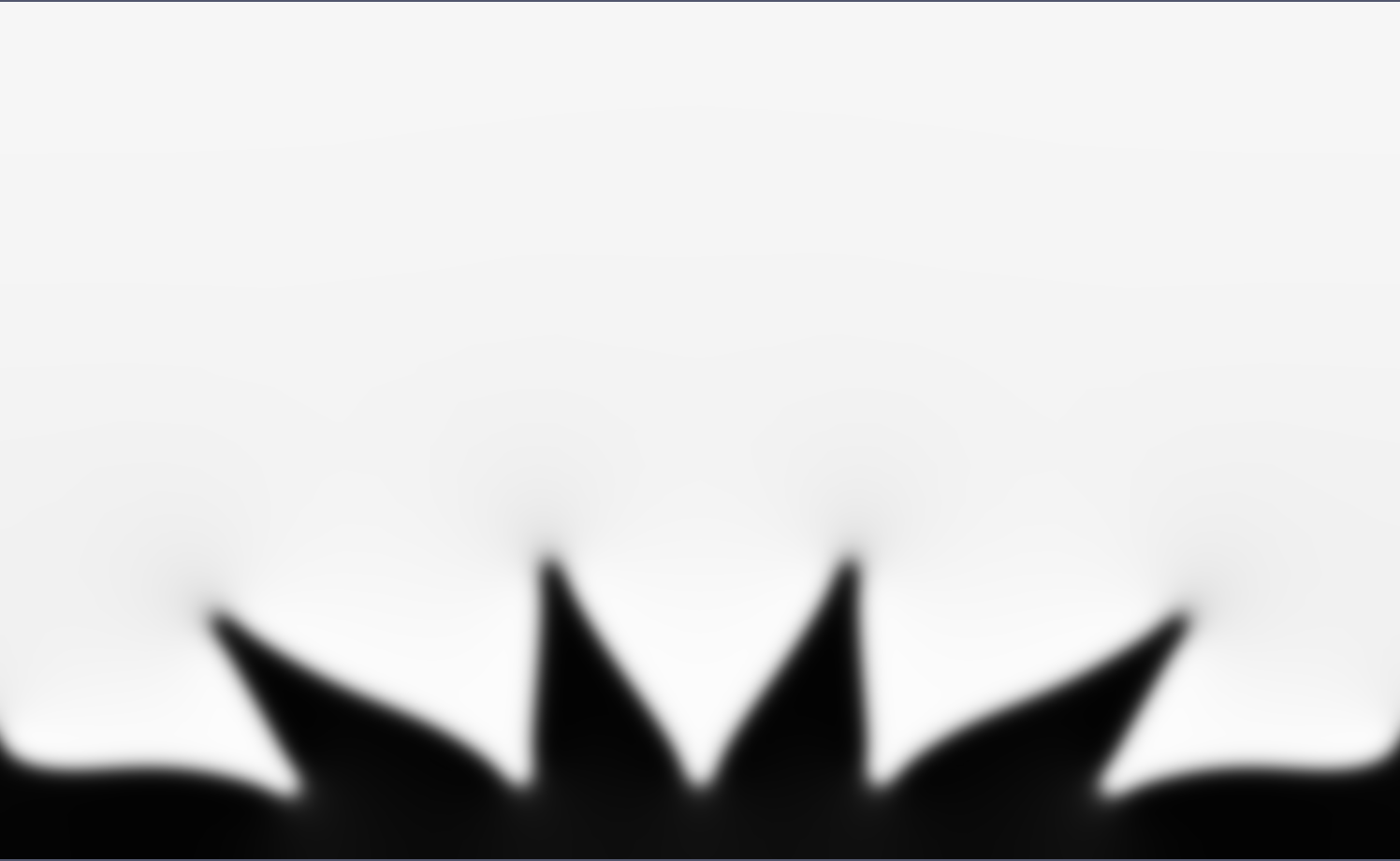}}&
    \fbox{\includegraphics[width=32mm]{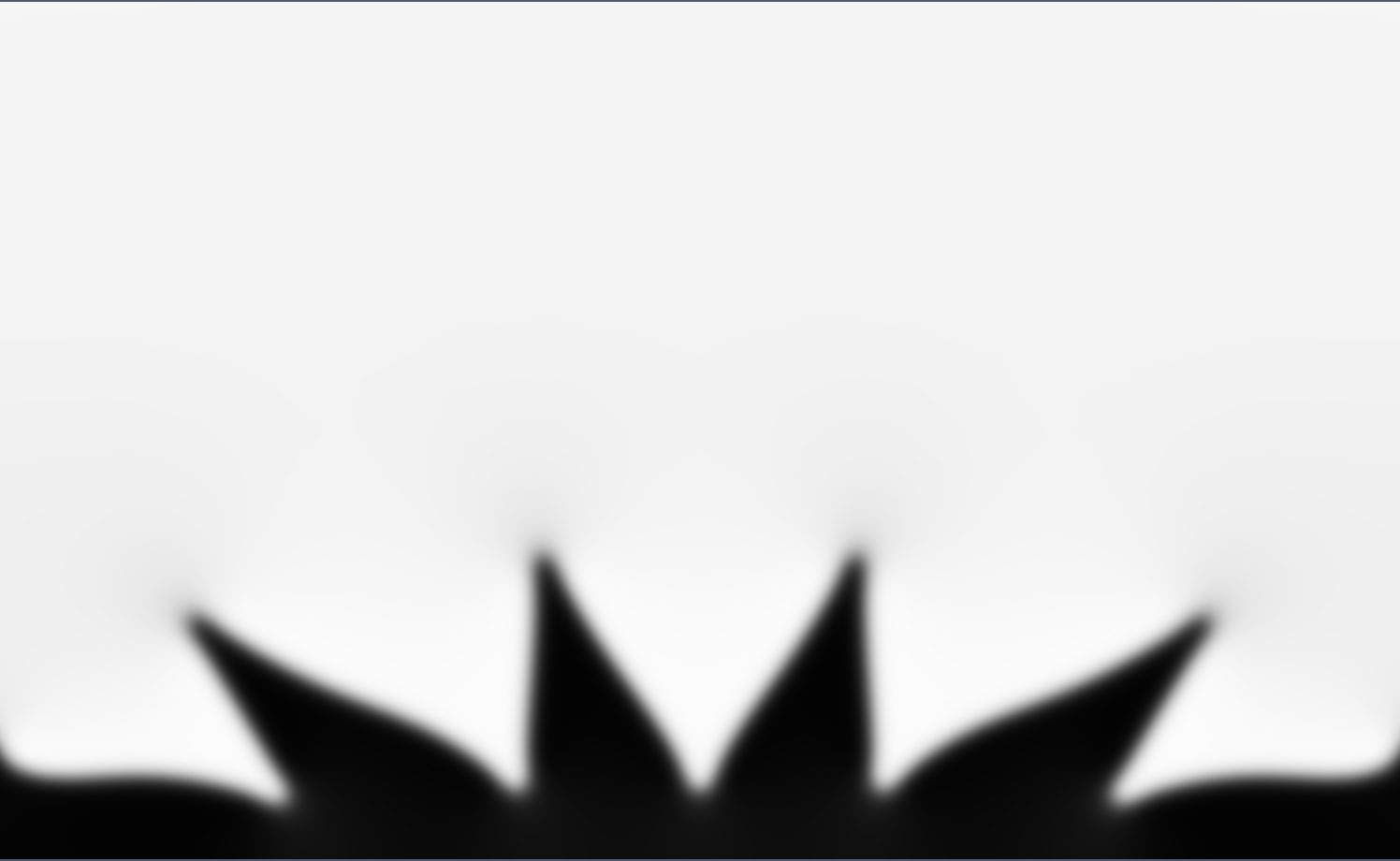}}&
    \fbox{\includegraphics[width=32mm]{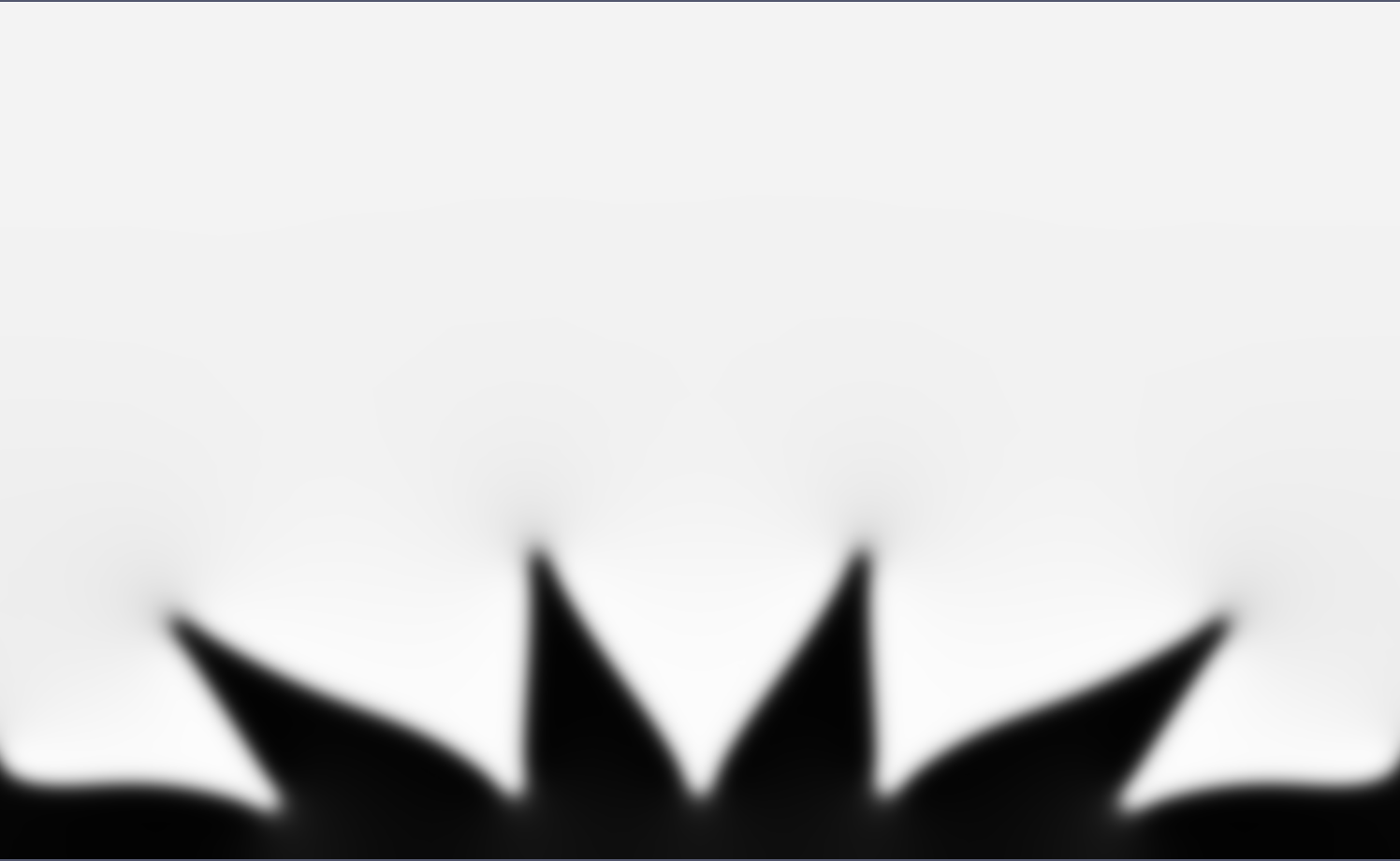}}\\

    \fbox{\includegraphics[width=32mm]{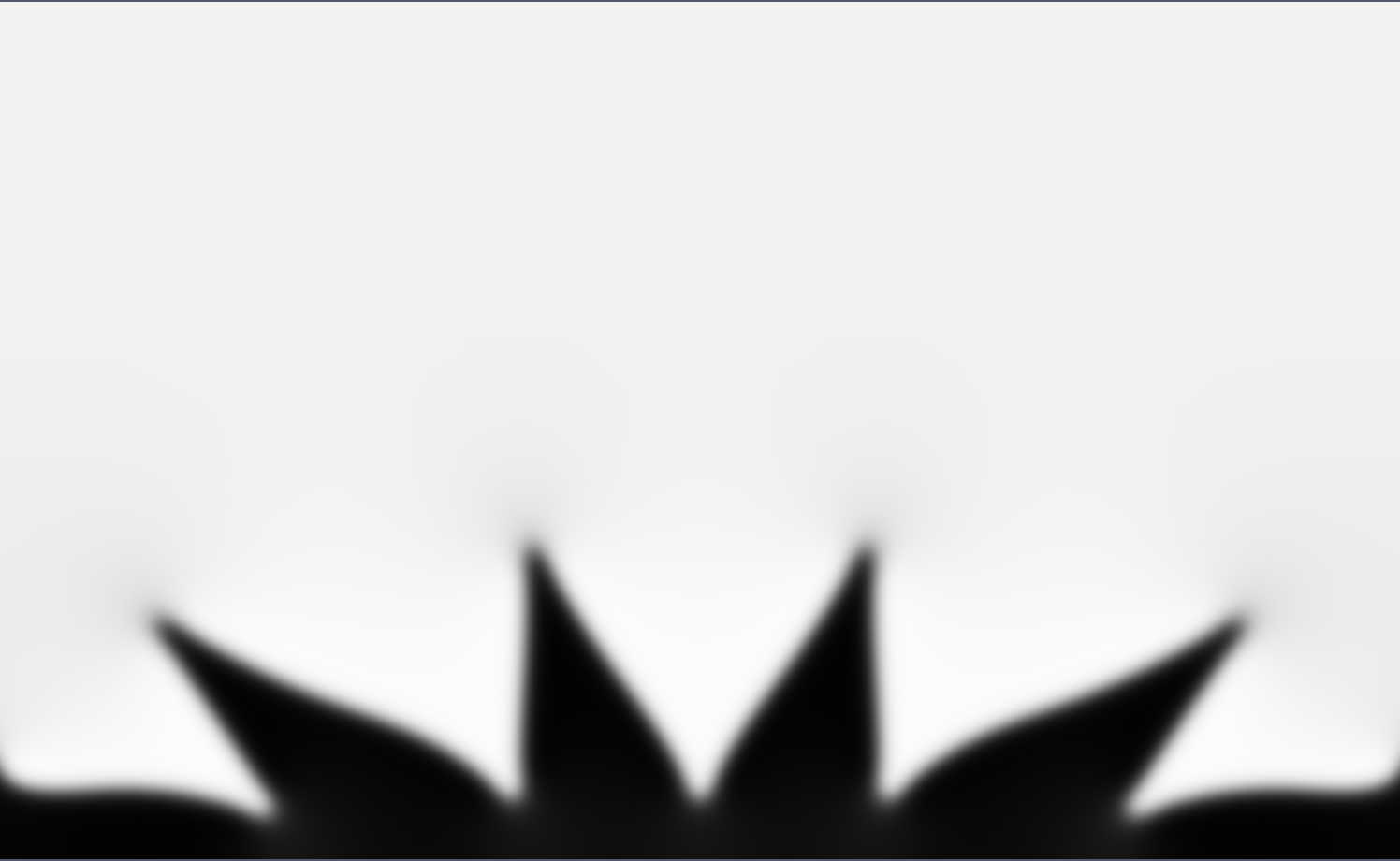}}&
    \fbox{\includegraphics[width=32mm]{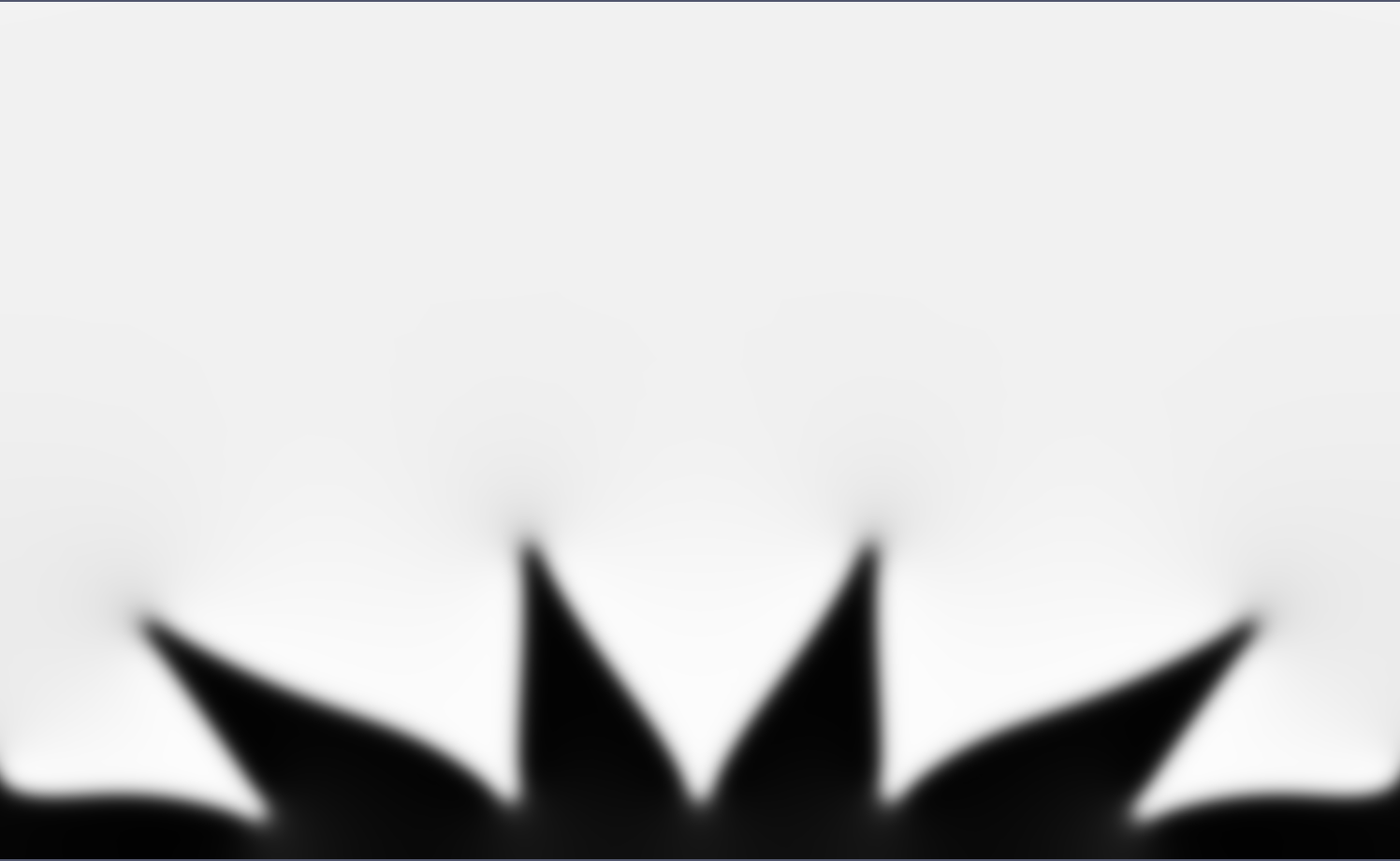}}&
    \fbox{\includegraphics[width=32mm]{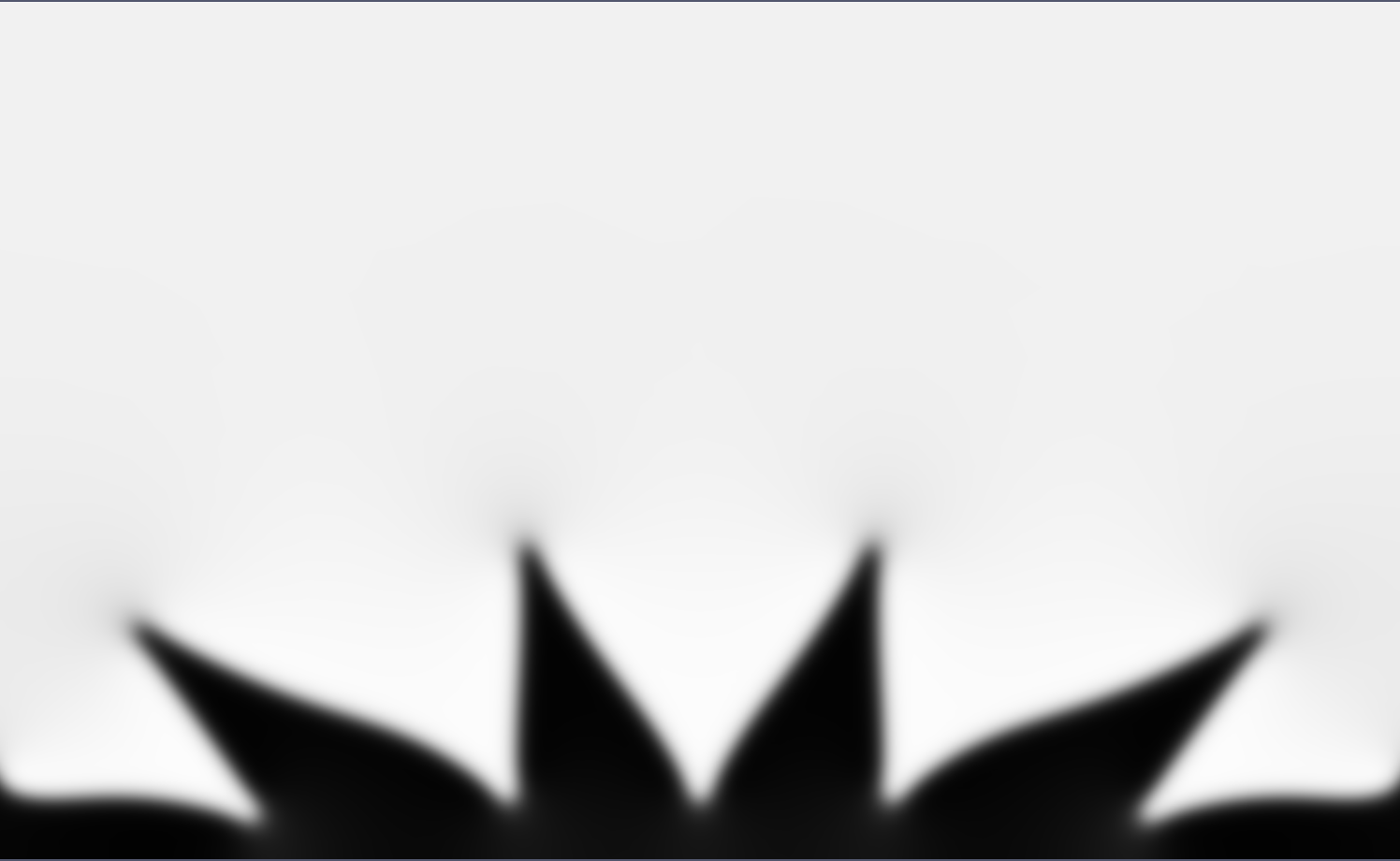}}&
    \fbox{\includegraphics[width=32mm]{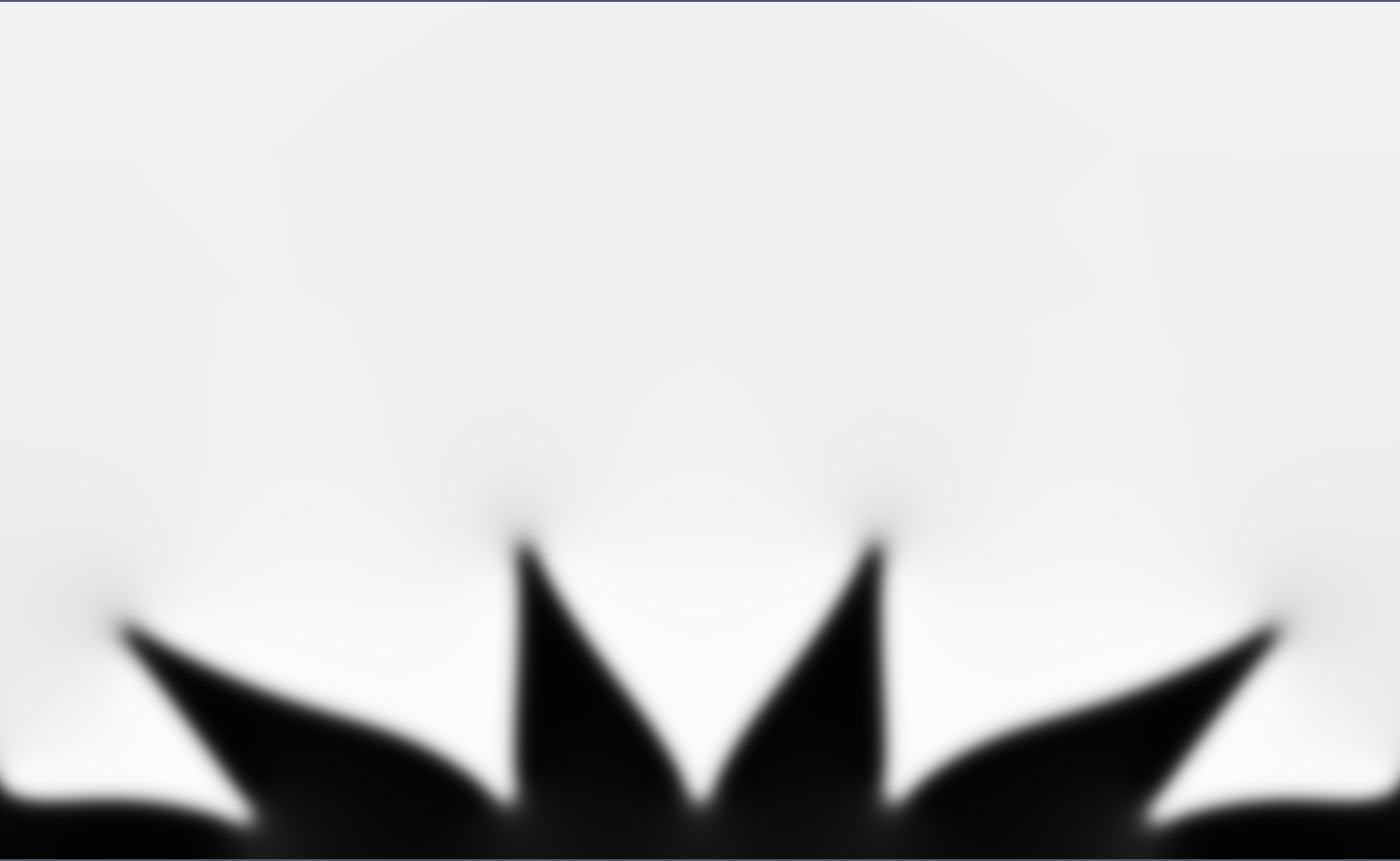}}\\

    \fbox{\includegraphics[width=32mm]{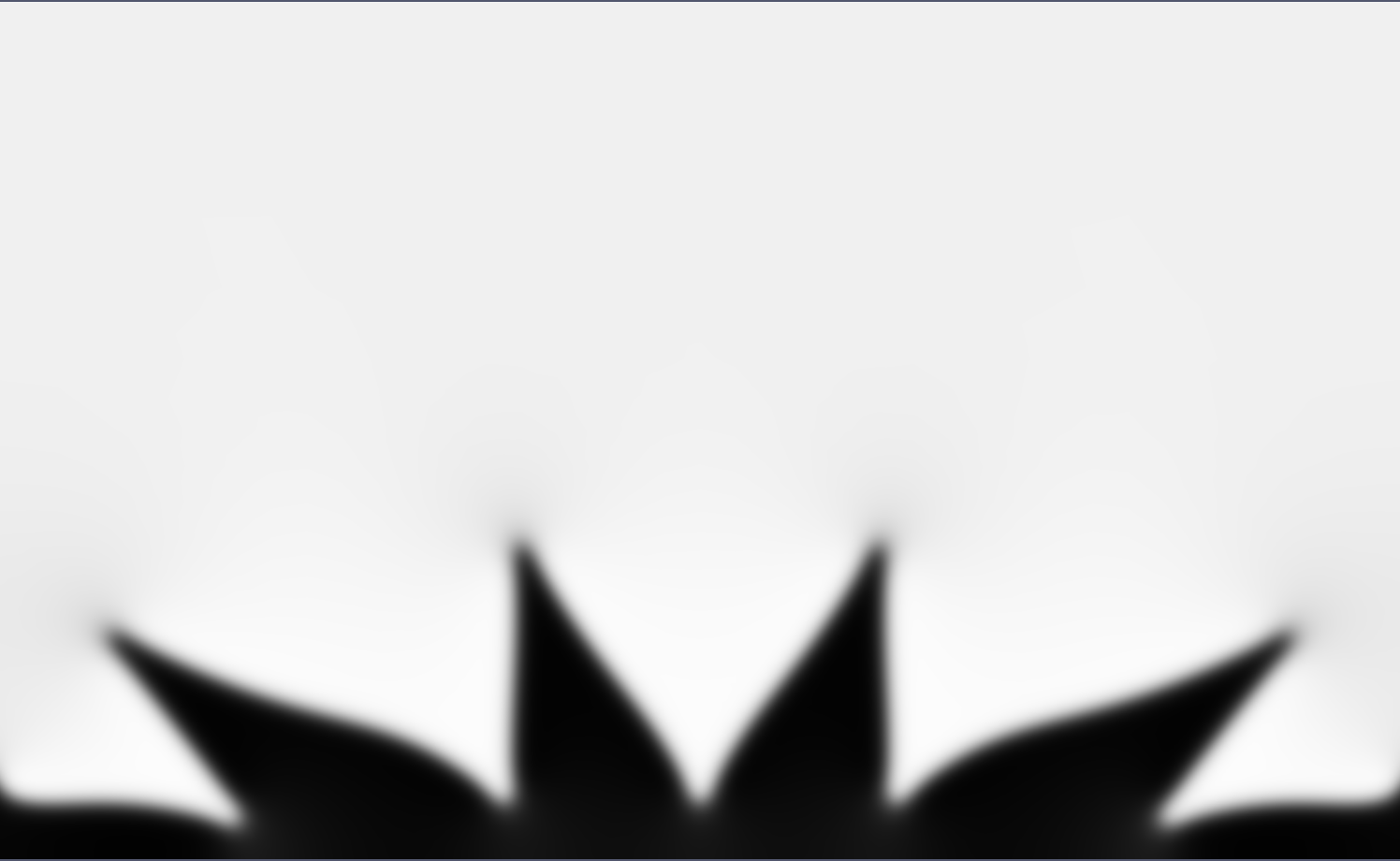}}&
    \fbox{\includegraphics[width=32mm]{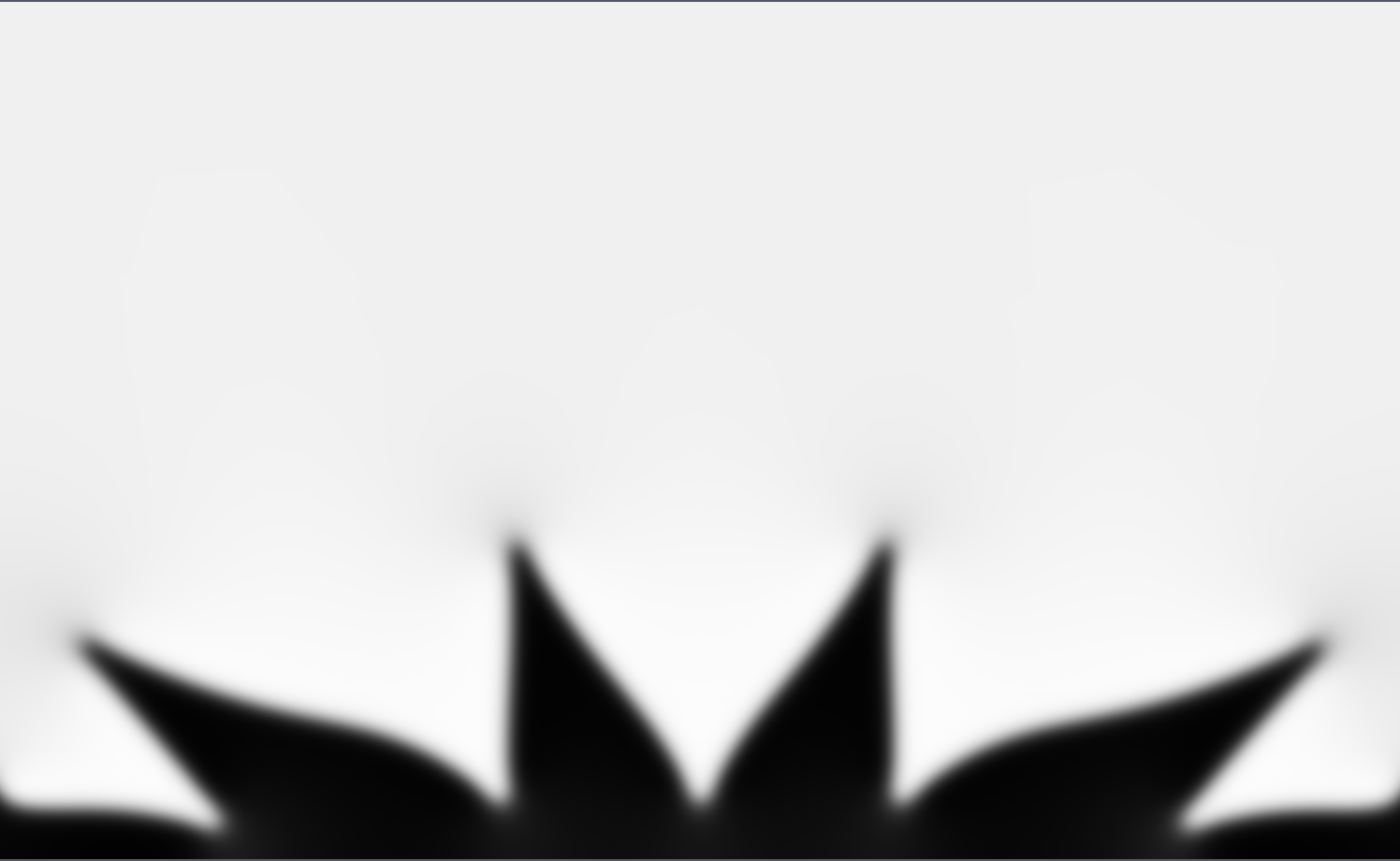}}&
    \fbox{\includegraphics[width=32mm]{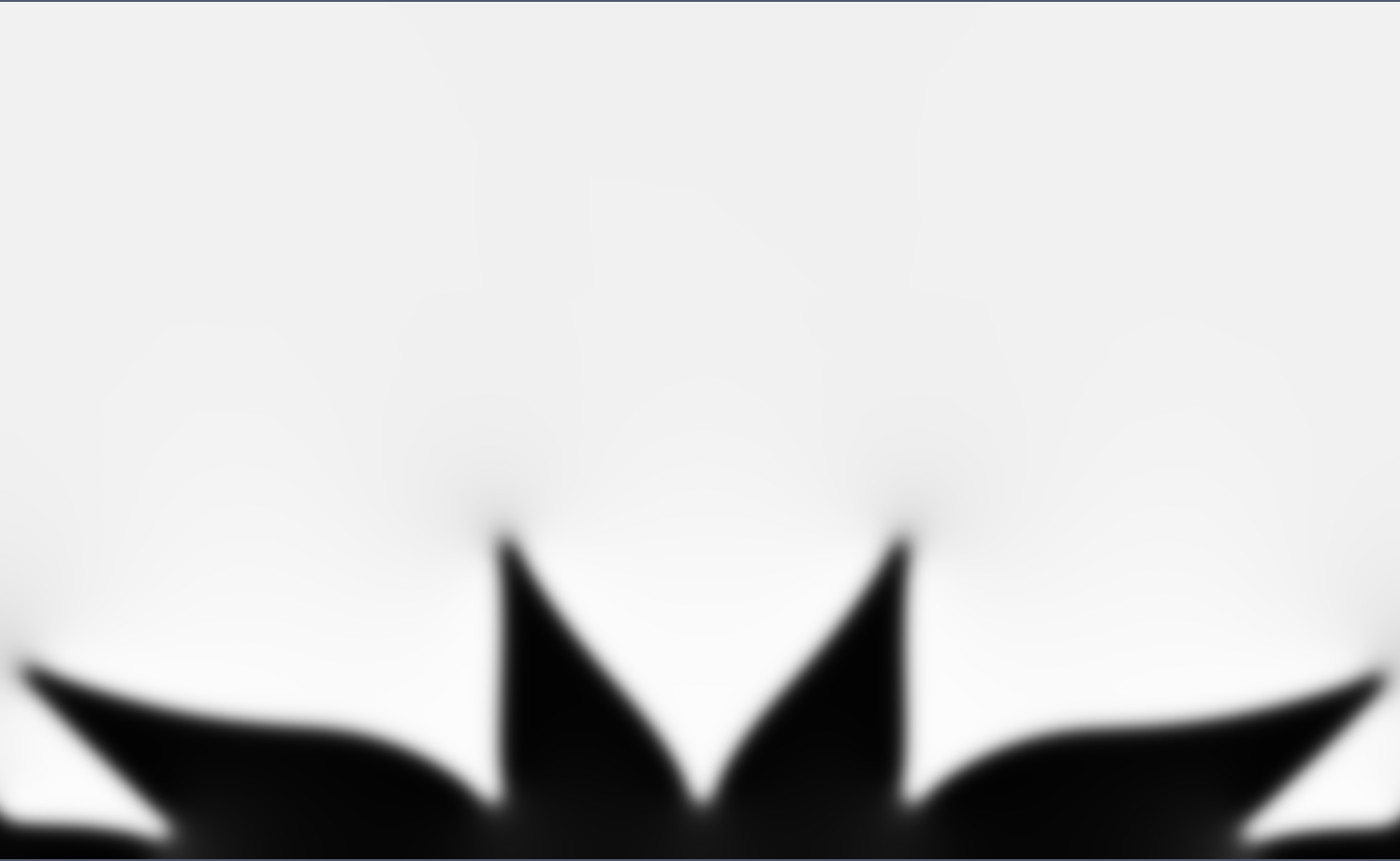}}&
    \fbox{\includegraphics[width=32mm]{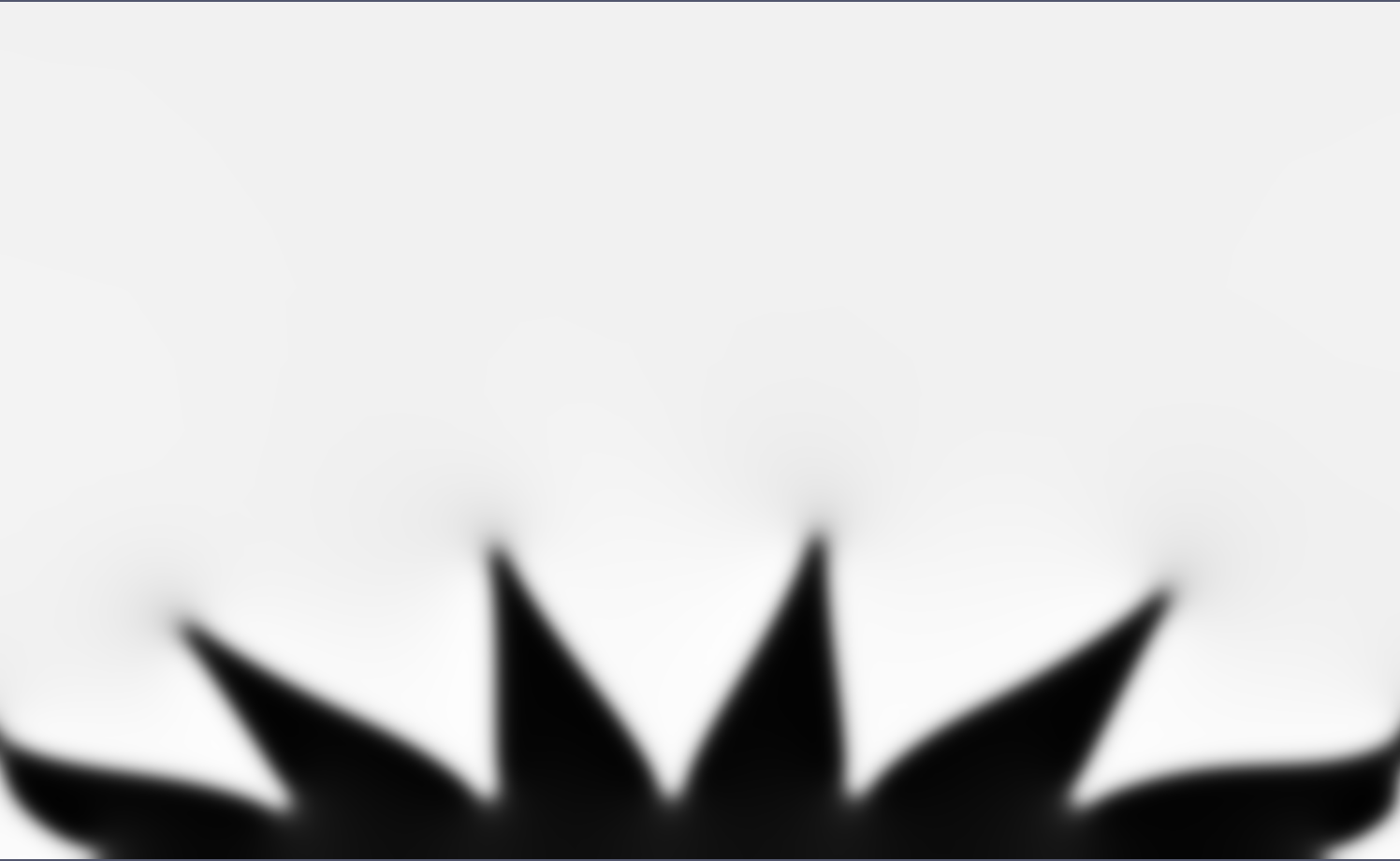}}


  \end{tabular}
\end{center}
\caption[The ferrofluid hedgehog: complete magnetizing field $\heff = \ha + \hd$]{\textbf{The ferrofluid hedgehog: complete magnetizing field $\heff = \ha + \hd$}. This figure shows another instance of the Rosensweig instability, this time with a non-uniform magnetic field (see Figure \ref{FigDipsHedge} for details regarding the source of magnetic field). This computation was carried out using the definition \eqref{totalhTwPh}--\eqref{phiNeuIIItwo} for the effective magnetizing field $\heff$. Here the frames correspond to $t=0$ for the first frame, to time $t=6$ for the last frame, at regular intervals of time. In contrast to Figure \ref{figureResolved} of experiment \ref{exp1}, the ferrofluid spikes exhibit an open pattern, just like the magnetic field $\ha$ driving the system. In addition, the interface starts flat, and develops four spikes in the middle region (where there is a narrow band of quasi-uniform magnetic field pointing upwards), a configuration that remains quite stable throughout 
most of the simulation time, but finally it has a second (much faster) transition from four to six spikes which can be seen in the last frames. This numerical experiment clearly exhibits noticeable resemblance with the physical experiment shown in Figure \ref{experiment1}.\label{hedgefig}}
\end{figure}

The intensity $\alpha_s$ is the same for each dipole, but it evolves in time. More precisely, $\alpha_s$ is increased linearly in time from $\alpha_s = 0$ at time $t=0$, to its maximum value $\alpha_s = 4.3$ at time $t = 4.2$, and from time $t = 4.2$ to $t = 6.0$ the intensity of the dipoles is kept constant in order to let the system rest and develop a stable configuration. The motivation behind a longer simulation time ($\tf=6$ units) and the maximum intensity $\alpha_s = 4.3$ is to push the system to a barely stable configuration at the brink of a second transition, so that allowing more simulation time, the system could evolve further and capture a second bifurcation.

Numerical results using the complete (effective) magnetizing field $\heff = \ha + \hd$ defined in \eqref{totalhTwPh}--\eqref{phiNeuIIItwo} are displayed in Figure \ref{hedgefig}. Numerical results using the reduced definition $\heff := \ha$ can be found in Figure \ref{hedgefigha}. These figures are strikingly different, thereby highlighting the importance of using a physically reasonable definition for effective magnetizing field $\heff$, and the critical influence of the demagnetizing field $\hd$ in the overall behavior of the system. Even though \eqref{totalhTwPh}--\eqref{phiNeuIIItwo}, used in Figures \ref{figureResolved}--\ref{hedgefig}, is a questionable approach to compute an approximation of $\heff = \ha + \hd$, it retains the presence of $\hd$, and is able to reproduce the classical Rosensweig instability. It is even able to respect the scaling \eqref{gravity} reasonably well in the context of uniform magnetic fields (see Figure \ref{figureResolved}), and the more common version of the Rosensweig instability in the context of non-uniform magnetic fields, as shown for instance in Figure \ref{hedgefig}.

Many attempts to model and explain the Rosensweig instability (and ferrofluid behavior in general) found in the literature (\cf \cite{Abou2000,Engel2001,AFK2008,AFK2010,Trung2011,Oden2002}), pay special attention to the modeling of non-linear susceptibilities and saturation effects (usually carried out with the Langevin function). However, they rarely ever elaborate on the effective magnetizing field $\heff$, the demagnetizing field $\hd$ (also called stray field), and their approximation/computation. They are sometimes not even mentioned in the entire text.

Saturation is indeed an important component in the physical behavior of magnetic materials, specially if the material is past the saturation limit. However, it is our opinion that current emphasis on the modeling of saturation effects is somehow not commensurate with its actual impact in the physical behavior of ferrofluids. Preliminary computational experiments, carried out by us (not reported), seem to indicate that the modeling of saturation effects add almost imperceptible nuances to the overall behavior of the system, while proper computation of the effective field (using \eqref{totalhTwPh}--\eqref{phiNeuIIItwo} or a better approximation if possible) has much more striking consequences in the global behavior of the system (particularly relevant for the study of the Rosensweig instability). Those consequences can be as noticeable as the difference between Figures \ref{hedgefig} and \ref{hedgefigha}.

\begin{figure}
\begin{center}
    \setlength\fboxsep{0pt}
    \setlength\fboxrule{1pt}
  \begin{tabular}{cccc}

    \fbox{\includegraphics[width=32mm]{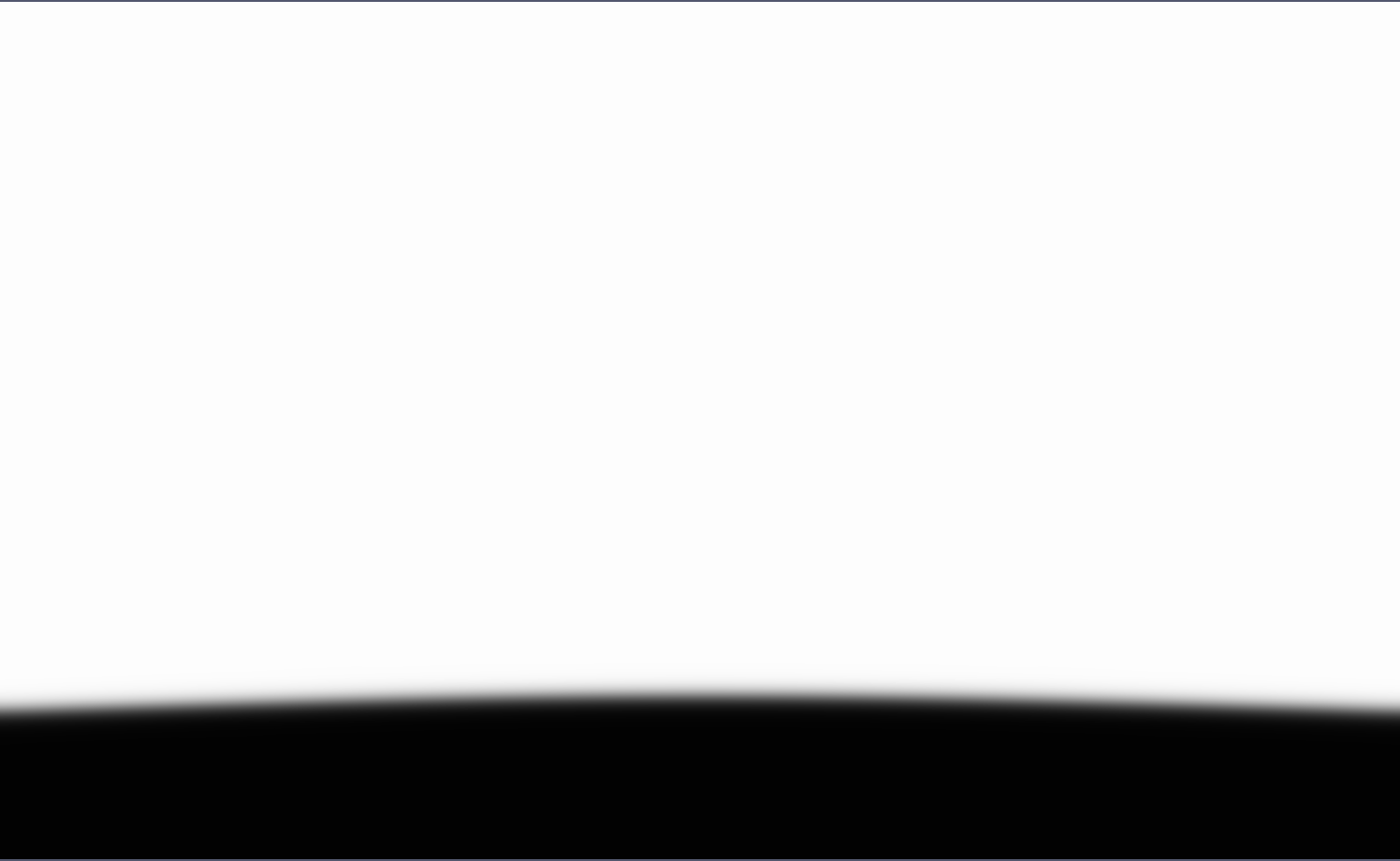}}&
    \fbox{\includegraphics[width=32mm]{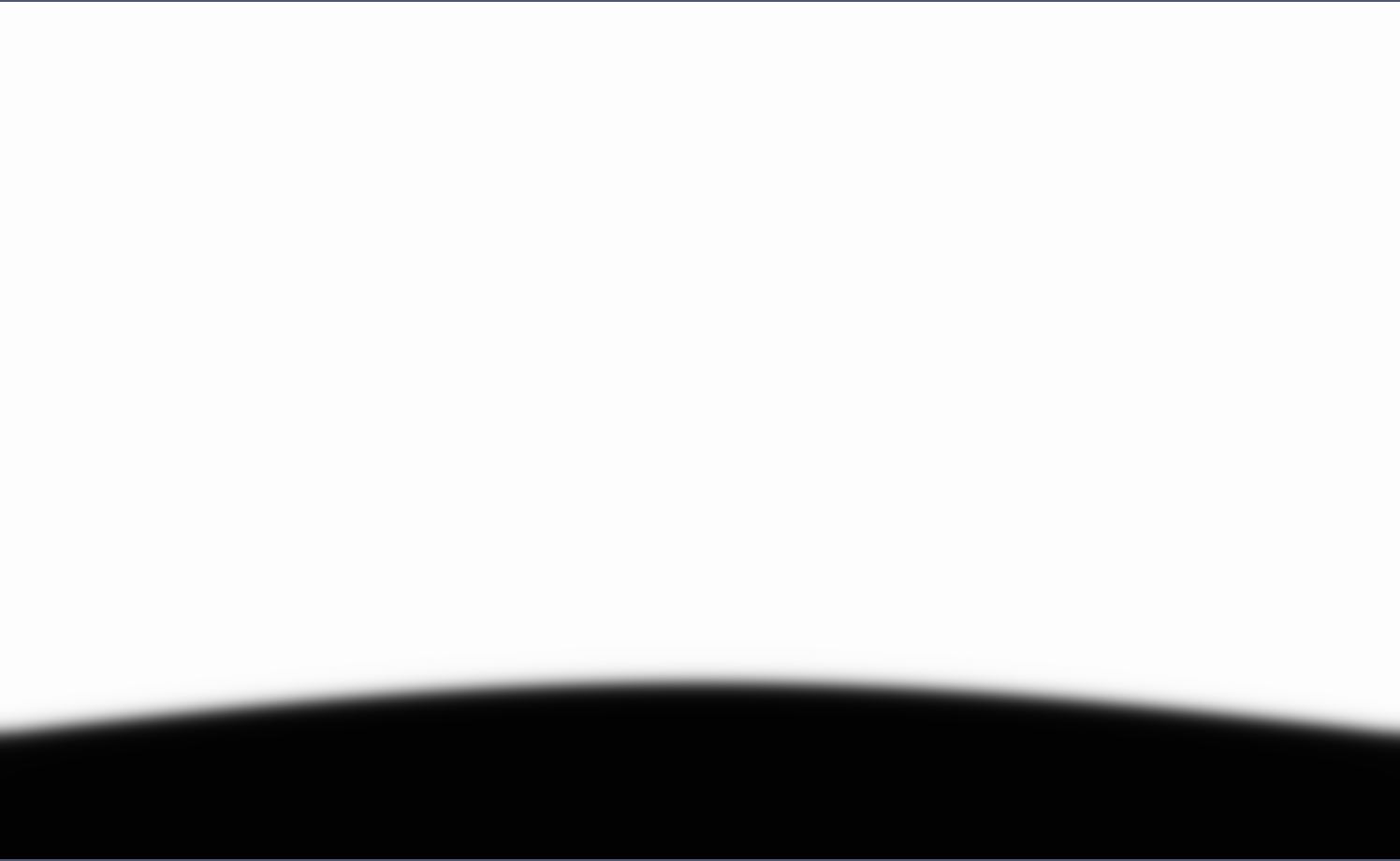}}&
    \fbox{\includegraphics[width=32mm]{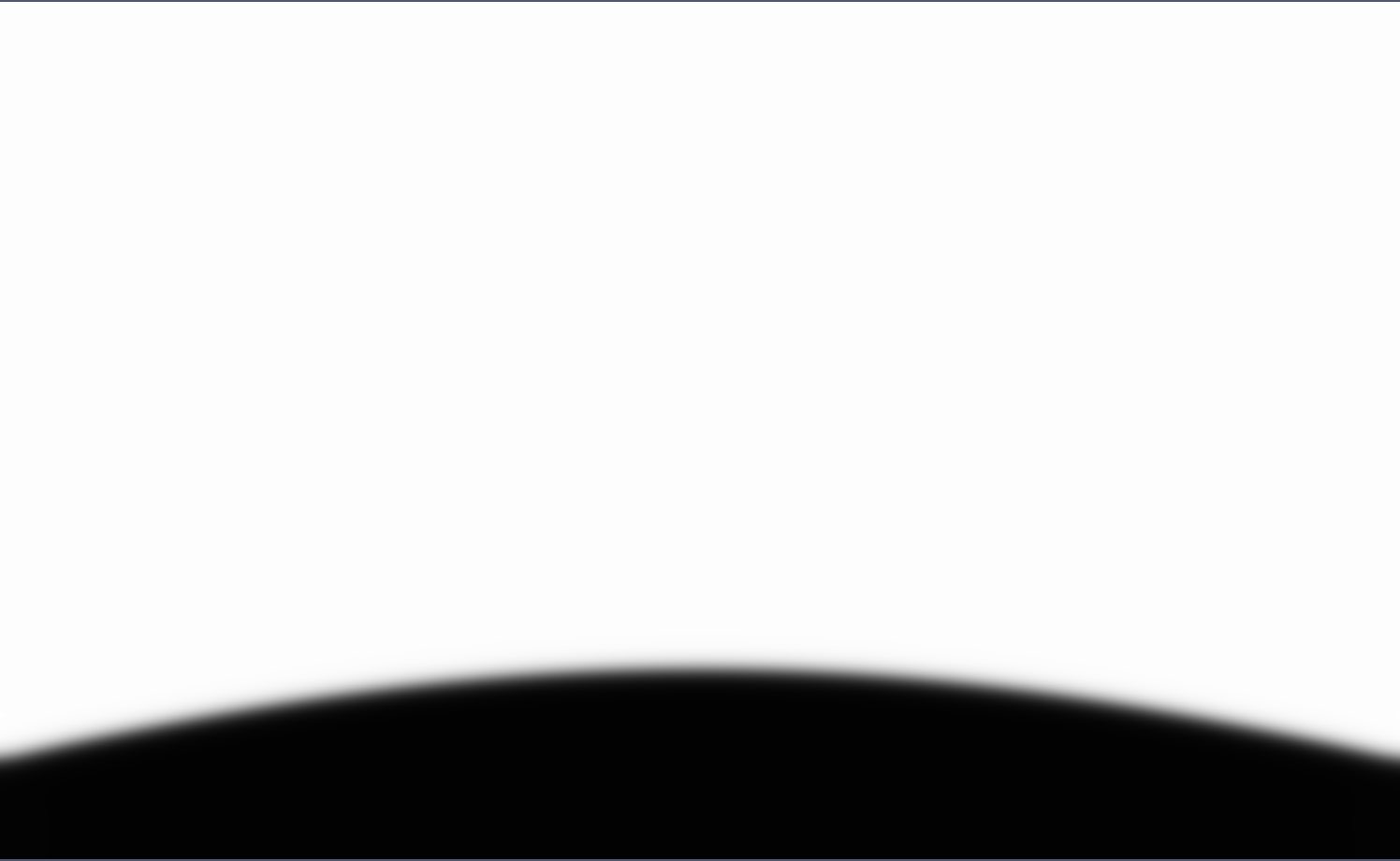}}&
    \fbox{\includegraphics[width=32mm]{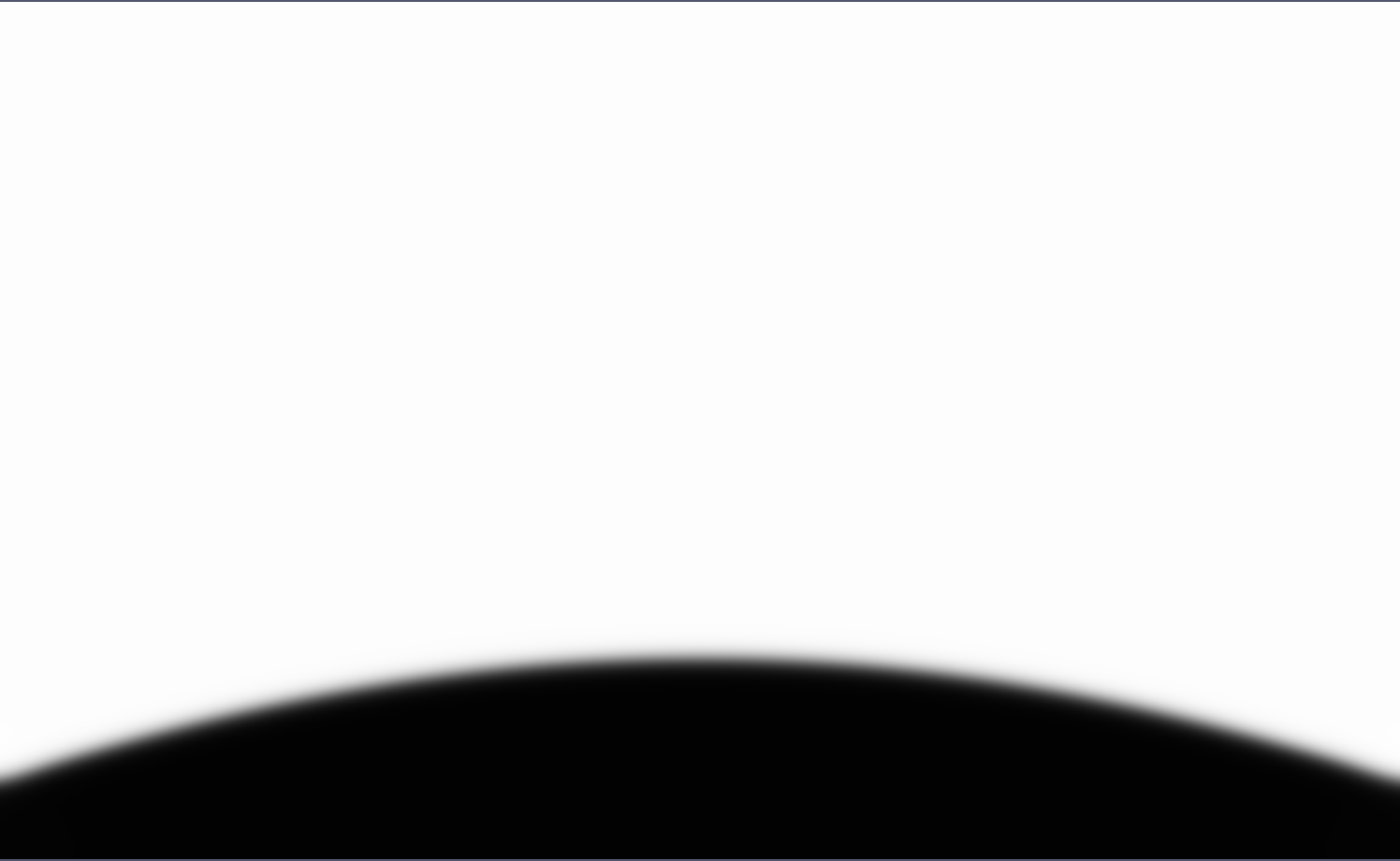}}\\

    \fbox{\includegraphics[width=32mm]{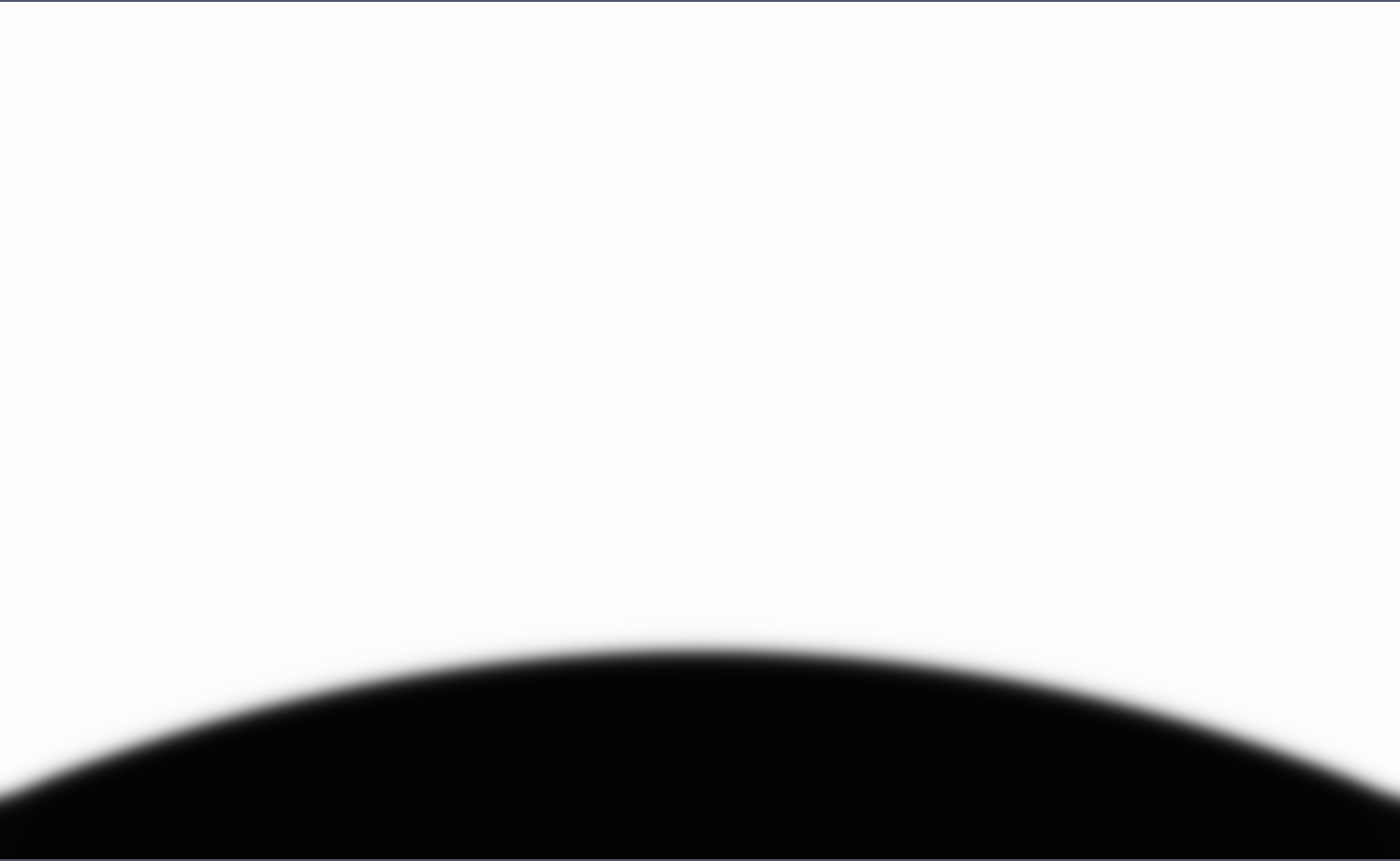}}&
    \fbox{\includegraphics[width=32mm]{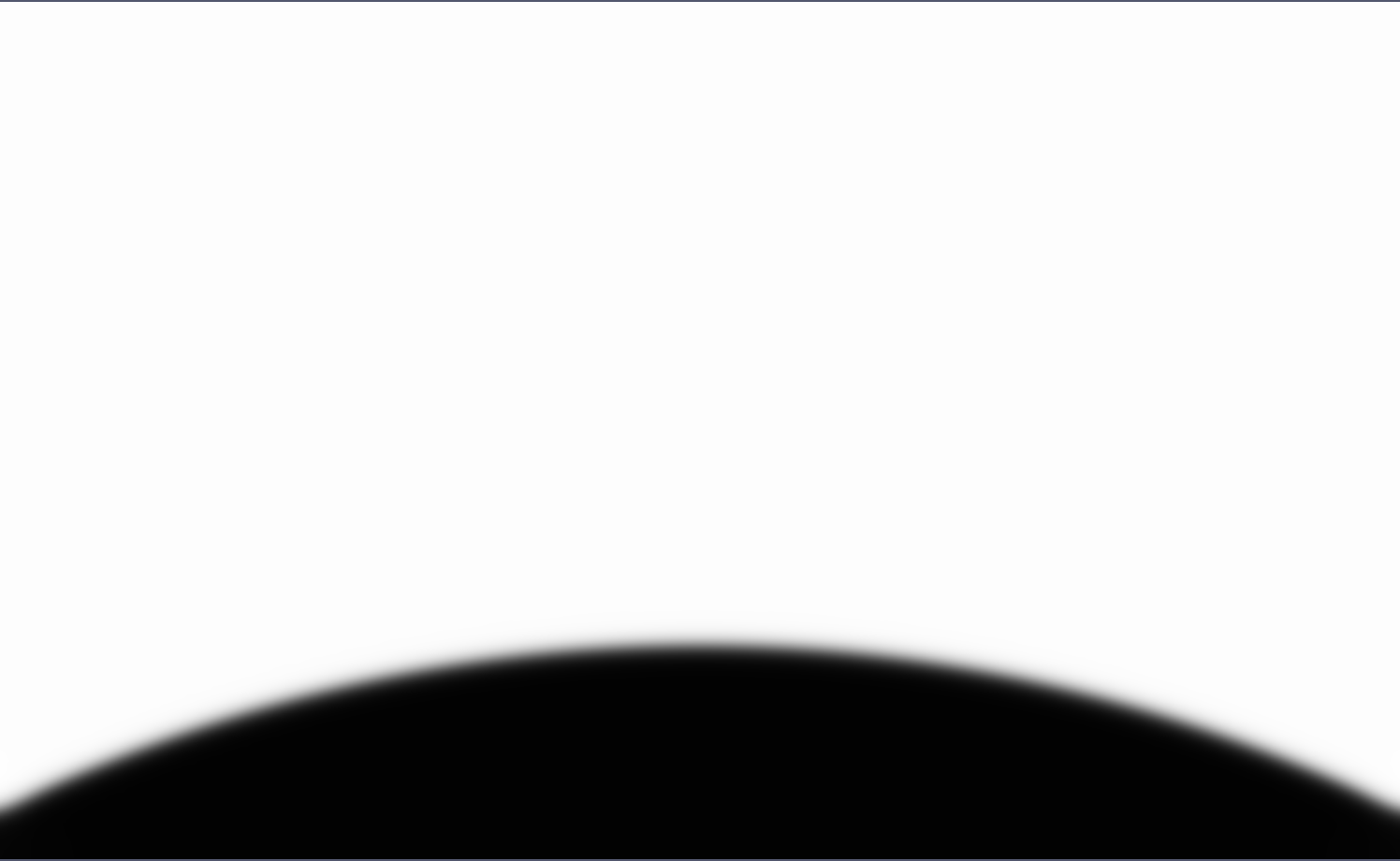}}&
    \fbox{\includegraphics[width=32mm]{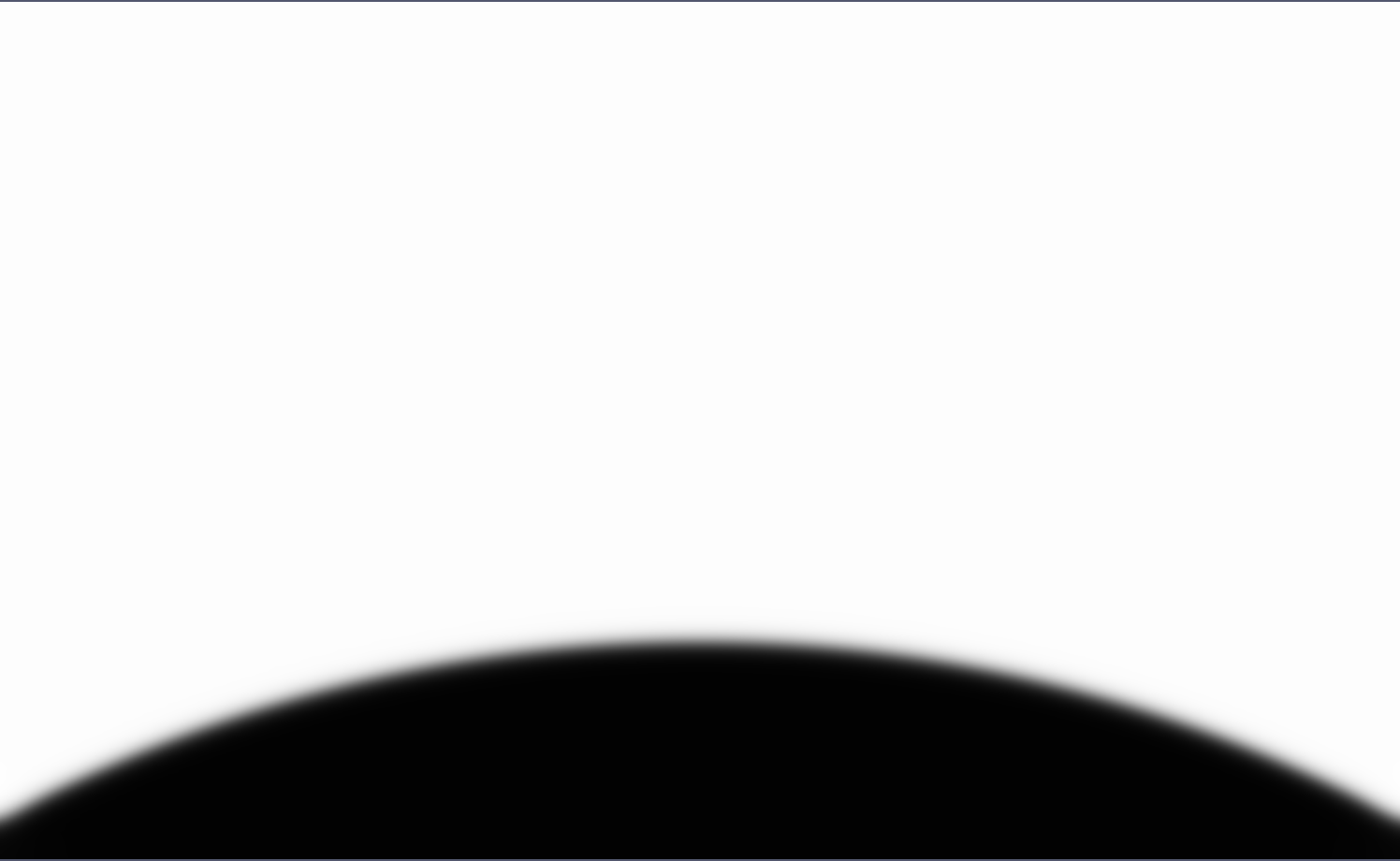}}&
    \fbox{\includegraphics[width=32mm]{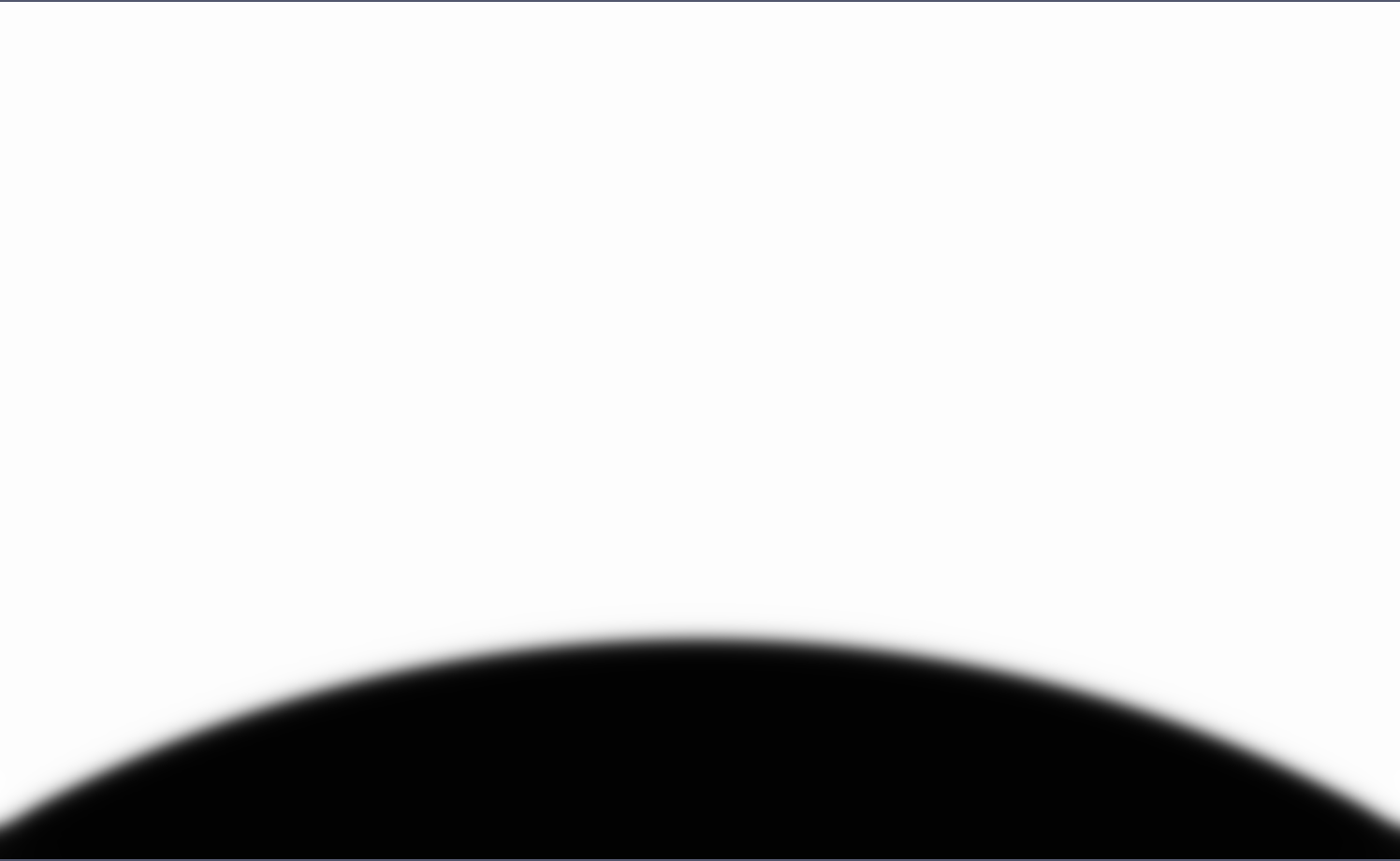}}\\

    \fbox{\includegraphics[width=32mm]{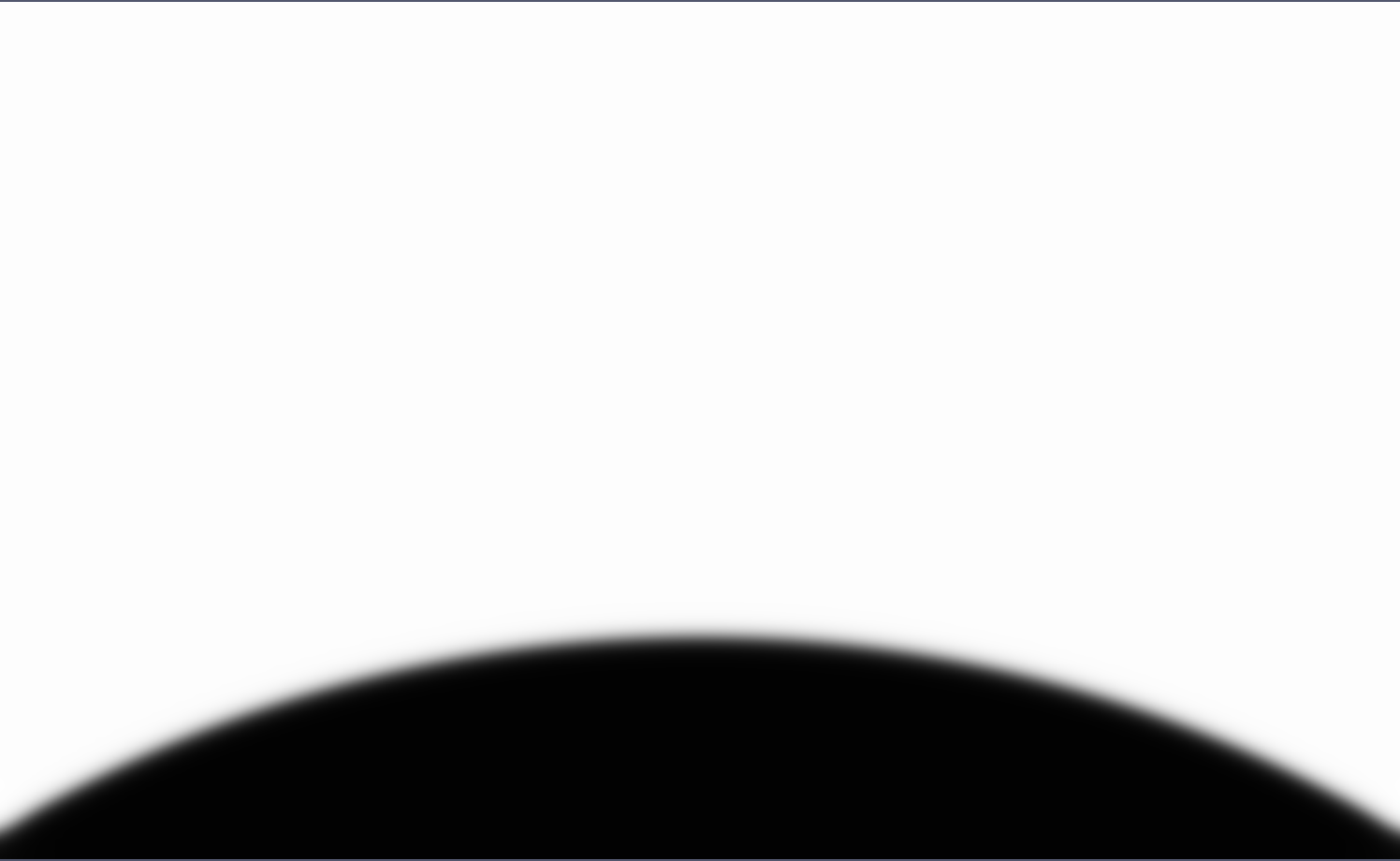}}&
    \fbox{\includegraphics[width=32mm]{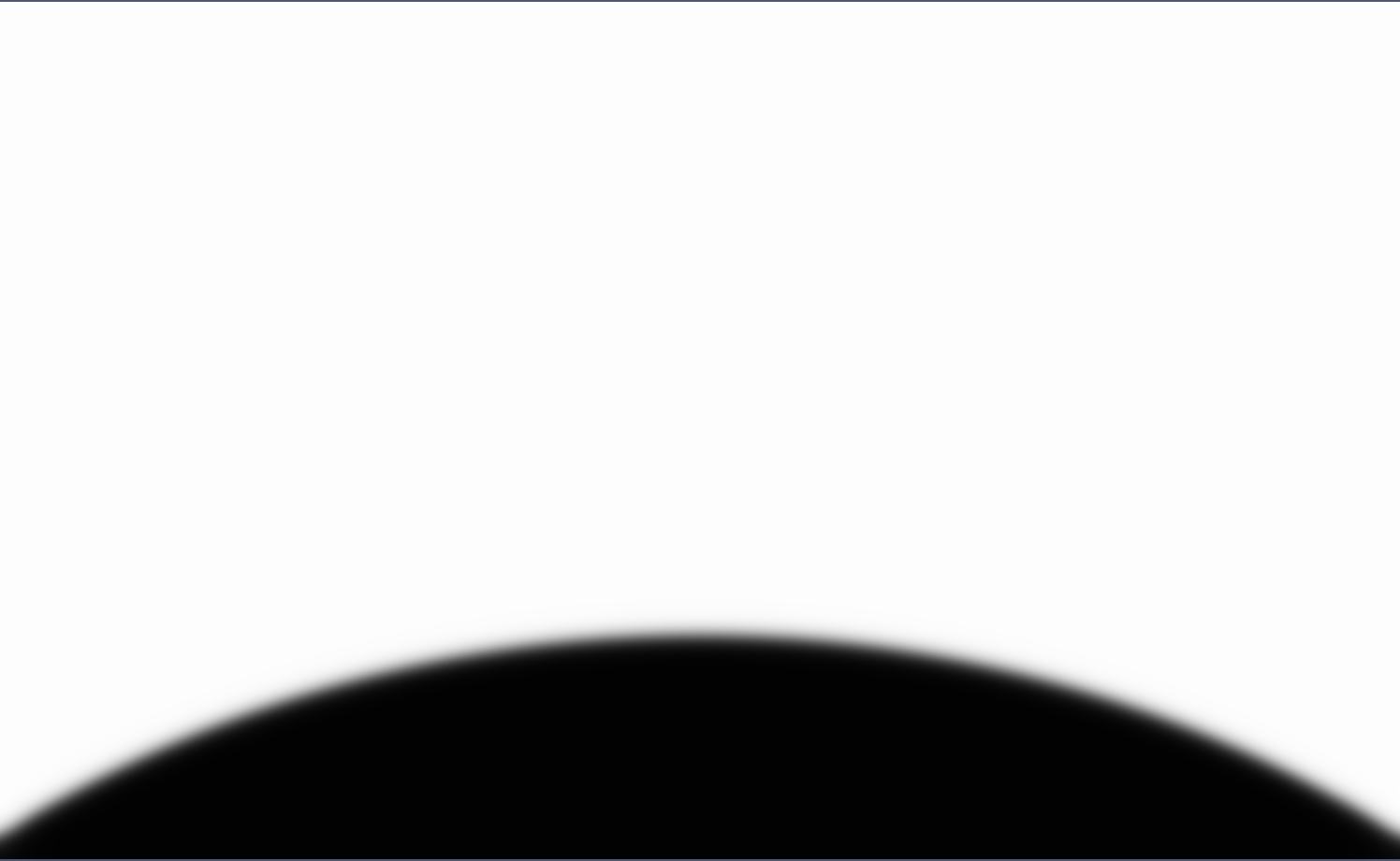}}&
    \fbox{\includegraphics[width=32mm]{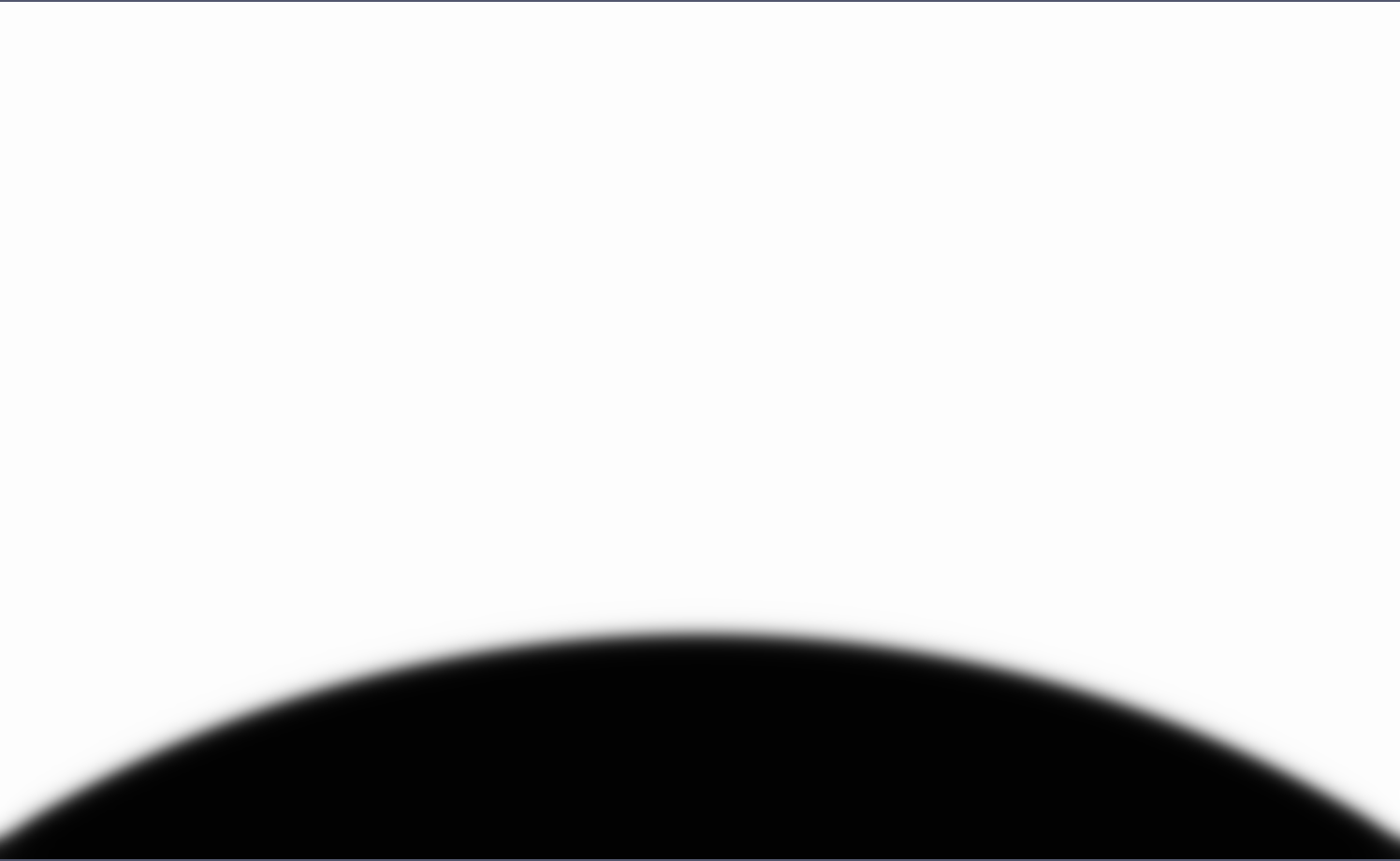}}&
    \fbox{\includegraphics[width=32mm]{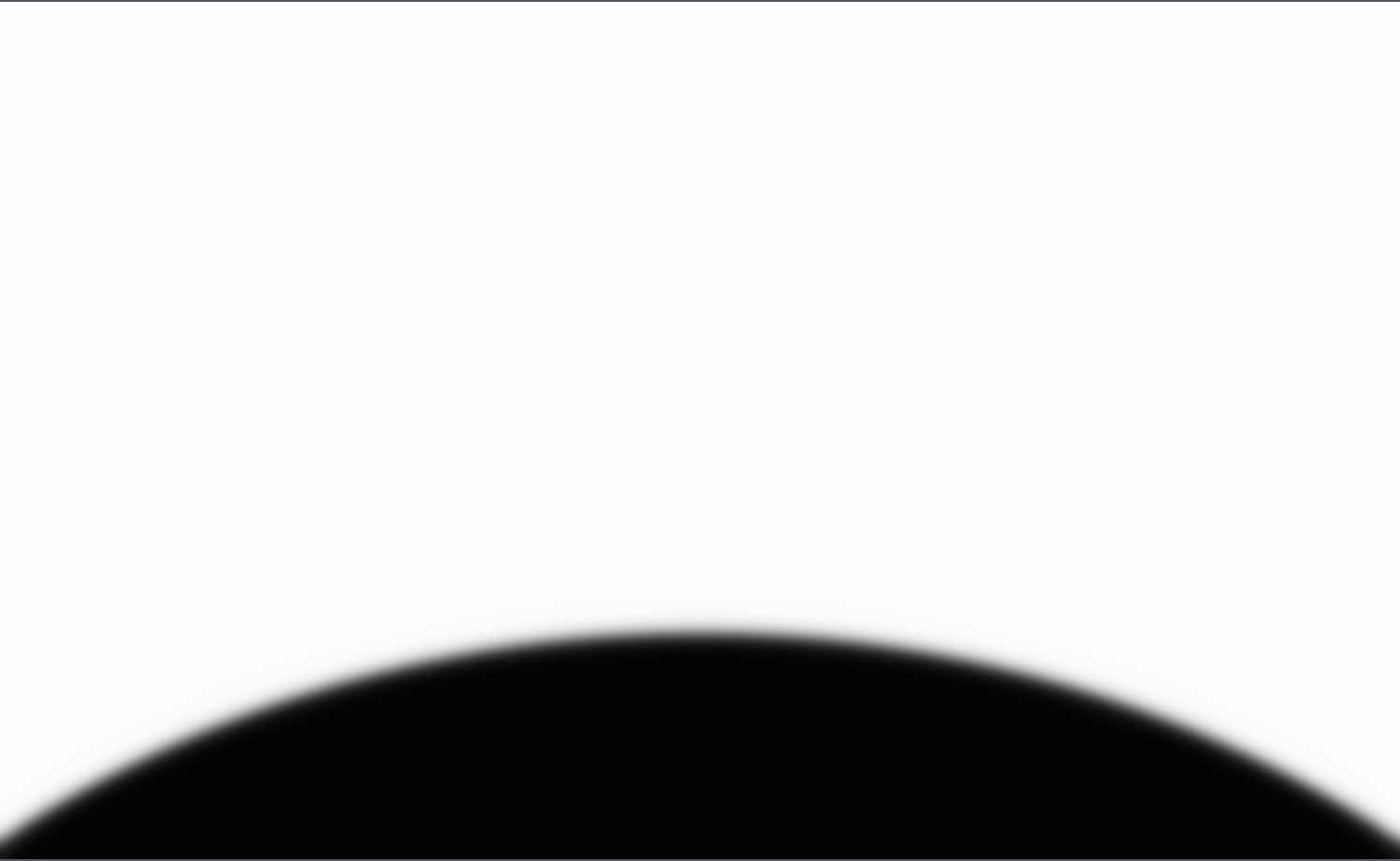}}\\

    \fbox{\includegraphics[width=32mm]{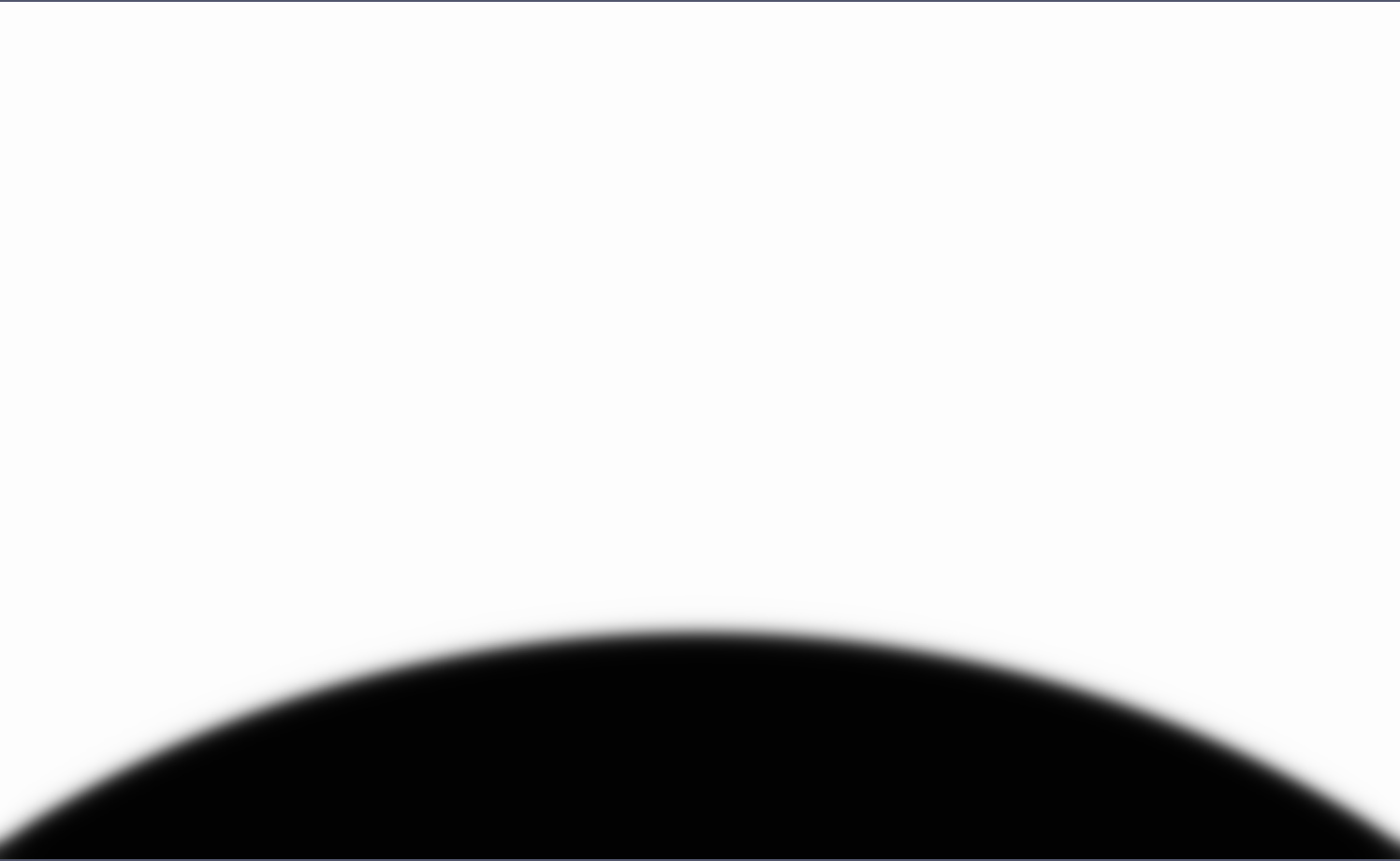}}&
    \fbox{\includegraphics[width=32mm]{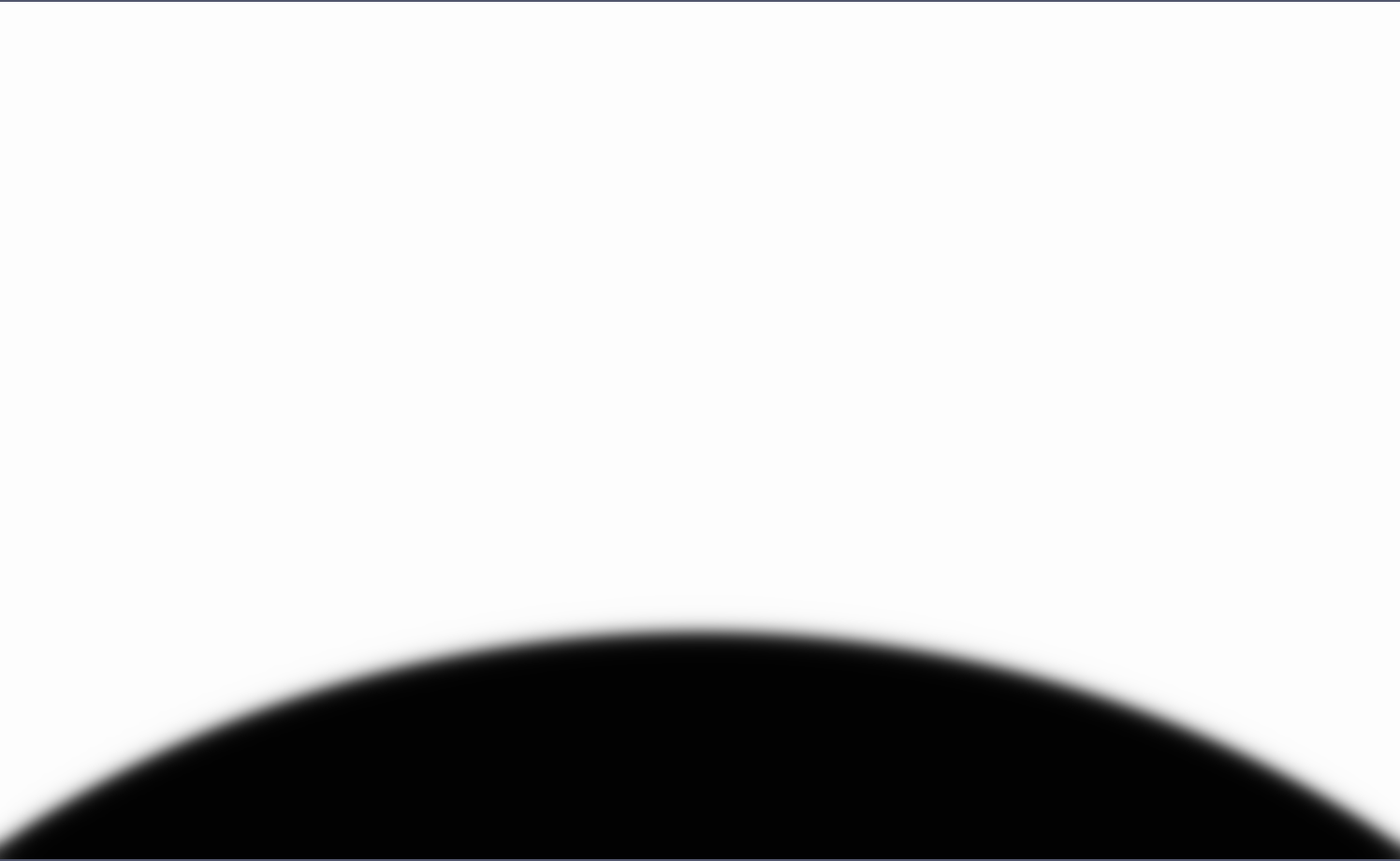}}&
    \fbox{\includegraphics[width=32mm]{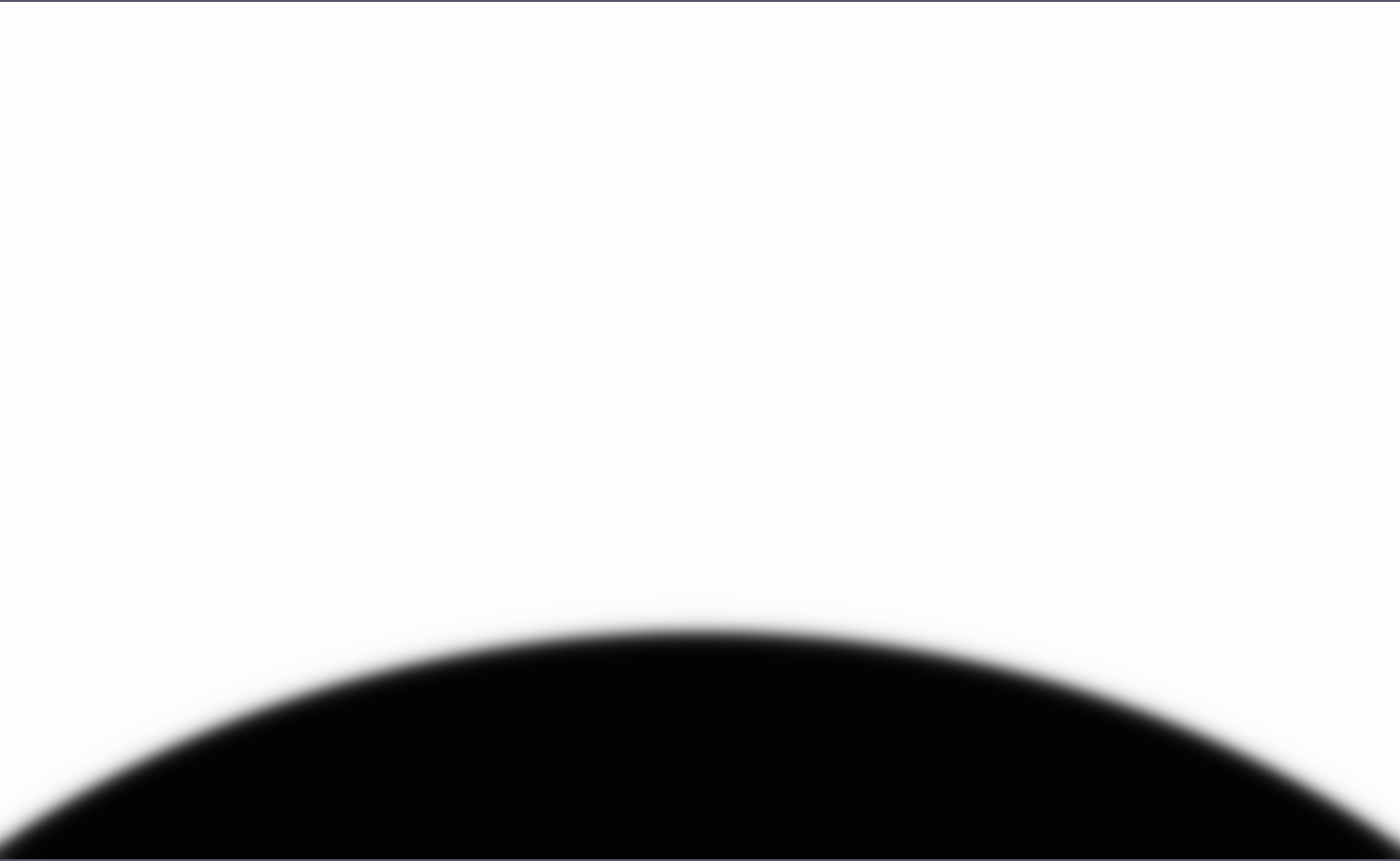}}&
    \fbox{\includegraphics[width=32mm]{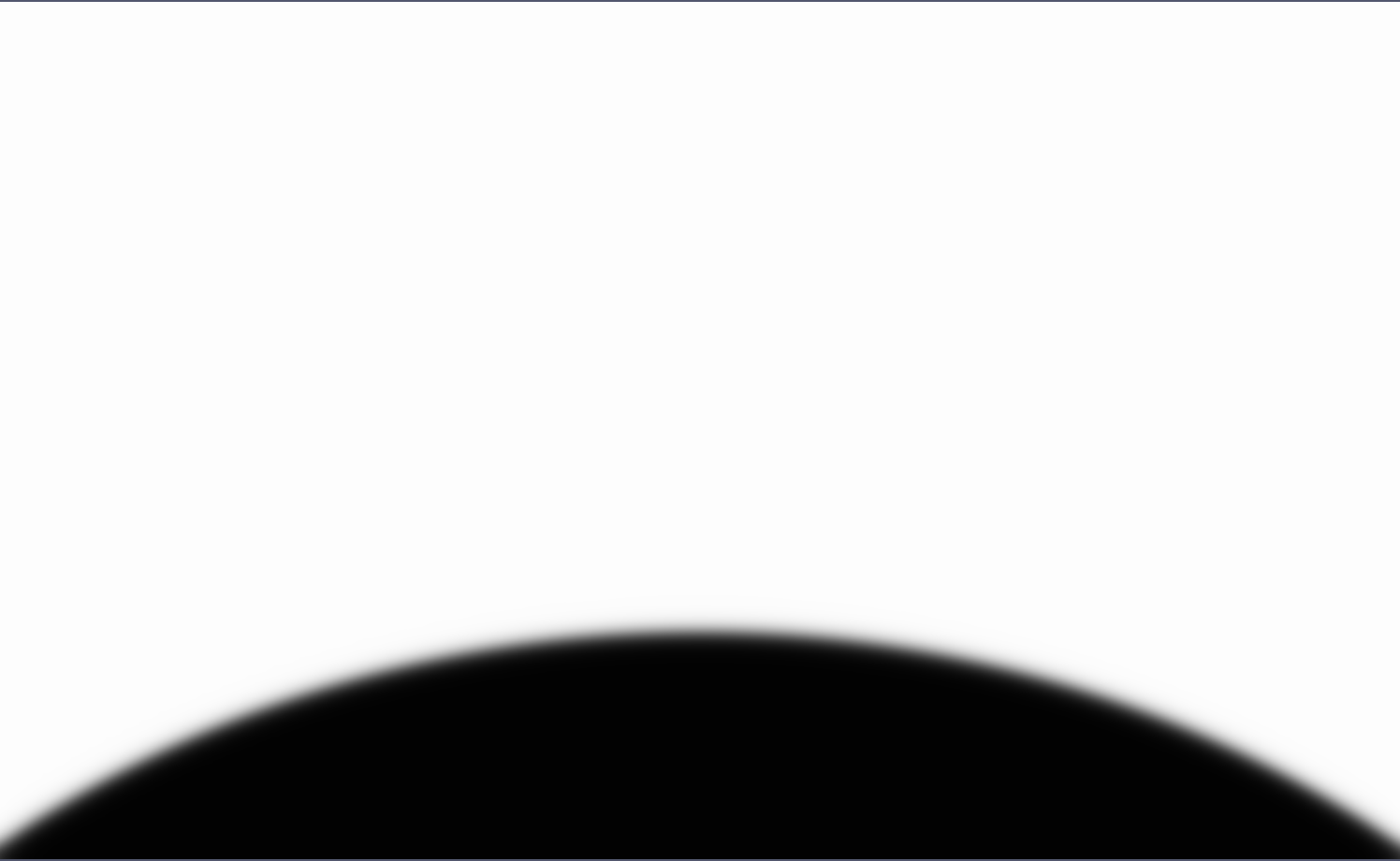}}\\

    \fbox{\includegraphics[width=32mm]{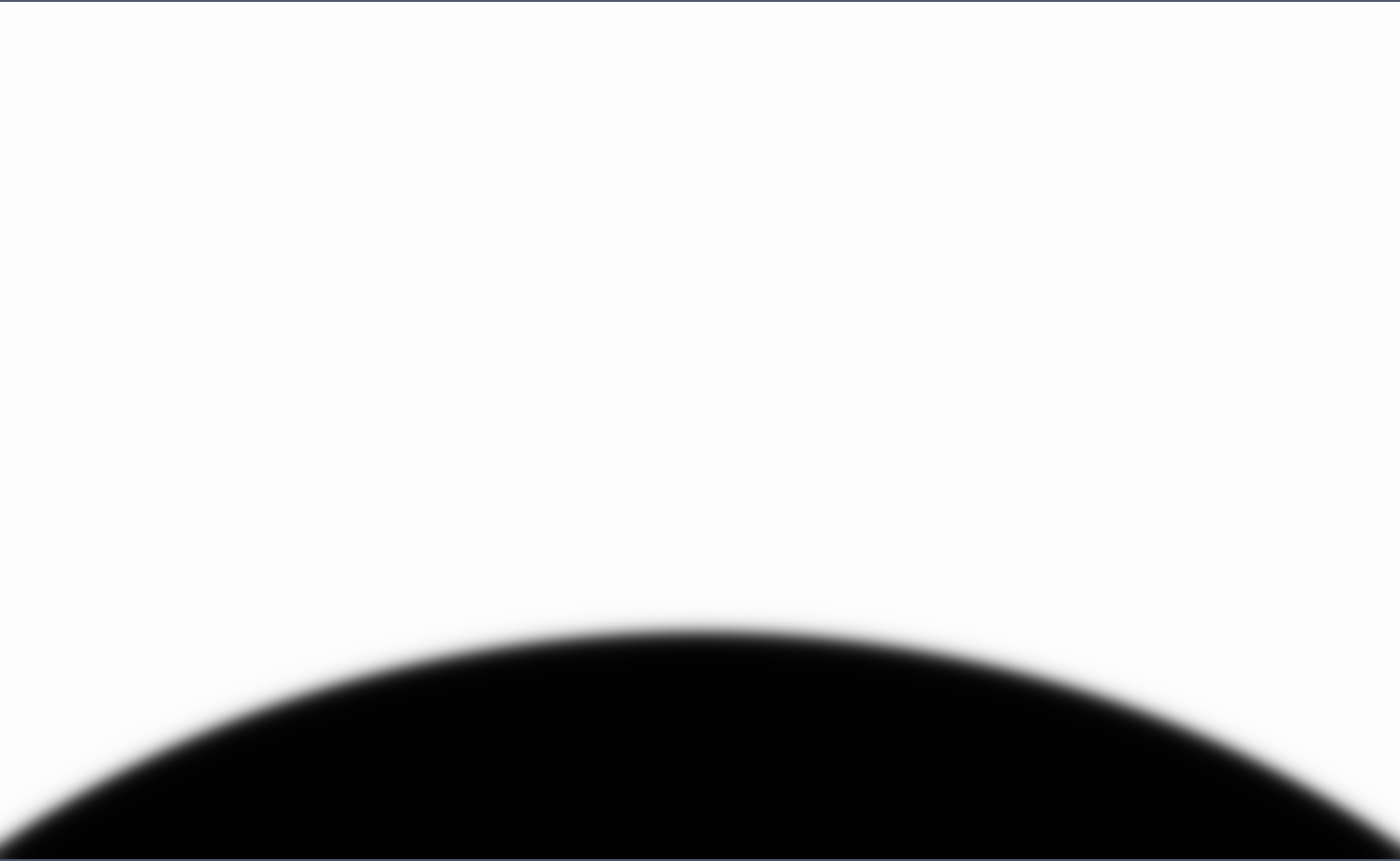}}&
    \fbox{\includegraphics[width=32mm]{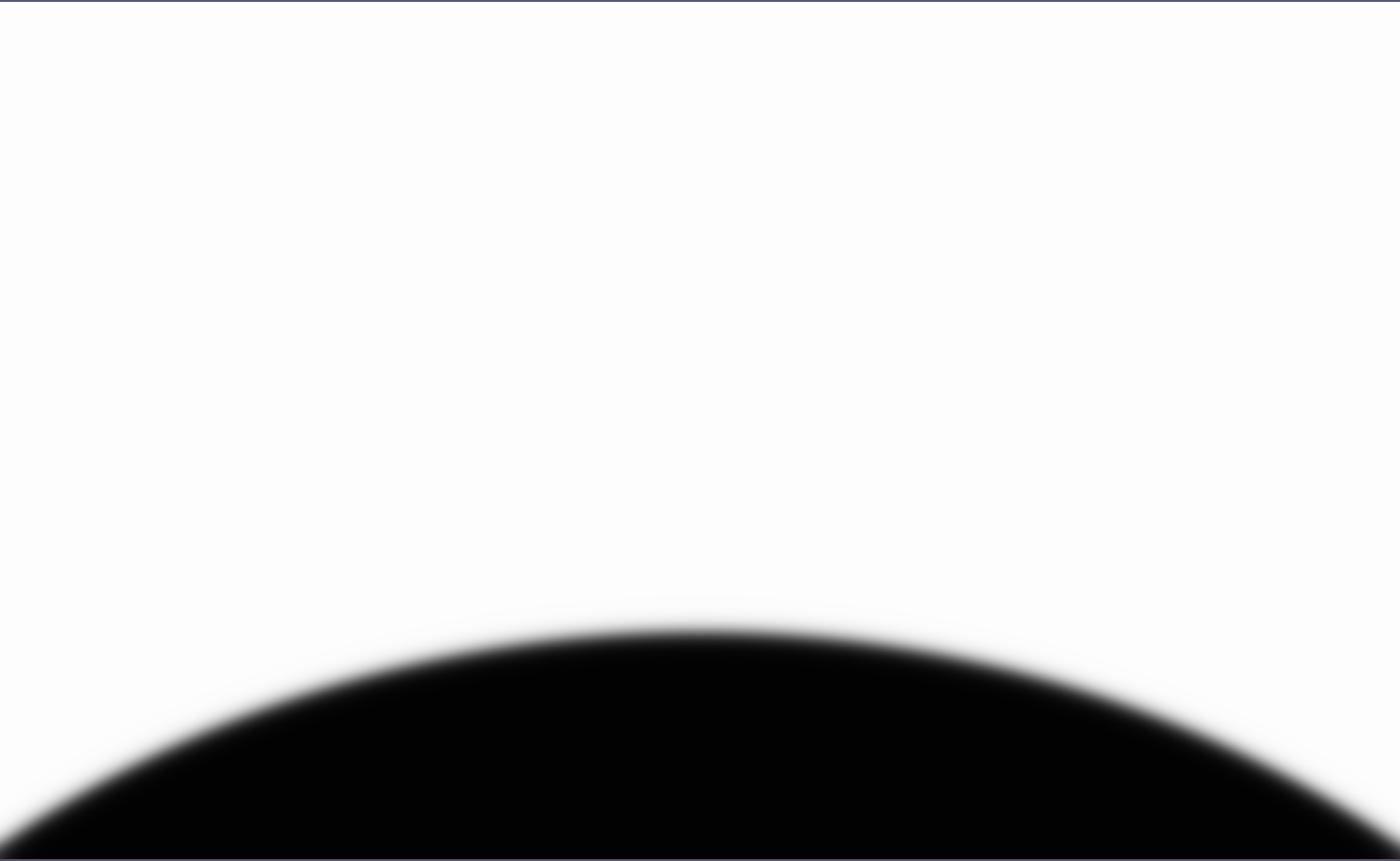}}&
    \fbox{\includegraphics[width=32mm]{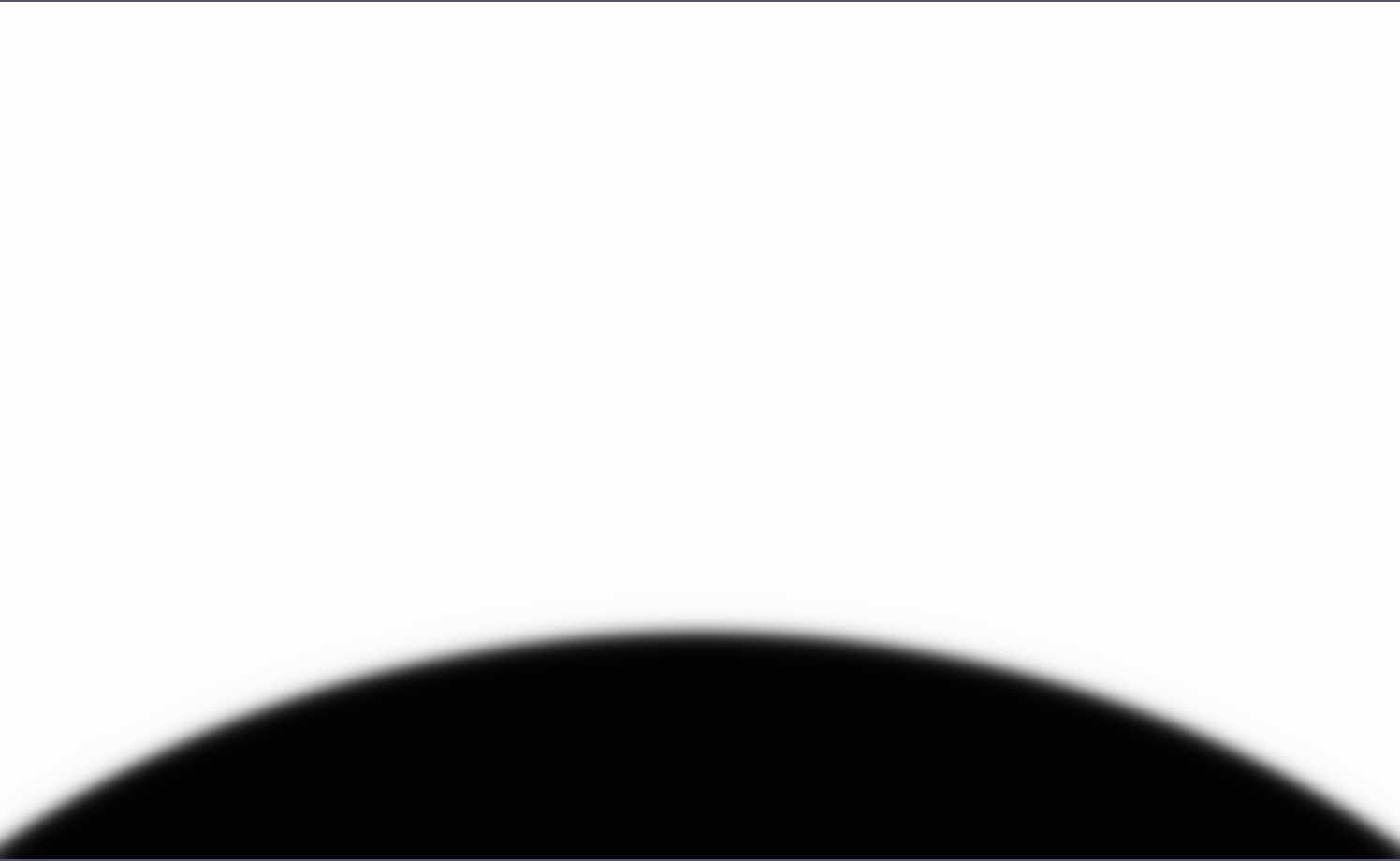}}&
    \fbox{\includegraphics[width=32mm]{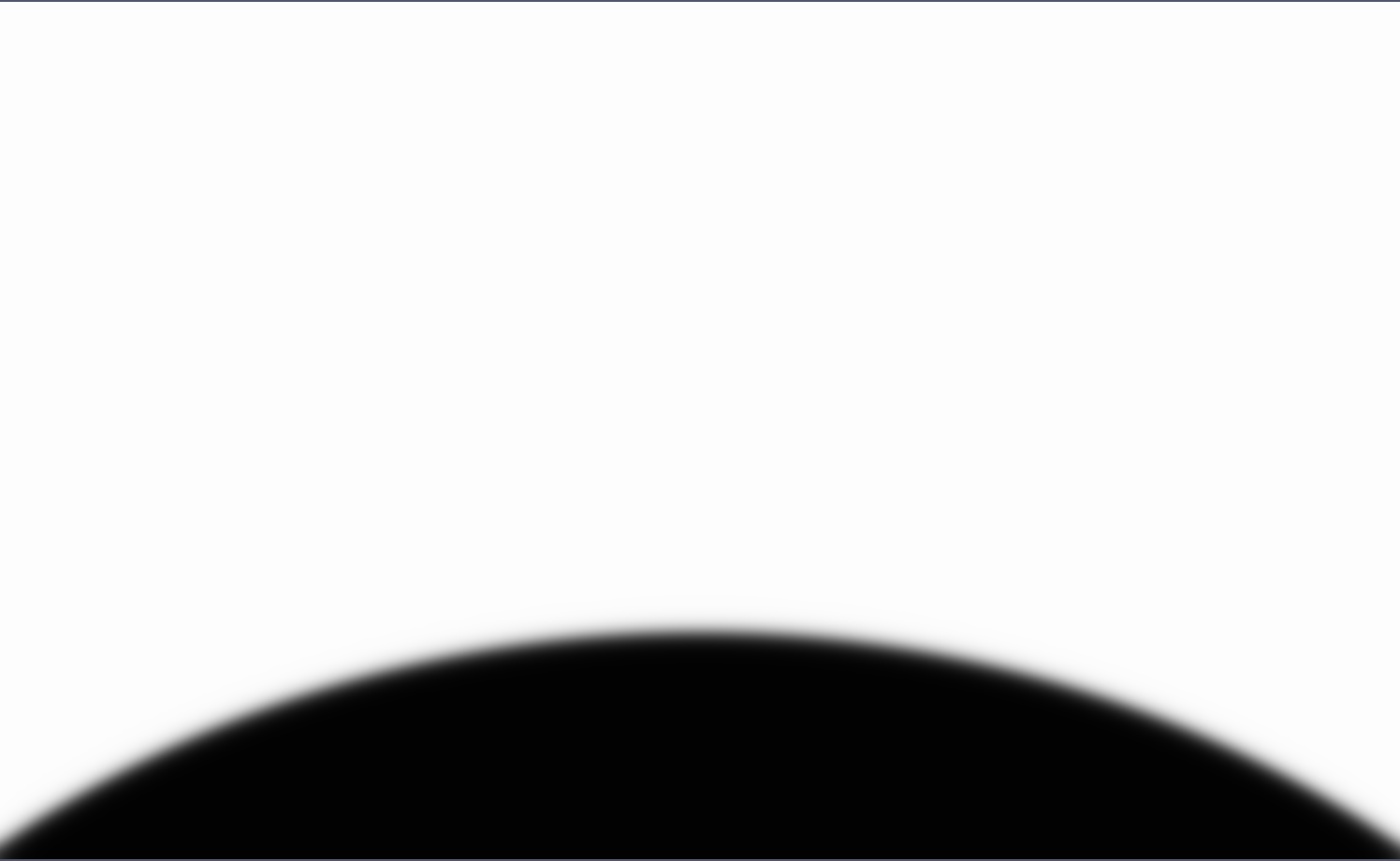}}

  \end{tabular}
\end{center}
\caption[The Hedgehog (attempt): reduced magnetizing field $\heff =
  \ha$]{\textbf{The ferrofluid hedgehog (attempt): reduced magnetizing field
    $\heff = \ha$}. This figure shows a computation carried out using
  the same setup as in Figure \ref{hedgefig}, but this time, we
  neglect the demagnetizing field $\hd$ and employ the reduced field
  $\heff = \ha$ for the effective magnetizing field (at discrete level
  we use \eqref{newHdef}). The evolution of the interface is totally
  different to that one of Figure \ref{hedgefig}, the Rosensweig instability does not manifest. The final configuration adopted by the system does not even resemble what would happen in a real life experiment (see for instance Figure \ref{experiment1}). \label{hedgefigha}}
\end{figure}

\begin{figure}
\begin{center}
  \setlength\fboxsep{0pt}
  \setlength\fboxrule{1pt}
  \fbox{\includegraphics[scale=0.30]{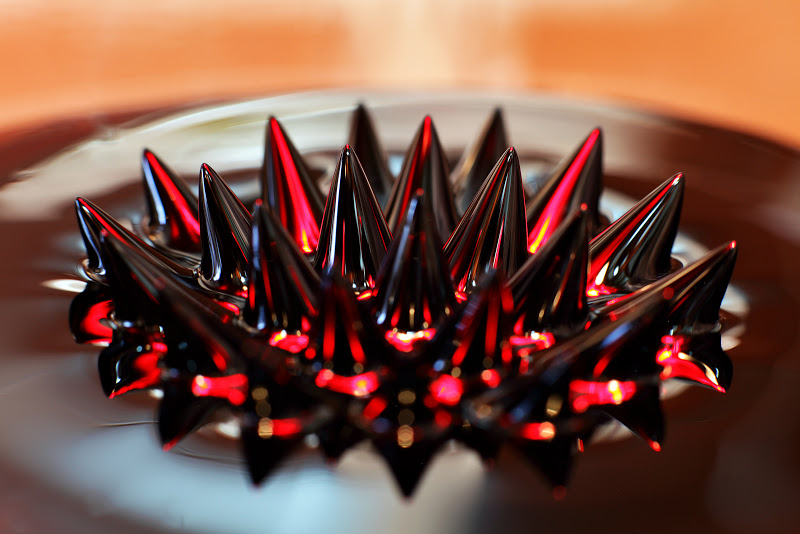}}
\end{center}
\caption[Experiment of a ferrofluid subject to a non-uniform magnetic field]{\textbf{Real experiment of a ferrofluid subject to a non-uniform magnetic field}. Courtesy (reproduced with permission) of \cite{Mied2011}. \label{experiment1}}
\end{figure}

\section{Conclusions}
In this work we propose a simple PDE model describing the behavior of two-phase ferrofluid flows. The model is assembled by choosing components from the one-phase Shliomis model of ferrofluids, magnetostatics, and well-known assumptions and simplifications from phase field techniques. The model satisfies a formal energy law and we are able to devise a numerical scheme that mimics it. The use of a discontinuous finite element space $\FEspaceM$ for the magnetization seems to be mandatory if we want to have a discrete energy law, hence, numerical stability. We are also able to prove that the scheme always has a solution.

We also present a simplified version of this model, which has a somehow more restrictive scope of physical validity, its use being primarily oriented to ferrofluids with small magnetic susceptibilities. For this simplified model we are able to develop a convergent numerical scheme. Convergence of the scheme relies on classical compactness arguments. On the other hand, the fact that the limits for $h , \dt \longrightarrow 0$ are ultra-weak solutions of \eqref{weakfor} demands a special construction; in particular, it requires the use of discontinuous pressures, which is an idea originally advanced by Walkington \cite{Walk2005}. 

We show a series of numerical experiments which illustrate the potential of these models and their ability to capture basic phenomenological features of ferrofluids. One such feature is the classical Rosensweig instability (uniform magnetic field from below; see Figures \ref{figureResolved}-\ref{parafigtime}), the more common case of the Rosensweig instability (non-uniform magnetic field; see Figures \ref{FigDipsHedge}-\ref{hedgefig}) when the interface forms and open pattern of spikes, and a final experiment which neglects the contribution of the demagnetizing field $\hd$ on the effective magnetizing field $\heff$ (see Figure \ref{hedgefigha}). This last experiment reveals the critical importance of the demagnetizing field $\hd$ in Rosensweig instability.

Finally, we must comment that many important issues are not
discussed. Among them we have to mention that regularizing the
model \eqref{Themodel} is very much an open problem, as is actually
solving the system posed by the numerical schemes proposed in this
work, modeling of saturation effects, modeling of magneto-rheological
effects, and deriving ferrofluid models via energy-variational techniques.

\section{Acknowledgements}
We want to thank the developers of \texttt{deal.II} for the significant amount of help received while developing the code used for the numerical experiments. In particular, the visit of Timo Heister to UMCP in Spring of 2014 was particularly fruitful. We also want to thank Martin J{\"o}nsson-Niedzi\'olka for giving us permission to include his pictures in this work.

\bibliographystyle{siam}
\bibliography{biblio}

\end{document}